\let\Horig\H
\newcommand{\cH}{\mathcal{H}}
\newcommand{\cE}{\mathcal{E}}
\newcommand{\cI}{\mathcal{I}}
\newcommand{\bbN}{\mathbb{N}}
\newcommand{\bbP}{\mathbb{P}}
\newcommand{\bbE}{\mathbb{E}}
\newcommand{\bbR}{\mathbb{R}}
\renewcommand{\exp}{{\rm{exp}}}
\newcommand{\TV}{{\sf TV}}
\renewcommand{\H}{{\rm{H}}}
\newcommand{\argmin}{\mathop{\rm arg\min}}
\newcommand{\indi}{{\mathds{1}}}
\newcommand{\CI}{{\textnormal{CI}}}
\newcommand{\wh}{\widehat}
\newcommand{\wt}{\widetilde}
\newcommand{\iprod}[2]{\left \langle #1, #2 \right\rangle}
\newtheorem{Theorem}{Theorem}
\newtheorem{Lemma}{Lemma}
\newtheorem{Remark}{Remark}
\newtheorem{Proposition}{Proposition}
\DeclareMathAlphabet\mathbfcal{OMS}{cmsy}{b}{n}
\newcommand*{\rom}[1]{\expandafter\@slowromancap\romannumeral #1@}
\begin{document}
	\title{Robust Confidence Intervals for a Binomial Proportion: Local Optimality and Adaptivity}
	
		\author[1]{Minjun Cho}
\author[2]{Yuetian Luo}
\author[1]{Chao Gao\thanks{The research of CG is supported in part by NSF Grants ECCS-2216912 and DMS-2310769, NSF Career Award DMS-1847590, and an Alfred Sloan fellowship.}}
\affil[1]{
University of Chicago
}
\affil[2]{
Rutgers University
}
	\date{}
	\maketitle

\begin{abstract}
This paper revisits the classical problem of interval estimation of a binomial proportion under Huber contamination. Our main result derives the rate of optimal interval length when the contamination proportion is unknown under a local minimax framework, where the performance of an interval is evaluated at each point in the parameter space. By comparing the rate with the optimal length of a confidence interval that is allowed to use the knowledge of contamination proportion, we characterize the exact adaptation cost due to the ignorance of data quality. Our construction of the confidence interval to achieve local length optimality builds on robust hypothesis testing with a new monotonization step, which guarantees valid coverage, boundary-respecting intervals, and an efficient algorithm for computing the endpoints. The general strategy of interval construction can be applied beyond the binomial setting, and leads to optimal interval estimation for Poisson data with contamination as well. We also investigate a closely related Erd\Horig{o}s--R\'{e}nyi model with node contamination. Though its optimal rate of parameter estimation agrees with that of the binomial setting, we show that adaptation to unknown contamination proportion is provably impossible for interval estimation in that setting.
\end{abstract}


\begin{sloppypar}
\section{Introduction} \label{sec: intro}

Interval estimation with i.i.d. samples from $\text{Binomial}(m,p)$ is arguably one of the most fundamental problems in statistics. It can be solved either using Gaussian approximation \citep{wilson1927probable,clopper1934use,vollset1993confidence,agresti1998approximate,brown2001interval,brown2002confidence} or by various concentration inequalities \citep{bernstein1924modification,hoeffding1963probability,arratia1989tutorial}. This paper revisits this classical problem with the presence of outliers. Given i.i.d. observations
\begin{equation}
X_1, \ldots, X_n \overset{i.i.d.}\sim P_{\epsilon,p,Q}=(1-\epsilon)\text{Binomial}(m,p)+\epsilon Q,\label{eq:Binomial}
\end{equation}
our goal is to construct a robust confidence interval that contains the model parameter $p$ with probability at least $1-\alpha$. The setting (\ref{eq:Binomial}) is known as Huber's contamination model \citep{huber1964robust}, where no assumptions are imposed on the contamination distribution $Q$. Roughly speaking, an $\epsilon$ fraction of the samples can take arbitrary values.

Unlike the classical setting without outliers, the difficulty of the problem under (\ref{eq:Binomial}) critically depends on whether the contamination proportion $\epsilon$ is known. With the knowledge of $\epsilon$, a valid confidence interval can be directly constructed from a high-probability estimation error bound. To be specific, one can first construct a robust estimator $\wh{p}$ that achieves the following local minimax error rate,\footnote{To the best of our knowledge, the minimax rate of estimating $p$ under (\ref{eq:Binomial}) was previously unknown. We characterize the rate in Theorem \ref{thm:est}.}
\begin{equation}
|\wh{p}-p|=O_{\mathbb{P}}\left(\sqrt{\frac{p(1-p)}{m}}\left(\frac{1}{\sqrt{n}}+\epsilon\right)+\frac{1}{m}\left(\frac{1}{n}+\epsilon\right)\right). \label{eq:CI<-er}
\end{equation}
Then, a confidence interval can be obtained by directly inverting the error bound (\ref{eq:CI<-er}).

On the other hand, when $\epsilon$ is unknown, there is no way to turn the estimation error bound (\ref{eq:CI<-er}) into a valid confidence interval. {What is even worse is that because we do not have any assumption on the contamination distribution $Q$, deriving an asymptotic distribution of $\wh{p}$ under \eqref{eq:Binomial} would be impossible. Therefore, subsampling based uncertainty quantification methods \citep{politis1994large,hall2013bootstrap}, such as bootstrap \citep{efron1994introduction}, would also fail in this setting. In fact, a simple example below illustrates that} confidence interval construction becomes fundamentally harder {when $\epsilon$ is unknown}. Consider the special case of $m=1$, and (\ref{eq:Binomial}) becomes
\begin{equation}
X_1, \ldots, X_n \overset{i.i.d.}\sim (1-\epsilon)\text{Bernoulli}(p)+\epsilon Q.\label{eq:bernoulli}
\end{equation}
{Suppose the true contamination proportion is $0$ but unknown, and $p = 0.1$.} It is interesting to note that
\begin{equation}
 \text{Bernoulli}(0.1)=0.9\text{Bernoulli}(0) + 0.1 \text{Bernoulli}(1). \label{eq:bern-eg}
\end{equation}
In other words, given a sequence of binary observations with roughly $90\%$ zeros and $10\%$ ones, there is no way to tell whether the sequence is generated by $\text{Bernoulli}(0.1)$ or by $\text{Bernoulli}(0)$ together with $10\%$ outliers. In this situation, if a statistician does not know in advance whether $\epsilon=0$ or $\epsilon=0.1$, the statistician will not be able to tell $p=0.1$ from $p=0$. Thus, any valid confidence interval needs to cover both $p=0.1$ and $p=0$, and its length is at least $0.1$ no matter how large the sample size $n$ is. 

This striking difference between known and unknown $\epsilon$ has been previously noted by \cite{luo2024adaptive} in the setting of the Gaussian location model
\begin{equation}
X_1, \ldots, X_n \overset{i.i.d.}\sim (1-\epsilon)N(\theta,1)+\epsilon Q.\label{eq:GLM}
\end{equation}
While the optimal length of a confidence interval for $\theta$ is of order $\frac{1}{\sqrt{n}}+\epsilon$ when $\epsilon$ is known, the best adaptive confidence interval without the knowledge of $\epsilon$ can only achieve the length
\begin{equation}
\frac{1}{\sqrt{\log n}} + \frac{1}{\sqrt{\log(1/\epsilon)}}. \label{eq:GLM-rate-un}
\end{equation}
The difference between the rates $\frac{1}{\sqrt{n}}+\epsilon$ and $\frac{1}{\sqrt{\log n}} + \frac{1}{\sqrt{\log(1/\epsilon)}}$ indicates a significant adaptation cost in the confidence interval construction.
The Bernoulli example (\ref{eq:bern-eg}) echoes this adaptation cost. In fact, the adaptation cost implied by the example (\ref{eq:bern-eg}) is even more severe, and the length lower bound $0.1$ holds for an arbitrary sample size $n$. In comparison, the Gaussian rate (\ref{eq:GLM-rate-un}) still tends to zero as $n\rightarrow \infty$ and $\epsilon\rightarrow 0$.

Despite the negative example (\ref{eq:bern-eg}) for the Bernoulli setting, there is still hope to solve the problem in a more interesting way for the binomial setting (\ref{eq:Binomial}). Indeed, the binomial distribution can be well approximated by a Gaussian as $m\rightarrow\infty$. We should, therefore, expect the possibility of constructing an adaptive confidence interval whose length scales as the rate (\ref{eq:GLM-rate-un}) multiplied by $\frac{1}{\sqrt{m}}$, {which shrinks polynomially with respect to $m$}. The main result of the paper confirms that this intuition is indeed true. We prove that the optimal length of a confidence interval with unknown $\epsilon$ under the setting (\ref{eq:Binomial}) scales as
\begin{equation}
\ell(n,\epsilon,m,p)=\left(\sqrt{\frac{p(1-p)}{m}}\left(\frac{1}{\sqrt{\log n}}+\frac{1}{\sqrt{\log(1/\epsilon)}}\right) + \frac{1}{m} \right) \wedge p\wedge (1-p) + \frac{1}{m}\left(\frac{1}{n}+\epsilon\right). \label{eq:main-rate-bmp}
\end{equation}
When $p$ is a constant bounded away from both $0$ and $1$, the rate (\ref{eq:main-rate-bmp}) can be simplified to $\frac{1}{\sqrt{m}}\left(\frac{1}{\sqrt{\log n}}+\frac{1}{\sqrt{\log(1/\epsilon)}}\right)+\frac{1}{m}$, which
 not only agrees with (\ref{eq:GLM-rate-un}) in the Gaussian setting for large $m$, but is also coherent with the impossibility example (\ref{eq:bern-eg}) in the Bernoulli setting when $m$ is small.
 {Moreover, the rate (\ref{eq:main-rate-bmp}) is derived under a {\it local optimality framework} for each individual $p\in[0,1]$. It extends the theory of \cite{luo2024adaptive} on robust confidence intervals with only worst-case optimality.} The local rate (\ref{eq:main-rate-bmp}) reflects the subtlety of the problem {--- the difficulty of interval estimation would vary across different $p$'s. This is particularly true} when the parameter $p$ is close to the boundary of $[0,1]$. In fact, the rate (\ref{eq:main-rate-bmp}) implies that adaptation to unknown $\epsilon$ is still possible even when $m=1$, i.e., the Bernoulli setting \eqref{eq:bernoulli}, as long as $p\wedge (1-p)$ tends to zero. This complements the important example (\ref{eq:bern-eg}).

According to \cite{luo2024adaptive}, an adaptive confidence interval can be constructed by inverting a family of robust testing functions. While this strategy works for the Gaussian location model (\ref{eq:GLM}), it does not guarantee to always output an interval in the binomial setting (\ref{eq:Binomial}). In this paper, {we propose a new principle, {\it monotonicity}, in designing robust tests, with which we are guaranteed to have intervals.} We will show that the monotone tests lead to an adaptive confidence interval under (\ref{eq:Binomial}) with its length shown to be of order (\ref{eq:main-rate-bmp}). This new construction also respects the boundary condition $0\leq p\leq 1$ that is not present in the previous location model (\ref{eq:GLM}), and can be discretized into an efficient algorithm that directly computes the two endpoints of the interval. To demonstrate the generality of the proposed method, we also consider Poisson data with contamination and construct an adaptive confidence interval that achieves both coverage and local length optimality under $X_1, \ldots, X_n \overset{i.i.d.}\sim (1-\epsilon)\text{Poisson}(\lambda)+\epsilon Q$.

In addition to the binomial model (\ref{eq:Binomial}), the paper also studies a closely related Erd\Horig{o}s--R\'{e}nyi model \citep{gilbert1959random,erdds1959random} with node contamination \citep{acharya2022robust}. This is an interesting random network model that allows outliers in the data. In such a setting, an $\epsilon$ fraction of network nodes are contaminated, and edges that are connected to these nodes have arbitrary connectivity. Equivalently, there is an $\epsilon$ fraction of rows and columns of the adjacency matrix that are contaminated, but the submatrix without contamination is still generated by $\text{Bernoulli}(p)$. In fact, the binomial model (\ref{eq:Binomial}) is equivalent to a setting where the adjacency matrix only has row contamination. This similarity between the binomial model and the Erd\Horig{o}s--R\'{e}nyi model with node contamination explains why the minimax rate of estimating $p$ derived by \cite{acharya2022robust} agrees with (\ref{eq:CI<-er}) when $m=n$. Somewhat surprisingly, in terms of constructing adaptive confidence intervals for $p$ when $\epsilon$ is unknown, the two models are drastically different. We show that adaptation to unknown $\epsilon$ is impossible for interval estimation under Erd\Horig{o}s--R\'{e}nyi model with node contamination; the optimal interval length cannot decrease as $\epsilon\rightarrow 0$. This is because an adaptive confidence interval there can be converted into a testing procedure that distinguishes a stochastic block model \citep{holland1983stochastic} from an Erd\Horig{o}s--R\'{e}nyi model, and will thus violate the lower bound of community detection in the literature.

\subsection{Paper Organization}

The mathematical formulation of adaptive confidence intervals under a local optimality framework is set up in Section \ref{sec:setting}. The solution to the binomial model will be given in Section \ref{sec:main}. Section \ref{sec:test} introduces a general framework of interval construction through inverting monotone tests. The same construction for the binomial model is also applied to Poisson data. The results for the Erd\Horig{o}s--R\'{e}nyi model will be given in Section \ref{sec:graph}. Finally, all technical proofs will be presented in Section \ref{sec:pfthm2} and the appendices.

\subsection{Notation}

Define $[n] = \{1,\ldots,n\}$ for any positive integer $n$. Given any two numbers $a, b \in \bbR$, let $a \wedge b := \min\{a,b\}$ and $a \vee b := \max\{a, b\}$. For any two sequences $\{a_n\}$ and $\{b_n\}$, we write $a_n \asymp b_n$ if there exist constants $c, C>0$ such that $ca_n \leq b_n\leq Ca_n$ for all $n$; $a_n \lesssim b_n$ means that $a_n \leq C b_n$ holds for some constant $C > 0$ independent of $n$. Given any real number $a \geq 0$, let $\lceil a \rceil = \min \{x \in \bbN_0: x \geq a \}$ and $\lfloor a \rfloor = \max \{x \in \bbN_0: x \leq a\}$ where $\bbN_0 = \bbN \cup \{0\}$ and $\bbN$ is the set of positive natural numbers. For an interval $B=[L,U]$, we write its length as $|B|=U - L$ when $U \geq L$ and implicitly assume it is empty when $U < L$. For any set $S$, we use $\#S$ to denote its cardinality. The notation $P^{\otimes n}$ means the product distribution of $P$ with $n$ i.i.d. copies. The total variation distance, Kullback–Leibler divergence and $\chi^2$-divergence between two distributions $P$ and $Q$ are defined by $\TV(P,Q)=\sup_B|P(B)-Q(B)|$, $D(P\|Q) = \int \log(dP/dQ) dP $ and $\chi^2(P\|Q) = \int \left( dP/dQ \right)^2 dQ - 1$, respectively. Given $n$ data points $X_1, \ldots, X_n$, we denote the empirical CDF as  
$
F_n(t) = \frac{1}{n} \sum_{i \in [n]} \indi\{X_i \leq t\}.
$ We use $\mathbb{E}$ and $\mathbb{P}$ for generic expectation and probability operators whenever the distribution is clear from the context.

\section{Problem Setting}\label{sec:setting}

\subsection{A Framework of Local Optimality}\label{sec:local}

Given i.i.d. observations
generated according to (\ref{eq:Binomial}),
our goal is to construct an adaptive robust confidence interval $\widehat{\CI}$ that contains the model parameter $p$ with probability at least $1-\alpha$. When the contamination proportion $\epsilon\in[0,\epsilon_{\max}]$ is unknown, we consider confidence intervals that satisfy the following coverage requirement,
$$\inf_{\epsilon\in[0,\epsilon_{\max}]}\inf_{p,Q}P_{\epsilon, p,Q}\left(p\in \widehat{\CI}\right)=\inf_{p,Q}P_{\epsilon_{\max}, p,Q}\left(p\in \widehat{\CI}\right)\geq 1-\alpha,$$ where the first equality is because $\{P_{\epsilon,p,Q}:Q\} \subseteq \{P_{\epsilon_{\max},p,Q}:Q\}$ for any $\epsilon \in [0, \epsilon_{\max}]$. Here $\epsilon_{\max}$ serves as a known conservative upper bound for $\epsilon$, which is assumed to be a constant throughout the paper unless otherwise stated. 
The set of all such intervals is defined by
$$\cI_{\alpha}(\epsilon_{\max}) = \left\{ \widehat{\CI} = [l(\{X_i \}_{i=1}^n), u(\{X_i \}_{i=1}^n) ]: \inf_{p,Q}P_{\epsilon_{\max}, p,Q}\left(p\in \widehat{\CI}\right) \geq 1- \alpha  \right\}.$$
We will find among $\cI_{\alpha}(\epsilon_{\max})$ a confidence interval with the smallest length. {In \cite{luo2024adaptive}, the optimal length is defined according to the worst-case performance over the class of all contamination distributions and the entire parameter space. This global notion of length optimality is suitable for location families {as the shape of the distribution there is location invariant, so is the difficulty of interval estimation. However, this attractive feature fails to hold beyond location families (an example will be provided shortly in Section \ref{sec:bernoulli}). Here, instead,} we will follow \cite{cai2013adaptive} and introduce a notion of local length optimality for the binomial model.} Given some $\epsilon\in[0,\epsilon_{\max}]$ and some $p\in[0,1]$, the locally optimal length of $\cI_{\alpha}(\epsilon_{\max})$ is defined by
\begin{equation}
r_{\alpha}(\epsilon,p,\epsilon_{\max})=\inf\left\{r\geq 0: \inf_{\widehat{\CI} \in  \cI_{\alpha}(\epsilon_{\max}) }\sup_{Q} P_{\epsilon, p, Q} \left( |\widehat{\CI}| \geq r \right) \leq \alpha\right\}. \label{eq:local-r-b}
\end{equation}
In other words, we allow the optimal length to depend on each specific parameter $p$.
Intuitively, when $p$ is close to $0$ or $1$, a shorter confidence interval is expected due to less variability of the data.

\subsection{Benchmark with Known $\epsilon$}

When $\epsilon_{\max}=\epsilon$, the quantity $r_{\alpha}(\epsilon,p,\epsilon)$ is reduced to the locally optimal length for confidence intervals that can depend on the knowledge of $\epsilon$. When $\epsilon_{\max}$ is a constant and $\epsilon=o(1)$, it is possible that $r_{\alpha}(\epsilon,p,\epsilon_{\max})$ is of greater order than $r_{\alpha}(\epsilon,p,\epsilon)$, in which case the unknown level of $\epsilon$ results in an \textit{adaptation cost}. In this paper, we will fully characterize the rates of both $r_{\alpha}(\epsilon,p,\epsilon_{\max})$ and $r_{\alpha}(\epsilon,p,\epsilon)$ and thus exactly quantify the cost of unknown $\epsilon$ in the construction of robust confidence intervals.

We first present a result on $r_{\alpha}(\epsilon,p,\epsilon)$ to benchmark the information-theoretic limit of the problem when $\epsilon$ is known.
\begin{Proposition}\label{prop:bench}
For any $\alpha \in (0,1/4)$, there exists some constant $c > 0 $ only depending on $\alpha$ such that \begin{equation*}\label{Ineq: Lower bound for known epsilon}
r_{\alpha}(\epsilon,p,\epsilon)\geq c\left[\sqrt{\frac{p(1-p)}{m}}\left(\frac{1}{\sqrt{n}}+\epsilon\right)+\frac{1}{m}\left(\frac{1}{n}+\epsilon\right)\right].
\end{equation*}
Moreover, for any $\alpha \in (0,1)$, if $\frac{\log(2/\alpha)}{n} + \epsilon$ is less than a sufficiently small constant, there is a robust confidence interval $\widehat{\CI}$ that satisfies
\begin{equation*}
		\begin{split}
			& \inf_{p, Q} P_{\epsilon, p, Q}\left( p \in\widehat{\CI} \right) \geq 1-\alpha,\\
			& \inf_{p, Q } P_{\epsilon, p, Q}\left( |\widehat{\CI}| \leq C \left[\sqrt{\frac{p(1-p)}{m}}\left(\frac{1}{\sqrt{n}}+\epsilon\right)+\frac{1}{m}\left(\frac{1}{n}+\epsilon\right)\right] \right) \geq 1-\alpha,
		\end{split}
	\end{equation*}
where $C > 0$ is some constant only depending on $\alpha$.
\end{Proposition}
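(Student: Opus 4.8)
The plan is to prove the lower bound by a constrained two-point (modulus-of-continuity) argument and the matching upper bound by calibrating a robust point estimator of $p$ and inverting it with a plug-in radius; I take these in turn. For the lower bound I would use the standard reduction from interval estimation to two-point testing. Fix $p$ and a perturbation $p'$; if one can pick contamination distributions $Q,Q'$ with $\TV\big(P_{\epsilon,p,Q}^{\otimes n},P_{\epsilon,p',Q'}^{\otimes n}\big)\le\delta$ for some $\delta<1-3\alpha$, then for every $\widehat{\CI}\in\cI_\alpha(\epsilon)$ the coverage requirement at both $p$ and $p'$ forces $P_{\epsilon,p,Q}\big(\{p,p'\}\subseteq\widehat{\CI}\big)\ge 1-2\alpha-\delta>\alpha$, hence $\sup_Q P_{\epsilon,p,Q}(|\widehat{\CI}|\ge|p-p'|)>\alpha$ and therefore $r_\alpha(\epsilon,p,\epsilon)\ge|p-p'|$. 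It then remains to realize each of the four terms as such a separation. The two $\epsilon$-free terms come from taking $Q=\mathrm{Bin}(m,p)$, $Q'=\mathrm{Bin}(m,p')$ (so the observed laws are the clean binomials) and choosing $|p-p'|$ small enough that $n\,D(\mathrm{Bin}(m,p)\|\mathrm{Bin}(m,p'))=nm\,D(\mathrm{Bern}(p)\|\mathrm{Bern}(p'))$ is a small constant — bounding $D$ by $\chi^2$ and always perturbing toward $1/2$ to keep the boundary under control — which yields separations of order $\sqrt{p(1-p)/(mn)}$ and $1/(mn)$. The two $\epsilon$-terms use the exact-matching fact that whenever $\TV(\mathrm{Bin}(m,p),\mathrm{Bin}(m,p'))\le\epsilon/(1-\epsilon)$ there are $Q,Q'$ with $(1-\epsilon)\mathrm{Bin}(m,p)+\epsilon Q=(1-\epsilon)\mathrm{Bin}(m,p')+\epsilon Q'$, so $\delta=0$; together with $\TV(\mathrm{Bin}(m,p),\mathrm{Bin}(m,p'))\le m|p-p'|$ and, via Hellinger, $\lesssim|p-p'|\sqrt{m/(p(1-p))}$ for small separations, this admits $|p-p'|\asymp\epsilon\sqrt{p(1-p)/m}$ and $|p-p'|\asymp\epsilon/m$. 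Whenever a prescribed separation would leave $[0,1]$ or exceed $p\wedge(1-p)$, that term is already dominated by $1/(mn)$ or $\epsilon/m$ there and may be dropped; summing the valid separations gives the claimed bound, with constants depending only on $\alpha$.

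For the upper bound I would build a robust estimator $\widehat p$ with $P_{\epsilon,p,Q}(|\widehat p-p|\le C_1 R(p))\ge 1-\alpha$ uniformly in $Q$, where $R(p)$ denotes the bracketed rate, and output $\widehat{\CI}=[\widehat p-h,\widehat p+h]\cap[0,1]$ with $h=C\big(\sqrt{\widehat p(1-\widehat p)/m}\,(n^{-1/2}+\epsilon)+\tfrac1m(n^{-1}+\epsilon)\big)$ a plug-in surrogate for $C_1R(p)$. The estimator would glue two regimes (without loss of generality, $p\le\tfrac12$): in the bulk, an empirical quantile (or trimmed mean) of the $X_i$ with an $\epsilon+c\sqrt{\log(2/\alpha)/n}$ trimming fraction on each side has error $\lesssim\sqrt{mp(1-p)}\,(\epsilon+\sqrt{\log(2/\alpha)/n})+O(1)$ by standard quantile/Bernstein estimates, giving the first term after dividing by $m$; near the boundary, one instead inverts the relative concentration of the small-value counts — for small $k$, Bernstein gives $\big|\#\{i:X_i\le k\}/n-P_{\epsilon,p,Q}(X\le k)\big|\lesssim\sqrt{\rho_k\log(2/\alpha)/n}+\log(2/\alpha)/n$ with $\rho_k=P_{\epsilon,p,Q}(X\le k)$ — and after the $\epsilon$-inflation $\mathbb{P}(\mathrm{Bin}(m,p)\le k)=\frac{\rho_k-\epsilon Q(X\le k)}{1-\epsilon}$ this localizes $mp$ to an interval of width $\lesssim\sqrt{mp/n}+1/n+\epsilon$, i.e.\ the second term after dividing by $m$. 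Coverage then follows by checking $h\ge C_1R(p)$ on the $1-\alpha$ event, using that $R(\cdot)$ is stable under perturbations of size $C_1R(p)$ (so $\widehat p(1-\widehat p)\asymp p(1-p)$ when the first term dominates, and $h\gtrsim\tfrac1m(n^{-1}+\epsilon)\gtrsim R(p)$ when the second does); the length bound $|\widehat{\CI}|\le 2h\lesssim R(p)$ follows from the same stability on the same event. The hypothesis that $\tfrac{\log(2/\alpha)}{n}+\epsilon$ is small is used only to ensure $\epsilon<\tfrac12$ and that the concentration slacks are genuinely lower order.

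The main obstacle, on both sides, will be the boundary term $\tfrac1m(\tfrac1n+\epsilon)$: it is the non-Gaussian piece of the rate, and on the upper-bound side it is precisely what a naive Kolmogorov--Smirnov inversion (with additive slack $n^{-1/2}$) fails to deliver, forcing the use of multiplicative (Bernstein-type) concentration of rare-event counts together with a careful analysis of how $\epsilon$-contamination propagates through the nonlinear map $p\mapsto\mathbb{P}(\mathrm{Bin}(m,p)\le k)$ as $p\to 0$ or $p\to 1$. Keeping all constants dependent only on $\alpha$ — in particular ensuring the several inflated constraints do not cost an extra union-bound factor — is the book-keeping that will need the most care.
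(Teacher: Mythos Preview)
Your lower bound is essentially the paper's argument: the two-point reduction you describe is Lemma~\ref{Lem: TV to lower bound}, and your four separations --- clean binomials with a $\chi^2$/Pinsker bound for the $n^{-1/2}$ and $(mn)^{-1}$ terms, and exact contamination-matching via $\TV(P_p,P_{p'})\le\epsilon/(1-\epsilon)$ (your ``exact-matching fact'' is the paper's Lemma~\ref{lem: TV}) for the $\epsilon$-terms --- are precisely the case split of Lemma~\ref{Lem: TV for known epsilon}.

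For the upper bound, the high-level route also matches: the paper too builds a rate-optimal robust point estimator (it cites Theorem~\ref{thm:est}) and inverts it, using the Wilson form $\{q:|\widehat p-q|\le C(A\sqrt{q(1-q)}+B)\}$ rather than your Wald form; Wilson makes coverage immediate without the stability check you outline, but either works. The real gap is in your \emph{bulk} estimator. An empirical quantile of integer-valued $\text{Binomial}(m,p)$ observations carries an irreducible $O(1)$ discreteness error --- you write ``$+O(1)$'' yourself and then silently drop it --- so after dividing by $m$ you carry an $O(1/m)$ term that is \emph{not} absorbed by $R(p)$: take $\epsilon=0$ and $p$ bounded away from $0,1$, so that $R(p)\asymp(mn)^{-1/2}$, and let $n\gg m$; then $1/m\gg R(p)$. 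Your boundary estimator cannot rescue this regime since it requires $p\lesssim C/m$. The paper avoids the issue by using, in the bulk, the Kolmogorov minimum-distance estimator $\widehat p_g=\argmin_{q}\sup_t|F_n(t)-P_q(X\le t)|$; matching the full CDF rather than a single quantile eliminates the discreteness floor and yields $|\widehat p_g-p|\lesssim\sqrt{p(1-p)/m}\,(n^{-1/2}+\epsilon)$ for all $p\gtrsim C/m$ with no additive $1/m$ slack. So your diagnosis that the boundary term is ``the main obstacle'' is only half the story --- the bulk also needs more than a quantile.
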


The construction of $\widehat{\CI}$ that achieves the locally optimal length in Proposition \ref{prop:bench} is straightforward by considering a rate-optimal robust estimator $\wh{p}$. One can characterize the high-probability error bound of $\wh{p}$ as a function of $n,m,p$ and $\epsilon$. Since $\epsilon$ is known, the error bound can be regarded as a known function of $p$. This leads to a Wilson-type confidence interval that achieves the optimality. Details of the construction will be given in Section \ref{sec:est}.

On the other hand, the error bound of $\wh{p}$ will be unknown if one does not have the value of $\epsilon$. In this case, construction of a confidence interval using the error bound of $\wh{p}$ is infeasible. One can certainly still use the error bound with the unknown $\epsilon$ replaced by the known upper bound $\epsilon_{\max}$. However, this conservative strategy does not adapt to the level of $\epsilon$, and this paper will construct an optimal solution that is strictly better than the conservative one.

\subsection{Understanding Bernoulli}\label{sec:bernoulli}

To help readers build intuition for the general binomial setting (\ref{eq:Binomial}), we first discuss the special case of Bernoulli when $m=1$. This simple setting is already rich enough to understand the necessity of characterizing the local length optimality.

Starting with the example (\ref{eq:bern-eg}), we know that given {the true $p$ is 0.1 and the contamination proportion is unknown}, it is necessary for a confidence interval to have length at least $0.1$ with high probability, regardless of the sample size $n$. However, the example (\ref{eq:bern-eg}) is not the whole story. {Suppose the true contamination proportion is still $0$, but $p = 0$, it is not hard to convince ourselves that even if we have a contamination model such that}
\begin{equation}
\text{Bernoulli}(0)=(1-\epsilon)\text{Bernoulli}(p') + \epsilon Q, \label{eq:minjun-eg}
\end{equation}
we must have $p'=0$ and $Q=\text{Bernoulli}(0)$ in (\ref{eq:minjun-eg}) for any $\epsilon\in(0,1)$. In other words, given a sequence of all zeros, we should be very certain that the $p$ that generates the sequence has to be extremely close to $0$, even if we do not know the value of $\epsilon$. In fact, as long as we know $\epsilon<1/2$, the interval $\left[0,\frac{4\log(2/\alpha)}{n}\right]$ is guaranteed to cover any $p$ that could generate a sequence of all zeros, with probability at least $1-\alpha$.
The two examples (\ref{eq:bern-eg}) and (\ref{eq:minjun-eg}) suggest that for a constant order $\epsilon_{\max}$, we expect to have $r_{\alpha}(0,0.1,\epsilon_{\max})\asymp 1$, but $r_{\alpha}(0,0,\epsilon_{\max})\asymp \frac{1}{n}$. The local optimal length of an adaptive confidence interval critically depends on the magnitude of $p$.

With data generated according to (\ref{eq:bernoulli}), we illustrate how to construct an adaptive confidence interval with unknown $\epsilon\in[0,\epsilon_{\max}]$. Let us start with the estimator $\wh{p}=\frac{1}{n}\sum_{i=1}^n\indi\{X_i=1\}$. Its error bound under (\ref{eq:bernoulli}) is given by (\ref{eq:CI<-er}) with $m=1$. That is,
\begin{equation}
|\wh{p}-p|=O_{\mathbb{P}}\left(\sqrt{\frac{p(1-p)}{n}}+\frac{1}{n}+\epsilon\right). \label{eq:bern-phat-er}
\end{equation}
As we have already pointed out, one cannot invert this error bound into a confidence interval without the knowledge of $\epsilon$. However, it is actually possible to obtain a better one-sided error bound. Under (\ref{eq:bernoulli}), we have $\mathbb{E}\wh{p}\geq (1-\epsilon)p\geq (1-\epsilon_{\max})p$. Thus, the random variable $n\wh{p}$ is stochastically greater than $\text{Binomial}(n,(1-\epsilon_{\max})p)$, which implies
\begin{equation}
\wh{p} \geq (1-\epsilon_{\max})p - O_{\mathbb{P}}\left(\sqrt{\frac{p}{n}} + \frac{1}{n}\right).\label{eq:bern-1sd}
\end{equation}
The one-sided error bound (\ref{eq:bern-1sd}) holds for all $\epsilon\in[0,\epsilon_{\max}]$. In comparison, one can only obtain from (\ref{eq:bern-phat-er}) a worse lower bound $p-O_{\mathbb{P}}(1)$ that holds for all $\epsilon\in[0,\epsilon_{\max}]$. A similar argument also leads to
\begin{equation}
\wh{1-p} \geq (1-\epsilon_{\max})(1-p) - O_{\mathbb{P}}\left(\sqrt{\frac{1-p}{n}} +  \frac{1}{n}\right),\label{eq:bern-othsd}
\end{equation}
where $\wh{1-p}=\frac{1}{n}\sum_{i=1}^n\indi\{X_i=0\}$.
By inverting (\ref{eq:bern-1sd}) and (\ref{eq:bern-othsd}), we can obtain the following confidence interval
\begin{equation}
\left[1-C\left(\wh{1-p}+\frac{1}{n}\right),C\left(\wh{p}+\frac{1}{n}\right)\right] \bigcap [0,1] , \label{eq:sol-bern}
\end{equation}
for some constant $C>0$ that can be explicitly computed. The interval (\ref{eq:sol-bern}) covers $p$ with high probability because of (\ref{eq:bern-1sd}) and (\ref{eq:bern-othsd}). Its length is bounded above by $C\left(\wh{p}\wedge (\wh{1-p})+\frac{1}{n}\right)$, which is at most of order $p\wedge(1-p)+\frac{1}{n}+\epsilon$ using (\ref{eq:bern-phat-er}). Hence, the length of (\ref{eq:sol-bern}) matches the locally optimal rate (\ref{eq:main-rate-bmp}) with $m=1$, and decreases with $\epsilon$ even though the construction of (\ref{eq:sol-bern}) does not use the knowledge of $\epsilon$.

\section{Main Results}\label{sec:main}

In this section, we will characterize the order of $r_{\alpha}(\epsilon,p,\epsilon_{\max})$ for the binomial model and construct an adaptive confidence interval for $p$ when the contamination proportion $\epsilon$ is unknown.

\subsection{A Lower Bound for Locally Optimal Length} \label{sec:bino-test}

For any interval in $\cI_{\alpha}(\epsilon_{\max})$, we first present a lower bound for its local length at a given $p$.
\begin{Theorem}\label{thm:lower}
For any $\alpha \in (0,1/4)$, $\epsilon_{\max} \in [0, 1/2]$, and $n \geq 3$ satisfying $\epsilon_{\max} \geq \frac{2\alpha}{n}$, there exists some constant $c > 0$ only depending on $\alpha$ and $\epsilon_{\max}$, such that
\begin{equation}
r_{\alpha}(\epsilon,p,\epsilon_{\max})\geq c \ell(n,\epsilon,m,p). \label{eq:rate-main}
\end{equation}
The formula of $\ell(n,\epsilon,m,p)$ is given by (\ref{eq:main-rate-bmp}).
\end{Theorem}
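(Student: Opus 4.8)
The plan is to establish the lower bound $r_{\alpha}(\epsilon,p,\epsilon_{\max})\geq c\,\ell(n,\epsilon,m,p)$ by separately lower bounding each term appearing in the defining expression (\ref{eq:main-rate-bmp}) and then combining them (losing only a constant factor, since a maximum of finitely many terms is comparable to their sum). Recall that
\[
\ell(n,\epsilon,m,p)=\left(\sqrt{\frac{p(1-p)}{m}}\left(\frac{1}{\sqrt{\log n}}+\frac{1}{\sqrt{\log(1/\epsilon)}}\right) + \frac{1}{m} \right) \wedge p\wedge (1-p) + \frac{1}{m}\left(\frac{1}{n}+\epsilon\right),
\]
so I need: (i) the term $\frac{1}{m}\big(\frac{1}{n}+\epsilon\big)$, which is the part already present in the known-$\epsilon$ benchmark of Proposition \ref{prop:bench} and is the "statistical" (non-adaptive) piece; (ii) the genuinely new adaptive terms $\sqrt{\tfrac{p(1-p)}{m}}\cdot\tfrac{1}{\sqrt{\log n}}$ and $\sqrt{\tfrac{p(1-p)}{m}}\cdot\tfrac{1}{\sqrt{\log(1/\epsilon)}}$; and (iii) the boundary truncation by $p\wedge(1-p)$, which should come almost for free since $p$ itself must lie in any valid interval containing it (so if $p$ is tiny, one endpoint can be pushed to $0$ but the length is still at least comparable to $\min$ of the two natural one-sided widths, matching the Bernoulli discussion around (\ref{eq:bern-eg})--(\ref{eq:minjun-eg})).

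The standard machinery I would invoke is the reduction from confidence-interval length lower bounds to two-point (or fuzzy) testing lower bounds. Concretely, I would use the constrained-risk / testing argument: if there were an interval in $\cI_{\alpha}(\epsilon_{\max})$ that with probability $>\alpha$ under $P_{\epsilon,p,Q}$ had length $<r$, then for any $p'$ with $|p-p'|\geq r$ one could test $H_0:$ parameter $=p$ versus $H_1:$ parameter $=p'$ with total error below a constant, by checking whether $p$ or $p'$ lies in the reported interval. Hence a lower bound on $r_{\alpha}$ follows from showing that such a test is impossible, i.e. from exhibiting $Q_0,Q_1$ and $p,p'$ separated by the claimed amount with $\TV\big(P_{\epsilon,p,Q_0}^{\otimes n},\,P_{\epsilon_{\max},p',Q_1}^{\otimes n}\big)$ bounded away from $1$. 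For the $\frac{1}{\sqrt{\log n}}$ term the construction should mirror \cite{luo2024adaptive}: take $\epsilon$ effectively $0$ on one side and $\epsilon_{\max}$ on the other, and choose $Q_1$ so that $(1-\epsilon_{\max})\mathrm{Binomial}(m,p')+\epsilon_{\max}Q_1$ matches $\mathrm{Binomial}(m,p)$ as closely as possible in the bulk; the binomial-to-Gaussian approximation (via a local CLT, valid in the regime where $\sqrt{p(1-p)/m}$ is the relevant scale) transfers the Gaussian-location calculation, where moving the mean by $\delta\sqrt{p(1-p)/m}$ while allowing $\epsilon_{\max}$ contamination costs $n\cdot\TV^2\asymp n e^{-c/\delta^2}$ in $\chi^2$; setting this to $O(1)$ forces $\delta\asymp 1/\sqrt{\log n}$. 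For the $\frac{1}{\sqrt{\log(1/\epsilon)}}$ term I would instead compare $\epsilon$ versus $\epsilon_{\max}$ (both nonzero), so that $n$ drops out and only the per-sample likelihood-ratio tails of a contamination of size $\asymp\epsilon$ matter, giving separation $\asymp\sqrt{p(1-p)/m}/\sqrt{\log(1/\epsilon)}$. The $\frac{1}{m}(\frac1n+\epsilon)$ term is inherited directly from Proposition \ref{prop:bench} (known-$\epsilon$ lower bounds are a fortiori lower bounds when $\epsilon$ is unknown), and the extra standalone $\frac{1}{m}$ inside the first parenthesis is a discreteness effect — the binomial takes values on a grid of spacing $1/m$, so two parameters differing by $o(1/m)$ in $p$ can be made essentially indistinguishable by an $O(\epsilon_{\max})$ contamination that reshuffles a single atom.

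The main obstacle I anticipate is making the binomial constructions fully rigorous near the boundary and in the discrete regime, where the clean Gaussian heuristics break down: when $p$ is within $O(1/m)$ of $0$ or $1$, or when $mp$ is not large, one cannot simply quote a CLT, and the right comparison is between a binomial and a *Poisson* (or even a direct combinatorial computation on the atoms), so I would need a separate case analysis — large-deviation regime, moderate regime, and boundary/Poisson regime — and check in each that the TV bound degrades gracefully, so that the final lower bound is the stated minimum/sum. A secondary technical point is verifying that the contamination distributions $Q$ chosen to realize the hard instances are genuinely valid (nonnegative, total mass one) and that the mixture identity $(1-\epsilon)\mathrm{Binomial}(m,p')+\epsilon Q=$ (target) can be solved with $\epsilon$ no larger than $\epsilon_{\max}$; this is exactly the kind of matching used informally in (\ref{eq:bern-eg}), and extending it to $m>1$ while controlling how much of the mixture budget is spent is where the $\sqrt{p(1-p)/m}$ scale and the logarithmic factors emerge. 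Once these case-by-case TV estimates are in hand, assembling them into (\ref{eq:rate-main}) is routine bookkeeping.
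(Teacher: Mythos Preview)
Your proposal is correct and follows essentially the same architecture as the paper: reduce the length lower bound to a two-point testing lower bound (the paper states this as Lemma~\ref{Lem: TV to lower bound}), then establish the indistinguishability result $\inf_{Q_0,Q_1}\TV\big(P^{\otimes n}_{\epsilon_{\max},p\pm r,Q_0},P^{\otimes n}_{\epsilon,p,Q_1}\big)\leq\alpha$ for $r\leq c\,\ell(n,\epsilon,m,p)$ by explicit construction of the contaminations in each regime (the paper's Theorem~\ref{thm:test-low}). Your case breakdown---the $\tfrac{1}{m}(\tfrac1n+\epsilon)$ piece, the $1/\sqrt{\log n}$ piece, the $1/\sqrt{\log(1/\epsilon)}$ piece, and the $1/m$ discreteness piece---matches the paper's partition almost exactly.

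The one substantive technical difference is that the paper does \emph{not} go through a Gaussian approximation or local CLT for the logarithmic terms. Instead, it constructs $Q_0$ directly as a truncated-and-renormalized binomial (truncating at a threshold $t\approx p-\text{const}\cdot\sqrt{p(1-p)\log(\cdot)/m}$) and controls the residual TV by a binomial Chernoff bound, together with a Hellinger--KL inequality. For the $1/\sqrt{\log(1/\epsilon)}$ term it achieves exact matching ($\TV=0$) by spending both the $\epsilon$ and $\epsilon_{\max}$ budgets on carefully chosen $Q_0,Q_1$, and the binding constraint is the validity of $Q_1$ (nonnegativity of its pmf), which via a Chernoff tail bound forces $r\lesssim\sqrt{p(1-p)/(m\log(1/\epsilon))}$. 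This direct route sidesteps the CLT approximation errors you anticipate having to manage, and in particular handles the moderate-$mp(1-p)$ regime without a separate Poisson comparison. Your plan would also work, but would require tracking Berry--Esseen-type remainders; the paper's Chernoff-based construction is cleaner for this reason.
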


Compared with the rate of $r_{\alpha}(\epsilon,p,\epsilon)$ in Proposition \ref{prop:bench}, the lower bound of $r_{\alpha}(\epsilon,p,\epsilon_{\max})$ implies an adaptation cost when $\epsilon$ is unknown. An interesting instance of (\ref{eq:rate-main}) is
$$r_{\alpha}(0,p,\epsilon_{\max}) \gtrsim \left(\sqrt{\frac{p(1-p)}{m\log n}} + \frac{1}{m}\right) \wedge p\wedge (1-p) + \frac{1}{mn}.$$
This means even when $\epsilon=0$ and there is literally no outlier in the entire data set, the adaptation cost of interval length is still necessary as long as a statistician is ignorant of the data quality.

The same problem has been considered with i.i.d. samples generated from a Gaussian location model with Huber contamination $(1-\epsilon)N(\theta,1)+\epsilon Q$. While the optimal length of a confidence interval with known $\epsilon$ is of order $\frac{1}{\sqrt{n}}+\epsilon$ \citep[Theorem 2.1]{chen2018robust}, it was shown in Theorem 1 of \cite{luo2024adaptive} that the rate deteriorates to $\frac{1}{\sqrt{\log n}}+\frac{1}{\sqrt{\log(1/\epsilon)}}$ when $\epsilon$ is unknown. Similarly, the term $\sqrt{\frac{p(1-p)}{m}}\left(\frac{1}{\sqrt{\log n}}+\frac{1}{\sqrt{\log(1/\epsilon)}}\right)$ in $\ell(n,\epsilon,m,p)$ resembles the Gaussian rate due to the fact that $\text{Binomial}(m,p)$ can be approximated by $N\left(mp, mp(1-p)\right)$ when $m$ is large. {But one nice feature in the contaminated binomial model is that the length of the interval still decreases polynomially in $m$.} For a general $m$, the function $\sqrt{\frac{p(1-p)}{m}}\left(\frac{1}{\sqrt{\log n}}+\frac{1}{\sqrt{\log(1/\epsilon)}}\right)+\frac{1}{m}$ interpolates between the Bernoulli rate and the Gaussian rate as $m$ ranges from $1$ to $\infty$. The additional minimum with $p\wedge (1-p)$ reflects the boundary effect of the problem as illustrated in Section \ref{sec:bernoulli}. Finally, the last term $\frac{1}{m}\left(\frac{1}{n}+\epsilon\right)$ is intrinsic to the confidence interval construction, and is needed even when $\epsilon$ is known (see Proposition~\ref{prop:bench}).

\subsection{A Locally Optimal Adaptive Confidence Interval}

We will introduce an algorithm to match the lower bound rate of Theorem \ref{thm:lower}. We first need a few quantities.
For each $p,\epsilon\in[0,1]$, define
\begin{subequations}\label{def:t-tau-r}
	\begin{align}
		\overline{t}(p,\epsilon) &= \left\{ \begin{array}{ll}
	 p - \min \left\{\frac{p (1 - p)}{2}, \frac{1}{8} \sqrt{\frac{p (1 - p) \log\left(\left(\epsilon + (\log(24/\alpha)/(2n))^{1/2}\right)^{-1}\right)}{m}} \right\} &  p \in [0, 1-1/m]\\
	 	 1-1/m & p \in (1-1/m, 1],
\end{array}  \right. \label{def:t}\\
\overline{r}(p,\epsilon) &= \left\{ \begin{array}{ll}
\frac{1}{2m} &  p = 0\\
	\frac{p(1-p)}{4m(p - \overline{t}(p,\epsilon) )}  &  p \in (0, 1-1/m]\\
	 (1- 1/(6e) )(1-p) &  p \in (1-1/m, 1],
\end{array}  \right. \label{def:r} \\
\overline{\tau}(p,\epsilon) &= \left\{ \begin{array}{ll}
	\frac{11}{10} P_{X\sim \text{Binomial}(m,p + \overline{r}(p, \epsilon))} \left( X \leq m \overline{t}(p,\epsilon) \right)  &   p \in [0, 1-1/m]\\
	  \frac{1}{2}(1 - p^m) - \frac{3 \log(24/\alpha) }{n}  &  p \in (1-1/m, 1].
\end{array}  \right. \label{def:tau}
	\end{align}
\end{subequations}
We also define $\underline{t}(p,\epsilon)=1-\overline{t}(1-p,\epsilon)$, $\underline{r}(p,\epsilon)=\overline{r}(1-p,\epsilon)$ and $\underline{\tau}(p,\epsilon)=\overline{\tau}(1-p,\epsilon)$.

With these quantities, the computation of the endpoints of our constructed interval is explicitly given by Algorithm \ref{alg:CI-endpoints}.
\begin{algorithm}[h]
\DontPrintSemicolon
\SetKwInOut{Input}{Input}\SetKwInOut{Output}{Output}
\Input{$\{X_i\}_{i=1}^n$}
\Output{$\wh{p}_{\rm{left}}$, $\wh{p}_{\rm{right}}$} 
\nl Set $p \leftarrow 1$, $S_m\leftarrow\left\{0,1/m, 2/m,\cdots,1\right\}$ and
\begin{equation*} \label{def:epsilon-set}
    \mathcal{E} \leftarrow \left\{ \frac{ 2^k \log (24 / \alpha)}{n}: k = 0,1, \ldots, \left\lfloor \log_2 \left( \frac{n \epsilon_{\max}}{\log(24/\alpha)} \right) \right\rfloor \right\} \cup \{\epsilon_{\max}\},
\end{equation*}

\nl Set
\begin{eqnarray}
\label{eq:end-start-left} \wh{p}_{\rm left} &\leftarrow& \left[1-\left(\frac{2}{n}\sum_{i=1}^n\indi\{X_i\leq m-1\}+\frac{6\log(24/\alpha)}{n}\right)\wedge 1\right]^{1/m}\vee\left(1-\frac{1}{m}\right),\\
\label{eq:end-start-right} \wh{p}_{\rm right} &\leftarrow& \left(1-\left[1-\left(\frac{2}{n}\sum_{i=1}^n\indi\{X_i\geq 1\}+\frac{6\log(24/\alpha)}{n}\right)\wedge 1\right]^{1/m}\right)\wedge\frac{1}{m}.
\end{eqnarray}

\nl For each $j\in[m]$, set $p \leftarrow p-1/m$,\;

\qquad For each $\epsilon\in\mathcal{E}$,\;

\qquad\qquad For each $q\in [0,p+1/m] \cap (S_m\setminus \{1\})$, compute 
\begin{equation}
\phi_{q,\epsilon}^+=\indi\left\{\frac{1}{n}\sum_{i=1}^n\indi\{X_i\leq m \overline{t}(q,\epsilon)\}< \overline{\tau}(q,\epsilon)\right\}.\label{eq:phi+}
\end{equation}
\qquad If $\max_{\epsilon\in\mathcal{E}}\min_{q\in [0,p+1/m] \cap (S_m\setminus \{1\})}\phi_{q,\epsilon}^+=0$, set $\wh{p}_{\rm{left}} \leftarrow p$.

\nl For each $j\in[m]$, set $p \leftarrow p+1/m$,\;

\qquad For each $\epsilon\in\mathcal{E}$,\;

\qquad\qquad For each $q\in [p-1/m,1] \cap (S_m \setminus \{0\} )$, compute 
\begin{equation}
\phi_{q,\epsilon}^-=\indi\left\{\frac{1}{n}\sum_{i=1}^n\indi\{X_i\geq m \underline{t}(q,\epsilon)\}< \underline{\tau}(q,\epsilon)\right\}.\label{eq:phi-}
\end{equation}
\qquad If $\max_{\epsilon\in\mathcal{E}}\min_{q\in [p-1/m,1] \cap (S_m \setminus \{0\} )}\phi_{q,\epsilon}^-=0$, set $\wh{p}_{\rm{right}} \leftarrow p$.

\caption{Computing Endpoints of Robust CI}
\label{alg:CI-endpoints}
\end{algorithm}
The output of Algorithm \ref{alg:CI-endpoints} can be concisely written as 
\begin{eqnarray}
\label{eq:left}\wh{p}_{\rm{left}} &=& \inf\left\{p\in S_m\cup\left[1-\frac{1}{m},1\right]: \max_{\epsilon\in\mathcal{E}}\left(\phi_{p,\epsilon}^+\wedge\min_{q\in [0,p+1/m] \cap (S_m \setminus \{1\} ) }\phi_{q,\epsilon}^+\right)=0\right\},\\
\label{eq:right}\wh{p}_{\rm{right}} &=& \sup\left\{p\in S_m\cup\left[0,\frac{1}{m}\right]: \max_{\epsilon\in\mathcal{E}}\left(\phi_{p,\epsilon}^-\wedge\min_{q\in [p-1/m,1] \cap (S_m \setminus \{0\} )}\phi_{q,\epsilon}^-\right)=0\right\},
\end{eqnarray}
where $S_m$ and $\mathcal{E}$ are discretizations of $[0,1]$ and $[0,\epsilon_{\max}]$ given in Algorithm \ref{alg:CI-endpoints},
 and the binary variables $\phi_{q,\epsilon}^+$ and $\phi_{q,\epsilon}^-$ are given by (\ref{eq:phi+}) and (\ref{eq:phi-}). At its core, Algorithm \ref{alg:CI-endpoints} computes $[\wh{p}_{\rm{left}},\wh{p}_{\rm{right}}]$ by inverting hypothesis tests. In particular, the formula (\ref{eq:left}) involves two testing functions $\min_{q\in [0,p+1/m] \cap (S_m \setminus \{1\} ) }\phi_{q,\epsilon}^+$ and $\phi_{p,\epsilon}^+$.
Intuitively, $\min_{q\in [0,p+1/m] \cap (S_m \setminus \{1\} ) }\phi_{q,\epsilon}^+$ can be viewed as a discretization of $\min_{q \in [0,p]} \phi_{q,\epsilon}^+$ when $p \in [0, 1-1/m]$. When it equals zero, it favors the null $H_0:p$ against an alternative $H_1:p+r$ for some $r \geq 0$ (more details will be given in Section \ref{sec:test}). Specifically, when there exists some $q\leq p$ such that the number of observations that $X_i\leq m\overline{t}(q,\epsilon)$ exceeds a certain threshold, this is evidence that the parameter generating the data is no greater than this $p$. For a larger $p> 1-\frac{1}{m}$, the {threshold values of $\phi_{p,\epsilon}^+$ are changed to accommodate the right boundary scenario and the test} can be directly inverted into the interval endpoint (\ref{eq:end-start-left}). Note that (\ref{eq:end-start-left}) and (\ref{eq:end-start-right}) can be regarded as extensions of the endpoints of the Bernoulli interval (\ref{eq:sol-bern}) for general $m\geq 1$. In the end, Algorithm~\ref{alg:CI-endpoints} computes the left endpoint $\wh{p}_{\rm{left}}$ (resp. right endpoint $\wh{p}_{\rm{right}}$) as the smallest (resp. largest) value such that the corresponding test is not rejected when tested against a larger (resp. smaller) alternative robustly over all levels of $\epsilon\in\mathcal{E}$.

The idea of constructing a confidence interval by inverting a testing procedure is very classical \citep{wilson1927probable}. Even when there is no outlier, inverting tests has advantages over other methods for general exponential families \citep{brown2001interval,brown2002confidence,brown2003interval}. For the purpose of constructing robust confidence intervals, the connection to robust hypothesis testing was established by \cite{luo2024adaptive} for general location families, though their results cannot be directly applied to the binomial setting (\ref{eq:Binomial}). 
Its reason and a formal connection to robust hypothesis testing in the setting of (\ref{eq:Binomial}) will be given in Section \ref{sec:test}. The guarantee for the output of Algorithm \ref{alg:CI-endpoints} is given as follows.

\begin{Theorem}\label{thm:upper}
Suppose $ \frac{\log(2/\alpha)}{n} + \epsilon_{\max}$ is less than a sufficiently small constant. Then, the interval $\widehat{\CI}$ with endpoints $\wh{p}_{\rm{left}}$ and $\wh{p}_{\rm{right}}$ computed by Algorithm \ref{alg:CI-endpoints} satisfies
\begin{equation*}
		\begin{split}
			&\inf_{ \epsilon \in [0, \epsilon_{\max}], p, Q} P_{\epsilon, p, Q}\left( p \in\widehat{\CI} \right) \geq 1-\alpha,\\
			&\inf_{\epsilon \in [0, \epsilon_{\max}], p, Q } P_{\epsilon, p, Q}\left( |\widehat{\CI}| \leq C \ell(n,\epsilon,m,p) \right) \geq 1-\alpha,
		\end{split}
	\end{equation*}
where $C>0$ is some constant only depending on $\alpha$. The formula of $\ell(n,\epsilon,m,p)$ is given by (\ref{eq:main-rate-bmp}).
\end{Theorem}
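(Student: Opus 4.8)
The plan is to split the argument into two parts: coverage and length. For coverage, I would fix any $\epsilon \in [0,\epsilon_{\max}]$, $p \in [0,1]$ and contamination $Q$, and show that $P_{\epsilon,p,Q}(p \in \widehat{\CI}) \geq 1-\alpha$. By symmetry between $\wh{p}_{\rm left}$ and $\wh{p}_{\rm right}$ (via $p \leftrightarrow 1-p$), it suffices to bound $P_{\epsilon,p,Q}(\wh{p}_{\rm left} > p)$ by $\alpha/2$, and likewise for the right endpoint. The key observation is that, by the definition \eqref{eq:left}, the event $\{\wh{p}_{\rm left} > p\}$ forces that for \emph{every} grid point $p' \in S_m \cup [1-1/m,1]$ with $p' \le p$ (more precisely, for the relevant $p'$ near $p$), there is some $\epsilon' \in \mathcal{E}$ such that the inverted test does not favor the null at $p'$; I would pick $p'$ to be the largest grid point $\le p$ and argue that the relevant test statistic $\frac{1}{n}\sum_i \indi\{X_i \le m\overline{t}(q,\epsilon')\}$ concentrates. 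Here I would use the stochastic-dominance idea already exploited in the Bernoulli discussion of Section \ref{sec:bernoulli}: since an $\epsilon$-fraction of samples is arbitrary, the count of clean samples satisfying $X_i \le m\overline{t}(q,\epsilon)$ is stochastically at least $\text{Binomial}(n(1-\epsilon), P_{X\sim \text{Binomial}(m,p)}(X \le m\overline{t}(q,\epsilon)))$, so a Chernoff/Bernstein bound shows the empirical frequency exceeds the threshold $\overline{\tau}(q,\epsilon)$ with probability $\ge 1-\alpha/\,(\text{poly in grid size})$. A union bound over the $O(\log n)$ values in $\mathcal{E}$ (and, if needed, over the handful of relevant $q$) then yields the coverage bound after verifying that the choice $\overline{\tau}(q,\epsilon)$ — an $\frac{11}{10}$-inflation of the true tail probability at parameter $q + \overline{r}(q,\epsilon)$ — is comfortably below the tail probability at parameter $q \le p$, which follows from monotonicity of binomial tails in $p$ together with the definition of $\overline{r}$.

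For the length bound, I would again fix $\epsilon, p, Q$ and this time \emph{upper} bound $\wh{p}_{\rm left}$ and \emph{lower} bound $\wh{p}_{\rm right}$, so that $|\widehat{\CI}| = \wh{p}_{\rm right} - \wh{p}_{\rm left} \le C\,\ell(n,\epsilon,m,p)$ with probability $\ge 1-\alpha$. The strategy: show that with high probability, at the grid point $p^\star$ roughly equal to $p - c\,\ell$ (the appropriate multiple below $p$), the minimized test $\min_q \phi_{q,\epsilon^\star}^+$ actually \emph{does} equal $1$ for the relevant $\epsilon^\star \in \mathcal{E}$ closest to $\epsilon$ — hence the $\max_{\epsilon}$ in \eqref{eq:left} is $1$ at $p^\star$, so $\wh{p}_{\rm left} \le p^\star$. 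This requires showing that for this choice of $p^\star$ and all $q \le p^\star + 1/m$, the thresholded test rejects, i.e. the empirical frequency $\frac1n \sum_i \indi\{X_i \le m\overline{t}(q,\epsilon^\star)\}$ is \emph{below} $\overline{\tau}(q,\epsilon^\star)$. Since $\overline{t}(q,\epsilon^\star) < q$ by construction, the true tail probability $P_{X\sim \text{Binomial}(m,p)}(X \le m\overline{t}(q,\epsilon^\star))$ at the actual parameter $p \ge p^\star + c\,\ell$ is small (this is a binomial/Bernstein deviation, where the separation $p - \overline{t}(q,\epsilon^\star) \gtrsim \ell$ and the scaling of $\overline{t}$ with $\sqrt{p(1-p)\log(\cdot)/m}$ is designed precisely so that the deviation probability is $\le$ a constant fraction of $\overline{\tau}$); adding back the $\epsilon$-contamination shifts this by at most $\epsilon \le$ the slack. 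The discretization error of $S_m$ (step $1/m$) and of $\mathcal{E}$ (geometric with ratio $2$) must be checked to be absorbed into the $\frac1m$ and $\frac{1}{m}(\frac1n + \epsilon)$ terms of $\ell$ and into the choice of $\log(1/\epsilon^\star) \asymp \log(1/\epsilon)$ respectively. The boundary cases — $p$ within $1/m$ of $0$ or $1$, where $\overline{t},\overline{r},\overline{\tau}$ switch to their special-case definitions and the endpoints revert to the explicit Bernoulli-type formulas \eqref{eq:end-start-left}--\eqref{eq:end-start-right} — I would handle separately, reusing the one-sided stochastic-dominance bounds \eqref{eq:bern-1sd}--\eqref{eq:bern-othsd} generalized to $\indi\{X_i \le m-1\}$ and $\indi\{X_i \ge 1\}$, and verifying that these give length $\lesssim (1-p) + \frac{1}{m}(\frac1n + \epsilon)$, matching $\ell$ in that regime.

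I expect the main obstacle to be the length bound near the boundary of $[0,1]$ and the matching of the exact form of $\ell$ in \eqref{eq:main-rate-bmp} — specifically, verifying that the three-way minimum $\bigl(\sqrt{p(1-p)/m}(\tfrac1{\sqrt{\log n}} + \tfrac1{\sqrt{\log(1/\epsilon)}}) + \tfrac1m\bigr)\wedge p \wedge (1-p)$ is what the algorithm actually achieves. The delicate point is that $\overline{r}(q,\epsilon) = \frac{q(1-q)}{4m(q - \overline{t}(q,\epsilon))}$ and $\overline{t}$ itself contains a $\min$ between $\frac{p(1-p)}{2}$ and the $\sqrt{\cdot}$ term, so one must track which branch is active as a function of $(p,\epsilon,m)$ and confirm the resulting separation $\overline{r}(q,\epsilon)$ between null and alternative is $\asymp \ell$ on each branch; the branch where $p(1-p)/2$ dominates is exactly the regime $p \wedge (1-p) \lesssim \sqrt{p(1-p)/m}\,(\cdots)$, i.e. where the $p \wedge (1-p)$ clamp kicks in. A secondary obstacle is bookkeeping the probability budget: the union bounds over $S_m$ (size $m+1$), $\mathcal{E}$ (size $O(\log n)$), and $[m]$ iterations must all fit inside $\alpha$, which is why the thresholds carry $\log(24/\alpha)$ factors rather than $\log(2/\alpha)$; I would need to confirm the constants $24$, $\frac{11}{10}$, $\frac{1}{6e}$, etc., are chosen consistently so the accounting closes. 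Most of these are calculations whose structure is dictated by the design of \eqref{def:t-tau-r}, so I anticipate the proof is a (lengthy) verification rather than requiring a genuinely new idea beyond the monotonization mechanism already introduced.
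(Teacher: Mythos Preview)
Your high-level plan is right, but the length argument has its direction flipped: to bound $|\widehat{\CI}|$ you need $\wh{p}_{\rm left} \geq p - C\ell$ (a lower bound) and $\wh{p}_{\rm right} \leq p + C\ell$; showing that $\min_q \phi_{q,\epsilon^\star}^+ = 1$ at $p^\star \approx p - c\ell$ means $p^\star$ is \emph{excluded}, hence $\wh{p}_{\rm left} > p^\star$, not $\le$. The underlying strategy you describe is correct; only the stated conclusion is reversed.

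The more substantive gap is how you discharge ``for all $q \le p^\star$'' without paying a union over $S_m$. The paper uses two devices you did not isolate. First, a single DKW inequality controls $\sup_t |F_n(t) - F(t)|$ with margin $\sqrt{\log(24/\alpha)/(2n)}$, handling every threshold $m\overline{t}(q,\epsilon)$ at once; this is why the constant is $24$ and why your concern about the size-$(m+1)$ union never materializes. Second, on the DKW event the remaining task is the population comparison $P_p(X \le m\overline{t}(q,\epsilon)) \le P_{q+\overline{r}(q,\epsilon)}(X \le m\overline{t}(q,\epsilon))$ for every $q \le p^\star$; the paper proves (Lemma~\ref{lm:r-t-property}(ii)) that $q \mapsto q + \overline{r}(q,\epsilon)$ is strictly increasing on $[0,1]$, which via stochastic dominance of binomials collapses the ``for all $q$'' to the single check $r \ge \overline{r}(p^\star,\epsilon)$. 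This monotonicity is not obvious, since $\overline{r}$ itself is not monotone in $q$, and it is precisely the step that turns the ``lengthy verification'' you anticipate into a short argument. The paper also organizes the proof more modularly than your direct attack: it first proves coverage and length for the continuous $\widetilde{\CI}$ (Theorem~\ref{thm:bino-upper-no-dis}), then for $\widebar{\CI}$ by discretizing $\epsilon$, and finally transfers to $\widehat{\CI}$ via the sandwich $[\wh{p}_{\rm left},\wh{p}_{\rm right}] \subseteq [\widebar{p}_{\rm left} - \tfrac{1}{m}, \widebar{p}_{\rm right} + \tfrac{1}{m}]$ (Lemma~\ref{lm:connection-hatp-barp}), cleanly separating the $1/m$ discretization error from the testing analysis.
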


\subsection{Comparison with Estimation Error}\label{sec:est}
This section will compare confidence interval construction with estimation in the setting of (\ref{eq:Binomial}). Similar to the definition of $r_{\alpha}(\epsilon,p,\epsilon_{\max})$ and $r_{\alpha}(\epsilon,p,\epsilon)$, we first define the locally optimal estimation error. For any $\epsilon, p, q$, define
$$r_{\alpha}^{\rm est}(\epsilon, p,q)=\inf\left\{r\geq 0: \inf_{\wh{p}}\sup_{\theta\in\{p,q\},Q}P_{\epsilon,\theta,Q}\left(|\wh{p}-\theta|\geq r\right)\leq\alpha\right\}.$$
Then, the locally optimal estimation error at some given $\epsilon$ and $p$ is given by
$$r_{\alpha}^{\rm est}(\epsilon, p)=\sup_{q}r_{\alpha}^{\rm est}(\epsilon, p,q).$$
In words, $r_{\alpha}^{\rm est}(\epsilon, p)$ is the minimax estimation error at $p$ against its locally least-favorable alternative. A similar definition of local minimax risk was given by \cite{cai2015framework,duchi2016local}. The main difference in our definition is the uniformity over the contamination distribution $Q$, which corresponds to the lack of assumption on outliers. {Next, we provide the locally optimal estimation error in the setting of \eqref{eq:Binomial}.}

\begin{Theorem}\label{thm:est}
Suppose $\alpha < 1/3$. Then there exists some constant $c > 0$ only depending on $\alpha$ such that 
$$r_{\alpha}^{\rm est}(\epsilon, p)\geq c\left[\sqrt{\frac{p(1-p)}{m}}\left(\frac{1}{\sqrt{n}}+\epsilon\right)+\frac{1}{m} \left(\frac{1}{n} + \epsilon \right)\right].$$
 Moreover, for any $\alpha \in (0, 1)$, if $\frac{\log(2/\alpha)}{n} + \epsilon$ is less than a sufficiently small universal constant, there exists an adaptive estimator $\wh{p}$ that does not depend on $\epsilon$, such that
$$
\inf_{p,Q} P_{\epsilon, p, Q}\left(|\wh{p}-p|\leq C\left[\sqrt{\frac{p(1-p)}{m}}\left(\frac{1}{\sqrt{n}}+\epsilon\right)+\frac{1}{m}\left(\frac{1}{n} + \epsilon \right)\right]\right)\geq 1-\alpha,
$$
where $C > 0$ is some constant only depending on $\alpha$.
\end{Theorem}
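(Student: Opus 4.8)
The plan is to prove the two directions separately: the lower bound via a collection of two-point (Le Cam) comparisons, one for each term of the claimed rate, and the upper bound via an explicit estimator that never looks at $\epsilon$, obtained by running a CDF-matching estimator in the ``bulk'' and overriding it with an estimator based on the empirical frequency of the extreme values $0$ and $m$ near the boundary. The constructions for the lower bound are the same ones that underlie Proposition~\ref{prop:bench} (specialized to $\epsilon_{\max}=\epsilon$), so the bookkeeping can be shared, and the estimator produced for the upper bound is exactly the rate-optimal robust estimator $\wh p$ referenced there.

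\emph{Lower bound.} I would handle the four terms $\sqrt{p(1-p)/(mn)}$, $\frac1{mn}$, $\epsilon\sqrt{p(1-p)/m}$, $\frac{\epsilon}{m}$ one at a time and take the maximum. For the two ``clean'' terms, take $Q$ null and $q=p+\Delta$: since summing is a deterministic map, $D\big(\mathrm{Binomial}(m,p)^{\otimes n}\|\mathrm{Binomial}(m,q)^{\otimes n}\big)\le nm\,d(p\|q)$ with $d$ the Bernoulli KL, which is $O(1)$ when $\Delta\asymp\sqrt{p(1-p)/(mn)}$; taking instead $p=0$ (so $\mathrm{Binomial}(m,0)=\delta_0$) and $q=\Delta$ gives $D(\delta_0^{\otimes n}\|\mathrm{Binomial}(m,q)^{\otimes n})=-nm\log(1-\Delta)\lesssim nm\Delta$, which is $O(1)$ when $\Delta\asymp\frac1{mn}$. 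Le Cam's two-point lemma (using $\alpha<1/3$ to keep the relevant total variation below $1-2\alpha$) then forces estimation error $\gtrsim\Delta$ at one of the two points. For the two ``contamination'' terms I would instead use an exact matching: whenever $\TV(\mathrm{Binomial}(m,p),\mathrm{Binomial}(m,q))\le\epsilon/(1-\epsilon)$, one can choose contamination laws $Q_0,Q_1$ with $(1-\epsilon)\mathrm{Binomial}(m,p)+\epsilon Q_0=(1-\epsilon)\mathrm{Binomial}(m,q)+\epsilon Q_1$ as measures, so the data law is literally identical under $(p,Q_0)$ and $(q,Q_1)$ and no estimator can separate them; bounding $\TV\asymp\Delta\sqrt{m/(p(1-p))}$ in the bulk and $\asymp m\Delta$ for $p$ near $0$ and setting it to a constant multiple of $\epsilon$ yields $\Delta\asymp\epsilon\sqrt{p(1-p)/m}$ and $\Delta\asymp\epsilon/m$ respectively.

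\emph{Upper bound.} Split $[0,1]$ at scale $1/m$. In the bulk, let $\wh p=\argmin_{q\in[0,1]}\|F_{m,q}-F_n\|_\infty$, where $F_{m,q}$ is the $\mathrm{Binomial}(m,q)$ CDF. By the DKW inequality and $\|(1-\epsilon)F_{m,p}+\epsilon F_Q-F_{m,p}\|_\infty\le\epsilon$ one has $\|F_n-F_{m,p}\|_\infty\le\delta_0:=\epsilon+\sqrt{\log(2/\alpha)/(2n)}$ on an event of probability $\ge 1-\alpha$; note this bound scales with the \emph{true} $\epsilon$, which is the source of adaptivity. Then $\|F_{m,\wh p}-F_{m,p}\|_\infty\le 2\delta_0$, and the modulus of continuity of $q\mapsto F_{m,q}$, via the identity $\partial_q P(\mathrm{Binomial}(m,q)\le k)=-m\,P(\mathrm{Binomial}(m-1,q)=k)$ together with $\max_k P(\mathrm{Binomial}(m-1,q)=k)\asymp(\sqrt{mq(1-q)}\vee 1)^{-1}$, gives $\|F_{m,q}-F_{m,p}\|_\infty\gtrsim|q-p|\sqrt{m/(p(1-p))}$ for $|q-p|$ in the relevant range when $mp(1-p)$ is bounded below; inverting yields $|\wh p-p|\lesssim\delta_0\sqrt{p(1-p)/m}\lesssim\sqrt{p(1-p)/m}\,(\frac1{\sqrt n}+\epsilon)$ (constant depending on $\alpha$), with no $1/m$ rounding error because $F_{m,q}$ is smooth in $q$. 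Near the left boundary I would override with $\wh p=1-F_n(0)^{1/m}$ (recall $F_n(0)=\frac1n\sum_i\indi\{X_i=0\}$): since $\mathbb E F_n(0)\in[(1-\epsilon)(1-p)^m,(1-\epsilon)(1-p)^m+\epsilon]$ and a Bernstein bound gives $|F_n(0)-\mathbb E F_n(0)|\lesssim\sqrt{((1-p)^m(1-(1-p)^m)+\epsilon)/n}+1/n$, the inequality $(1-\epsilon)^{1/m}\ge 1-2\epsilon/m$ and a short case split according to whether $\epsilon\le 1/n$ give $|\wh p-p|\lesssim\frac1m(\frac1n+\epsilon)$; the right boundary is symmetric with $1-F_n(m-1)$ in place of $F_n(0)$. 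Finally I would use $F_n(0)$ and $1-F_n(m-1)$ to decide which of the three estimators to output, verify on the high-probability event that the choice is consistent with the true $p$, and check that on the overlap of regimes ($p\wedge(1-p)\asymp 1/m$) the bulk bound already implies the target. The resulting $\wh p$ depends only on the data and on $(\epsilon_{\max},\alpha)$, giving the claimed adaptivity.

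\emph{Main obstacle.} The difficulty is concentrated in the upper bound near and across the boundary. One must: (i) realize that plain DKW is too lossy at the extremes (it would yield $\frac1{m\sqrt n}$ rather than $\frac1{mn}$) and therefore use localized Bernstein- (or Poissonization-) type concentration for the atoms $\{X=0\}$ and $\{X=m\}$; (ii) confirm that this boundary override genuinely adapts to the unknown $\epsilon$, which is what the $(1-\epsilon)^{1/m}\ge 1-2\epsilon/m$ estimate delivers; and (iii) stitch the bulk and two boundary estimators into a single estimator carrying one clean error bound uniform over all $p\in[0,1]$ and all $Q$, which requires knowing that the data-driven regime selection errs only on a low-probability event. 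The two-point lower bounds are comparatively routine once the binomial KL and total-variation estimates are in hand.
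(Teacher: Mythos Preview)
Your proposal is correct and matches the paper's approach closely: the lower bound via two-point Le Cam comparisons (Pinsker/KL for the $1/\sqrt{n}$ and $1/n$ terms, exact Huber matching via $\TV\le\epsilon/(1-\epsilon)$ for the $\epsilon$ terms) and the upper bound via the CDF-matching estimator $\wh p_g=\argmin_q\|F_{m,q}-F_n\|_\infty$ in the bulk together with the atom-based estimator $\wh p_s=1-F_n(0)^{1/m}$ near the boundary, with a data-driven switch based on thresholding $F_n(0)$ and $1-F_n(m-1)$. One minor slip: your boundary bound should read $\sqrt{p/(mn)}+\tfrac1m(\tfrac1n+\epsilon)$ rather than just $\tfrac1m(\tfrac1n+\epsilon)$, since your own Bernstein variance term has $1-(1-p)^m\asymp mp$ when $p\lesssim 1/m$; this extra piece is already absorbed by the target rate $\sqrt{p(1-p)/m}\cdot n^{-1/2}$, so nothing breaks.
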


Theorem \ref{thm:est} shows that the lower bound of $r_{\alpha}^{\rm est}(\epsilon, p)$ can be achieved by an estimator $\wh{p}$ that does not use the knowledge of $\epsilon$. This means that, contrary to the adaptation cost in confidence interval construction, rate-optimal adaptive estimation can be achieved without any cost. An optimal $\wh{p}$ can be constructed using the framework of total variation learning \citep{gao2018robust}. Its details will be given in Appendix \ref{app:proof-est}.

Given a rate-optimal estimator $\wh{p}$ and its error characterization in Theorem \ref{thm:est}, one can immediately obtain a Wilson-type confidence interval by inverting the high-probability error bound.
That is, the set
\begin{equation}
\left\{p\in[0,1]: |\wh{p}-p|\leq C\left[\sqrt{\frac{p(1-p)}{m}}\left(\frac{1}{\sqrt{n}}+\epsilon\right)+\frac{1}{m}\left(\frac{1}{n} + \epsilon \right)\right] \right\}\label{eq:bin-k-e}
\end{equation}
is an interval satisfying the coverage property.
When $\epsilon$ is known, this construction can be used in Proposition \ref{prop:bench} to achieve the lower bound of $r_{\alpha}(\epsilon,p,\epsilon)$.

When $\epsilon$ is unknown, the comparison between Theorem \ref{thm:est} and Theorem \ref{thm:lower} reveals a drastic difference between $r_{\alpha}^{\rm est}(\epsilon, p)$ and $r_{\alpha}(\epsilon,p,\epsilon_{\max})$. It is thus no longer possible to use the length of any confidence interval to accurately reflect the statistical error of an optimal point estimator.

\section{Locally Optimal Robust Test} \label{sec:test}
For the Gaussian location model with Huber contamination, it was established by \cite{luo2024adaptive} that the construction of an adaptive robust confidence interval is equivalent to solving robust hypothesis testing, and a length-optimal interval can be constructed by inverting a family of rate-optimal testing procedures. Following their strategy, we consider similar robust testing problems for $P_{\epsilon,p,Q}=(1-\epsilon)\text{Binomial}(m,p)+\epsilon Q$ in this section. With i.i.d. observations $X_1,\cdots,X_n$ drawn from some distribution $P$, define the following two pairs of robust hypothesis testing:
\begin{equation}\label{eq:test-def}
	\begin{split} &\cH(p, p+ r, \epsilon): \begin{array}{l}
		H_{0}: P \in \left\{ P_{\epsilon_{\max}, p, Q}: Q \right\} \quad  \textnormal { v.s. } \quad 
		H_{1}: P \in \left\{ P_{\epsilon, p + r, Q}: Q \right\},
	\end{array}\\
	&\cH(p, p - r, \epsilon): \begin{array}{l}
		H_{0}: P \in \left\{ P_{\epsilon_{\max}, p, Q}: Q \right\}\quad  \textnormal { v.s. } \quad
		H_{1}: P \in \left\{ P_{\epsilon, p - r, Q}: Q\right\},
	\end{array}
	\end{split}
\end{equation}
where $r$ is nonnegative and $\epsilon\in[0,\epsilon_{\max}]$. Suppose $\phi_{p,\epsilon}^+$ and $\phi_{p,\epsilon}^-$ are optimal testing functions for $\cH(p, p+ r, \epsilon)$ and $\cH(p, p - r, \epsilon)$, respectively. The confidence set constructed by \cite{luo2024adaptive} is given by the formula
\begin{equation}
\left\{p\in[0,1]: \phi_{p,\epsilon}^+=\phi_{p,\epsilon}^-=0\text{ for all }\epsilon\in[0,\epsilon_{\max}]\right\}.\label{eq:LGset}
\end{equation} When the set \eqref{eq:LGset}
 is an interval, which is the case for the Gaussian location model and other location families, \cite{luo2024adaptive} showed the coverage and optimal length guarantee of the confidence interval in \eqref{eq:LGset}. Unfortunately, for general statistical models, including the binomial model, the set (\ref{eq:LGset}) may not be an interval and therefore the result of \cite{luo2024adaptive} does not apply. A more delicate inversion of $\{\phi_{p,\epsilon}^+\}$ and $\{\phi_{p,\epsilon}^-\}$ is needed.

This section will first systematically study the robust testing (\ref{eq:test-def}). Then, we will discuss how to modify (\ref{eq:LGset}) into the formulas (\ref{eq:left}) and (\ref{eq:right}) that are applicable for more general parametric families.

\subsection{Locally Optimal Separation Rate}

In this section, we provide the lower and the upper bounds for solving testing problems in \eqref{eq:test-def}. For the clarity of presentation and technical convenience, we will present the lower bound result for $\cH(p\pm r, p, \epsilon)$ and the upper bound result for $\cH(p, p\pm r, \epsilon)$. 

Since the parameter $p$ is always bounded between $0$ and $1$, there is additional subtlety in characterizing the local optimal separation of the testing problem. In particular, the local separation rates of $\cH(p- r, p, \epsilon)$ and $\cH(p+ r, p, \epsilon)$ can be different when $p$ is close to $0$ or $1$. Fortunately for our purpose, it is easy to get around this problem when our goal is to lower bound the locally optimal confidence interval length by the testing rate. This is because the proof of Theorem \ref{thm:lower} only requires the lower bound of one of the two testing problems in $\cH(p\pm r, p, \epsilon)$, and for each $p\in[0,1]$ we can always choose the harder one. 

\begin{Theorem}\label{thm:test-low}
For any $\alpha \in (0,1)$, $\epsilon_{\max} \in [0, 1/2]$, and $n \geq 3$ satisfying $\epsilon_{\max} \geq \frac{2\alpha}{n}$, there exists some constant $c > 0$ only depending on $\alpha$ and $\epsilon_{\max}$, such that for any $\epsilon \in [0, \epsilon_{\max}]$ and $p \in [0,1]$, as long as $r\leq c\ell(n,\epsilon,m,p)$, we have
\begin{equation}\label{eq:l-s-p}
\begin{split}
	&\textnormal{either }\inf_{Q_0,Q_1} \TV\left(P^{\otimes n}_{\epsilon_{\max},p - r,Q_0}, P^{\otimes n}_{\epsilon, p, Q_1}\right) \leq \alpha \quad \textnormal{or}\quad  \inf_{Q_0,Q_1} \TV\left(P^{\otimes n}_{\epsilon_{\max},p + r,Q_0}, P^{\otimes n}_{\epsilon, p, Q_1}\right) \leq \alpha.
\end{split}
\end{equation}
\end{Theorem}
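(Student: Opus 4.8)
The plan is to reduce the $n$-sample total-variation statement to a single-observation \emph{mixture-matching} estimate and then control that estimate through the binomial likelihood ratio. Write $b_{q}(k)=\binom{m}{k}q^{k}(1-q)^{m-k}$ for $q\in[0,1]$, and for an admissible sign $s\in\{+,-\}$ (i.e.\ $p+sr\in[0,1]$) set $\delta_{s}:=\big\|\big((1-\epsilon_{\max})b_{p+sr}-(1-\epsilon)b_{p}\big)_{+}\big\|_{1}$. Decomposing the signed measure $(1-\epsilon)b_{p}-(1-\epsilon_{\max})b_{p+sr}$ into positive and negative parts and completing $(1-\epsilon_{\max})b_{p+sr}$ and $(1-\epsilon)b_{p}$ to probability vectors by the free masses $\epsilon_{\max}Q_{0}$ and $\epsilon Q_{1}$, one checks that the minimal single-observation total variation between the two completed mixtures equals $(\delta_{s}-\epsilon)_{+}$; using the same $Q_{0},Q_{1}$ in every coordinate and $\TV(\mu^{\otimes n},\nu^{\otimes n})\le n\,\TV(\mu,\nu)$ then gives
\begin{equation*}
\inf_{Q_{0},Q_{1}}\TV\!\left(P^{\otimes n}_{\epsilon_{\max},\,p+sr,\,Q_{0}},\;P^{\otimes n}_{\epsilon,\,p,\,Q_{1}}\right)\;\le\; n\,(\delta_{s}-\epsilon)_{+}.
\end{equation*}
Hence it suffices to show that, whenever $r\le c\,\ell(n,\epsilon,m,p)$, there is an admissible sign with $\delta_{s}\le\epsilon+\alpha/n$, which is exactly one of the two alternatives in \eqref{eq:l-s-p}.

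\textbf{The likelihood-ratio estimate.} The ratio $k\mapsto b_{p+sr}(k)/b_{p}(k)=(\tfrac{p+sr}{p})^{k}(\tfrac{1-p-sr}{1-p})^{m-k}$ is monotone in $k$ (increasing if $s=+$, decreasing if $s=-$), so the set $\{k:(1-\epsilon_{\max})b_{p+sr}(k)>(1-\epsilon)b_{p}(k)\}$ is a one-sided tail and, dropping the subtracted term, $\delta_{s}\le(1-\epsilon_{\max})\,\mathbb{P}\big(\text{Binomial}(m,p+sr)\in\text{that tail}\big)$. If the maximal likelihood ratio $\max_{k}b_{p+sr}(k)/b_{p}(k)$ does not exceed $\tfrac{1-\epsilon}{1-\epsilon_{\max}}$, the tail is empty and $\delta_{s}=0$ (exact matching). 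Otherwise, writing $L:=\log\tfrac{1-\epsilon}{1-\epsilon_{\max}}\ge 0$ and using $\log(1+x)\le x$, the tail endpoint $k^{\ast}$ obeys $|k^{\ast}-mp|\ge Lp(1-p)/r$, so the tail event is a deviation of $\text{Binomial}(m,p+sr)$ from its mean $m(p+sr)$ of magnitude at least $Lp(1-p)/r-mr$; a Chernoff bound for the binomial tail yields, in the range where this magnitude is positive,
\begin{equation*}
\delta_{s}\;\le\;(1-\epsilon_{\max})\exp\!\Big(-c'\,\frac{L^{2}\,p(1-p)}{m\,r^{2}}\Big)
\end{equation*}
for a numerical constant $c'$. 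When $\epsilon$ is too close to $\epsilon_{\max}$ for $L$ to be usefully bounded below, we instead use the crude bound $\delta_{s}\le(1-\epsilon_{\max})\,\TV(\text{Binomial}(m,p+sr),\text{Binomial}(m,p))$ together with the standard estimate $\TV(\text{Binomial}(m,q'),\text{Binomial}(m,q))\lesssim \min\{1,\,m|q'-q|/\sqrt{1\vee mq(1-q)}\}$.

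\textbf{Choice of sign and the regimes.} The pair of statements in \eqref{eq:l-s-p}, as well as $\ell$, is invariant under $p\leftrightarrow 1-p$ with $X\leftrightarrow m-X$, so assume $p\le 1/2$; take $s=-$ when $r\le p$ and $s=+$ when $r>p$ (the latter forcing, via $r\le c\ell\le c(p+\tfrac1m(\tfrac1n+\epsilon))$, that $m(p+r)\le 2c(\tfrac1n+\epsilon)$). If $\epsilon>\epsilon_{\max}/2$ then $\log(1/\epsilon)\asymp 1$, so $r\le c\,\ell\lesssim c(\sqrt{p(1-p)/m}+1/m)$ and the crude bound gives $\delta_{s}\lesssim c\le\epsilon\le\epsilon+\alpha/n$ for $c$ small relative to $\epsilon_{\max}$; this handles the case of small $L$. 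Otherwise $\epsilon\le\epsilon_{\max}/2$ and $L\ge\tfrac12\log\tfrac{1-\epsilon_{\max}/2}{1-\epsilon_{\max}}\ge\epsilon_{\max}/4$ is bounded below, and $r\le c\,\ell(n,\epsilon,m,p)$ lies in one of three regimes: (a) $r>p$, where $s=+$ gives $\delta_{+}\le(1-\epsilon_{\max})(1-(1-p-r)^{m})\le m(p+r)\le 2c(\tfrac1n+\epsilon)\le\epsilon+\alpha/n$; (b) $r\le p$ with $mp(1-p)\lesssim 1$ (so $p$ is within a constant multiple of $1/m$ of $0$, or the Gaussian part of $\ell$ is $\lesssim 1/m$), where $r\le c\,\ell\lesssim c/m$ forces the maximal likelihood ratio below $\tfrac{1-\epsilon}{1-\epsilon_{\max}}$ and hence $\delta_{-}=0$; and (c) the Gaussian regime $r\le p$ with $mp(1-p)\gtrsim\min\{\log n,\log(1/\epsilon)\}$, where $r\le c\,\ell\lesssim c\sqrt{p(1-p)/(m\min\{\log n,\log(1/\epsilon)\})}$, so the exponent in the displayed Chernoff bound is $\gtrsim\epsilon_{\max}^{2}\log n/c^{2}$ when $\epsilon\le 1/n$ (giving $\delta_{-}\le n^{-2}\le\alpha/n$) and $\gtrsim\epsilon_{\max}^{2}\log(1/\epsilon)/c^{2}$ when $\epsilon>1/n$ (giving $\delta_{-}\le\epsilon$). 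Taking $c=c(\alpha,\epsilon_{\max})$ small enough throughout and restoring $p>1/2$ by symmetry proves \eqref{eq:l-s-p}.

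\textbf{Main obstacle.} The substance is the bookkeeping of the last step: $\ell$ superposes four mechanisms — $\sqrt{p(1-p)/m}/\sqrt{\log n}$, $\sqrt{p(1-p)/m}/\sqrt{\log(1/\epsilon)}$, $1/m$, and $\tfrac1m(\tfrac1n+\epsilon)$ — and one must verify that for \emph{every} triple $(p,\epsilon,r)$ with $r\le c\ell$ the sign forced by the constraint $p+sr\in[0,1]$ falls into a regime where the likelihood-ratio estimate closes, all with a single constant $c=c(\alpha,\epsilon_{\max})$. The two genuinely delicate points are: (i) making the Chernoff bound uniform over the whole range $mp(1-p)\gtrsim 1$, where the binomial tail crosses over from sub-Gaussian to Poissonian behaviour; and (ii) the boundary case $\epsilon\uparrow\epsilon_{\max}$, where $L\downarrow 0$, both the exact- and approximate-matching arguments degrade, and one must retreat to the crude total-variation bound and absorb the resulting $O(c)$ error into the $\epsilon$ on the right-hand side of \eqref{eq:l-s-p}.
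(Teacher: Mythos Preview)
Your reduction to the single-sample quantity $\delta_{s}=\big\|\big((1-\epsilon_{\max})b_{p+sr}-(1-\epsilon)b_{p}\big)_{+}\big\|_{1}$ together with the identity $\inf_{Q_{0},Q_{1}}\TV=(\delta_{s}-\epsilon)_{+}$ is correct and is a clean way to organise what the paper does by explicit construction: where the paper builds $Q_{0},Q_{1}$ by hand in each case (pure matching in Part~I Case~2 and Part~II Case~1, a truncated-binomial $Q_{0}$ in Part~II Case~2), you are simply computing or bounding the optimum directly. The regime split and the Chernoff argument in your regime~(c) are the same as the paper's two Scenarios in Part~II Case~2, and your regime~(a) is the paper's Part~I.

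There is, however, a genuine gap in how you partition the remaining range. Regime~(b) is stated as ``$mp(1-p)\lesssim 1$'' while regime~(c) is ``$mp(1-p)\gtrsim\min\{\log n,\log(1/\epsilon)\}$''; these are \emph{not} complementary, and the window $1\lesssim mp(1-p)\lesssim\min\{\log n,\log(1/\epsilon)\}$ is left uncovered. The fix is that the correct dividing line is whether the $1/m$ term dominates $\ell$, i.e.\ whether $mp(1-p)\lesssim\min\{\log n,\log(1/\epsilon)\}$. Under that (weaker) hypothesis one still has $r\le c\ell\lesssim c/m$, and your exact-matching argument in~(b) --- $\max_{k}b_{p-r}(k)/b_{p}(k)\le e^{2mr}\le e^{L}$ once $c$ is small relative to $\epsilon_{\max}$ --- goes through unchanged. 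With this relabeling, (a)--(c) are exhaustive and the proof closes. Your self-identified ``delicate point~(i)'' about sub-Gaussian/Poissonian crossover is not actually an obstacle: the KL-form Chernoff bound you invoke is already uniform over the whole range.
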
 {For any confidence interval $\widehat{\CI}\in\mathcal{I}_{\alpha}(\epsilon_{\max})$, it can be shown by that the test $\indi\{p-r\notin \widehat{\CI}\}$ (resp. $\indi\{p+r\notin \widehat{\CI}\}$) achieves small testing errors for $\cH(p-r, p, \epsilon)$ (resp. $\cH(p+r, p, \epsilon)$) as long as $r$ is greater than a high probability length bound of $\widehat{\CI}$.} Therefore, Theorem \ref{thm:lower} is immediately implied by Theorem \ref{thm:test-low} (see the proof of Theorem \ref{thm:lower} for details).

Next, we will characterize the local separation rates of the testing functions used in Algorithm~\ref{alg:CI-endpoints}. Recall the definitions of $\overline{r}(p, \epsilon)$ and $\underline{r}(p, \epsilon)$ in \eqref{def:t-tau-r}. The orders of the two quantities and their relations to $\ell(n,\epsilon,m,p)$ in (\ref{eq:main-rate-bmp}) are given by the following lemma.
\begin{Lemma} \label{lm:test-rate-CI-length-connection}
	    Suppose $\epsilon \in [0, 1/2]$ and $n \geq 2$. For $\overline{r}(p,\epsilon)$, $\underline{r}(p, \epsilon)$ and $\ell(n,\epsilon,m,p)$, we have
\begin{eqnarray*}
\overline{r}(p, \epsilon) &\asymp& \left(\sqrt{\frac{p(1-p)}{m}}\left(\frac{1}{\sqrt{\log n}}+\frac{1}{\sqrt{\log(1/\epsilon)}}\right) + \frac{1}{m} \right) \wedge (1-p), \\
\underline{r}(p, \epsilon) &\asymp& \left(\sqrt{\frac{p(1-p)}{m}}\left(\frac{1}{\sqrt{\log n}}+\frac{1}{\sqrt{\log(1/\epsilon)}}\right) + \frac{1}{m} \right) \wedge p, \\
\ell(n,\epsilon,m,p) &\asymp&  \overline{r}(p, \epsilon)\wedge \underline{r}(p, \epsilon) + \frac{1}{m}\left(\frac{1}{n}+\epsilon\right),\end{eqnarray*}
     where $\asymp$ suppresses dependence on $\alpha$. 
\end{Lemma}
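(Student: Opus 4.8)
The plan is to establish the three asymptotic equivalences in turn, with the first being the main work and the other two following by symmetry and bookkeeping. The starting point is the explicit formula \eqref{def:t} for $\overline{t}(p,\epsilon)$, from which $p - \overline{t}(p,\epsilon) = \min\{p(1-p)/2,\ \tfrac18\sqrt{p(1-p)L/m}\}$ on $p\in[0,1-1/m]$, where I abbreviate $L := \log\bigl((\epsilon + (\log(24/\alpha)/(2n))^{1/2})^{-1}\bigr)$. The first step is to show $L \asymp \log n \wedge \log(1/\epsilon)$ up to $\alpha$-dependent constants: indeed $\epsilon + (\log(24/\alpha)/(2n))^{1/2} \asymp \epsilon \vee n^{-1/2}$, and taking logarithms turns the max inside into a min outside, so $L \asymp \log(1/\epsilon) \wedge \tfrac12\log(n/\log(24/\alpha)) \asymp \log(1/\epsilon)\wedge \log n$; here I use $\epsilon_{\max}\le 1/2$ and $n\ge 2$ to keep $L$ bounded below by a positive constant. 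Consequently $\tfrac{1}{\sqrt{L}} \asymp \tfrac{1}{\sqrt{\log n}} + \tfrac{1}{\sqrt{\log(1/\epsilon)}}$.

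Next I plug this into the definition \eqref{def:r} of $\overline{r}(p,\epsilon)$. For $p\in(0,1-1/m]$ we have $\overline{r}(p,\epsilon) = \frac{p(1-p)}{4m(p-\overline{t}(p,\epsilon))} = \frac{p(1-p)}{4m}\cdot\max\bigl\{\tfrac{2}{p(1-p)},\ \tfrac{8}{\sqrt{p(1-p)L/m}}\bigr\}$, which simplifies to $\max\bigl\{\tfrac{1}{2m},\ \tfrac{2}{m}\sqrt{\tfrac{p(1-p)m}{L}}\bigr\} \asymp \tfrac{1}{m} + \sqrt{\tfrac{p(1-p)}{mL}} \asymp \tfrac1m + \sqrt{\tfrac{p(1-p)}{m}}\bigl(\tfrac{1}{\sqrt{\log n}}+\tfrac{1}{\sqrt{\log(1/\epsilon)}}\bigr)$. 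I then need to check this against the claimed $\wedge(1-p)$: since $p\le 1-1/m$ forces $1-p\ge 1/m$, the term $1/m$ in the bound is already $\le (1-p)$, so on this range $\overline{r}(p,\epsilon)$ is of the stated order if and only if that order is $\lesssim 1-p$ — but in fact the stated RHS $\bigl(\sqrt{p(1-p)/m}(\cdots)+1/m\bigr)\wedge(1-p)$ is exactly what one gets, because when $\sqrt{p(1-p)/m}(\cdots) \gtrsim 1-p$ both sides are $\asymp 1-p$ (up to absorbing the universal $\log$ factors, which are bounded), and when it is $\lesssim 1-p$ the minimum is inactive. For the boundary piece $p\in(1-1/m,1]$, \eqref{def:r} gives $\overline{r}(p,\epsilon) = (1-\tfrac{1}{6e})(1-p) \asymp 1-p$, and one checks the claimed RHS also reduces to $\asymp 1-p$ there since $1-p < 1/m$ makes $1/m$ dominate inside the parenthesis while the outer $\wedge(1-p)$ clamps it down. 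Finally $p=0$ gives $\overline{r} = 1/(2m)$, matching the formula at $p=0$. Assembling the pieces over all three ranges yields the first equivalence. The second equivalence for $\underline{r}(p,\epsilon)$ is immediate from $\underline{r}(p,\epsilon) = \overline{r}(1-p,\epsilon)$ and the substitution $p\leftrightarrow 1-p$.

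For the third equivalence, I combine the first two: $\overline{r}(p,\epsilon)\wedge\underline{r}(p,\epsilon) \asymp \bigl(\sqrt{p(1-p)/m}(\tfrac{1}{\sqrt{\log n}}+\tfrac{1}{\sqrt{\log(1/\epsilon)}}) + \tfrac1m\bigr)\wedge(1-p)\wedge p$, which is precisely the first bracketed-and-clamped term of $\ell(n,\epsilon,m,p)$ in \eqref{eq:main-rate-bmp}; adding back the residual $\tfrac1m(\tfrac1n+\epsilon)$ term, which appears verbatim in \eqref{eq:main-rate-bmp} and is untouched by the minima, gives $\ell(n,\epsilon,m,p) \asymp \overline{r}(p,\epsilon)\wedge\underline{r}(p,\epsilon) + \tfrac1m(\tfrac1n+\epsilon)$ as claimed. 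The main obstacle I anticipate is not any single estimate but the careful case analysis near the two boundary regimes $p\in(1-1/m,1]$ and $p\in[0,1/m)$: there the piecewise definitions of $\overline{t},\overline{r}$ switch formulas, the $\min$ inside $p-\overline{t}$ may be attained by either branch, and one must verify that the outer $\wedge p$ and $\wedge(1-p)$ in the target expression correctly reproduce the clamped behavior — in particular that the constant $(1-\tfrac{1}{6e})$ and the factor $1/(2m)$ at $p=0$ are consistent with the general-$p$ formula up to absolute constants. Keeping track of which terms are active in each regime, and confirming that all suppressed constants depend only on $\alpha$ (not on $m,n,p,\epsilon$), is the bookkeeping-heavy heart of the argument.
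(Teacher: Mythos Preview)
Your proposal is correct and follows essentially the same route as the paper: reduce $\overline{r}$ on $[0,1-1/m]$ to $\tfrac{1}{2m}\vee 2\sqrt{p(1-p)/(m\log(1/A))}$ (the paper cites this as Lemma~\ref{lm:additional-r-property}(i), you derive it directly), identify $1/\sqrt{\log(1/A)}\asymp 1/\sqrt{\log n}+1/\sqrt{\log(1/\epsilon)}$, handle the boundary piece $(1-1/m,1]$ via the explicit $(1-\tfrac{1}{6e})(1-p)$, and then use the symmetry $\underline{r}(p,\epsilon)=\overline{r}(1-p,\epsilon)$ and direct combination for $\ell$. The one place the paper is slightly crisper is on $[0,1-1/m]$: rather than splitting into whether $\sqrt{p(1-p)/m}(\cdots)\gtrsim 1-p$, it simply observes that $1-p\ge 1/m$ forces $\sqrt{p(1-p)/(mL)}\le\sqrt{(1-p)/(mL)}\lesssim 1-p$, so the outer $\wedge(1-p)$ is \emph{always} inactive there---this sidesteps your case analysis entirely.
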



The following result shows that $\overline{r}(p, \epsilon)$ and $\underline{r}(p, \epsilon)$ upper bound the locally optimal separation rates of $\cH(p, p+ r, \epsilon)$ and $\cH(p, p- r, \epsilon)$ respectively when $p$ is away from the boundary of $[0,1]$.

\begin{Theorem}\label{thm:test-up}
Suppose $\frac{\log(2/\alpha)}{n} + \epsilon_{\max}$ is less than a sufficiently small constant. The testing functions $\phi_{p,\epsilon}^+$ and $\phi_{p,\epsilon}^-$ defined by (\ref{eq:phi+}) and (\ref{eq:phi-}) satisfy the following simultaneous Type-1 error bounds,
\begin{eqnarray}
\label{ineq:type-1-phi+} \underset{Q}{\sup}P_{\epsilon_{\max},p,Q}\left(\underset{\epsilon\in[0,\epsilon_{\max}]}{\sup}\phi_{p,\epsilon}^+ = 1 \right) &\leq& \alpha/12, \\
\label{ineq:type-1-phi-} \underset{Q}{\sup}P_{\epsilon_{\max},p,Q}\left(\underset{\epsilon\in[0,\epsilon_{\max}]}{\sup}\phi_{p,\epsilon}^- = 1 \right) &\leq& \alpha/12,
\end{eqnarray}
for all $p\in \left[0,1\right]$. In addition, the testing function $\phi_{p,\epsilon}^+$ satisfies the following Type-2 error bound,
\begin{equation*}
    \underset{Q}{\sup}P_{\epsilon,p+r,Q}(\phi_{p,\epsilon}^+ = 0)\leq \alpha/12,   
\end{equation*}
for all $\epsilon \in [0, \epsilon_{\max}]$, all $p \in \left[0, 1 - \frac{4}{m}\left( \frac{10 \log(24/\alpha)}{n} + 3 \epsilon \right) \right ]$, and all
$r\in[\overline{r}(p, \epsilon),1-p]$.
Similarly, the testing function $\phi_{p,\epsilon}^-$ satisfies the following Type-2 error bound,
\begin{equation*}
    \underset{Q}{\sup}P_{\epsilon,p-r,Q}(\phi_{p,\epsilon}^- = 0)\leq \alpha/12,
\end{equation*}
for all $\epsilon \in [0, \epsilon_{\max}]$, all $p \in \left[ \frac{4}{m}\left( \frac{10 \log(24/\alpha)}{n} + 3 \epsilon \right),1 \right ]$, and all
$r\in[\underline{r}(p, \epsilon),p]$.
\end{Theorem}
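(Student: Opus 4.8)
The plan is to analyze the two testing functions $\phi_{p,\epsilon}^+$ and $\phi_{p,\epsilon}^-$ separately, each time reducing the event $\{\phi=1\}$ or $\{\phi=0\}$ to a statement about the empirical CDF $F_n(t)$ evaluated at the data-dependent threshold $t=\overline{t}(p,\epsilon)$ or $t=\underline{t}(p,\epsilon)$, and then invoking a concentration inequality for $F_n$ uniformly over $\epsilon$. By the symmetry built into the definitions ($\underline{t}(p,\epsilon)=1-\overline{t}(1-p,\epsilon)$, $\underline{r}(p,\epsilon)=\overline{r}(1-p,\epsilon)$, $\underline{\tau}(p,\epsilon)=\overline{\tau}(1-p,\epsilon)$, and the fact that replacing each $X_i$ by $m-X_i$ swaps $\text{Binomial}(m,p)$ with $\text{Binomial}(m,1-p)$), it suffices to prove the Type-1 bound \eqref{ineq:type-1-phi+} and the Type-2 bound for $\phi_{p,\epsilon}^+$; the statements for $\phi_{p,\epsilon}^-$ follow by applying these to the reflected data. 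So I will focus on $\phi_{p,\epsilon}^+$.

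\textbf{Type-1 error.} Fix $p\in[0,1]$ and any contamination $Q$; the data are drawn from $P_{\epsilon_{\max},p,Q}$. First dispense with the boundary case $p\in(1-1/m,1]$: there $\overline{t}(p,\epsilon)=1-1/m$ and $\overline{\tau}(p,\epsilon)=\tfrac12(1-p^m)-\tfrac{3\log(24/\alpha)}{n}$ do not depend on $\epsilon$, so the supremum over $\epsilon$ is a single test, and one only needs a lower-tail bound on $\frac1n\sum_i\indi\{X_i\le m-1\}$; its mean under $P_{\epsilon_{\max},p,Q}$ is at least $(1-\epsilon_{\max})(1-p^m)$, which exceeds $\overline{\tau}(p,\epsilon)$ by a margin controlled by $\epsilon_{\max}$ and $\log(24/\alpha)/n$ being small, and Bernstein (or Chernoff) finishes it. For $p\in[0,1-1/m]$, the key structural observation is that $\overline{t}(q,\epsilon)$ is monotone in a way that lets us reduce the supremum over the continuum $\epsilon\in[0,\epsilon_{\max}]$ to the single worst case: since $\overline{t}(p,\epsilon)$ is nonincreasing in $\epsilon$ (larger $\epsilon$ shrinks the log factor, hence the subtracted term), the event $\sup_\epsilon\phi_{p,\epsilon}^+=1$ is $\{F_n(m\,\overline{t}(p,\epsilon_0))<\sup_\epsilon\overline{\tau}(p,\epsilon)\}$ for an appropriate extreme $\epsilon_0$, modulo checking that the threshold $\overline{\tau}$ moves in the compatible direction. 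Then I bound $\mathbb{E}_{P_{\epsilon_{\max},p,Q}}F_n(m\overline{t}(p,\epsilon))$ from below: the clean part contributes $(1-\epsilon_{\max})\,\mathbb{P}_{X\sim\text{Binomial}(m,p)}(X\le m\overline{t}(p,\epsilon))$, and the contamination can only subtract $\epsilon_{\max}$. The definition of $\overline{\tau}$ as $\tfrac{11}{10}$ times the Binomial$(m,p+\overline{r})$ lower-tail probability at the same threshold is engineered precisely so that a Binomial tail-comparison lemma (shifting the success probability from $p$ down to $p+\overline r$ moves the tail, but the $\tfrac{11}{10}$ and the choice of $\overline r$ give slack) plus the small-$\epsilon_{\max}$ assumption make the mean exceed $\overline{\tau}$ by a constant factor; a one-sided Bernstein bound on $F_n$ then gives probability $\le\alpha/12$, with the $\log$ in the exponent absorbing the (logarithmically many) discretization points — though here, since we've reduced to a single $\epsilon$, even a plain Chernoff bound suffices and the uniformity is free.

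\textbf{Type-2 error.} Now fix $\epsilon\in[0,\epsilon_{\max}]$, fix $p$ in the stated range, and $r\in[\overline{r}(p,\epsilon),1-p]$; the data come from $P_{\epsilon,p+r,Q}$. We must show $F_n(m\overline{t}(p,\epsilon))\ge\overline{\tau}(p,\epsilon)$ with probability $\ge1-\alpha/12$. The mean of $F_n(m\overline t(p,\epsilon))$ is at least $(1-\epsilon)\,\mathbb{P}_{X\sim\text{Binomial}(m,p+r)}(X\le m\overline t(p,\epsilon))$. Since $r\ge\overline r(p,\epsilon)$, the alternative mean $m(p+r)$ is at least $m(p+\overline r(p,\epsilon))$, so this tail probability is at least $\mathbb{P}_{X\sim\text{Binomial}(m,p+\overline r(p,\epsilon))}(X\le m\overline t(p,\epsilon))=\tfrac{10}{11}\overline\tau(p,\epsilon)$ (by monotonicity of the lower tail in the success probability, one needs $r\le 1-p$ and $\overline t<p+\overline r$, which holds since $\overline t<p$). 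Hence the mean is at least $(1-\epsilon)\tfrac{10}{11}\overline\tau(p,\epsilon)$, and the restriction $p\le 1-\tfrac4m(\tfrac{10\log(24/\alpha)}{n}+3\epsilon)$ is exactly what guarantees $\overline\tau(p,\epsilon)$ is bounded below by a quantity of order $\tfrac{\log(24/\alpha)}{n}+\epsilon$ — large enough that the gap between the mean and the threshold $\overline\tau$ dominates the fluctuation of $F_n$. A final Bernstein/Chernoff lower-tail bound on $F_n$ at scale $\overline\tau(p,\epsilon)$, using that $n\overline\tau(p,\epsilon)\gtrsim\log(24/\alpha)$, yields deviation probability $\le\alpha/12$.

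\textbf{Main obstacle.} The delicate part is the Type-1 analysis: one must verify that $\overline t(p,\epsilon)$ and $\overline\tau(p,\epsilon)$ vary with $\epsilon$ in mutually compatible monotone directions so that the supremum over $\epsilon\in[0,\epsilon_{\max}]$ genuinely reduces to one extreme value (otherwise one pays a union bound over $\mathcal{E}$, which is harmless but needs the $\log$ slack to be tracked), and — more substantively — that the Binomial lower-tail comparison between success parameters $p$ and $p+\overline r(p,\epsilon)$ at threshold $m\overline t(p,\epsilon)$ loses at most the factor $\tfrac{11}{10}$. This is where the precise algebraic form of $\overline t$ (with its $\tfrac18\sqrt{\cdot}$ and the $\min$ with $p(1-p)/2$) and of $\overline r=\tfrac{p(1-p)}{4m(p-\overline t)}$ matters: I expect to need a quantitative Binomial tail estimate (e.g. a Bernstein-type two-sided bound, or a local CLT / Tusnády-type coupling when $m$ is large, together with a direct bound when $m$ is small) showing that moving the mean from $mp$ to $m(p+\overline r)$ changes the lower tail at $m\overline t$ by a bounded multiplicative factor precisely because $\overline r\cdot m \asymp \tfrac{mp(1-p)}{m(p-\overline t)}$ is on the same scale as the standard deviation $\sqrt{mp(1-p)}$ divided by the normalized deviation $(p-\overline t)/\sqrt{p(1-p)/m}$, which by the choice of $\overline t$ is $\Theta(\sqrt{\log(1/(\epsilon+\sqrt{\log(24/\alpha)/(2n)}))})$ — i.e., the standard "separation $\asymp$ (std)$\times\sqrt{\log}$" calibration that makes a constant-factor tail ratio possible. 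Handling the two regimes $p$ near the boundary versus $p$ well inside, and small $m$ versus large $m$, uniformly, is the bookkeeping-heavy core of the argument.
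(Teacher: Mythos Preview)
Your Type-2 analysis has the direction of the event flipped throughout. You write ``We must show $F_n(m\overline{t}(p,\epsilon))\ge\overline{\tau}(p,\epsilon)$ with probability $\ge1-\alpha/12$,'' but $\phi_{p,\epsilon}^+=0$ is exactly the event $\{F_n(m\overline{t})\ge\overline\tau\}$, and under the alternative $p+r$ you want to \emph{reject}, i.e.\ show $P_{\epsilon,p+r,Q}\bigl(F_n(m\overline t)\ge\overline\tau\bigr)\le\alpha/12$. Correspondingly, the monotonicity step is reversed: for $r\ge\overline r(p,\epsilon)$ one has $P_{X\sim\text{Bin}(m,p+r)}(X\le m\overline t)\le P_{X\sim\text{Bin}(m,p+\overline r)}(X\le m\overline t)=\tfrac{10}{11}\overline\tau$, giving an \emph{upper} bound on the mean of $F_n(m\overline t)$, namely $\le\epsilon+\tfrac{10}{11}\overline\tau$. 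The gap $\overline\tau-\text{mean}\ge\tfrac{1}{11}\overline\tau-\epsilon$ is then what must dominate the Hoeffding fluctuation $\sqrt{\log(24/\alpha)/(2n)}$, and this is precisely where the paper invokes Lemma~\ref{lm:r-t-property}(iii), which says $P_{p+\overline r}(X\le m\overline t)>10\bigl(\epsilon+\sqrt{\log(24/\alpha)/(2n)}\bigr)$. Your final sentence (Bernstein at scale $\overline\tau$, using $n\overline\tau\gtrsim\log(24/\alpha)$) is in the right spirit, so this may be a write-up error rather than a conceptual one, but as stated the paragraph proves the wrong inequality.

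For Type-1, your monotonicity claim is wrong (larger $\epsilon$ makes $\log(1/A)$ smaller, hence $\overline t(p,\epsilon)$ is non\emph{decreasing} in $\epsilon$), and more importantly the paper does not reduce to a single $\epsilon$ at all. Instead it applies the DKW inequality once to get $\sup_x|F_n(x)-P_{\epsilon_{\max},p,Q}(X\le x)|\le\sqrt{\log(24/\alpha)/(2n)}$ with probability $\ge1-\alpha/12$, and then on this event proves the \emph{deterministic} inequality $(1-\epsilon_{\max})P_p(X\le m\overline t(p,\epsilon))>\tfrac{12}{10}P_{p+\overline r(p,\epsilon)}(X\le m\overline t(p,\epsilon))$ for every $\epsilon\in[0,\epsilon_{\max}]$. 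The uniformity over $\epsilon$ is free from DKW's uniformity over thresholds; no monotonicity bookkeeping or union bound is needed.

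Finally, the ``main obstacle'' you flag --- the multiplicative tail comparison between parameters $p$ and $p+\overline r$ at threshold $m\overline t$ --- has an elementary proof in the paper that you are overlooking. Rather than CLT or coupling, one bounds the ratio term by term: for each $k\le m\overline t$, $\bigl(\tfrac{p}{p+\overline r}\bigr)^k\bigl(\tfrac{1-p}{1-p-\overline r}\bigr)^{m-k}$ is increasing in $k$, so its minimum is at $k=m\overline t$, and taking logs and using $\log(1+x)\ge x/(1+x)$ reduces the required inequality to $\overline r\ge\tfrac{p(1-p)}{4m(p-\overline t)}$, which is the very definition of $\overline r$. No regime-splitting in $m$ is required.
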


Theorem \ref{thm:test-up} shows that the tests $\phi_{p,\epsilon}^+$ and $\phi_{p,\epsilon}^-$ achieve the local separation rates $\overline{r}(p, \epsilon)$ and $\underline{r}(p, \epsilon)$ for all $p$ that is bounded away from $0$ and $1$ by the order of $\frac{1}{m}\left(\frac{1}{n}+\epsilon\right)$. {This suggests that they can be inverted into a confidence interval with length of order $\overline{r}(p, \epsilon)+\frac{1}{m}\left(\frac{1}{n}+\epsilon\right)$ or $\underline{r}(p, \epsilon)+\frac{1}{m}\left(\frac{1}{n}+\epsilon\right)$ depending on whether $p$ is closer to $1$ or $0$. In either case, it will match the order of the locally optimal length $\ell(n,\epsilon,m,p)$ in view of Lemma \ref{lm:test-rate-CI-length-connection}.}

\subsection{From Monotone Tests to Confidence Interval} \label{sec:monotone-test-to-CI}

Since (\ref{eq:LGset}) may not be an interval with the testing functions (\ref{eq:phi+}) and (\ref{eq:phi-}), the length guarantee proved by \cite{luo2024adaptive} does not apply to (\ref{eq:LGset}).

To tackle this challenge, we need to find testing functions $\{\psi_{p,\epsilon}^{\pm}\}$ that solve $\cH(p, p\pm r, \epsilon)$ such that (\ref{eq:LGset}) is an interval. Note that (\ref{eq:LGset}), with $\phi_{p,\epsilon}^{\pm}$ replaced by $\psi_{p,\epsilon}^{\pm}$, can also be written as
$$\left\{p\in[0,1]: \psi_{p,\epsilon}^+=0\text{ for all }\epsilon\in[0,\epsilon_{\max}]\right\}\cap\left\{p\in[0,1]: \psi_{p,\epsilon}^-=0\text{ for all }\epsilon\in[0,\epsilon_{\max}]\right\}.$$
It is thus sufficient to require that $\left\{p\in[0,1]: \psi_{p,\epsilon}^+=0\text{ for all }\epsilon\in[0,\epsilon_{\max}]\right\}=[\wh{p}_{\rm left},1]$ for some $\wh{p}_{\rm left}\in[0,1]$ and $\left\{p\in[0,1]: \psi_{p,\epsilon}^-=0\text{ for all }\epsilon\in[0,\epsilon_{\max}]\right\}=[0,\wh{p}_{\rm right}]$ for some $\wh{p}_{\rm right}\in[0,1]$. This is clearly satisfied as long as $\psi_{p,\epsilon}^+$ is non-increasing in $p$ and $\psi_{p,\epsilon}^-$ is non-decreasing in $p$ for any $\epsilon \in [0, \epsilon_{\max}]$.

The monotonicity of the test is a very natural requirement. Intuitively, $\psi_{p,\epsilon}^+=1$ means that the data suggests that $p$ should be rejected in favor of some larger alternative. In other words, $p$ is too small to fit the observations. Therefore, for some even smaller $\wt{p}<p$, one should certainly reject $\wt{p}$ as well, which means $\psi_{\wt{p},\epsilon}^+=1$, or equivalently $\psi_{\wt{p},\epsilon}^+\geq \psi_{p,\epsilon}^+$. 

{To construct such a test, }we propose the following monotone variation of $\phi_{p,\epsilon}^{+}$, defined by
\begin{equation}
\psi_{p,\epsilon}^+ = \min_{q\in[0,p]}\phi_{q,\epsilon}^+. \label{eq:psi+}
\end{equation}
Thus, when $p\in[0,1]$, $\psi_{p,\epsilon}^+=1$ or $p$ is rejected in favor of a larger alternative if and only if the data suggests that every $q\leq p$ is too small in the sense that
$$\frac{1}{n}\sum_{i=1}^n\indi\{X_i\leq m \overline{t}(q,\epsilon)\}< \overline{\tau}(q,\epsilon).$$
Similarly, a monotone variation of $\phi_{p,\epsilon}^{-}$ is defined by
\begin{equation}
\psi_{p,\epsilon}^- = \min_{q\in[p,1]}\phi_{q,\epsilon}^-.\label{eq:psi-}
\end{equation}
This strategy also works for general parametric families beyond binomial. {In addition}, if the testing function $\phi_{p,\epsilon}^+$ (resp. $\phi_{p,\epsilon}^-$) is already non-increasing (resp. non-decreasing), we would simply get $\phi_{p,\epsilon}^+=\min_{q\in[0,p]}\phi_{q,\epsilon}^+$ (resp. $\phi_{p,\epsilon}^-=\min_{q\in[p,1]}\phi_{q,\epsilon}^-$).

Since $\psi_{p,\epsilon}^+\leq \phi_{p,\epsilon}^+$ and $\psi_{p,\epsilon}^-\leq \phi_{p,\epsilon}^-$, it is straightforward to see that the simultaneous Type-1 error guarantees in Theorem \ref{thm:test-up} still hold for $\{\psi_{p,\epsilon}^+\}$ and $\{\psi_{p,\epsilon}^-\}$. The following result shows that Type-2 error guarantees continue to hold as well.

\begin{Theorem}\label{thm:type2}
Suppose $ \frac{\log(2/\alpha)}{n} + \epsilon_{\max}$ is less than a sufficiently small constant. The testing function $\psi_{p,\epsilon}^+$ defined by (\ref{eq:psi+}) satisfies the Type-2 error bound,
\begin{equation*}
    \underset{Q}{\sup}P_{\epsilon,p+r,Q}(\psi_{p,\epsilon}^+ = 0)\leq \alpha/6,   
\end{equation*}
for all $\epsilon \in [0, \epsilon_{\max}]$, all $p \in \left[0, 1 - \frac{4}{m}\left( \frac{10 \log(24/\alpha)}{n} + 3 \epsilon \right) \right ]$, and all
$r\in[\overline{r}(p, \epsilon),1-p]$.
Similarly, the testing function $\psi_{p,\epsilon}^-$ defined by (\ref{eq:psi-}) satisfies the Type-2 error bound,
\begin{equation*}
    \underset{Q}{\sup}P_{\epsilon,p-r,Q}(\psi_{p,\epsilon}^- = 0)\leq \alpha/6,
\end{equation*}
for all $\epsilon \in [0, \epsilon_{\max}]$, all $p \in \left[ \frac{4}{m}\left( \frac{10 \log(24/\alpha)}{n} + 3 \epsilon \right),1 \right ]$, and all
$r\in[\underline{r}(p, \epsilon),p]$.
\end{Theorem}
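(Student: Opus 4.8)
The plan is to deduce the Type-2 error bounds for the monotonized tests $\psi_{p,\epsilon}^{\pm}$ from the Type-2 error bounds for the raw tests $\phi_{p,\epsilon}^{\pm}$ already established in Theorem \ref{thm:test-up}. The key structural observation is that $\psi_{p,\epsilon}^+ = \min_{q\in[0,p]}\phi_{q,\epsilon}^+$, so $\psi_{p,\epsilon}^+ = 0$ occurs if and only if $\phi_{q,\epsilon}^+ = 0$ for \emph{some} $q\in[0,p]$. This is a larger event than $\{\phi_{p,\epsilon}^+ = 0\}$, so we cannot directly quote Theorem \ref{thm:test-up}; instead we must argue that when the data are drawn from $P_{\epsilon,p+r,Q}$, the event $\{\phi_{q,\epsilon}^+=0 \text{ for some }q\le p\}$ is still rare. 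The natural route is monotonicity of the relevant quantities: first I would show that $\phi_{q,\epsilon}^+ = 0$ for some $q\le p$ already implies $\phi_{p,\epsilon}^+ = 0$ with high probability — or, more precisely, that the whole family of events $\{\phi_{q,\epsilon}^+=0\}$ over $q\le p$ is controlled simultaneously.

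The mechanism I would use: the test statistic $\frac{1}{n}\sum_{i=1}^n \indi\{X_i \le m\,\overline{t}(q,\epsilon)\}$ and the threshold $\overline{\tau}(q,\epsilon)$ both depend on $q$ only through $\overline{t}(q,\epsilon)$ and $\overline{r}(q,\epsilon)$. I would check from the definitions \eqref{def:t}--\eqref{def:tau} that $\overline{t}(q,\epsilon)$ is non-decreasing in $q$ on $[0,1-1/m]$ (so that $\indi\{X_i \le m\overline t(q,\epsilon)\}$ is non-decreasing in $q$ pointwise, hence the empirical count is non-decreasing in $q$), while $\overline{\tau}(q,\epsilon)$ is such that the gap between the true mean $P_{X\sim\text{Binomial}(m,q+\overline r(q,\epsilon))}(X\le m\overline t(q,\epsilon))$ and $\overline\tau(q,\epsilon)$ stays uniformly bounded below. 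The cleanest packaging is: on the high-probability event where $F_n(m\overline t(q,\epsilon))$ is close to its expectation under $P_{\epsilon,p+r,Q}$ \emph{uniformly over the finitely many relevant $q$ and $\epsilon$} (a union bound over $\mathcal E$ and the grid $S_m$, exactly as in the proof of Theorem \ref{thm:test-up}), one has $\phi_{q,\epsilon}^+ = 1$ for every $q \le p$ simultaneously, because for each such $q$, $p+r \ge q + \overline{r}(q,\epsilon)$ (using $r\ge \overline r(p,\epsilon)$ and monotonicity of $q\mapsto q+\overline r(q,\epsilon)$), so the data look at least as extreme as the Type-2 analysis for $\phi_{q,\epsilon}^+$ requires. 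Then $\psi_{p,\epsilon}^+ = \min_{q\le p}\phi_{q,\epsilon}^+ = 1$ on that event.

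Concretely, the steps are: (1) restate the Type-2 bound for $\phi_{p,\epsilon}^+$ from Theorem \ref{thm:test-up} and recall that, in its proof, the bound $\alpha/12$ comes from a concentration inequality (Bernstein/Chernoff) controlling $F_n(m\overline t(p,\epsilon))$ around its mean under $P_{\epsilon,p+r,Q}$; (2) verify the two monotonicity facts — $q\mapsto \overline t(q,\epsilon)$ non-decreasing and $q\mapsto q+\overline r(q,\epsilon)$ non-decreasing on $[0,1-1/m]$ — from \eqref{def:t-tau-r}, handling the boundary piece $p\in(1-1/m,1]$ separately; (3) using $r \ge \overline r(p,\epsilon)$, deduce $p + r \ge q + \overline r(q,\epsilon)$ for all $q\in[0,p]$, so each $\phi_{q,\epsilon}^+$ is being tested against an alternative at least as favorable as its own separation rate; (4) run a single concentration-plus-union-bound argument over $q$ in the grid and $\epsilon\in\mathcal E$ to conclude $\phi_{q,\epsilon}^+=1$ for all such $q$ with probability $\ge 1-\alpha/6$, whence $\psi_{p,\epsilon}^+=1$; (5) repeat verbatim for $\psi_{p,\epsilon}^-$ using the reflected definitions $\underline t(p,\epsilon)=1-\overline t(1-p,\epsilon)$, etc. The loosening of the constant from $\alpha/12$ to $\alpha/6$ absorbs the extra union bound over the $q$-grid.

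The main obstacle I anticipate is step (4): making precise that a \emph{single} high-probability event suffices to force $\phi_{q,\epsilon}^+=1$ for all $q\le p$ at once. Naively one would union-bound over all $q$ in $S_m\cap[0,p]$, which is fine since $S_m$ has only $m+1$ points and the failure probability per point is exponentially small in $n$ (so $(m+1)\cdot e^{-cn}$ is negligible given the standing assumption that $\log(2/\alpha)/n$ is small); but one must be careful that the threshold $\overline\tau(q,\epsilon)$ and shift $\overline r(q,\epsilon)$ depend on $q$ in a way that keeps the per-$q$ margin bounded below by a constant multiple of the relevant standard deviation — this is precisely where the factor $11/10$ in \eqref{def:tau} and the factor $1/8$ in \eqref{def:t} do their work, and I would need to re-examine the Type-2 proof of Theorem \ref{thm:test-up} to confirm the margin it extracts is uniform in $q$ rather than tailored to the single value $q=p$. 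If the margin is uniform (which the explicit constants suggest), the rest is routine; the only genuinely new ingredient beyond Theorem \ref{thm:test-up} is the monotonicity bookkeeping in steps (2)--(3).
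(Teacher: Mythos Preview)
Your high-level strategy matches the paper's: reduce $\{\psi_{p,\epsilon}^+=0\}$ to $\{\exists q\le p:\phi_{q,\epsilon}^+=0\}$, then show all $\phi_{q,\epsilon}^+=1$ simultaneously by using $p+r\ge q+\overline r(q,\epsilon)$ for every $q\le p$ (monotonicity of $q\mapsto q+\overline r(q,\epsilon)$, Lemma \ref{lm:r-t-property}(ii)) together with the slack from Lemma \ref{lm:r-t-property}(iii). That part is right.

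The gap is in step (4). Your plan is to union-bound over $q\in S_m\cap[0,p]$ and claim $(m{+}1)e^{-cn}$ is negligible. Two problems. First, $\psi_{p,\epsilon}^+$ is defined as a minimum over the \emph{continuum} $[0,p]$, not over $S_m$; the grid only appears in the discretized test $\wh\psi$, which is not the object of this theorem. Second, and more seriously, the per-$q$ failure probability you have available is not $e^{-cn}$ for a free constant $c$: the margin between $\overline\tau(q,\epsilon)$ and the mean of the test statistic under $P_{\epsilon,p+r,Q}$ is exactly $\sqrt{\log(24/\alpha)/(2n)}$ (this is what Lemma \ref{lm:r-t-property}(iii), with its fixed factor $10$, is calibrated to deliver), so Hoeffding gives $\alpha/12$ per point. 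A union bound over $m{+}1$ points gives $(m{+}1)\alpha/12$, and there is no assumption relating $m$ to $n$; $m$ can be arbitrarily large. The paper sidesteps this entirely by invoking the DKW inequality once: with probability $\ge 1-\alpha/12$ one has $\sup_x|F_n(x)-P_{\epsilon,p+r,Q}(X\le x)|\le\sqrt{\log(24/\alpha)/(2n)}$, and on this single event the deterministic chain $F_n(m\overline t(q,\epsilon))\le P_{p+r}(X\le m\overline t(q,\epsilon))+\epsilon+\sqrt{\cdots}<\overline\tau(q,\epsilon)$ goes through for every $q\in[0,p]$ simultaneously. (The monotonicity of $\overline t$ you check in step (2) is then unnecessary, since DKW already covers all thresholds $x$.) The $\alpha/12\to\alpha/6$ relaxation is not used to absorb a union over $q$; it only appears in the boundary regime $p\in(1-\tfrac1m,1]$, where the paper splits $\min_{q\le p}\phi_{q,\epsilon}^+$ into $\min_{q\le 1-1/m}\phi_{q,\epsilon}^+$ (handled by the DKW argument, cost $\alpha/12$) and $\phi_{p,\epsilon}^+$ itself (already monotone there by Lemma \ref{lm:monotone-test-boundary}, so Theorem \ref{thm:test-up} applies directly, cost $\alpha/12$).
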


With Theorem \ref{thm:type2}, the testing functions $\psi_{p,\epsilon}^+$ and $\psi_{p,\epsilon}^-$ are not only monotone in $p$, but they also achieve the same local separation rates as $\phi_{p,\epsilon}^+$ and $\phi_{p,\epsilon}^-$. With this modification, the set
\begin{equation}
\widetilde{\CI}=\left\{p\in[0,1]: \psi_{p,\epsilon}^+=\psi_{p,\epsilon}^-=0\text{ for all }\epsilon\in[0,\epsilon_{\max}]\right\} \label{eq:ci-con}
\end{equation}
is a well-defined interval. Its coverage and local optimality of length can be established from Theorem~\ref{thm:test-up} and Theorem \ref{thm:type2}.

A practical issue of (\ref{eq:ci-con}) is the difficulty of computing the endpoints of the interval. This motivates an additional discretization step. With $S_m=\left\{0,1/m, 2/m,\cdots,1\right\}$, we can replace (\ref{eq:psi+}) and (\ref{eq:psi-}) with
\begin{eqnarray} \label{eq:disc-test+}
\wh{\psi}_{p,\epsilon}^+ &=& \begin{cases}
\min_{q\in[0, \lceil mp \rceil/m ]\cap S_m}\phi_{q,\epsilon}^+ & p\in [0,1-\frac{1}{m}] \\
\phi_{p,\epsilon}^+\wedge \min_{q\in S_m\backslash\{1\}}\phi_{q,\epsilon}^+ & p \in (1-\frac{1}{m}, 1],
\end{cases} \\
\wh{\psi}_{p,\epsilon}^- &=& \begin{cases}
\phi_{p,\epsilon}^-\wedge \min_{q\in S_m\backslash\{0\}}\phi_{q,\epsilon}^- & p \in [0,\frac{1}{m}) \\
\min_{q\in[ \lfloor mp \rfloor/m  ,1]\cap S_m}\phi_{q,\epsilon}^- & p \in [\frac{1}{m}, 1]. \\
\end{cases}\label{eq:disc-test-}
\end{eqnarray}
Together with the grid $\mathcal{E}\subset[0,\epsilon_{\max}]$ used in Algorithm \ref{alg:CI-endpoints}, we define
\begin{equation}
\widehat{\CI}=\left\{p\in[0,1]: \wh{\psi}_{p,\epsilon}^+=\wh{\psi}_{p,\epsilon}^-=0\text{ for all }\epsilon\in\mathcal{E}\right\}. \label{eq:ci-dis}
\end{equation}
By the monotonicity of $\wh{\psi}_{p,\epsilon}^+$ and $\wh{\psi}_{p,\epsilon}^-$ , the formula (\ref{eq:ci-dis}) is still an interval to apply the theory of \cite{luo2024adaptive}, which leads to Theorem \ref{thm:upper}. Moreover, the endpoints of (\ref{eq:ci-dis}) are given by \eqref{eq:left} and \eqref{eq:right}, which can be explicitly computed by Algorithm \ref{alg:CI-endpoints}.

\begin{Proposition} \label{prop:binom-end-pionts}
The set $\widehat{\CI}$ defined by (\ref{eq:ci-dis}) is an interval whose endpoints are given by (\ref{eq:left}) and (\ref{eq:right}) and can be computed by Algorithm \ref{alg:CI-endpoints}.
\end{Proposition}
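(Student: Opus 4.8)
The plan is to establish, in order, that $\widehat{\CI}$ defined in (\ref{eq:ci-dis}) is an interval, that its endpoints coincide with (\ref{eq:left})--(\ref{eq:right}), and that Algorithm~\ref{alg:CI-endpoints} outputs exactly those endpoints. Throughout I would reduce everything to the left endpoint, the right one following from the reflection $p\leftrightarrow 1-p$, $X_i\leftrightarrow m-X_i$ together with the identities $\underline{t}(p,\epsilon)=1-\overline{t}(1-p,\epsilon)$ and $\underline{\tau}(p,\epsilon)=\overline{\tau}(1-p,\epsilon)$, under which $\phi_{q,\epsilon}^-$ in the $X$-data becomes $\phi_{1-q,\epsilon}^+$ in the $(m-X)$-data. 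The first step is monotonicity: for each fixed $\epsilon$, $p\mapsto\wh{\psi}_{p,\epsilon}^+$ is non-increasing and $p\mapsto\wh{\psi}_{p,\epsilon}^-$ is non-decreasing on $[0,1]$. On $[0,1-1/m]$ this holds because $\wh{\psi}_{p,\epsilon}^+$ depends on $p$ only through $\lceil mp\rceil$ and $k\mapsto\min_{q\in\{0,1/m,\dots,k/m\}}\phi_{q,\epsilon}^+$ is non-increasing; on $(1-1/m,1]$ it holds because there $\overline{t}(p,\epsilon)\equiv 1-1/m$ while $\overline{\tau}(p,\epsilon)=\tfrac{1}{2}(1-p^m)-\tfrac{3\log(24/\alpha)}{n}$ is strictly decreasing in $p$; continuity across $p=1-1/m$ is then checked by hand. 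Hence $h^+(p):=\max_{\epsilon\in\mathcal{E}}\wh{\psi}_{p,\epsilon}^+$ is non-increasing, $h^-(p):=\max_{\epsilon\in\mathcal{E}}\wh{\psi}_{p,\epsilon}^-$ is non-decreasing, so $A^+:=\{p\in[0,1]:h^+(p)=0\}$ is an up-set, $A^-:=\{p\in[0,1]:h^-(p)=0\}$ is a down-set, and $\widehat{\CI}=A^+\cap A^-$ is an interval. Because $\overline{\tau}(1,\epsilon)<0$ forces $\phi_{1,\epsilon}^+\equiv 0$, one has $1\in A^+$ and symmetrically $0\in A^-$, so both are nonempty and $\inf\widehat{\CI}=\inf A^+$, $\sup\widehat{\CI}=\sup A^-$; it remains to identify $\inf A^+$ with (\ref{eq:left}).

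For this I would split on the value of $g:=\max_{\epsilon\in\mathcal{E}}\min_{q\in S_m\setminus\{1\}}\phi_{q,\epsilon}^+\in\{0,1\}$, writing $g_k:=\max_{\epsilon\in\mathcal{E}}\min_{q\in\{0,1/m,\dots,k/m\}}\phi_{q,\epsilon}^+$, which is non-increasing in $k$ with $g_{m-1}=g$. Suppose $g=0$, so $\min_{q\in S_m\setminus\{1\}}\phi_{q,\epsilon}^+=0$ for every $\epsilon\in\mathcal{E}$; then $\wh{\psi}_{p,\epsilon}^+\equiv 0$ on $[1-1/m,1]$, whereas for $p\in[0,1-1/m]$ one has $p\in A^+\iff g_{\lceil mp\rceil}=0\iff\lceil mp\rceil\ge k^*$ with $k^*:=\min\{k:g_k=0\}$. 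Thus $A^+=((k^*-1)/m,1]$ when $k^*\ge 1$ and $A^+=[0,1]$ when $k^*=0$, so $\inf A^+=(k^*-1)/m$ or $0$ respectively. The crucial observation is that although $(k^*-1)/m$ is typically \emph{not} a member of $A^+$, it \emph{is} selected by (\ref{eq:left}): at the grid point $p=(k^*-1)/m$ (or $p=0$ when $k^*=0$) the minimum in (\ref{eq:left}) runs over $[0,p+1/m]\cap(S_m\setminus\{1\})=\{0,\dots,k^*/m\}$, so the defining condition unwinds to $g_{k^*}=0$, which holds, while at every smaller grid point it unwinds to $g_\ell=0$ with $\ell<k^*$, which fails; and the extra factor $\phi_{p,\epsilon}^+\wedge(\cdot)$ in (\ref{eq:left}) is inert for $p\in S_m\setminus\{1\}$ since $\phi_{p,\epsilon}^+$ is itself one of the minimized terms. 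Hence (\ref{eq:left}) equals $\inf A^+$ in this regime. Suppose instead $g=1$; choosing $\epsilon'\in\mathcal{E}$ with $\min_{q\in S_m\setminus\{1\}}\phi_{q,\epsilon'}^+=1$ gives $\wh{\psi}_{p,\epsilon'}^+=1$ for all $p\le 1-1/m$, so $A^+\subseteq(1-1/m,1]$. On that stratum $\overline{t},\overline{\tau}$ are $\epsilon$-free, hence $\phi_{p,\epsilon}^+=\phi_p^+$ does not depend on $\epsilon$ and $\wh{\psi}_{p,\epsilon}^+=\phi_p^+\wedge g=\phi_p^+$, so $A^+=\{p\in(1-1/m,1]:\phi_p^+=0\}$. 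Solving $\phi_p^+=0$, i.e. $\tfrac{1}{n}\sum_i\indi\{X_i\le m-1\}\ge\tfrac{1}{2}(1-p^m)-\tfrac{3\log(24/\alpha)}{n}$, yields $\inf A^+=\big[1-(\tfrac{2}{n}\sum_i\indi\{X_i\le m-1\}+\tfrac{6\log(24/\alpha)}{n})\wedge 1\big]^{1/m}\vee(1-1/m)$, exactly the bracketed expression in (\ref{eq:end-start-left}); moreover every grid point $\le 1-1/m$ fails the condition in (\ref{eq:left}) because its inner minimum is $\ge g_{m-1}=1$, so (\ref{eq:left}) again returns this same value.

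Finally I would read off Algorithm~\ref{alg:CI-endpoints} directly: line~2 initializes $\wh{p}_{\rm left}$ to the closed form (\ref{eq:end-start-left}), and line~3 sweeps $p$ downward through $S_m\setminus\{1\}$, overwriting $\wh{p}_{\rm left}\leftarrow p$ precisely when $\max_{\epsilon\in\mathcal{E}}\min_{q\in[0,p+1/m]\cap(S_m\setminus\{1\})}\phi_{q,\epsilon}^+=0$, so the final value is the smallest grid point meeting this test. When $g=0$ that smallest point is $(k^*-1)/m$ (or $0$), which overrides the initialization; when $g=1$ no grid point triggers an update, since each such inner minimum is $\ge g_{m-1}=1$, so the line~2 value survives. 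In both cases the output equals $\inf A^+=\inf\widehat{\CI}$, and lines~2 and~4 give $\wh{p}_{\rm right}=\sup A^-=\sup\widehat{\CI}$ by the symmetric argument, which proves the proposition. I expect the main obstacle to be the discretization bookkeeping — keeping straight the half-open structure of $A^+$ versus the shifted-grid index set $[0,p+1/m]$ in (\ref{eq:left}), which is engineered precisely so that $\inf A^+$ is attained at a grid point and is therefore computable — together with the separate handling of the boundary stratum $(1-1/m,1]$, where the test changes form and loses its dependence on $\epsilon$.
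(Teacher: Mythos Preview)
Your proposal is correct and follows essentially the same route as the paper: establish monotonicity of $\wh{\psi}_{p,\epsilon}^\pm$ to get an interval, then split on whether $g:=\max_{\epsilon\in\mathcal{E}}\min_{q\in S_m\setminus\{1\}}\phi_{q,\epsilon}^+$ equals $0$ or $1$, using that $\phi_{p,\epsilon}^+$ is $\epsilon$-free on $(1-1/m,1]$; the paper organizes this via an intermediate two-case characterization (its Lemma~\ref{lm:p-hat-charac}) and then matches that to (\ref{eq:left}) and to the algorithm, whereas you go directly. One small slip: in the $g=1$ branch you write $\wh{\psi}_{p,\epsilon}^+=\phi_p^+\wedge g$, but this identity holds only after taking $\max_{\epsilon\in\mathcal{E}}$ (i.e., for $h^+(p)$), since $\min_{q\in S_m\setminus\{1\}}\phi_{q,\epsilon}^+$ still depends on $\epsilon$; the conclusion is unaffected because $\max_\epsilon(\phi_p^+\wedge b_\epsilon)=\phi_p^+\wedge\max_\epsilon b_\epsilon$.
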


We remark that in \eqref{eq:disc-test+} and \eqref{eq:disc-test-}, the discretization over the grid $S_m$ only applies to $\psi_{p,\epsilon}^+$ when $0\leq p\leq 1-\frac{1}{m}$ and to $\psi_{p,\epsilon}^-$ when $\frac{1}{m}\leq p\leq 1$. This is because when $p\in[\frac{1}{m},1-\frac{1}{m}]$, the local optimal rate $\ell(n,\epsilon,m,p)$ is at least of order $\frac{1}{m}$, and therefore the discretization does not change the confidence interval length for more than the optimal rate. When $p<\frac{1}{m}$ or $p>1-\frac{1}{m}$, discretization is actually not needed, since $\phi_{p,\epsilon}^+$ (resp. $\phi_{p,\epsilon}^-$) is already monotone when $p>1-\frac{1}{m}$ (resp. $p<\frac{1}{m}$) and the tests can be explicity inverted into the closed form formulas (\ref{eq:end-start-left}) and (\ref{eq:end-start-right}).

\subsection{Application to Poisson Data}

The general strategy of interval construction by inverting monotone tests is not limited to the binomial model. In this section, we will demonstrate an application to Poisson data with contamination. Consider
\begin{equation}
X_1, \ldots, X_n \overset{i.i.d.}\sim (1-\epsilon)\text{Poisson}(\lambda)+\epsilon Q.\label{eq:Poisson}
\end{equation}
{Poisson distribution is not symmetric}, but similar to the binomial distribution, the variability of Poisson data also depends on the parameter $\lambda$. {We will use the framework of local optimality, and a special treatment will be given when $\lambda$ is close to $0$}. With slight abuse of notation, we define $r_{\alpha}(\epsilon,\lambda,\epsilon_{\max})$ in the same way as (\ref{eq:local-r-b}) except that the binomial probability is replaced by $P_{\epsilon,\lambda,Q}=(1-\epsilon)\text{Poisson}(\lambda)+\epsilon Q$. We first present a lower bound on the locally optimal length.
\begin{Theorem}\label{thm:lower-pois}
For any $\alpha \in (0,1/4)$, $\epsilon_{\max} \in [0, 1/2]$, and $n \geq 3$ satisfying $\epsilon_{\max} \geq \frac{2\alpha}{n}$, there exists some constant $c > 0$ only depending on $\alpha$ and $\epsilon_{\max}$, such that
\begin{equation}
r_{\alpha}(\epsilon,\lambda,\epsilon_{\max})\geq c\left(\left( \sqrt{\lambda}\left(\frac{1}{\sqrt{\log n}}+\frac{1}{\sqrt{\log(1/\epsilon)}}\right) + 1 \right)\wedge\lambda +\frac{1}{n}+\epsilon\right). \label{eq:rate-main-pois}
\end{equation}
\end{Theorem}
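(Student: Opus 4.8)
The plan is to mirror the lower bound proof of Theorem \ref{thm:lower} (equivalently, Theorem \ref{thm:test-low}) in the Poisson setting, exploiting the fact that the core of that argument is a two-point testing obstruction built from the Huber contamination structure. Specifically, by the same reduction used to pass from Theorem \ref{thm:test-low} to Theorem \ref{thm:lower}, it suffices to show that whenever $r \leq c\left(\left(\sqrt{\lambda}\left(\tfrac{1}{\sqrt{\log n}}+\tfrac{1}{\sqrt{\log(1/\epsilon)}}\right)+1\right)\wedge\lambda+\tfrac{1}{n}+\epsilon\right)$, at least one of the two total-variation quantities
\begin{equation*}
\inf_{Q_0,Q_1}\TV\!\left(P^{\otimes n}_{\epsilon_{\max},\lambda-r,Q_0},\,P^{\otimes n}_{\epsilon,\lambda,Q_1}\right),\qquad
\inf_{Q_0,Q_1}\TV\!\left(P^{\otimes n}_{\epsilon_{\max},\lambda+r,Q_0},\,P^{\otimes n}_{\epsilon,\lambda,Q_1}\right)
\end{equation*}
is at most $\alpha$, where now $P_{\epsilon,\lambda,Q}=(1-\epsilon)\text{Poisson}(\lambda)+\epsilon Q$. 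For each value of $\lambda$ I would choose the harder of the $\pm$ directions, exactly as the remark before Theorem \ref{thm:test-low} permits.

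The argument splits according to which term of the rate is active. \textbf{(i) Contamination-dominated regime.} When $\epsilon$ dominates, I use the standard Huber trick: match $(1-\epsilon_{\max})\text{Poisson}(\lambda')$ against $(1-\epsilon)\text{Poisson}(\lambda)$ by absorbing the discrepancy into the contamination distributions $Q_0,Q_1$. Since $\text{Poisson}(\lambda)$ and $\text{Poisson}(\lambda')$ have overlap $1-O(|\lambda-\lambda'|/\sqrt{\lambda})$ for $|\lambda-\lambda'|\lesssim\sqrt{\lambda}$ (and overlap bounded below by a constant when $|\lambda-\lambda'|\lesssim 1$ near $\lambda\asymp 1$, or overlap $\asymp e^{-\lambda'}\asymp 1$ when $\lambda\lesssim 1$ so that the $\wedge\lambda$ truncation and the additive $1$ are the relevant scales), one can make the two mixtures exactly equal when $r\lesssim\sqrt{\lambda}\,\epsilon+\epsilon$, giving $\TV=0$. \textbf{(ii) Sample-size-dominated regime ($\tfrac{1}{n}$ term).} Here I compare $\text{Poisson}(\lambda)$ with $\text{Poisson}(\lambda+r)$ directly with no contamination ($Q_0=Q_1$ irrelevant, $\epsilon_{\max}$ slack): $\TV(\text{Poisson}(\lambda),\text{Poisson}(\lambda+r))\asymp r/\sqrt{\lambda}\wedge 1$ pointwise, so after tensorizing, $\TV(P^{\otimes n},\ldots)\lesssim \sqrt{n}\cdot r/\sqrt{\lambda}\wedge$ (constant), which is $\leq\alpha$ when $r\lesssim\sqrt{\lambda/n}$; combined with the boundary case this handles the $\sqrt{\lambda/\log n}$ and $\tfrac1n$ pieces once $\log n$ replaces $\sqrt n$ via the next step. \textbf{(iii) Logarithmic regime — the main obstacle.} The genuinely hard term is $\sqrt{\lambda}\left(\tfrac{1}{\sqrt{\log n}}+\tfrac{1}{\sqrt{\log(1/\epsilon)}}\right)$. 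This requires the more delicate construction of \cite{luo2024adaptive}: one builds $Q_0$ and $Q_1$ supported so that $(1-\epsilon_{\max})\text{Poisson}(\lambda+r)+\epsilon_{\max}Q_0$ and $(1-\epsilon)\text{Poisson}(\lambda)+\epsilon Q_1$ agree on all but an exponentially small ($\leq\alpha/n$) probability region, using that the ratio of Poisson pmfs $e^{-r}(1+r/\lambda)^k$ stays within a bounded band over a range of $k$ of length $\asymp\sqrt{\lambda\log(1/\epsilon)}$ around the mean — precisely the moderate-deviation window where the tail mass is still $\gtrsim\epsilon$. Controlling this band and verifying the $\asymp\sqrt{\lambda}/\sqrt{\log n}$ analogue (replacing $\epsilon$ by the ``effective contamination level'' $\sqrt{\log(2/\alpha)/n}$ coming from sampling fluctuation, as in the definition of $\overline t$) is where the Poisson tail asymptotics must be worked out carefully; the key estimate is a two-sided bound on $P_{X\sim\text{Poisson}(\lambda)}(X\le t)$ in the moderate deviation range, which substitutes for the Gaussian tail used in \cite{luo2024adaptive}.

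I expect the technical heart to be step (iii): establishing that the Poisson pmf ratio is uniformly bounded on a window of the correct width and that the complementary tail has mass below $\alpha/n$, so that the $n$-fold product TV stays below $\alpha$. Everything else (the reduction from the CI length lower bound to the TV statement, the $\wedge\lambda$ boundary truncation analogous to $p\wedge(1-p)$, and combining the regimes by a case analysis on which term of the max is largest) is a routine transcription of the binomial argument, with $p(1-p)/m$ replaced by $\lambda$, $1/m$ replaced by $1$, and the Bernoulli/binomial pmf replaced by the Poisson pmf; I would also note that the boundary case $\lambda\to 0$ is actually simpler than $p\to 0$ since there is only one boundary. Finally I would assemble the bound: for the chosen direction and the active regime, the relevant TV is $\leq\alpha$, hence by the reduction $r_\alpha(\epsilon,\lambda,\epsilon_{\max})\geq r$ for all such $r$, yielding \eqref{eq:rate-main-pois} with $c$ depending only on $\alpha$ and $\epsilon_{\max}$.
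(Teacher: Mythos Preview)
Your reduction to a two-point TV obstruction and the regime decomposition match the paper's proof (Theorem~\ref{thm:test-low-Poisson}); step~(iii) is indeed the core, and your picture for it is correct --- the paper's Part~III sets a threshold $t=\lambda-c\sqrt{\lambda\log(\cdot)}$, restricts the pmf to a half-line, and controls the discarded tail by Poisson Chernoff bounds.

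However, steps~(i) and~(ii) misstate the rates they deliver. In~(i) you write that exact matching works for $r\lesssim\sqrt{\lambda}\,\epsilon+\epsilon$; that is the \emph{known}-$\epsilon$ scale and is strictly weaker than what the ``$+1$'' term in \eqref{eq:rate-main-pois} requires. The fix is that the null carries contamination level $\epsilon_{\max}$ (a constant), not $\epsilon$: taking $Q_1=P_\lambda$ and solving $(1-\epsilon_{\max})P_{\lambda-r}+\epsilon_{\max}Q_0=P_\lambda$ for $Q_0\ge 0$ succeeds iff $e^{r}\le\frac{1}{1-\epsilon_{\max}}$, i.e.\ for all $r$ up to a constant independent of $\lambda,\epsilon,n$; this is the paper's Part~II and is a direct pmf-ratio check, not an overlap argument. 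In~(ii), the direct product-TV comparison gives $r\lesssim\sqrt{\lambda/n}$, not $\sqrt{\lambda/\log n}$, and there is no mechanism that later ``replaces $\sqrt n$ by $\log n$''. The $\sqrt{\lambda/\log n}$ term must come entirely from the construction in~(iii), with the tail cutoff chosen so the residual mass is $\le\alpha/n$; the only legitimate use of the bare TV bound is at the extreme boundary $\lambda\lesssim\alpha/n$, where $\TV(P_\lambda,P_{\lambda+r})\le 1-e^{-\lambda-r}\le\alpha/n$ supplies the $+\tfrac1n$ piece (Part~I, Case~1). Once these two bookkeeping errors are corrected, your argument coincides with the paper's.
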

The rate (\ref{eq:rate-main-pois}) agrees with the binomial rate $m\ell(n,\epsilon,m,p)$ in (\ref{eq:main-rate-bmp}) with $p=\frac{\lambda}{m}$. This is quite natural given that $\text{Binomial}(m,p)$ can be approximated by $\text{Poisson}(\lambda)$ when $mp$ is close to $\lambda$.

To construct an optimal confidence interval, we need to solve the hypothesis testing problems (\ref{eq:test-def}) with a Poisson distribution. For each $\lambda \geq 0$, define
\begin{subequations}\label{def:t-tau-r-Poisson-upper}
	\begin{align}
		\overline{t}(\lambda, \epsilon) &= 
	 \lambda - \min \left\{\frac{\lambda}{2}, \frac{1}{8} \sqrt{ \lambda \log\left(\left(\epsilon + (\log(24/\alpha)/(2n))^{1/2}\right)^{-1}\right)} \right\},   \label{def:t-Poisson-upper} \\
\overline{r}(\lambda, \epsilon) &= \left\{ \begin{array}{ll}
\frac{1}{2} &  \lambda = 0\\
	\frac{\lambda}{4(\lambda - \overline{t}(\lambda,\epsilon) )}  &  \lambda \in (0, \infty),
\end{array}  \right.\label{def:r-Poisson-upper}\\
\overline{\tau}(\lambda, \epsilon) &= 
	\frac{11}{10} P_{X \sim \textnormal{Poisson}(\lambda + \overline{r}(\lambda, \epsilon))} \left( X \leq \overline{t}(\lambda, \epsilon) \right),
\label{def:tau-Poisson-upper}
	\end{align}
\end{subequations}
and
\begin{subequations}\label{def:t-tau-r-Poisson-lower}
\begin{align}
		\underline{t}(\lambda,\epsilon) &= \left\{ \begin{array}{ll}
        1 & \lambda \in [0,1) \\
	 \lambda +  \min \left\{\frac{\lambda}{2}, \frac{1}{8} \sqrt{ \lambda \log\left(\left(\epsilon + (\log(24/\alpha)/(2n))^{1/2}\right)^{-1}\right)} \right\} &  \lambda \in [1, \infty),
\end{array}  \right. \label{def:t-Poisson-under} \\
\underline{r}(\lambda,\epsilon) &= \left\{ \begin{array}{ll}
 (1- 1/(6e) )\lambda & \lambda \in [0,1)\\
	\frac{\lambda}{4(\underline{t}(\lambda,\epsilon) - \lambda )}  &  \lambda \in [1, \infty),
\end{array}  \right. \label{def:r-Poisson-under} \\
\underline{\tau}(\lambda, \epsilon) &= \left\{ \begin{array}{ll}
	  \frac{1}{2}(1 - e^{-\lambda}) - \frac{3 \log(24/\alpha) }{n}  & \lambda \in [0,1) \\
      \frac{11}{10} P_{X \sim \textnormal{Poisson}(\lambda - \underline{r}(\lambda, \epsilon))} \left( X \geq \underline{t}(\lambda, \epsilon)) \right)  &  \lambda \in [1, \infty).
\end{array}  \right. \label{def:tau-Poisson-under}
	\end{align}
\end{subequations}
Notice that we adopt the same notation for analogous quantities in both the binomial and Poisson settings. The intended meaning should be clear from the context, with $p$ referring to the binomial case and $\lambda$ to the Poisson case. Similar to (\ref{eq:phi+}) and (\ref{eq:phi-}), we define the testing functions
\begin{eqnarray}
\label{eq:pos-test+} \phi_{\lambda,\epsilon}^+ &=& \indi\left\{\frac{1}{n}\sum_{i=1}^n\indi\{X_i\leq  \overline{t}(\lambda,\epsilon)\}< \overline{\tau}(\lambda,\epsilon)\right\}, \\
\label{eq:pos-test-} \phi_{\lambda,\epsilon}^- &=& \indi\left\{\frac{1}{n}\sum_{i=1}^n\indi\{X_i\geq  \underline{t}(\lambda,\epsilon)\}< \underline{\tau}(\lambda,\epsilon)\right\}.
\end{eqnarray}
To obtain a confidence interval, we apply the following monotonization and discretization,
\begin{eqnarray}
\wh{\psi}_{\lambda,\epsilon}^+ &=& \begin{cases} \min_{\mu\in[0,\lceil\lambda\rceil]\cap \mathbb{N}_0}\phi_{\mu,\epsilon}^+ & \lambda  \in [0, \wh{\lambda}_{\max})\\
0 & \lambda = \wh{\lambda}_{\max},
\end{cases} \label{eq:psi+Poisson} \\
\wh{\psi}_{\lambda,\epsilon}^- &=& \begin{cases}
\phi_{\lambda,\epsilon}^-\wedge \min_{\mu\in [1,\wh{\lambda}_{\max}]\cap\mathbb{N}}\phi_{\mu,\epsilon}^- & \lambda \in [0,  1) \\
\min_{\mu\in[ \lfloor \lambda \rfloor,\wh{\lambda}_{\max}]\cap \mathbb{N}}\phi_{\mu,\epsilon}^- &  \lambda \in [1, \wh{\lambda}_{\max}], \\
\end{cases} \label{eq:psi-Poisson}
\end{eqnarray}
where
\begin{equation}
\wh{\lambda}_{\max}=X_{(\lceil 3n/4 \rceil)} + 1\label{eq:lambda-max}
\end{equation}
is a conservative upper bound for $\lambda$ to limit the search space to a bounded interval.
Together with the grid $\mathcal{E} = \left\{ \frac{ 2^k \log (24/\alpha)}{n}: k = 0,1, \ldots, \left\lfloor \log_2 \left( \frac{n \epsilon_{\max}}{\log(24/\alpha)} \right) \right\rfloor \right\} \cup \{\epsilon_{\max}\}$, we define
\begin{equation}
\widehat{\CI}=\left\{\lambda\in[0,\wh{\lambda}_{\max}]: \wh{\psi}_{\lambda,\epsilon}^+=\wh{\psi}_{\lambda,\epsilon}^-=0\text{ for all }\epsilon\in\mathcal{E}\right\}. \label{eq:ci-dis-Poisson}
\end{equation}
The endpoints of this interval can be computed in a similar way to Algorithm \ref{alg:CI-endpoints}. {The pseudocode will be presented as Algorithm \ref{alg:CI-endpoints-pois} in Appendix \ref{app:posi-alg}.} The guarantee for the interval \eqref{eq:ci-dis-Poisson} is given as follows.

\begin{Theorem}\label{thm:upper-pois}
Suppose $ \frac{\log(2/\alpha)}{n} + \epsilon_{\max}$ is less than a sufficiently small constant. Then, the interval $\widehat{\CI}$ defined by (\ref{eq:ci-dis-Poisson}) satisfies
\begin{equation*}
		\begin{split}
			&\inf_{ \epsilon \in [0, \epsilon_{\max}], \lambda, Q} P_{\epsilon, \lambda, Q}\left( p \in\widehat{\CI} \right) \geq 1-\alpha,\\
			&\inf_{\epsilon \in [0, \epsilon_{\max}], \lambda, Q } P_{\epsilon, \lambda, Q}\left( |\widehat{\CI}| \leq C \left( \left(\sqrt{\lambda}\left(\frac{1}{\sqrt{\log n}}+\frac{1}{\sqrt{\log(1/\epsilon)}}\right)  +1 \right) \wedge\lambda +\frac{1}{n}+\epsilon\right) \right) \geq 1-\alpha,
		\end{split}
	\end{equation*}
where $C>0$ is some constant only depending on $\alpha$.
\end{Theorem}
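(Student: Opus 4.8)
The plan is to transplant the three-step program behind Theorem~\ref{thm:upper} to the Poisson model: (i) prove Poisson analogues of the testing-error bounds in Theorems~\ref{thm:test-up} and~\ref{thm:type2} for the elementary tests $\phi_{\lambda,\epsilon}^{\pm}$ in \eqref{eq:pos-test+}--\eqref{eq:pos-test-}; (ii) check that the monotonization/discretization in \eqref{eq:psi+Poisson}--\eqref{eq:psi-Poisson} preserves these bounds and makes \eqref{eq:ci-dis-Poisson} an interval, so that the inversion-of-monotone-tests argument of \cite{luo2024adaptive} (the same one that yields Theorem~\ref{thm:upper} from Proposition~\ref{prop:binom-end-pionts}) applies; and (iii) translate the separation rates $\overline r(\lambda,\epsilon),\underline r(\lambda,\epsilon)$ from \eqref{def:t-tau-r-Poisson-upper}--\eqref{def:t-tau-r-Poisson-lower} into the rate \eqref{eq:rate-main-pois} via a Poisson version of Lemma~\ref{lm:test-rate-CI-length-connection}. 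The first bookkeeping task is to replace the binomial tail estimates used throughout Section~\ref{sec:test} by Chernoff bounds for $\mathrm{Poisson}(\lambda)$, and to verify that $\overline t(\lambda,\epsilon)$ is calibrated so that $P_{X\sim\mathrm{Poisson}(\lambda+\overline r(\lambda,\epsilon))}(X\le\overline t(\lambda,\epsilon))\asymp \epsilon+\sqrt{\log(24/\alpha)/n}$, with the symmetric statement for $\underline t,\underline r$; this is the computation that manufactures the $\tfrac{1}{\sqrt{\log n}}$ and $\tfrac{1}{\sqrt{\log(1/\epsilon)}}$ factors, and it also drives the Poisson analogue of Lemma~\ref{lm:test-rate-CI-length-connection}.

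For the Type-1 control I would use that $\overline t(\lambda,\epsilon)$ is nondecreasing and $\overline\tau(\lambda,\epsilon)$ nonincreasing in $\epsilon$ (with mirrored monotonicity for the underline quantities), so $\sup_{\epsilon\in[0,\epsilon_{\max}]}\phi_{\lambda,\epsilon}^{+}$ is governed by $\epsilon=\epsilon_{\max}$; a concentration bound for the empirical CDF of $X_1,\dots,X_n$, together with the fact that under $P_{\epsilon_{\max},\lambda,Q}$ the uncontaminated part already contributes at least $(1-\epsilon_{\max})P_{\mathrm{Poisson}(\lambda)}(X\le\overline t)$ to the indicator average, gives the $\alpha/12$ bounds; the regime $\lambda\in[0,1)$ for $\phi^-$ is handled separately, just as $p\in(1-1/m,1]$ is for the binomial $\phi^-$, by directly inverting the test into a closed-form endpoint. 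For the Type-2 control I would mimic the proof of Theorem~\ref{thm:test-up}: under $P_{\epsilon,\lambda+r,Q}$ with $r\ge\overline r(\lambda,\epsilon)$, stochastic monotonicity of the Poisson family in its mean gives $P_{\mathrm{Poisson}(\lambda+r)}(X\le\overline t)\le P_{\mathrm{Poisson}(\lambda+\overline r)}(X\le\overline t)$, so the indicator average drops strictly below $\overline\tau(\lambda,\epsilon)$ after accounting for the at-most-$\epsilon$ mass the adversary can place below the cut point, forcing $\phi_{\lambda,\epsilon}^{+}=1$; the argument for $\phi^-$ is symmetric. The monotonization is then routine: $\wh\psi_{\lambda,\epsilon}^{+}\le\phi_{\lambda,\epsilon}^{+}$ gives Type-1 for free, while the Poisson analogue of Theorem~\ref{thm:type2} follows because under the alternative $\lambda+r$ every integer $\mu\le\lceil\lambda\rceil$ is at least $r-1$ below the truth, so the grid spacing $1$ is absorbed into the rate whenever $\lambda$ is bounded below by a constant, and for $\lambda=O(1)$ the constant-multiple formulas in \eqref{def:r-Poisson-upper}--\eqref{def:r-Poisson-under} and the closed-form branches at $\lambda<1$ and $\lambda=\wh\lambda_{\max}$ take over.

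The step I expect to be the main obstacle is the random truncation $\wh\lambda_{\max}=X_{(\lceil 3n/4\rceil)}+1$ in \eqref{eq:lambda-max}, which has no analogue in the binomial model (where $[0,1]$ is already bounded) or in the Gaussian model of \cite{luo2024adaptive}. Two things must be shown on a high-probability event. First, $\wh\lambda_{\max}\ge\lambda$: since $\epsilon_{\max}$ is below a small constant, a concentration bound gives at most $n/4$ contaminated samples, so the $\lceil 3n/4\rceil$-th order statistic of the full sample dominates the median of the clean $\mathrm{Poisson}(\lambda)$ sample, which is at least $\lfloor\lambda\rfloor\ge\lambda-1$; adding $1$ gives $\wh\lambda_{\max}\ge\lambda$, which makes the true $\lambda$ reachable by the search and is needed for coverage. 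Second, $\wh\lambda_{\max}\le \lambda+C(\sqrt\lambda+1)$: by a Chebyshev/Chernoff upper-quantile bound for $\mathrm{Poisson}(\lambda)$ plus the $\le \epsilon_{\max}n$ extra samples, fewer than $n/4$ observations exceed $\lambda+C(\sqrt\lambda+1)$, capping the order statistic. Because $\wh\lambda_{\max}$ can still be as large as $\Theta(n)$ when $\lambda$ is large, the length argument cannot union-bound the Type-2 events over all $\Theta(\wh\lambda_{\max})$ integer grid points naively; instead one peels the grid, using that the Type-2 error of $\phi_{\mu,\epsilon}^{-}$ decays geometrically as $\mu$ moves away from $\lambda$, so the total Type-2 budget stays $O(\alpha)$ uniformly in $\lambda$. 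On this event the tests $\phi^{\pm}$ are unchanged (they do not see $\wh\lambda_{\max}$), the set \eqref{eq:ci-dis-Poisson} is an interval by the monotonicity of $\wh\psi_{\lambda,\epsilon}^{\pm}$ in $\lambda$ exactly as in Proposition~\ref{prop:binom-end-pionts}, and the conclusion follows as in the proof of Theorem~\ref{thm:upper}: coverage from the simultaneous Type-1 bounds applied at the grid point nearest $\lambda$ together with $\{\lambda\le\wh\lambda_{\max}\}$, and the length bound from the peeled Type-2 bounds applied just past $\lambda\pm\overline r(\lambda,\epsilon)$ and $\lambda-\underline r(\lambda,\epsilon)$, combined with the Poisson version of Lemma~\ref{lm:test-rate-CI-length-connection} identifying $\overline r(\lambda,\epsilon)\wedge\underline r(\lambda,\epsilon)+\tfrac1n+\epsilon$ with \eqref{eq:rate-main-pois}. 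All failure probabilities---$\alpha/12$ per side for Type-1, $\alpha/6$ per side after monotonization for Type-2 (including the peeled tail), and the two events for $\wh\lambda_{\max}$---are budgeted to sum to at most $\alpha$, which only requires $\tfrac{\log(2/\alpha)}{n}+\epsilon_{\max}$ small, as assumed.
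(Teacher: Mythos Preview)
Your high-level plan---transplant the binomial argument via Poisson analogues of Theorems~\ref{thm:test-up} and~\ref{thm:type2}, a Poisson Lemma~\ref{lm:test-rate-CI-length-connection}, and monotone-test inversion---matches the paper's structure. The one substantive divergence is in how you handle the data-dependent truncation $\wh\lambda_{\max}$, and here the paper's device is simpler than what you propose.

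You plan to sandwich $\wh\lambda_{\max}$ between $\lambda$ and $\lambda+C(\sqrt\lambda+1)$ and then peel a geometric series of Type-2 errors over the grid. The paper instead sandwiches the \emph{interval} itself. It introduces, for each fixed $\lambda_{\max}\in\mathbb{N}$, the interval $\wh{\CI}_{\lambda_{\max}}$ obtained by replacing $\wh\lambda_{\max}$ by $\lambda_{\max}$ in \eqref{eq:psi-Poisson}, and observes the monotonicity $\wh{\CI}_{\lambda_{\max}}\subseteq\wh{\CI}\subseteq\wh{\CI}_{\infty}$ whenever $\lambda_{\max}\le\wh\lambda_{\max}$. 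For coverage one only needs the lower bound $\lceil\lambda\rceil\vee1\le\wh\lambda_{\max}$ (your first bullet, proved via the empirical-CDF/Poisson-median argument), since then $\lambda\in\wh{\CI}_{\lceil\lambda\rceil\vee1}\subseteq\wh{\CI}$. For length, the inclusion $\wh{\CI}\subseteq\wh{\CI}_{\infty}$ means \emph{no} bound on $\wh\lambda_{\max}$ is needed at all: one just shows $|\wh{\CI}_{\infty}|\lesssim\ell(n,\epsilon,\lambda)$, which follows from the Poisson analogue of Lemma~\ref{lm:connection-hatp-barp}, namely $[\wh\lambda_{\infty,\rm left},\wh\lambda_{\infty,\rm right}]\subseteq[\widebar\lambda_{\rm left}-1,\widebar\lambda_{\rm right}+1]$, together with the length guarantee for the undiscretized $\widebar{\CI}$.

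Relatedly, the paper never union-bounds or peels Type-2 errors over grid points. Its Type-2 control for the monotonized tests (the Poisson Theorem~\ref{thm:type2}) comes, exactly as in the binomial proof, from a \emph{single} DKW event: once the empirical CDF is uniformly close to $P_{\epsilon,\lambda+r,Q}$, one verifies a deterministic sufficient condition at every $\mu\le\lambda$ simultaneously, using that $\mu\mapsto\mu+\overline r(\mu,\epsilon)$ is increasing (the Poisson Lemma~\ref{lm:r-t-property}(ii) analogue). Your peeling route is plausible but unnecessary. Finally, your Type-1 reduction to $\epsilon=\epsilon_{\max}$ via monotonicity of $\overline\tau$ in $\epsilon$ is not the paper's argument (it again uses DKW to handle all $\epsilon$ at once), and the claimed monotonicity is not immediate since both $\overline t$ and $\overline r$ move with $\epsilon$; but since you already invoke empirical-CDF concentration, this is a detour rather than a gap.
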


\section{Erd\Horig{o}s--R\'{e}nyi Model with Node Contamination} \label{sec:graph}

In this section, we consider statistical inference in the setting of Erd\Horig{o}s--R\'{e}nyi model \citep{gilbert1959random,erdds1959random} with contamination. In a standard Erd\Horig{o}s--R\'{e}nyi model with $n$ nodes, one observes a random graph encoded by an adjacency matrix $A\in\{0,1\}^{n\times n}$ such that $A_{ij}=A_{ji}\overset{i.i.d.}\sim \text{Bernoulli}(p)$ for all $1\leq i<j\leq n$ and $A_{ii}=0$ for all $1 \leq i \leq n$. We will consider a version of the problem with node contamination.

\subsection{Node Contamination}

There are two natural ways to model contamination on a random graph. One is the edge contamination, which allows an $\epsilon$ fraction of $A_{ij}$'s to be drawn from different distributions. The other one is node contamination. In this setting, an $\epsilon$ fraction of nodes are contaminated, and edges that are connected to the contaminated nodes are drawn from different distributions. Though both settings are relevant for different purposes of applications, the edge contamination setting is mathematically trivial given its resemblance to (\ref{eq:bernoulli}). We will thus focus on the setting of node contamination.

The Erd\Horig{o}s--R\'{e}nyi model with node contamination was proposed by \cite{acharya2022robust}. It is defined as a distribution set $\mathcal{G}(n,p,\epsilon)$, such that for any $P\in\mathcal{G}(n,p,\epsilon)$ the sampling process of $\{A_{ij}\}_{1\leq i<j\leq n}\sim P$ is described as follows.
\begin{enumerate}
\item Sample $z_1,\cdots,z_n \overset{i.i.d.}\sim \text{Bernoulli}(\epsilon)$.
\item Sample $\{A_{ij}\}$ independently given $\{z_i\}$. To be specific, for any $1\leq i<j\leq n$, sample $A_{ij}\sim \text{Bernoulli}(p)$ if $z_i=z_j=0$ and otherwise sample $A_{ij}$ from an arbitrary Bernoulli.
\end{enumerate}
In other words, the adjacency matrix $A$ has an $\epsilon$ fraction of rows and columns that are contaminated. We note that there is an even stronger contamination setting considered by \cite{acharya2022robust}, but we will present our results for $\mathcal{G}(n,p,\epsilon)$, even though the same conclusion continues to hold for the larger family.

Statistical inference under $\mathcal{G}(n,p,\epsilon)$ is a highly nontrivial task, even just for the estimation of $p$. The locally optimal estimation rate under the loss $|\wh{p}-p|$ is given by
\begin{equation}
\sqrt{\frac{p(1-p)}{n}}\left(\frac{1}{\sqrt{n}}+\epsilon\right)+\frac{1}{n}\left(\frac{1}{n}+\epsilon\right).\label{eq:ach}
\end{equation}
Remarkably, a delicate two-step spectral algorithm was proposed and analyzed by \cite{acharya2022robust} to achieve the above rate.\footnote{To be more precise, Theorem 21 of \cite{acharya2022robust} proved the lower bound of order (\ref{eq:ach}), together with an upper bound (Theorem 3 there) that matches (\ref{eq:ach}) up to some logarithmic factor.} It is interesting to compare (\ref{eq:ach}) with the locally optimal estimation error of the binomial model given by Theorem \ref{thm:est}. In particular, (\ref{eq:ach}) can be regarded as the binomial rate with $m=n$, though the two models are not equivalent. Further discussion on the difference and similarity of the two models will be given in Section \ref{eq:row-con}.

\subsection{A Conservative Confidence Interval}

Similar to (\ref{eq:local-r-b}), we define the locally optimal confidence interval length for the Erd\Horig{o}s--R\'{e}nyi model by
$$
r_{\alpha}^{\rm ER}(\epsilon,p,\epsilon_{\max})=\inf\left\{r\geq 0: \inf_{\widehat{\CI} \in  \cI_{\alpha}^{\rm ER}(\epsilon_{\max}) }\sup_{P\in\mathcal{G}(n,p,\epsilon)} P\left( |\widehat{\CI}| \geq r \right) \leq \alpha\right\},
$$
where
$$\cI_{\alpha}^{\rm ER}(\epsilon_{\max}) = \left\{ \widehat{\CI} = [l(A), u(A) ]: \inf_p\inf_{P\in\mathcal{G}(n,p,\epsilon_{\max})}P\left(p\in \widehat{\CI}\right) \geq 1- \alpha  \right\}.$$
Again, when $\epsilon_{\max}=\epsilon$ and $\epsilon$ is known, the locally optimal length $r_{\alpha}^{\rm ER}(\epsilon,p,\epsilon)$ can be achieved by (\ref{eq:bin-k-e}) with $m=n$, since the rate-optimal estimator $\wh{p}$ in \cite{acharya2022robust} achieves the high probability error bound (\ref{eq:ach}).

When $\epsilon_{\max}$ is a constant and $\epsilon$ is unknown, a conservative strategy is to use the error bound (\ref{eq:ach}) with $\epsilon=\epsilon_{\max}$. This leads to the following confidence interval
\begin{equation}
\widehat{\CI} = \left\{p\in[0,1]: |\wh{p}-p|\leq C\left(\sqrt{\frac{p(1-p)}{n}}+\frac{1}{n}\right)\right\}. \label{eq:er-conser}
\end{equation}
Somewhat surprisingly, this naive construction cannot be improved in terms of its local length when $\epsilon$ is unknown. In other words, for Erd\Horig{o}s--R\'{e}nyi model with node contamination, we have $r_{\alpha}^{\rm ER}(\epsilon,p,\epsilon_{\max})\asymp r_{\alpha}^{\rm ER}(\epsilon_{\max},p,\epsilon_{\max})$, and a shorter confidence interval when $\epsilon \ll\epsilon_{\max}$ is impossible. This is in stark contrast to the binomial model with Huber contamination given the results of Theorem \ref{thm:lower} and Theorem \ref{thm:upper}.

We will proceed to show the coverage and local length guarantees of (\ref{eq:er-conser}), while delaying the lower bound for $r_{\alpha}^{\rm ER}(\epsilon,p,\epsilon_{\max})$ to Section \ref{sec:sbm}. Since the spectral estimator of \cite{acharya2022robust} achieves (\ref{eq:ach}) up to some logarithmic factor, we still need to improve the estimator of \cite{acharya2022robust} in order that (\ref{eq:er-conser}) achieves the exact local length optimality.

To this end, we define a matrix norm
\begin{equation}\label{def: norm}
\|B\|_{\mathcal{U}}=
\sup_{U\in\mathcal{U}} |\iprod{B}{U}|,
\end{equation}
where
$$\mathcal{U}=\left\{U = (J-I)_{S\times S}: S\subset[n]\right\}.$$
Here $J\in\mathbb{R}^{n\times n}$ is the matrix with all ones, $I\in\mathbb{R}^{n\times n}$ is the identity matrix, and for any $B\in\mathbb{R}^{n\times n}$, the matrix $B_{S\times S}\in\mathbb{R}^{n\times n}$ is defined by zeroing out all entries of $B$ in $(S\times S)^c$. We consider the following estimator
\begin{equation}
\wh{p} = \frac{1}{\#\wh{S}(\#\wh{S} - 1)}\sum_{i\in\wh{S}}\sum_{j\in\wh{S}}A_{ij}, \label{eq:p-improved}
\end{equation}
where the subset $\wh{S}$ is computed according to
\begin{equation}
\wh{S}=\argmin_{S\subset[n]: \#{S} \geq \frac{3n}{4}}\left\|\left(A-\left(\frac{1}{\#{S}(\#{S} - 1)}\sum_{i\in{S}}\sum_{j\in{S}}A_{ij}\right)J\right)_{S\times S}\right\|_{\mathcal{U}}. \label{eq:find-whS}
\end{equation}

The first step of the spectral estimator of \cite{acharya2022robust} is defined in the same way, but they use the matrix operator norm instead of the norm $\|\cdot\|_{\mathcal{U}}$ to find $\wh{S}$. Their estimator achieves the rate $\sqrt{\frac{p(1-p)}{n}}+\frac{\sqrt{\log n}}{n}$. {By inspecting the proof of Theorem 12 in \cite{acharya2022robust}, we can see that the reason an extra $\sqrt{\log n}$ factor appears there is that the operator norm of a centered adjacency matrix of a standard  Erd\Horig{o}s--R\'{e}nyi graph with connection probability $p$ is bounded by $\sqrt{np(1-p)} + \sqrt{\log n}$. This result is in general sharp \citep{bandeira2016sharp}, thus the additive $\sqrt{\log n}$ term is not removable in the concentration of the operator norm.} We show that with our new matrix norm definition, the $\sqrt{\log n}$ factor can be removed, which immediately implies the desired coverage and local length guarantees for (\ref{eq:er-conser}).

\begin{Theorem}\label{thm:er-up}
Suppose $ \frac{\log(2/\alpha)}{n} + \epsilon_{\max}$ is less than a sufficiently small constant. Then, the estimator (\ref{eq:p-improved}) satisfies
\begin{equation}\label{Ineq: er-est}
\inf_p\inf_{P\in\mathcal{G}(n,p,\epsilon_{\max})} P\left(|\wh{p}-p|\leq C\left(\sqrt{\frac{p(1-p)}{n}}+\frac{1}{n}\right)\right)\geq 1- \alpha, 
\end{equation} where $C>0$ is some constant only depending on $\alpha$.
Consequently, the interval (\ref{eq:er-conser}) satisfies
\begin{equation*}
		\begin{split}
			&\inf_p\inf_{P\in\mathcal{G}(n,p,\epsilon_{\max})} P\left( p \in\widehat{\CI} \right) \geq 1-\alpha,\\
			&\inf_{\epsilon\in[0,\epsilon_{\max}],p}\inf_{P\in\mathcal{G}(n,p,\epsilon)} P\left( |\widehat{\CI}| \leq C' \left(\sqrt{\frac{p(1-p)}{n}}+\frac{1}{n}\right) \right) \geq 1-\alpha,
		\end{split}
	\end{equation*} where $C'>0$ is some constant only depending on $\alpha$.
\end{Theorem}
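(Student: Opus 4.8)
The plan is to establish the estimation bound \eqref{Ineq: er-est}; the confidence interval statements then follow routinely, because \eqref{eq:er-conser} is obtained by inverting a monotone high-probability error bound, and the length of the resulting interval at the true $p$ is of order $\sqrt{p(1-p)/n}+1/n$ whenever $\wh p$ falls within the claimed tolerance. So the entire work is in controlling $|\wh p - p|$.

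First I would fix a realization of the latent labels $z_1,\dots,z_n$ and let $S^\star = \{i: z_i = 0\}$ be the set of clean nodes. A Chernoff bound gives $\#S^\star \geq (1-2\epsilon_{\max})n \geq 3n/4$ with probability at least $1-\alpha/3$, so $S^\star$ is feasible in \eqref{eq:find-whS}. Conditionally on $\{z_i\}$, the clean submatrix $A_{S^\star\times S^\star}$ (off-diagonal) is a genuine Erd\H{o}s--R\'enyi matrix with parameter $p$. The key deterministic fact is that for \emph{any} feasible $S$, writing $\bar A_S$ for its empirical mean, $\|\,(A - \bar A_S J)_{S\times S}\|_{\mathcal U} \geq \#S(\#S-1)\,|\bar A_S - p|$, because $(J-I)_{S\times S}\in\mathcal U$ and $\iprod{(A - p J)_{S\times S}}{(J-I)_{S\times S}} = \#S(\#S-1)(\bar A_S - p)$ up to the already-removed diagonal. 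Combined with the optimality of $\wh S$ and the triangle inequality, this reduces the problem to two uniform-concentration statements: (i) $\sup_{S: \#S\geq 3n/4}\|(A - p J)_{S\times S}\|_{\mathcal U}$ is small, and (ii) the objective evaluated at the clean set $S^\star$ is small, i.e. $\|(A - p J)_{S^\star\times S^\star}\|_{\mathcal U}$ is small. Then $|\bar A_{\wh S} - p| \lesssim \|(A - p J)_{\wh S \times \wh S}\|_{\mathcal U}/(\#\wh S)^2$, and $\#\wh S \asymp n$, so the whole bound is driven by the $\|\cdot\|_{\mathcal U}$-norm of the \emph{centered clean noise}.

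The heart of the argument is therefore bounding $\sup_{S\subset[n], \#S \geq 3n/4}\,|\iprod{W}{(J-I)_{S\times S}}|$ where $W = A - pJ$ has independent, mean-zero, $[-1,1]$-bounded entries above the diagonal with variance $p(1-p)$, restricted to the clean rows/columns. Since $(J-I)_{S\times S}$ has entries in $\{0,1\}$, $\iprod{W}{(J-I)_{S\times S}} = 2\sum_{i<j,\, i,j\in S} W_{ij}$, so we need a uniform bound on a sum of $\binom{\#S}{2}$ independent bounded variables, over all $\#S$-subsets $S$. For a \emph{fixed} $S$, Bernstein's inequality gives $|\sum_{i<j\in S}W_{ij}| \lesssim \sqrt{p(1-p)}\,(\#S)^{3/2}\sqrt{t} + (\#S)^2 t$ with probability $1 - e^{-(\#S)^2 t}$; the crucial point is that the union bound is over at most $2^n$ subsets, i.e. $\log(\#\mathcal U) \lesssim n$, and since $(\#S)^2 \gtrsim n^2 \gg n$, choosing the deviation level $t \asymp 1/n$ absorbs the $2^n$ union bound while contributing only $\sqrt{p(1-p)}\,(\#S)^{3/2}/\sqrt n + (\#S)^2/n \asymp n^2\big(\sqrt{p(1-p)/n} + 1/n\big)$ to the norm. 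Dividing by $(\#S)^2 \asymp n^2$ yields exactly the target rate $\sqrt{p(1-p)/n} + 1/n$, \emph{without} the $\sqrt{\log n}$ factor --- this is precisely where replacing the operator norm by $\|\cdot\|_{\mathcal U}$ pays off, since the operator norm would bring in a $\sqrt{n}$ factor from covering the sphere whose logarithm is $\Theta(n)$ but whose effect on the variance term does not wash out the same way. I expect this uniform Bernstein bound, together with the bookkeeping that keeps track of which entries are clean vs. contaminated (the contaminated rows/columns enter $\wh S$'s objective but the triangle-inequality reduction isolates their contribution, which is at most $2\epsilon_{\max}n \cdot n$ before normalization and hence $O(\epsilon_{\max})$ --- this is why the bound degrades to $\sqrt{p(1-p)/n}+1/n$ rather than improving with $\epsilon$), to be the main technical obstacle.

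Finally, I would assemble the pieces: on the intersection of the three good events (clean set large, uniform norm bound, clean-set objective bound), each occurring with probability $\geq 1-\alpha/3$, we get $|\wh p - p| \leq C(\sqrt{p(1-p)/n} + 1/n)$, establishing \eqref{Ineq: er-est}. The coverage statement for $\widehat{\CI}$ in \eqref{eq:er-conser} is then immediate by choosing the constant in the interval's definition to match $C$. For the length: on the same good event, every $p'$ with $|\wh p - p'| \leq C(\sqrt{p'(1-p')/n}+1/n)$ satisfies, by the reverse triangle inequality and the fact that $p'$ is then within $O(\sqrt{p(1-p)/n}+1/n)$ of $p$ so that $\sqrt{p'(1-p')} \asymp \sqrt{p(1-p)} + O(\cdot)$, $|p' - \wh p| = O(\sqrt{p(1-p)/n}+1/n)$; hence the diameter of $\widehat{\CI}$ is $O(\sqrt{p(1-p)/n}+1/n)$, which gives the second display with an appropriate $C'$. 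Since none of this used $\epsilon$, the length bound holds uniformly over $\epsilon\in[0,\epsilon_{\max}]$, matching the statement.
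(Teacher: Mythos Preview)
Your uniform Bernstein argument for the clean noise (union bound over $2^n$ subsets, deviation level $\asymp 1/n$) and your handling of the Wilson-type interval both match the paper. The gap is in the reduction.

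First, the ``key deterministic fact'' as written is false: $\langle (A-\bar A_S J)_{S\times S},(J-I)_{S\times S}\rangle = \sum_{i\neq j\in S}(A_{ij}-\bar A_S)=0$, so this test matrix cannot give the claimed lower bound. Your own justification actually proves $\|(A-pJ)_{S\times S}\|_{\mathcal U}\geq \#S(\#S-1)|\bar A_S-p|$, with $p$ not $\bar A_S$.

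More seriously, using that corrected inequality with $S=\wh S$ forces you to control $\|(A-pJ)_{\wh S\times\wh S}\|_{\mathcal U}$, and $\wh S$ may contain up to $\Theta(\epsilon_{\max} n)$ contaminated nodes. The contaminated entries are arbitrary in $[0,1]$, so their contribution to the inner product is $O(\epsilon_{\max} n^2)$, yielding $|\wh p-p|=O(\epsilon_{\max})$ after normalization. You note this yourself, but then claim the bound ``degrades to $\sqrt{p(1-p)/n}+1/n$''; in fact $\epsilon_{\max}$ is a fixed constant and \emph{dominates} that rate, so your argument only proves $|\wh p-p|\lesssim \epsilon_{\max}$, which is trivial.

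The paper's fix is to start the chain not on $\wh S$ but on $\wh S\cap G$: write $\#(\wh S\cap G)(\#(\wh S\cap G)-1)|\wh p-p|=\|((\wh p-p)J)_{\wh S\cap G\times\wh S\cap G}\|_{\mathcal U}$, then split by the triangle inequality into $\|(A-\wh p J)_{\wh S\cap G\times\wh S\cap G}\|_{\mathcal U}+\|(A-pJ)_{\wh S\cap G\times\wh S\cap G}\|_{\mathcal U}$. The crucial property, Lemma~\ref{lm:norm property}\,(iv), is that $\|B_{S'\times S'}\|_{\mathcal U}\leq\|B_{S\times S}\|_{\mathcal U}$ whenever $S'\subseteq S$. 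This lets you enlarge the first term to $\wh S$ (so optimality applies and sends it to $G$) and the second term directly to $G$; every $\|\cdot\|_{\mathcal U}$-norm that remains lives on $G\times G$ and involves only clean Bernoulli$(p)$ entries, so no $\epsilon_{\max}$ term ever appears.
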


\subsection{Optimality via Community Detection}\label{sec:sbm}

The lower bound for $r_{\alpha}^{\rm ER}(\epsilon, p, \epsilon_{\max})$ is given by $r_{\alpha}^{\rm ER}(0, p, \epsilon_{\max})$ and the lower bound of $r_{\alpha}^{\rm ER}(0,p,\epsilon_{\max})$ can be derived from testing $H_0:P\in\mathcal{G}(n,p,0)$ against $H_1:P\in\mathcal{G}(n,q,\epsilon_{\max})$, where the notation $\mathcal{G}(n,p,0)$ is slightly abused to denote the standard Erd\Horig{o}s--R\'{e}nyi model without contamination. While $\mathcal{G}(n,p,0)$ is just the standard Erd\Horig{o}s--R\'{e}nyi model, the class $\mathcal{G}(n,q,\epsilon_{\max})$ contains a very important example called stochastic block model \citep{holland1983stochastic}. For $\eta,p_1,p_2,q\in[0,1]$, the sampling process of $\{A_{ij}\}_{1\leq i<j\leq n}\sim\text{SBM}(n,p_1,p_2,q,\eta)$ is given below.
\begin{enumerate}
\item Sample $z_1,\cdots,z_n \overset{i.i.d.}\sim \text{Bernoulli}(\eta)$.
\item Sample $\{A_{ij}\}$ independently given $\{z_i\}$ according to
$$A_{ij}\sim\begin{cases}
\text{Bernoulli}(p_1) & z_i=z_j = 0\\
\text{Bernoulli}(p_2) & z_i=z_j = 1\\
\text{Bernoulli}(q) & z_i\neq z_j.
\end{cases}$$
\end{enumerate}
Given the definition of $\mathcal{G}(n,p,\epsilon)$, we have
\begin{equation}
\text{SBM}(n,p_1,p_2,q, \eta)\in \mathcal{G}(n,p_1,\eta). \label{eq:sbm-er}
\end{equation}
Therefore, an even simpler testing problem is
\begin{equation}
H_0:A\sim\mathcal{G}(n,p,0)\,\,\,\, \textnormal{v.s.}\,\,\,\, H_1:A\sim \text{SBM}\left(n,p+r, p + \left(\frac{1 - \epsilon_{\max}}{\epsilon_{\max}}\right)^2 r,p-\frac{1-\epsilon_{\max}}{\epsilon_{\max}}r,\epsilon_{\max}\right) \label{eq:cd}
\end{equation} for some $r \geq 0$.
This is known as community detection, which has been thoroughly studied in the literature by \cite{decelle2011asymptotic,massoulie2014community,mossel2015reconstruction,mossel2014consistency,mossel2018proof,abbe2018community,gao2021minimax} and references therein. In particular, testing (\ref{eq:cd}) is possible if and only if the separation parameter $r$ is at least of order $\sqrt{\frac{p(1-p)}{n}}+\frac{1}{n}$. The following result is an adaptation of a lower bound in Theorem 3.3 of \cite{jin2021optimal}.
\begin{Proposition}\label{prop:cd-lower}
For any $\alpha \in (0, 1)$, $\epsilon_{\max} \in [0, 1/2]$, and $n \geq 2$, there exists some constant $c > 0$ only depending on $\alpha$ and $\epsilon_{\max}$, such that as long as
$$0\leq r\leq c\left(\sqrt{\frac{p(1-p)}{n}}+\frac{1}{n}\right),$$
we have
$$\TV\left(\mathcal{G}(n,p,0),\textnormal{SBM}\left(n,p+r, p + \left(\frac{1 - \epsilon_{\max}}{\epsilon_{\max}}\right)^2 r,p-\frac{1-\epsilon_{\max}}{\epsilon_{\max}}r,\epsilon_{\max}\right)\right)\leq \alpha,$$
for all $p\in[\frac{1}{n},\frac{1}{2}]$, and
$$\TV\left(\mathcal{G}(n,p,0),\textnormal{SBM}\left(n,p-r, p - \left(\frac{1 - \epsilon_{\max}}{\epsilon_{\max}}\right)^2 r,p+\frac{1-\epsilon_{\max}}{\epsilon_{\max}}r,\epsilon_{\max}\right)\right)\leq \alpha,$$
for all $p\in[\frac{1}{2},1 - \frac{1}{n}]$.
\end{Proposition}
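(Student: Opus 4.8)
The plan is to prove this directly as an upper bound on the total variation distance, via a second‑moment (chi‑square) computation on the likelihood ratio between the Erd\Horig{o}s--R\'{e}nyi null and the stochastic block model alternative. First I would reduce to the case $p\in[1/n,1/2]$: the complementation map $A_{ij}\mapsto 1-A_{ij}$, $p\mapsto 1-p$ sends $\mathcal{G}(n,p,0)$ to $\mathcal{G}(n,1-p,0)$, sends the ``$-r$'' block model at $p$ to the ``$+r$'' block model at $1-p$, and preserves total variation, so the second display follows from the first applied at $1-p$. Since $p\ge 1/n$ and $1-p\ge 1/2$ give $\sqrt{p(1-p)/n}\ge 1/(\sqrt 2\,n)$, the hypothesis $r\le c\big(\sqrt{p(1-p)/n}+1/n\big)$ becomes $r\le c'\sqrt{p(1-p)/n}$ after enlarging the constant, and for $c'$ small enough (depending only on $\epsilon_{\max}$) the three block probabilities $p+r$, $p+(\tfrac{1-\epsilon_{\max}}{\epsilon_{\max}})^2 r$, $p-\tfrac{1-\epsilon_{\max}}{\epsilon_{\max}} r$ all lie in $[0,1]$, so the block model $P_1$ is a well‑defined distribution; write $P_0=\mathcal{G}(n,p,0)$. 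Since $\TV(P_0,P_1)\le\tfrac12\sqrt{\chi^2(P_1\|P_0)}$, it suffices to show $\chi^2(P_1\|P_0)\le 4\alpha^2$ once $c'$ is small.

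The key structural fact I would exploit is that, with $\eta=\epsilon_{\max}$ and $\rho=(1-\eta)/\eta$, the block probabilities are exactly $p+r\,s(z_i)s(z_j)$ where $s(0)=1$, $s(1)=-\rho$, and \emph{crucially} $\mathbb{E}_{z\sim\text{Bernoulli}(\eta)}[s(z)]=(1-\eta)-\eta\rho=0$; this mean‑zero, rank‑one structure is precisely what pins down the block‑model parameters in the statement and is what makes the two hypotheses hard to separate. Expanding $L=dP_1/dP_0$ as an average over the latent labels, using two independent label copies $z,z'$ for the second moment, and invoking the elementary per‑edge identity
\[
\mathbb{E}_{X\sim\text{Bernoulli}(p)}\!\left[\frac{a^X(1-a)^{1-X}\,b^X(1-b)^{1-X}}{p^{2X}(1-p)^{2(1-X)}}\right]=1+\frac{(a-p)(b-p)}{p(1-p)},
\]
I get $\chi^2(P_1\|P_0)+1=\mathbb{E}\big[\prod_{i<j}\big(1+t\,w_iw_j\big)\big]$ with $t=r^2/(p(1-p))$ and $w_i:=s(z_i)s(z'_i)$ i.i.d., $\mathbb{E}w_i=0$, $\mathbb{E}w_i^2=\rho^2$, $|w_i|\le\rho^2$. (Equivalently, the subgraph expansion of this second moment is carried entirely by closed walks; the Gaussian trick below is just a compact way of resumming it and avoids delicate combinatorial bookkeeping.)

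Finally, since $|t\,w_iw_j|\le t\rho^4\le c'^2\rho^4/n<1$, I would use $\log(1+x)\le x$ to get $\prod_{i<j}(1+tw_iw_j)\le\exp\!\big(\tfrac t2(\sum_i w_i)^2-\tfrac t2\sum_i w_i^2\big)\le\exp\!\big(\tfrac t2(\sum_i w_i)^2\big)$, then apply the Hubbard--Stratonovich identity $\exp(\tfrac t2 W^2)=\mathbb{E}_{g\sim N(0,1)}\exp(\sqrt t\,gW)$ to reduce the second moment to $\mathbb{E}_g\big(\mathbb{E}_w e^{\sqrt t\,g w}\big)^n$. In the bulk ($\sqrt t\,|g|$ small, which holds for $|g|$ up to order $\sqrt n$ since $\sqrt t\lesssim 1/\sqrt n$) a second‑order Taylor bound gives $\mathbb{E}_w e^{\sqrt t g w}\le\exp\!\big(\tfrac12 tg^2\rho^2(1+o(1))\big)$ with uniform error, so that part integrates to at most $(1-\lambda-o(1))^{-1/2}$ with $\lambda:=nt\rho^2=nr^2\rho^2/(p(1-p))\le c'^2\rho^2<1$; for large $|g|$ the crude bound $(\mathbb{E}_w e^{\sqrt t gw})^n\le e^{n\sqrt t\rho^2|g|}$ together with $n\sqrt t\rho^2\le c'\rho^2\sqrt n$ is overwhelmed by $e^{-g^2/2}$, contributing only $e^{-\Omega(n)}$. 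Hence $\chi^2(P_1\|P_0)\le(1-c'^2\rho^2)^{-1/2}-1+o(1)$, which is below $4\alpha^2$ once $c'$ (equivalently $c$), depending only on $\alpha$ and $\epsilon_{\max}$, is small enough. I expect the main obstacle to be making this last paragraph fully rigorous — controlling the tail of the Gaussian integral and the uniformity of the quadratic bound on $\mathbb{E}_w e^{\sqrt t g w}$ across the integration range — while the reductions and the chi‑square identity are routine. This argument adapts the lower‑bound technique of \cite{jin2021optimal}.
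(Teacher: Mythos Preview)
Your proposal is correct and follows the same route as the paper: reduce by symmetry to $p\in[1/n,1/2]$, bound $\TV$ by $\chi^2$, use the mean-zero label representation and the two-copy trick to reach $\mathbb{E}\prod_{i<j}(1+t\,w_iw_j)$, then $1+x\le e^x$ to get $\mathbb{E}\exp\!\big(\tfrac t2(\sum_i w_i)^2\big)$. The only difference is at the final step you flag as the main obstacle: rather than Hubbard--Stratonovich and a bulk/tail split, the paper writes $\mathbb{E}e^{Y}=1+\int_0^\infty e^s\,\mathbb{P}(Y>s)\,ds$ for the nonnegative $Y=\tfrac t2(\sum_i w_i)^2$ and applies Hoeffding's inequality directly to the bounded mean-zero sum $\sum_i w_i$, obtaining $\chi^2\le \tfrac{2nS}{1-nS}$ in one line and avoiding the uniformity issue entirely.
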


On the other hand, suppose we have a confidence interval $\widehat{\CI}\in\mathcal{I}_{\alpha}^{\rm ER}(\epsilon_{\max})$, the relation (\ref{eq:sbm-er}) implies that the community detection problem (\ref{eq:cd}) can be solved by the test $\indi\{p + r\in \widehat{\CI}\}$ as long as $r$ exceeds a high probability length bound of $\widehat{\CI}$. The lower bound result of Proposition~\ref{prop:cd-lower} immediately implies the lower bound for $r_{\alpha}^{\rm ER}(\epsilon,p,\epsilon_{\max})$, which then leads to optimality of the conservative interval (\ref{eq:er-conser}).

\begin{Theorem}\label{thm:er-low}
For any $\alpha \in (0, 1/4)$, $\epsilon_{\max} \in [0, 1/2]$, and $n \geq 2$, there exists some constant $c > 0$ only depending on $\alpha$ and $\epsilon_{\max}$, such that
$$
r_{\alpha}^{\rm ER}(\epsilon,p,\epsilon_{\max})\geq c \left(\sqrt{\frac{p(1-p)}{n}}+\frac{1}{n}\right),
$$
for any $\epsilon\in[0,\epsilon_{\max}]$ and $p\in[\frac{1}{n},1-\frac{1}{n}]$.
\end{Theorem}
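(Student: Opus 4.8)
\textbf{Proof plan for Theorem \ref{thm:er-low}.} The plan is to pass the length lower bound through the community detection impossibility of Proposition \ref{prop:cd-lower}, via the usual interval-to-test reduction, after first stripping out the dependence on $\epsilon$. The first move is to reduce to $\epsilon = 0$: since one may always take the contaminated edges in the node-contamination model to be drawn from $\mathrm{Bernoulli}(p)$ as well, we have $\cG(n,p,0)\subseteq\cG(n,p,\epsilon)$ for every $\epsilon\in[0,\epsilon_{\max}]$, so $\sup_{P\in\cG(n,p,\epsilon)}P(|\widehat{\CI}|\geq r)\geq\sup_{P\in\cG(n,p,0)}P(|\widehat{\CI}|\geq r)$ for any $\widehat{\CI}$, which yields $r_{\alpha}^{\rm ER}(\epsilon,p,\epsilon_{\max})\geq r_{\alpha}^{\rm ER}(0,p,\epsilon_{\max})$. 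It then suffices to lower bound $r_{\alpha}^{\rm ER}(0,p,\epsilon_{\max})$, where $\cG(n,p,0)$ is a single distribution, say $P_0$.

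Next I would turn a short confidence interval into a test for \eqref{eq:cd}. Fix $p\in[\tfrac1n,\tfrac12]$ (the range $[\tfrac12,1-\tfrac1n]$ is symmetric and handled with the ``$-r$'' half of Proposition \ref{prop:cd-lower}), let $r\geq 0$, and set $P_1=\SBM(n, p+r,\, p+((1-\epsilon_{\max})/\epsilon_{\max})^2 r,\, p-\tfrac{1-\epsilon_{\max}}{\epsilon_{\max}}r,\,\epsilon_{\max})$; by \eqref{eq:sbm-er}, $P_1\in\cG(n,p+r,\epsilon_{\max})$. Suppose, toward a contradiction, that some $\widehat{\CI}\in\cI_{\alpha}^{\rm ER}(\epsilon_{\max})$ obeys $P_0(|\widehat{\CI}|\geq r)\leq\alpha$. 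Consider the test $\psi=\indi\{p+r\in\widehat{\CI}\}$ for $H_0:A\sim P_0$ versus $H_1:A\sim P_1$. Under $H_1$ the true parameter is $p+r$, so the coverage guarantee of $\widehat{\CI}$ gives $P_1(\psi=0)=P_1(p+r\notin\widehat{\CI})\leq\alpha$. Under $H_0$ the true parameter is $p$, so $P_0(p\in\widehat{\CI})\geq 1-\alpha$; on the event $\{p\in\widehat{\CI}\}\cap\{|\widehat{\CI}|<r\}$ the upper endpoint of $\widehat{\CI}$ is strictly below $p+r$, hence $p+r\notin\widehat{\CI}$ and $\psi=0$, so a union bound gives $P_0(\psi=1)\leq 2\alpha$.

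Combining the two bounds, $P_0(\psi=1)+P_1(\psi=0)\leq 3\alpha$, and since $P_0(\psi=1)+P_1(\psi=0)\geq 1-\TV(P_0,P_1)$ for any test, we get $\TV(P_0,P_1)\geq 1-3\alpha$. On the other hand, Proposition \ref{prop:cd-lower} supplies a constant $c>0$ (depending only on $\alpha,\epsilon_{\max}$) such that $r\leq c\bigl(\sqrt{\tfrac{p(1-p)}{n}}+\tfrac1n\bigr)$ forces $\TV(P_0,P_1)\leq\alpha$; since $\alpha<1/4$ implies $1-3\alpha>\alpha$, this is a contradiction. Therefore every $\widehat{\CI}\in\cI_{\alpha}^{\rm ER}(\epsilon_{\max})$ must satisfy $P_0(|\widehat{\CI}|\geq r)>\alpha$ whenever $r\leq c\bigl(\sqrt{\tfrac{p(1-p)}{n}}+\tfrac1n\bigr)$, i.e. $r_{\alpha}^{\rm ER}(0,p,\epsilon_{\max})\geq c\bigl(\sqrt{\tfrac{p(1-p)}{n}}+\tfrac1n\bigr)$; with the first step this proves the claim for $p\in[\tfrac1n,\tfrac12]$. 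The case $p\in[\tfrac12,1-\tfrac1n]$ goes through verbatim with $p+r$ replaced by $p-r$, the corresponding SBM from \eqref{eq:cd}, and the containment fact that $p\in\widehat{\CI}$ together with $|\widehat{\CI}|<r$ forces $p-r\notin\widehat{\CI}$.

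\emph{Where the difficulty lies.} The substantive content of the lower bound is entirely inside Proposition \ref{prop:cd-lower} (the stochastic-block-model-versus-Erd\Horig{o}s--R\'enyi indistinguishability, adapted from \cite{jin2021optimal}), which we take as given; what remains here is pure bookkeeping. The only steps that demand care are the interval-containment argument --- verifying that coverage plus a length below $r$ genuinely pushes $p\pm r$ outside $\widehat{\CI}$ in the correct direction --- and tracking constants so that the accumulated test error ($\leq 3\alpha$) stays strictly under the $1-\TV$ threshold, which is precisely what the hypothesis $\alpha<1/4$ provides.
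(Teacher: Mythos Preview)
Your proposal is correct and follows essentially the same approach as the paper: the paper packages the interval-to-test contradiction into a separate lemma (an Erd\Horig{o}s--R\'enyi analogue of Lemma~\ref{Lem: TV to lower bound}) and then combines it with Proposition~\ref{prop:cd-lower}, while you carry out the same reduction inline. The monotonicity step $r_\alpha^{\rm ER}(\epsilon,p,\epsilon_{\max})\geq r_\alpha^{\rm ER}(0,p,\epsilon_{\max})$ via $\cG(n,p,0)\subseteq\cG(n,p,\epsilon)$ is also used (implicitly) by the paper, and your handling of the two ranges of $p$ matches the two halves of Proposition~\ref{prop:cd-lower}.
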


\begin{Remark}
Since the testing lower bound of Proposition \ref{prop:cd-lower} requires the condition $p\in[\frac{1}{n},1-\frac{1}{n}]$, the optimality of the conservative interval (\ref{eq:er-conser}) may not hold when $p$ is extremely close to $0$ or $1$. In fact, consider testing $H_0:P\in\mathcal{G}(n,0,0)$ against $H_1:P\in\mathcal{G}(n,r,\epsilon_{\max})$. It is straightforward to check that the two hypotheses can be distinguished whenever $r\geq \frac{C}{n^2}$, which suggests $r_{\alpha}^{\rm ER}(0,0,\epsilon_{\max})\asymp \frac{1}{n^2}$, a faster rate than $\sqrt{\frac{p(1-p)}{n}}+\frac{1}{n}$ {with $p = 0$}. However, fully understanding all the pathological cases in $[0,\frac{1}{n})\cup(1-\frac{1}{n},1]$ is quite technically involved. Theorem \ref{thm:er-up} and Theorem \ref{thm:er-low} show that at least for $p$'s that are mostly practically relevant, one cannot do better than the conservative strategy.
\end{Remark}

\subsection{Comparison with the Binomial Model}\label{eq:row-con}

To understand the relation between $\mathcal{G}(n,p,\epsilon)$ and the binomial model (\ref{eq:bernoulli}), it is helpful to introduce another set of distributions $\overline{\mathcal{G}}(n,p,\epsilon)$ on binary random matrices. The sampling process of $A\sim P$ for some $P\in \overline{\mathcal{G}}(n,p,\epsilon)$ is given as follow.
\begin{enumerate}
\item Sample $z_1,\cdots,z_n \overset{i.i.d.}\sim \text{Bernoulli}(\epsilon)$.
\item Sample $\{A_{ij}\}$ independently given $\{z_i\}$. To be specific, for any $i,j\in[n]$, sample $A_{ij}\sim \text{Bernoulli}(p)$ if $z_i=0$ and otherwise sample $A_{ij}$ from an arbitrary Bernoulli.
\end{enumerate}

In other words, a distribution in $\overline{\mathcal{G}}(n,p,\epsilon)$ only allows rows to be contaminated, while $\mathcal{G}(n,p,\epsilon)$ involves simultaneous row and column contamination. In fact, given some $P\in \overline{\mathcal{G}}(n,p,\epsilon)$ and sample $A\sim P$ according to the above process, we have $A_{i1},\cdots,A_{in}\overset{i.i.d.}\sim \text{Bernoulli}(p)$ for a row such that $z_i=0$. A sufficient statistic for this row is $\sum_{j=1}^nA_{ij}\sim\text{Binomial}(n,p)$. In other words, $\sum_{j=1}^nA_{1j},\cdots,\sum_{j=1}^nA_{nj}$ can be regarded as data generated by the binomial model (\ref{eq:bernoulli}) with $m=n$. In fact, it is not hard to check that Theorems \ref{thm:lower}, \ref{thm:upper} and \ref{thm:est} all hold for $\overline{\mathcal{G}}(n,p,\epsilon)$.

In comparison, the additional column contamination in the model $\mathcal{G}(n,p,\epsilon)$ makes statistical inference of $p$ significantly harder than that of $\overline{\mathcal{G}}(n,p,\epsilon)$ or (\ref{eq:bernoulli}). Though the locally optimal estimation errors are the same,\footnote{Though \cite{acharya2022robust} only proves the rate (\ref{eq:ach}) up to a logarithmic factor, we conjecture that (\ref{eq:ach}) is the exact locally optimal rate for the estimation problem.} construction of adaptive confidence intervals with unknown $\epsilon$ is completely different under the two settings. Theorem \ref{thm:er-up} and Theorem \ref{thm:er-low} imply that adaptation to unknown $\epsilon$ is impossible, and the optimal local length under $\mathcal{G}(n,p,\epsilon)$ is of the same order as that under $\overline{\mathcal{G}}(n,p,\epsilon_{\max})$.

\section{Proofs}\label{sec:pfthm2}

Due to page limits, we will prove Theorem \ref{thm:upper} in this section. The proofs of other results are given in the appendices. Throughout the section, we write $P_p=\text{Binomial}(m,p)$ so that $P_{\epsilon,p,Q}=(1-\epsilon)P_p+\epsilon Q$.

\begin{proof}[Proof of Theorem \ref{thm:upper}]
To show the guarantees of $\wh{\CI}$, we first analyze the properties of two related confidence intervals: the $\wt{\CI}$ in \eqref{eq:ci-con} and 
\begin{equation}
\widebar{\CI}=\left\{p\in[0,1]: {\psi}_{p,\epsilon}^+= {\psi}_{p,\epsilon}^-=0\text{ for all }\epsilon\in \cE\right\}, \label{eq:ci-semidis}
\end{equation} where ${\psi}_{p,\epsilon}^+$ and ${\psi}_{p,\epsilon}^-$ are defined in \eqref{eq:psi+} and \eqref{eq:psi-}. Intuitively, $\widebar{\CI}$ approximates $\wt{\CI}$ by discretizing $\epsilon$ and $\wh{\CI}$ further approximates $\widebar{\CI}$ by discretizing $p$. We divide the rest of the proof into three parts, each analyzing $\wt{\CI}$, $\widebar{\CI}$ and $\wh{\CI}$, respectively. 

\vskip.2cm
{\noindent \bf (Part I -- Guarantees for $\wt{\CI}$)} We are going to show the following result. 

\begin{Theorem} \label{thm:bino-upper-no-dis}
	Suppose $ \frac{\log(2/\alpha)}{n} + \epsilon_{\max}$ is less than a sufficiently small universal constant. The confidence interval (\ref{eq:ci-con}) satisfies
	\begin{equation*}
		\begin{split}
			&\inf_{ \epsilon \in [0, \epsilon_{\max}], p, Q} P_{\epsilon, p, Q}\left( p \in\widetilde{\CI} \right) \geq 1-\alpha/6\quad \textnormal{and}\quad\inf_{\epsilon \in [0, \epsilon_{\max}], p, Q } P_{\epsilon, p, Q}\left( |\widetilde{\CI}| \leq C' \ell(n,\epsilon,m,p) \right) \geq 1-\alpha/2,
		\end{split}
	\end{equation*}
where $C' > 0$ is some constant depending on $\alpha$ only. The formula of $\ell(n,\epsilon,m,p)$ is given by (\ref{eq:main-rate-bmp}).
\end{Theorem}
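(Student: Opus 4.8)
The plan is to derive both assertions of Theorem~\ref{thm:bino-upper-no-dis} from the testing guarantees already in hand, after first observing that $\widetilde{\CI}$ really is an interval. Since $\psi_{p,\epsilon}^{+}=\min_{q\in[0,p]}\phi_{q,\epsilon}^{+}$ is non-increasing in $p$ and $\psi_{p,\epsilon}^{-}=\min_{q\in[p,1]}\phi_{q,\epsilon}^{-}$ is non-decreasing in $p$ for each fixed $\epsilon$, the sets $\{p:\psi_{p,\epsilon'}^{+}=0\ \forall\epsilon'\in[0,\epsilon_{\max}]\}$ and $\{p:\psi_{p,\epsilon'}^{-}=0\ \forall\epsilon'\in[0,\epsilon_{\max}]\}$ have the forms $[\wh{p}_{\rm left},1]$ and $[0,\wh{p}_{\rm right}]$, so $\widetilde{\CI}=[\wh{p}_{\rm left},\wh{p}_{\rm right}]$. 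For coverage, $p\notin\widetilde{\CI}$ forces $\sup_{\epsilon'}\psi_{p,\epsilon'}^{+}=1$ or $\sup_{\epsilon'}\psi_{p,\epsilon'}^{-}=1$; since $\psi^{\pm}\le\phi^{\pm}$ and, for every $\epsilon\le\epsilon_{\max}$, $P_{\epsilon,p,Q}$ belongs to $\{P_{\epsilon_{\max},p,Q'}:Q'\}$, the simultaneous Type-1 bounds of Theorem~\ref{thm:test-up} give $P_{\epsilon,p,Q}(p\notin\widetilde{\CI})\le\alpha/12+\alpha/12=\alpha/6$ uniformly over $\epsilon\in[0,\epsilon_{\max}]$, $p$ and $Q$.

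For the length bound I will push the two endpoints inward separately. By the $p\leftrightarrow1-p$ symmetry of $\widetilde{\CI}$ and of $\ell(n,\epsilon,m,p)$ I may assume $p\le1/2$, so that Lemma~\ref{lm:test-rate-CI-length-connection} gives $\ell(n,\epsilon,m,p)\asymp\underline{r}(p,\epsilon)+\tfrac1m(\tfrac1n+\epsilon)$. Fix a large constant $C=C(\alpha)$ and set $r^{+}:=C\bigl(\overline{r}(p,\epsilon)+\tfrac1m(\tfrac1n+\epsilon)\bigr)$ and $r^{-}:=C\bigl(\underline{r}(p,\epsilon)+\tfrac1m(\tfrac1n+\epsilon)\bigr)$. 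Applying the Type-2 bound of Theorem~\ref{thm:type2} for $\psi^{+}$ with its nominal parameter set to $p-r^{+}$ and separation $r^{+}$ yields $P_{\epsilon,p,Q}(\psi_{p-r^{+},\epsilon}^{+}=0)\le\alpha/6$; since the true $\epsilon$ lies in $[0,\epsilon_{\max}]$, on the complementary event $p-r^{+}\notin[\wh{p}_{\rm left},1]$, hence $\wh{p}_{\rm left}\ge p-r^{+}$, and combining with $\widetilde{\CI}\subseteq[0,1]$ gives $\wh{p}_{\rm left}\ge(p-r^{+})_{+}$. Symmetrically, the Type-2 bound for $\psi^{-}$ with nominal parameter $p+r^{-}$ and separation $r^{-}$ gives $\wh{p}_{\rm right}\le p+r^{-}$ off an event of probability $\alpha/6$. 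On the intersection of these two events, whose probability is at least $1-\alpha/3\ge1-\alpha/2$,
$$|\widetilde{\CI}|=\wh{p}_{\rm right}-\wh{p}_{\rm left}\le(p+r^{-})-(p-r^{+})_{+}=r^{-}+(p\wedge r^{+}).$$

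It remains to check two things. First, Theorem~\ref{thm:type2} must be legitimately applicable with these choices. The interior conditions $p-r^{+}\le1-\tfrac4m(\tfrac{10\log(24/\alpha)}n+3\epsilon)$ and $p+r^{-}\ge\tfrac4m(\tfrac{10\log(24/\alpha)}n+3\epsilon)$ hold for $p\le1/2$ and $C$ large since $r^{\pm}\ge\tfrac Cm(\tfrac1n+\epsilon)$ and $\tfrac1m(\tfrac1n+\epsilon)$ is small by hypothesis; the separation constraints reduce to the self-consistency inequalities $r^{+}\ge\overline{r}(p-r^{+},\epsilon)$ and $r^{-}\ge\underline{r}(p+r^{-},\epsilon)$, while when $p-r^{+}<0$ or $p+r^{-}>1$ the corresponding endpoint is pinned for free by $\widetilde{\CI}\subseteq[0,1]$. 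These self-consistency inequalities follow from the explicit formulas \eqref{def:t-tau-r}: as recorded in Lemma~\ref{lm:test-rate-CI-length-connection}, $\overline{r}(q,\epsilon)\asymp\bigl(\sqrt{q(1-q)/m}\,(\tfrac1{\sqrt{\log n}}+\tfrac1{\sqrt{\log(1/\epsilon)}})+\tfrac1m\bigr)\wedge(1-q)$ with the analogue for $\underline{r}$, so $q\mapsto\overline{r}(q,\epsilon)$ is non-decreasing up to constants on $[0,1/2]$, which handles $\overline{r}(p-r^{+},\epsilon)\lesssim\overline{r}(p,\epsilon)$, while $\underline{r}(q,\epsilon)\lesssim q$ together with $q(1-q)\lesssim p(1-p)$ for $q\le2p\le1$ yields $\underline{r}(p+r^{-},\epsilon)\lesssim\underline{r}(p,\epsilon)$; taking $C$ large absorbs the implied constants. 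Second, the bound must be converted to $\ell$: since $\min(p,1-p)=p$ and $\underline{r}(p,\epsilon)\lesssim p$ for $p\le1/2$, one has $p\wedge r^{+}\lesssim_{\alpha}(p\wedge\overline{r}(p,\epsilon))+\tfrac1m(\tfrac1n+\epsilon)\lesssim_{\alpha}\underline{r}(p,\epsilon)+\tfrac1m(\tfrac1n+\epsilon)$ and $r^{-}\lesssim_{\alpha}\underline{r}(p,\epsilon)+\tfrac1m(\tfrac1n+\epsilon)$, so $|\widetilde{\CI}|\lesssim_{\alpha}\underline{r}(p,\epsilon)+\tfrac1m(\tfrac1n+\epsilon)\asymp_{\alpha}\ell(n,\epsilon,m,p)$.

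\emph{The main obstacle} is precisely this self-consistency and monotonicity bookkeeping for $\overline{r}$ and $\underline{r}$, most delicate in the boundary regions $p\in(0,1/m]$ and $p\in(1-1/m,1]$ where the definitions \eqref{def:t-tau-r} switch branches and $\overline{r}(\cdot,\epsilon)$ ceases to be Lipschitz: one must verify in every regime that an $O\bigl(\overline{r}(p,\epsilon)+\tfrac1m(\tfrac1n+\epsilon)\bigr)$ perturbation of $p$ changes $\overline{r}$ (resp.\ $\underline{r}$) by at most a bounded factor, and separately dispatch the degenerate cases $r^{+}>p$ and $r^{-}>1-p$, which by Lemma~\ref{lm:test-rate-CI-length-connection} can occur only when $\ell(n,\epsilon,m,p)\asymp_{\alpha}p$ or $\asymp_{\alpha}1$ respectively, so that the crude bounds $|\widetilde{\CI}|\le p+r^{-}$ or $|\widetilde{\CI}|\le1$ already suffice. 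Threading the various smallness hypotheses on $\tfrac{\log(2/\alpha)}n+\epsilon_{\max}$ so that they remain compatible with the single fixed choice $C=C(\alpha)$ is routine but must be done with care.
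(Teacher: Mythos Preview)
Your proposal is correct and follows essentially the same approach as the paper: coverage from the simultaneous Type-1 bounds of Theorem~\ref{thm:test-up} (via $\psi^\pm\le\phi^\pm$ and the model inclusion \eqref{eq:model-inclusion}), and length by inverting the Type-2 bounds of Theorem~\ref{thm:type2} at shifted nominal parameters, with the self-consistency inequalities being exactly the content of Lemma~\ref{lm:r-property-length-bino}. The only organizational differences are that the paper conditions on $\{p\in\widetilde{\CI}\}$ and splits into the three ranges $p\in[0,\tfrac{1}{4m})$, $[\tfrac{1}{4m},1-\tfrac{1}{4m}]$, $(1-\tfrac{1}{4m},1]$ rather than using your symmetry reduction to $p\le 1/2$ and your unified $(p\wedge r^{+})+r^{-}$ bound; both routes lead to the same estimates.
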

\begin{proof}[Proof of Theorem \ref{thm:bino-upper-no-dis}]
We begin with the coverage guarantee. For any $\epsilon \in [0, \epsilon_{\max}]$ and $p\in [0,1]$, we have
\begin{equation*}
    \begin{split}
        \sup_QP_{\epsilon,p,Q}\left(p \notin \widetilde{\mathrm{CI}}\right)&\leq \sup_QP_{\epsilon,p,Q}\left(\sup_{\epsilon'\in [0, \epsilon_{\max}]} {\psi}^{+}_{p, \epsilon'}=1 \right)+\sup_QP_{\epsilon,p,Q}\left(\sup_{\epsilon'\in [0, \epsilon_{\max}]}{\psi}^{-}_{p, \epsilon'}=1\right)\\
        &\overset{(a)}{\leq} \sup_QP_{\epsilon_{\max},p,Q}\left(\sup_{\epsilon'\in [0,\epsilon_{\max}]} {\psi}^{+}_{p, \epsilon'}=1 \right)+\sup_QP_{\epsilon_{\max},p,Q}\left(\sup_{\epsilon'\in [0,\epsilon_{\max}]} {\psi}^{-}_{p, \epsilon'}=1\right)\\
        & \overset{(b)}\leq \sup_QP_{\epsilon_{\max},p,Q}\left(\sup_{\epsilon'\in [0,\epsilon_{\max}]} {\phi}^{+}_{p, \epsilon'}=1 \right)+\sup_QP_{\epsilon_{\max},p,Q}\left(\sup_{\epsilon'\in [0,\epsilon_{\max}]} {\phi}^{-}_{p, \epsilon'}=1\right)\\
        & \overset{(c)}\leq \alpha/6,
    \end{split}
\end{equation*}
where (a) follows from the fact that
\begin{equation} \label{eq:model-inclusion}
	\{P_{\epsilon,p,Q}:Q\} \subseteq \{P_{\epsilon_0,p,Q}:Q\} \quad \textnormal{ for all } \, p \in [0,1], 0 \leq \epsilon \leq \epsilon_0 \leq 1,
\end{equation}
 (b) is because $ {\psi}^{+}_{p, \epsilon}$ (resp. ${\psi}^{-}_{p, \epsilon'}$) is no bigger than ${\phi}^{+}_{p, \epsilon}$ (resp. $ {\phi}^{-}_{p, \epsilon'}$) and (c) is by the simultaneous Type-1 error control of $\phi^+_{p, \epsilon}$ and $\phi^-_{p, \epsilon}$ given in Theorem \ref{thm:test-up}.

Now we consider the length guarantee. The following lemma will be useful, and its proof is given in Appendix \ref{sec:thm-upper-supporting-lemma}.
\begin{Lemma}\label{lm:r-property-length-bino}
	Suppose $ \frac{\log(2/\alpha)}{n} + \epsilon_{\max}$ is less than a sufficiently small universal constant. Given any $c \in (0,1)$, there exists a large constant $C_0 > 0$ only depending on $c$ and $\alpha$ such that for any $C \geq C_0$ and $\epsilon \in [0, \epsilon_{\max}]$,
    	\begin{equation}\label{ineq:r-length-prop2}
		\begin{split}
			\bar{r}( p - C\bar{r}(p, \epsilon), \epsilon ) \leq C\bar{r}(p, \epsilon), \quad \forall p \in [c/m, 1-c/m] \cap \{p: p - C\bar{r}(p, \epsilon) \geq 0 \}, \\
			\underline{r}( p + C\underline{r}(p, \epsilon), \epsilon ) \leq C\underline{r}(p, \epsilon), \quad \forall p \in [c/m, 1-c/m] \cap \{p: p + C\underline{r}(p, \epsilon) \leq 1 \}.
		\end{split}
	\end{equation}
\end{Lemma}
Next, we observe that $\widetilde{\CI}$ is an interval by the monotonicity of ${\psi}^{+}_{p, \epsilon}$ and ${\psi}^{-}_{p, \epsilon}$. For any $p\in [0,1]$, $\epsilon \in [0, \epsilon_{\max}]$, we have
\begin{equation}\label{Ineq: Length guarantee}
    \begin{split}
    \sup_QP_{\epsilon,p,Q}\left(|\widetilde{\mathrm{CI}}|\geq C' \ell(n, \epsilon, m, p)  \right)\leq& \sup_QP_{\epsilon,p,Q}\left(|\wt{\mathrm{CI}}|\geq C' \ell(n, \epsilon, m, p)  ,p\in\wt{\mathrm{CI}}\right)+\sup_QP_{\epsilon,p,Q}\left(p\notin \wt{\mathrm{CI}} \right) \\
    \leq & \sup_Q P_{\epsilon,p,Q}\left(|\wt{\mathrm{CI}}|\geq C' \ell(n, \epsilon, m, p)  ,p\in\wt{\mathrm{CI}}\right) + \alpha/6,
    \end{split}
\end{equation} where the last inequality is by the coverage guarantee we have just shown. We will bound
$$\sup_QP_{\epsilon,p,Q}\left(|\wt{\mathrm{CI}}|\geq C' \ell(n, \epsilon, m, p)  ,p\in\wt{\mathrm{CI}}\right),$$
according to three cases.
\begin{itemize}[leftmargin=*]
	\item (Case 1: $p \in [0, \frac{1}{4m})$) In this case, $p \leq 1/2$ and $\ell(n, \epsilon, m, p)  = p + \frac{1}{m}\left( \frac{1}{n} + \epsilon \right)$. In addition, without loss of generality, we can assume that $C' \ell(n,\epsilon,m,p) \leq 1/2$, as otherwise the statement is trivial since $ \sup_Q P_{\epsilon,p,Q}\left(|\wt{\mathrm{CI}}|\geq 2C' \ell(n, \epsilon, m, p)\right) = 0$. Thus, for any $C' > 1$,
	\begin{equation} \label{ineq:length-regime1}
		\begin{split}
			\sup_Q P_{\epsilon,p,Q}\left(|\wt{\mathrm{CI}}|\geq C' \ell(n, \epsilon, m, p)  ,p\in\wt{\mathrm{CI}}\right)   &\overset{(a)} \leq \sup_Q P_{\epsilon,p,Q}\left(p + (C'-1)\left(p + \frac{1}{m}\left( \frac{1}{n} + \epsilon \right)\right) \in \wt{\CI}\right) \\
			& \overset{(b)}\leq \sup_Q P_{\epsilon,p,Q}\left(\psi^-_{p + (C'-1)\left(p + \frac{1}{m}\left( \frac{1}{n} + \epsilon \right)\right), \epsilon  } = 0\right),
		\end{split}
	\end{equation} where (a) is because $|\wt{\mathrm{CI}}|\geq C' \ell(n, \epsilon, m, p) $ and $p\in\wt{\mathrm{CI}}$ together imply that $p + (C'-1)\left(p + \frac{1}{m}\left( \frac{1}{n} + \epsilon \right)\right) \in \wt{\CI}$ as $\wt{\CI}$ is an interval (note that $p + (C'-1)\left(p + \frac{1}{m}\left( \frac{1}{n} + \epsilon \right)\right) \leq 1/2$ as $C' \ell(n, \epsilon, m, p)  = C'\left(p + \frac{1}{m}\left( \frac{1}{n} + \epsilon \right)\right) \leq 1/2$); (b) is by the definition of $\wt{\CI}$.
	
	By Theorem \ref{thm:type2}, to control the Type-2 error of $\psi^-_{p + (C'-1)\left(p + \frac{1}{m}\left( \frac{1}{n} + \epsilon \right)\right), \epsilon  }$, we need 
	\begin{equation*}
		\begin{split}
			\left\{ \begin{array}{l}
				(i):p + (C'-1)\left(p + \frac{1}{m}\left( \frac{1}{n} + \epsilon \right)\right) \geq \frac{4}{m}\left( \frac{10 \log(24/\alpha)}{n} + 3 \epsilon \right), \\
				(ii):(C'-1)\left(p + \frac{1}{m}\left( \frac{1}{n} + \epsilon \right)\right) \geq \underline{r}(p + (C'-1)\left(p + \frac{1}{m}\left( \frac{1}{n} + \epsilon \right)\right), \epsilon).
			\end{array} \right.
		\end{split}
	\end{equation*} Notice that the first condition above holds as long as $C'$ is larger than a universal constant depending on $\alpha$ only. In addition, by Lemma \ref{lm:additional-r-property} (i) and the fact that 
\(\bar{r}(q, \epsilon) = \underline{r}(1 - q, \epsilon)\), 
it is easy to check that $\underline{r}(q, \epsilon) \leq \left(1 - \frac{1}{6e}\right)q$ for all $q \leq 1/2$. By the definition of $\underline{r}(p,\epsilon)$, and using the fact that $C'\left(p+\frac{1}{m}\left(\frac{1}{n}+\epsilon\right)\right) \leq 1/2$, a sufficient condition for the second requirement is
\begin{equation*}  (C'-1)\left(p + \frac{1}{m}\left( \frac{1}{n} + \epsilon \right)\right) \geq (1 - 1/(6e) ) \left( p + (C'-1)\left(p + \frac{1}{m}\left( \frac{1}{n} + \epsilon \right)\right) \right),  \end{equation*} which can be satisfied as long as $C' \geq 6e$. In summary, when $C'$ is sufficiently large, we have $\sup_Q P_{\epsilon,p,Q}\left(\psi^-_{p + (C'-1)\left(p + \frac{1}{m}\left( \frac{1}{n} + \epsilon \right)\right), \epsilon  } = 0\right) \leq \alpha/6$ by Theorem \ref{thm:type2}. Plugging it back into \eqref{ineq:length-regime1}, we have $\sup_QP_{\epsilon,p,Q}\left(|\wt{\mathrm{CI}}|\geq C' \ell(n, \epsilon, m, p)  ,p\in\wt{\mathrm{CI}}\right) \leq \alpha/6$.

	\item (Case 2: $p \in \left[\frac{1}{4m}, 1-\frac{1}{4m}\right]$) By Lemma \ref{lm:test-rate-CI-length-connection}, it is easy to check $\ell(n, \epsilon, m, p) \asymp\bar{r}(p, \epsilon) \wedge \underline{r}(p, \epsilon) \asymp \bar{r}(p, \epsilon) \vee \underline{r}(p, \epsilon)$ in this regime. Thus,
	\begin{equation} \label{ineq:length-regime2}
		\begin{split}
			&\sup_Q P_{\epsilon,p,Q}\left(|\wt{\mathrm{CI}}|\geq C' \ell(n, \epsilon, m, p)  ,p\in\wt{\mathrm{CI}}\right) \overset{(a)}\leq \sup_Q P_{\epsilon,p,Q}\Bigg(p - \frac{C''}{2} \bar{r}(p, \epsilon) \in \wt{\CI} \text{ or } p + \frac{C''}{2}  \underline{r}(p, \epsilon)   \in \wt{\CI}\Bigg) \\
			& \leq \sup_Q P_{\epsilon,p,Q}\Bigg(p - \frac{C''}{2} \bar{r}(p, \epsilon) \in \wt{\CI} \Bigg)  + \sup_Q P_{\epsilon,p,Q}\Bigg(p + \frac{C''}{2}  \underline{r}(p, \epsilon)   \in \wt{\CI}\Bigg),
		\end{split}
	\end{equation} where in (a), we use the fact $\wt{\CI}$ is an interval. 
	
	Next, we bound $\sup_Q P_{\epsilon,p,Q}\Bigg(p - \frac{C''}{2} \bar{r}(p, \epsilon) \in \wt{\CI} \Bigg) $. The analysis for bounding $ \sup_Q P_{\epsilon,p,Q}\Bigg(p + \frac{C''}{2}  \underline{r}(p, \epsilon)   \in \wt{\CI}\Bigg)$ is similar and we omit it here for simplicity. Notice that when $p - \frac{C''}{2} \bar{r}(p, \epsilon) < 0$, the corresponding probability is zero. So without loss of generality, we assume $p - \frac{C''}{2} \bar{r}(p, \epsilon) \geq 0$. By definition of $\wt{\CI}$, we have\begin{equation*}
	\begin{split}
		& \sup_Q P_{\epsilon,p,Q}\Bigg(p - \frac{C''}{2} \bar{r}(p, \epsilon) \in \wt{\CI} \Bigg) \leq \sup_Q P_{\epsilon,p,Q}\Bigg( \psi^+_{ p - \frac{C''}{2} \bar{r}(p, \epsilon), \epsilon }  = 0\Bigg).
	\end{split}
\end{equation*}		By Theorem \ref{thm:type2}, to control the Type-2 error of $\psi^+_{ p - \frac{C''}{2} \bar{r}(p, \epsilon), \epsilon }$ above, we need 
	\begin{equation*}
		\begin{split}
				(i):p - \frac{C''}{2} \bar{r}(p, \epsilon) \leq 1- \frac{4}{m}\left( \frac{10 \log(24/\alpha)}{n} + 3 \epsilon \right) \quad \textnormal{and}\quad (ii):
				\frac{C''}{2} \bar{r}(p, \epsilon) \geq \bar{r}\left(p - \frac{C''}{2} \bar{r}(p, \epsilon), \epsilon\right).
		\end{split}
	\end{equation*} The first condition above holds because $p \leq 1 - \frac{1}{4m}$ and $\frac{\log(2 / \alpha)}{n} + \epsilon_{\max}$ is sufficiently small; the second condition above holds as long as $C''$ is large by Lemma \ref{lm:r-property-length-bino}. In summary, as long as $C'$ is large enough to allow $C''$ to be taken sufficiently large, 
by Theorem \ref{thm:type2} and \eqref{ineq:length-regime2}, we have
$\sup_Q P_{\epsilon,p,Q}\left(|\wt{\mathrm{CI}}|\geq C' \ell(n, \epsilon, m, p)  ,p\in\wt{\mathrm{CI}}\right) \leq   \alpha/3.
$

	\item (Case 3: $p \in (1-\frac{1}{4m} , 1]$) The analysis is similar to the Case 1, and we omit the details for simplicity. 
\end{itemize}
 
 By plugging the upper bound of $\sup_Q P_{\epsilon,p,Q}\left(|\wt{\mathrm{CI}}|\geq C' \ell(n, \epsilon, m, p)  ,p\in\wt{\mathrm{CI}}\right) $ into \eqref{Ineq: Length guarantee}, we have
 \begin{equation*}
		 \sup_QP_{\epsilon,p,Q}\left(|\widetilde{\mathrm{CI}}|\geq C' \ell(n, \epsilon, m, p)  \right)\leq  \sup_QP_{\epsilon,p,Q}\left(|\wt{\mathrm{CI}}|\geq C' \ell(n, \epsilon, m, p)  ,p\in\wt{\mathrm{CI}}\right) + \alpha/6 \leq \alpha/2.
	\end{equation*}
 This finishes the length guarantee and also finishes the proof of Theorem \ref{thm:bino-upper-no-dis}.
 \end{proof}
\vskip.2cm
{\noindent \bf (Part II -- Guarantees for $\widebar{\CI}$)} 
By definition $\wt{\CI} \subseteq \widebar{\CI}$, so the coverage guarantee of $\widebar{\CI}$ directly follows from the coverage guarantee of $\wt{\CI}$. Now, we show the length guarantee of $\widebar{\CI}$. First, we observe that following the same analysis as the length analysis in Theorem \ref{thm:bino-upper-no-dis}, i.e., the analysis of \eqref{Ineq: Length guarantee}, we have
\begin{equation} \label{ineq:length-guarantee-grid}
	\begin{split}
		 \sup_{p, Q} P_{\epsilon,p,Q}\left(|\widebar{\mathrm{CI}}|\geq C' \ell(n, \epsilon, m, p) \right) \leq \alpha/2, \quad \forall \epsilon \in \cE.
	\end{split}
\end{equation} Now let us consider $\epsilon \in [0, \epsilon_{\max}] \setminus \cE$. For any $\epsilon \in [0, \epsilon_{\max}] \setminus \cE$, by construction of $\cE$, we can find $\epsilon_0 \in \cE$ such that $\epsilon_0 \geq \epsilon$ and $\ell(n, \epsilon, m, p) \asymp \ell(n, \epsilon_0, m, p)$. Then there exists a large $C > 0$ such that 
\begin{equation*}
		\begin{split}
			 \sup_{p, Q} P_{\epsilon,p,Q}\left(|\widebar{\mathrm{CI}}|\geq C \ell(n, \epsilon, m, p) \right) & \overset{ \eqref{eq:model-inclusion} }\leq   \sup_{p, Q} P_{\epsilon_0,p,Q}\left(|\widebar{\mathrm{CI}}|\geq C \ell(n, \epsilon, m, p) \right) \\
			 & \overset{(a)} \leq \sup_{p, Q} P_{\epsilon_0,p,Q}\left(|\widebar{\mathrm{CI}}|\geq C' \ell(n, \epsilon_0, m, p) \right) \overset{\eqref{ineq:length-guarantee-grid}} \leq  \alpha/2,
		\end{split}
	\end{equation*} where (a) is because $\ell(n, \epsilon, m, p) \asymp \ell(n, \epsilon_0, m, p)$.

\vskip.2cm
{\noindent \bf (Part III-1 -- Coverage Guarantee of $\widehat{\CI}$)} Recall the definition of $\widehat{\CI}$ in \eqref{eq:ci-dis}. For any $\epsilon \in [0, \epsilon_{\max}]$ and $p\in [0,1]$, we have
\begin{equation} \label{ineq:hatCI-coverage}
    \begin{split}
        \sup_QP_{\epsilon,p,Q}\left(p \notin \wh{\mathrm{CI}}\right)\leq \sup_QP_{\epsilon,p,Q}\left(\sup_{\epsilon'\in \cE} \wh{\psi}^{+}_{p, \epsilon'}=1 \right)+\sup_QP_{\epsilon,p,Q}\left(\sup_{\epsilon'\in \cE} \wh{\psi}^{-}_{p, \epsilon'}=1\right).
    \end{split}
\end{equation} Next, we will bound the two terms at the end of the above equation. Due to symmetry, we will just show the bound for $\sup_QP_{\epsilon,p,Q}\left(\sup_{\epsilon'\in \cE} \wh{\psi}^{+}_{p, \epsilon'}=1 \right)$. We divide the proof based on the range of $p$. 
\begin{itemize}[leftmargin=*]
	\item (Case 1: $p \in [0,1-1/m]$) In this case, given any $\epsilon \in [0, \epsilon_{\max}]$,
	\begin{equation*}\label{ineq:CI hat coverage}
		\begin{split}
			&\sup_{Q}P_{\epsilon,p,Q}\left(\sup_{\epsilon'\in \cE} \wh{\psi}^{+}_{p, \epsilon'}=1 \right) = \sup_{Q}P_{\epsilon,p,Q}\left(\sup_{\epsilon'\in \cE} \min_{q\in[0, \lceil mp \rceil/m ]\cap S_m}\phi_{q,\epsilon'}^+=1 \right) \quad (\text{by definition of }  \wh{\psi}^{+}_{p, \epsilon'}) \\
			& \leq \sup_{Q} P_{\epsilon,p,Q}\left(\sup_{\epsilon'\in \cE} \phi_{\lceil mp \rceil/m,\epsilon'}^+=1 \right)\overset{(a)}\leq \sup_{Q}P_{\epsilon,\lceil mp \rceil/m,Q}\left(\sup_{\epsilon'\in \cE} \phi_{\lceil mp \rceil/m,\epsilon'}^+=1 \right) \\
			& \overset{ \eqref{eq:model-inclusion} }\leq \sup_{Q} P_{\epsilon_{\max},\lceil mp \rceil/m,Q}\left(\sup_{\epsilon'\in [0, \epsilon_{\max}]} \phi_{\lceil mp \rceil/m,\epsilon'}^+=1 \right) \overset{(b)}\leq \alpha/12,
		\end{split}
	\end{equation*} where in (a) we use the fact that Binomial($m,\lceil mp \rceil/m$) stochastically dominates Binomial($m,p$) as $\lceil mp \rceil/m \geq p$, i.e., we may construct $X \sim \text{Binomial}(m,\lceil mp \rceil/m)$, $Y \sim \text{Binomial}(m,p)$, and some nonnegative random variable $Z$ such that $X = Y + Z$ almost surely; applying it to the clean data part in $ P_{\epsilon,p,Q}$ and $P_{\epsilon,\lceil mp \rceil/m,Q}$, we have 
	\begin{equation*}
		\begin{split}
			& P_{\frac{\lceil mp \rceil}{m}}\left(\sup_{\epsilon'\in \cE} \phi_{\lceil mp \rceil/m,\epsilon'}^+=1 \right) = P_{\frac{\lceil mp \rceil}{m}}\left(\sup_{\epsilon'\in \cE} \indi\left\{\frac{1}{n}\sum_{i=1}^n\indi\{X_i\leq m \overline{t}( \lceil mp \rceil/m ,\epsilon')\}< \overline{\tau}(\lceil mp \rceil/m,\epsilon')\right\} = 1 \right) \\
			 & \overset{(i)}= P_{ \{X_i, Z_i\}_{i=1}^n }\left(\sup_{\epsilon'\in \cE} \indi\left\{\frac{1}{n}\sum_{i=1}^n\indi\{X_i + Z_i\leq m \overline{t}( \lceil mp \rceil/m ,\epsilon')\}< \overline{\tau}(\lceil mp \rceil/m,\epsilon')\right\} = 1 \right) \\
			 & \geq  P_{ p}\left(\sup_{\epsilon'\in \cE} \indi\left\{\frac{1}{n}\sum_{i=1}^n\indi\{X_i \leq m \overline{t}( \lceil mp \rceil/m ,\epsilon')\}< \overline{\tau}(\lceil mp \rceil/m,\epsilon')\right\} = 1 \right) =P_{p}\left(\sup_{\epsilon'\in \cE} \phi_{\lceil mp \rceil/m,\epsilon'}^+=1 \right)
		\end{split}
	\end{equation*} where in (i), $Z_i \geq 0$ and $X_i \sim \textnormal{Binomial}(m,p)$; (b) is by the simultaneous Type-1 error control of $\phi_{q,\epsilon'}^+$ shown in Theorem \ref{thm:test-up}.
	
	\item (Case 2: $p \in (1-1/m,1]$) In this case, given any $\epsilon \in [0, \epsilon_{\max}]$,
	\begin{equation*}
		\begin{split}
			&\sup_{Q}P_{\epsilon,p,Q}\left(\sup_{\epsilon'\in \cE} \wh{\psi}^{+}_{p, \epsilon'}=1 \right) = \sup_{Q}P_{\epsilon,p,Q}\left(\sup_{\epsilon'\in \cE} \left( \phi_{p,\epsilon'}^+\wedge \min_{q\in S_m\backslash\{1\}}\phi_{q,\epsilon'}^+ \right)=1 \right) \quad (\text{by definition of }  \wh{\psi}^{+}_{p, \epsilon'}) \\
			& \leq \sup_{Q}P_{\epsilon,p,Q}\left(\sup_{\epsilon'\in \cE} \phi_{p,\epsilon'}^+=1 \right) \overset{ \eqref{eq:model-inclusion} }\leq \sup_{Q}P_{\epsilon_{\max},p,Q}\left(\sup_{\epsilon'\in [0, \epsilon_{\max}]} \phi_{p,\epsilon'}^+=1 \right)
			 \overset{(a)}\leq \alpha/12,
		\end{split}
	\end{equation*}
	where (a) is by the simultaneous Type-1 error control of $\phi^+_{p, \epsilon}$ given in Theorem \ref{thm:test-up}.
\end{itemize}
In summary, we have shown $\sup_QP_{\epsilon,p,Q}\left(\sup_{\epsilon'\in \cE} \wh{\psi}^{+}_{p, \epsilon'}=1 \right) \leq \alpha/12$. Following the same proof, we can also show $\sup_QP_{\epsilon,p,Q}\left(\sup_{\epsilon'\in \cE} \wh{\psi}^{-}_{p, \epsilon'}=1\right) \leq \alpha/12$. Plugging them back into \eqref{ineq:hatCI-coverage}, we have shown for any $\epsilon \in [0, \epsilon_{\max}]$ and $p\in [0,1]$, $ \sup_Q P_{\epsilon,p,Q}\left(p \notin \wh{\mathrm{CI}}\right) \leq \alpha/6$. This finishes the proof for the coverage guarantee.

\vskip.2cm
{\noindent \bf (Part III-2 -- Length Guarantee of $\widehat{\CI}$)} In this part, we will show 
\begin{equation} \label{ineq:hatCI-length}
	\sup_Q P_{\epsilon,p,Q}\left(|\widehat{\CI}|\geq C^* \ell(n, \epsilon, m, p) \right) \leq \alpha
\end{equation} for some $C^* >0$. Without loss of generality, we can assume $\wh{p}_{\rm left} \leq \wh{p}_{\rm right}$; the case $\wh{p}_{\rm left} > \wh{p}_{\rm right}$ is handled by our convention $|\widehat{\CI}|=0$ and need not be considered.

We will leverage the coverage and length guarantees of $\widebar{\CI}$ shown in Part II. Let us first denote the endpoints of $\widebar{\CI}$ as $\widebar{p}_{\rm{left}}$ and $\widebar{p}_{\rm{right}}$, i.e.,    
\begin{equation*} \label{eq:barCI-end-points}
		\begin{split}
			\widebar{p}_{\rm{left}} = \inf\{p \in [0,1]:  {\psi}_{p,\epsilon}^+ = 0 \text{ for all }\epsilon\in\mathcal{E} \} \quad \text{and} \quad \widebar{p}_{\rm{right}} = \sup\{p \in [0,1]:  {\psi}_{p,\epsilon}^- = 0 \text{ for all }\epsilon\in\mathcal{E} \}.
		\end{split}
	\end{equation*}  The following lemma shows a few connections of $\wh{p}_{\rm{left}}$/$\wh{p}_{\rm{right}}$ with $\widebar{p}_{\rm{left}}$/$\widebar{p}_{\rm{right}}$ and its proof is given in Appendix \ref{sec:thm-upper-supporting-lemma}. 

\begin{Lemma}\label{lm:connection-hatp-barp} (i) If $\widebar{p}_{\rm{left}} > 1-1/m$, then $\wh{p}_{\rm{left}} = \widebar{p}_{\rm{left}}$. Similarly, if $\widebar{p}_{\rm{right}} < 1/m$, then $\wh{p}_{\rm{right}} = \widebar{p}_{\rm{right}}$. (ii) We always have $[\wh{p}_{\rm{left}}, \wh{p}_{\rm{right}}] \subseteq [\widebar{p}_{\rm{left}}- 1/m, \widebar{p}_{\rm{right}} + 1/m]$. 
\end{Lemma}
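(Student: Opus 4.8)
\emph{Proof plan.} Both claims are deterministic statements comparing the level sets of the discretized tests $\{\wh{\psi}^{+}_{p,\epsilon},\wh{\psi}^{-}_{p,\epsilon}\}$ with those of $\{\psi^{+}_{p,\epsilon},\psi^{-}_{p,\epsilon}\}$. The plan is to first record, using the monotonicity in $p$ of all these tests, that $\wh{L}:=\{p\in[0,1]:\wh{\psi}^{+}_{p,\epsilon}=0\text{ for all }\epsilon\in\cE\}=[\wh{p}_{\rm{left}},1]$ and $\wh{R}:=\{p\in[0,1]:\wh{\psi}^{-}_{p,\epsilon}=0\text{ for all }\epsilon\in\cE\}=[0,\wh{p}_{\rm{right}}]$, and likewise $\widebar{L}=[\widebar{p}_{\rm{left}},1]$ and $\widebar{R}=[0,\widebar{p}_{\rm{right}}]$ for the $\psi$-versions, so that $\widehat{\CI}=[\wh{p}_{\rm{left}},\wh{p}_{\rm{right}}]$ and $\widebar{\CI}=[\widebar{p}_{\rm{left}},\widebar{p}_{\rm{right}}]$. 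The whole argument then reduces to comparing $\wh{L}$ with $\widebar{L}$; the companion statements for $\wh{p}_{\rm{right}},\widebar{p}_{\rm{right}}$ follow by the reflection $p\mapsto 1-p$ (equivalently $X_i\mapsto m-X_i$), which exchanges $(\overline{t},\overline{\tau},\phi^{+},\psi^{+},\wh{\psi}^{+})$ with $(\underline{t},\underline{\tau},\phi^{-},\psi^{-},\wh{\psi}^{-})$ since $\underline{t}(p,\epsilon)=1-\overline{t}(1-p,\epsilon)$ and $\underline{\tau}(p,\epsilon)=\overline{\tau}(1-p,\epsilon)$.

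For part (ii) I would prove the ``$1/m$-slack'' bound $\wh{p}_{\rm{left}}\geq\widebar{p}_{\rm{left}}-\tfrac1m$ directly. Fix $p<\widebar{p}_{\rm{left}}-\tfrac1m$; then $p\leq 1-\tfrac1m$, so $\wh{\psi}^{+}_{p,\epsilon}=\min_{q\in[0,\lceil mp\rceil/m]\cap S_m}\phi^{+}_{q,\epsilon}$. Since $\lceil mp\rceil/m<p+\tfrac1m<\widebar{p}_{\rm{left}}$, the point $\lceil mp\rceil/m$ lies outside $\widebar{L}$, so there is $\epsilon\in\cE$ with $\psi^{+}_{\lceil mp\rceil/m,\epsilon}=\min_{q\in[0,\lceil mp\rceil/m]}\phi^{+}_{q,\epsilon}=1$, i.e.\ $\phi^{+}_{q,\epsilon}=1$ for every $q\in[0,\lceil mp\rceil/m]$; restricting to $q\in S_m$ gives $\wh{\psi}^{+}_{p,\epsilon}=1$, so $p\notin\wh{L}$. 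Hence $\wh{p}_{\rm{left}}\geq\widebar{p}_{\rm{left}}-\tfrac1m$, and the mirror computation (with $\lfloor mp\rfloor/m>p-\tfrac1m$ and the branch $\wh{\psi}^{-}_{p,\epsilon}=\min_{q\in[\lfloor mp\rfloor/m,1]\cap S_m}\phi^{-}_{q,\epsilon}$) gives $\wh{p}_{\rm{right}}\leq\widebar{p}_{\rm{right}}+\tfrac1m$; together these are (ii).

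Part (i) is the substantive part. The observation to isolate first is that \emph{on the boundary block $p\in(1-\tfrac1m,1]$ the test $\phi^{+}_{p,\epsilon}$ does not depend on $\epsilon$}: by \eqref{def:t} and \eqref{def:tau}, $\overline{t}(p,\epsilon)=1-\tfrac1m$ and $\overline{\tau}(p,\epsilon)=\tfrac12(1-p^m)-\tfrac{3\log(24/\alpha)}{n}$ there, so $\phi^{+}_{p,\epsilon}=\indi\{\tfrac1n\sum_{i=1}^n\indi\{X_i\leq m-1\}<\overline{\tau}(p,\epsilon)\}=:\phi^{+}_{p}$, which is non-increasing in $p$ because $\overline{\tau}(\cdot,\epsilon)$ is. Consequently, for $p\in(1-\tfrac1m,1]$,
\[
\psi^{+}_{p,\epsilon}=\left(\min_{q\in[0,1-1/m]}\phi^{+}_{q,\epsilon}\right)\wedge\phi^{+}_{p}=:a_\epsilon\wedge\phi^{+}_{p},\qquad \wh{\psi}^{+}_{p,\epsilon}=\left(\min_{q\in S_m\setminus\{1\}}\phi^{+}_{q,\epsilon}\right)\wedge\phi^{+}_{p}=:b_\epsilon\wedge\phi^{+}_{p},
\]
where the minimum over $(1-\tfrac1m,p]$ collapses to $\phi^{+}_{p}$ by monotonicity of $\phi^{+}_{\cdot}$, and $a_\epsilon\leq b_\epsilon$ since $S_m\setminus\{1\}\subset[0,1-\tfrac1m]$. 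Now assume $\widebar{p}_{\rm{left}}>1-\tfrac1m$. One cannot have $a_\epsilon=0$ for all $\epsilon\in\cE$, since that would give $\psi^{+}_{p,\epsilon}=0$ on $(1-\tfrac1m,1]$ for all $\epsilon$, i.e.\ $(1-\tfrac1m,1]\subseteq\widebar{L}$ and hence $\widebar{p}_{\rm{left}}\leq 1-\tfrac1m$; so some $\epsilon_0\in\cE$ has $a_{\epsilon_0}=1$, whence also $b_{\epsilon_0}=1$. Testing with $\epsilon_0$ shows that on $(1-\tfrac1m,1]$ the condition ``$\psi^{+}_{p,\epsilon}=0$ for all $\epsilon\in\cE$'' is equivalent to $\phi^{+}_{p}=0$, and likewise for $\wh{\psi}^{+}$; hence $\widebar{L}\cap(1-\tfrac1m,1]=\{p>1-\tfrac1m:\phi^{+}_{p}=0\}=\wh{L}\cap(1-\tfrac1m,1]$. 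The part-(ii) argument, applied now with $\lceil mp\rceil/m\leq 1-\tfrac1m<\widebar{p}_{\rm{left}}$, also gives $\wh{L}\cap[0,1-\tfrac1m]=\emptyset$, and $\widebar{p}_{\rm{left}}>1-\tfrac1m$ forces $\widebar{L}=[\widebar{p}_{\rm{left}},1]\subseteq(1-\tfrac1m,1]$. Therefore $\wh{L}=\widebar{L}$, so $\wh{p}_{\rm{left}}=\widebar{p}_{\rm{left}}$; the statement for $\widebar{p}_{\rm{right}}<\tfrac1m$ is its reflection.

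I expect the main obstacle to be precisely this $\epsilon$-independence of $\phi^{+}$ on the boundary block $(1-\tfrac1m,1]$: it is what renders the coarse grid $S_m$ harmless there, since the only $p$-dependent---and the only grid-sensitive---part of both $\psi^{+}$ and $\wh{\psi}^{+}$ on that block degenerates to the single common threshold $\phi^{+}_{p}$, and the remaining discrepancy between the continuum minimum $a_\epsilon$ and the grid minimum $b_\epsilon$ over $[0,1-\tfrac1m]$ is neutralized by the existence of an $\epsilon_0$ with $a_{\epsilon_0}=b_{\epsilon_0}=1$ whenever $\widebar{p}_{\rm{left}}>1-\tfrac1m$. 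A secondary point that needs care is the jump of $\overline{t}(\cdot,\epsilon)$ at $p=1-\tfrac1m$; this is harmless once $\psi^{+}_{p,\epsilon}$ is written as a minimum that already includes the whole block $[0,1-\tfrac1m]$.
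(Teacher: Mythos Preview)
Your proposal is correct and follows essentially the same route as the paper. The only organizational difference is that the paper packages the key observation---that on $(1-\tfrac1m,1]$ the test $\phi^{+}_{p,\epsilon}$ is $\epsilon$-independent and monotone, so both $\widebar{p}_{\rm{left}}$ and $\wh{p}_{\rm{left}}$ reduce to $\inf\{p\in(1-\tfrac1m,1]:\phi^{+}_{p}=0\}$ in the boundary case---into a separate two-case characterization lemma for $\widebar{p}_{\rm{left}}$ (the analogue of Lemma~\ref{lm:p-hat-charac}), and then reads off (i) by matching the second cases; your level-set argument with $a_\epsilon,b_\epsilon$ does the same thing in place. For (ii) the paper argues the contrapositive direction (starting from $\wh{p}_{\rm{left}}$ and pushing the grid minimum to the continuum minimum over $[0,\wh{p}_{\rm{left}}+\tfrac1m]$), while you start from $p<\widebar{p}_{\rm{left}}-\tfrac1m$ and restrict the continuum minimum to the grid; these are equivalent.
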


 By the coverage and length guarantees of $\widebar{\CI}$ shown in Part II, we know for any $\epsilon \in [0, \epsilon_{\max}], p\in [0,1]$, and $Q$, with probability at least $1 - \alpha$ under $P_{\epsilon, p, Q}$, the following event $(G)$ happens:
\begin{equation*}
	\begin{split}
		(G) := \{p \in \widebar{\CI}\quad \text{ and }\quad |\widebar{\CI}| \leq C' \ell(n, \epsilon, m, p) \}.
	\end{split}
\end{equation*} Next, we will show that given $(G)$ happens, we have $|\widehat{\CI}| \leq C^*\ell(n, \epsilon, m, p)$ for some $C^* > 0$ depending on $C'$ only. We divide the proof into three cases based on the range of $p$. Let $c \in (0,1)$ to be a small constant we will specify later.
\begin{itemize}[leftmargin=*]
	\item (Case 1: $p \in [0,c/m)$) In this regime, $\ell(n, \epsilon, m, p)  = p + \frac{1}{m}\left( \frac{1}{n} + \epsilon \right) $. Then 
	\begin{equation*}
		\begin{split}
			\widebar{p}_{\rm{right}} \overset{(G)}\leq p + |\widebar{\CI}|  \overset{(G)}\leq (C' + 1) \left(p + \frac{1}{m}\left( \frac{1}{n} + \epsilon \right) \right) < \frac{(C' + 1)c}{m} + \frac{(C'+1)(1/n + \epsilon)}{m} \leq 1/m,
		\end{split}
	\end{equation*} where the last inequality holds as $ \frac{1}{n} + \epsilon_{\max}$ is small and we take $c$ to be a sufficiently small constant depending on $C'$ only. Then by Lemma \ref{lm:connection-hatp-barp} (i), we have $\wh{p}_{\rm{right}} = \widebar{p}_{\rm{right}}$. As a result,
	\begin{equation*}
		\begin{split}
			|\wh{\CI}| = \wh{p}_{\rm{right}} - \wh{p}_{\rm{left}} \leq \wh{p}_{\rm{right}} = \widebar{p}_{\rm{right}} \leq (C' + 1) \left(p + \frac{1}{m}\left( \frac{1}{n} + \epsilon \right) \right) \lesssim \ell(n, \epsilon, m, p).
		\end{split}
	\end{equation*}
	
\item (Case 2: $p \in [c/m,1-c/m]$) Note that in this regime,
 \begin{equation*}
 	\ell(n,\epsilon,m,p) \asymp \left(\sqrt{\frac{p(1-p)}{m}}\left(\frac{1}{\sqrt{\log n}}+\frac{1}{\sqrt{\log(1/\epsilon)}}\right) + \frac{1}{m} \right).
 \end{equation*}

Then by Lemma \ref{lm:connection-hatp-barp} (ii), we have
 \begin{equation*}
 	\begin{split}
 		|\wh{\CI}| \leq |\widebar{\CI}| + \frac{2}{m} \overset{(G)}\leq C' \ell(n,\epsilon,m,p) + \frac{2}{m} \lesssim \ell(n,\epsilon,m,p).
 	\end{split}
 \end{equation*}

\item (Case 3: $p \in (1-c/m,1]$) The proof is the same as Case 1 and we have $|\widehat{\CI}| \lesssim \ell(n, \epsilon, m, p)$. 
\end{itemize}
In summary, we have shown that given $(G)$ happens, there exists a $C^* > 0$ depending on $C'$ only such that $|\widehat{\CI}| \leq C^*\ell(n, \epsilon, m, p)$. This shows \eqref{ineq:hatCI-length} and finishes the proof for the length guarantee.
\end{proof}

\vskip 0.2in
\bibliographystyle{apalike}
\bibliography{reference}

\end{sloppypar}

		\newpage
		\appendix
		
			\begin{center}
			{\LARGE Supplement to "Robust Confidence Intervals for a Binomial Proportion: Local Optimality and Adaptivity"	
				
			}		
			\medskip
			
			\medskip
		\end{center}
		
		In this supplement, we provide the rest of the technical proofs. 

\appendix

\section{Proofs for the Binomial Model with Unknown $\epsilon$} \label{sec:proof-main}

This section collects proofs of Theorem \ref{thm:lower}, Theorem \ref{thm:test-low}, Theorem \ref{thm:test-up}, Theorem \ref{thm:type2}, Proposition \ref{prop:binom-end-pionts}, Lemma \ref{lm:p-hat-charac}, and the two supporting lemmas in the proof of Theorem \ref{thm:upper} (Lemma \ref{lm:r-property-length-bino} and Lemma \ref{lm:connection-hatp-barp}).

\subsection{Proof of Theorem \ref{thm:lower}}
The proof of this theorem relies on the following lemma.
\begin{Lemma}\label{Lem: TV to lower bound}
For any $\alpha \in (0,1/4)$, $\epsilon, \epsilon_0$ with $\epsilon \leq \epsilon_0$, and $p, p_0 \in [0,1]$ satisfying
$$
\inf_{Q_0, Q_1} \TV(P^{\otimes n}_{\epsilon_0, p_0, Q_0}, P^{\otimes n}_{\epsilon, p, Q_1}) \leq \alpha,
$$
we have
$
r_{\alpha}(\epsilon,p, \epsilon_0) > |p - p_0|.
$ Similarly, under the contaminated Poisson model (\ref{eq:Poisson}), for any $\lambda, \lambda_0 \geq 0$ satisfying
$
\inf_{Q_0, Q_1} \TV(P^{\otimes n}_{\epsilon_0, \lambda_0, Q_0}, P^{\otimes n}_{\epsilon, \lambda, Q_1}) \leq \alpha,
$ we have $
r_{\alpha}(\epsilon,\lambda, \epsilon_0) > |\lambda - \lambda_0|.
$
\end{Lemma}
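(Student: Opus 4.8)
The plan is to reduce the lower bound on $r_\alpha(\epsilon,p,\epsilon_0)$ to a two-point (Le Cam--type) testing argument, which is the standard device that converts the indistinguishability bounds of Theorem~\ref{thm:test-low} (and its Poisson analogue) into confidence-interval length lower bounds, and in particular lets Theorem~\ref{thm:lower} follow from Theorem~\ref{thm:test-low}. Fix an arbitrary $\widehat{\CI}=[l,u]\in\cI_\alpha(\epsilon_0)$ and write $\Delta=|p-p_0|$. I will use only two properties of $\widehat{\CI}$: (i) it covers $p_0$ with probability at least $1-\alpha$ under every $P_{\epsilon_0,p_0,Q_0}$, directly from $\widehat{\CI}\in\cI_\alpha(\epsilon_0)$; and (ii) it covers $p$ with probability at least $1-\alpha$ under every $P_{\epsilon,p,Q_1}$, using the model-inclusion $\{P_{\epsilon,p,Q}:Q\}\subseteq\{P_{\epsilon_0,p,Q}:Q\}$, which holds because $\epsilon\le\epsilon_0$. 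The geometric observation driving everything is that whenever both $p_0\in\widehat{\CI}$ and $p\in\widehat{\CI}$ the interval is nonempty and $|\widehat{\CI}|=u-l\ge\Delta$; that is, $\{p_0\in\widehat{\CI}\}\cap\{p\in\widehat{\CI}\}\subseteq\{|\widehat{\CI}|\ge\Delta\}$.

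Next I would transport coverage of $p_0$ from the null to the alternative using the hypothesis. For any $\eta>0$, pick $Q_0,Q_1$ with $\TV\!\left(P^{\otimes n}_{\epsilon_0,p_0,Q_0},P^{\otimes n}_{\epsilon,p,Q_1}\right)\le\alpha+\eta$. Since total variation bounds the change in any event's probability, property (i) gives $P_{\epsilon,p,Q_1}(p_0\notin\widehat{\CI})\le P_{\epsilon_0,p_0,Q_0}(p_0\notin\widehat{\CI})+(\alpha+\eta)\le 2\alpha+\eta$, while property (ii) gives $P_{\epsilon,p,Q_1}(p\notin\widehat{\CI})\le\alpha$. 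A union bound then yields $P_{\epsilon,p,Q_1}\!\left(|\widehat{\CI}|\ge\Delta\right)\ge 1-3\alpha-\eta$. Letting $\eta\downarrow0$ and taking a supremum over $Q$ shows $\sup_Q P_{\epsilon,p,Q}\!\left(|\widehat{\CI}|\ge\Delta\right)\ge 1-3\alpha$ for every $\widehat{\CI}\in\cI_\alpha(\epsilon_0)$.

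Finally I would read off the conclusion from the definition of $r_\alpha$. Since $\alpha<1/4$ we have $1-3\alpha>\alpha$, so the previous display gives $\inf_{\widehat{\CI}\in\cI_\alpha(\epsilon_0)}\sup_Q P_{\epsilon,p,Q}\!\left(|\widehat{\CI}|\ge\Delta\right)>\alpha$; because $r\mapsto\inf_{\widehat{\CI}}\sup_Q P_{\epsilon,p,Q}\!\left(|\widehat{\CI}|\ge r\right)$ is nonincreasing, the same strict inequality holds for every $r\le\Delta$, so no $r\le\Delta$ lies in the set defining $r_\alpha(\epsilon,p,\epsilon_0)$ and hence $r_\alpha(\epsilon,p,\epsilon_0)>\Delta=|p-p_0|$ (the slack $1-3\alpha$ versus $\alpha$ absorbs any boundary bookkeeping, and in the applications only $r_\alpha\gtrsim|p-p_0|$ is ever used). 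The Poisson statement is proved verbatim, replacing $\text{Binomial}(m,\cdot)$ by $\text{Poisson}(\cdot)$ and $p$ by $\lambda$; nothing above used the specific form of the clean distribution.

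\textbf{Main obstacle.} There is essentially no hard step: this is the routine plumbing lemma that turns total-variation indistinguishability into a length lower bound. The only points needing care are (a) invoking the model-inclusion \eqref{eq:model-inclusion} to retain $(1-\alpha)$-coverage of $p$ under the \emph{smaller} contamination level $\epsilon$ while coverage of $p_0$ is only guaranteed at level $\epsilon_0$; (b) handling the infimum over $Q_0,Q_1$ in the hypothesis by an $\eta$-approximation rather than assuming it is attained; and (c) tracking that the two bad events ($p_0$ or $p$ missing from $\widehat{\CI}$) together cost at most $3\alpha$, which is exactly where the assumption $\alpha<1/4$ is consumed.
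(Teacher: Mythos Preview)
Your proof is correct and takes essentially the same approach as the paper's: the paper phrases the argument as a contradiction (if some $\widehat{\CI}\in\cI_\alpha(\epsilon_0)$ were short with high probability, the test $\phi_{p_0}=\indi\{p_0\notin\widehat{\CI}\}$ would have Type-1 plus Type-2 error at most $3\alpha<1-\alpha\le 1-\TV$, a contradiction), while you run the equivalent direct version by transporting the event $\{p_0\in\widehat{\CI}\}$ across the $\TV$ bound and union-bounding. The only cosmetic difference is contradiction versus direct, and you correctly identify that the strict-versus-nonstrict inequality is immaterial for how the lemma is actually used downstream.
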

\begin{proof}[Proof of Lemma \ref{Lem: TV to lower bound}]
	The proof of Lemma \ref{Lem: TV to lower bound} is similar to the proof of a combination of Proposition 2 and Theorem 1 in \cite{luo2024adaptive}. Basically, it is followed by a contradiction argument. If a confidence interval $\wh{\mathrm{CI}}$ has small length, then the test function defined by $\phi_{p_0} = \indi \left\{p_0 \notin \wh{\mathrm{CI}}\right\}$ can solve the hypothesis testing problem $$H_0: X_1, \ldots, X_n \overset{i.i.d.}\sim P_{\epsilon_0, p_0, Q} \quad \textnormal{v.s.}\quad H_1: X_1, \ldots, X_n \overset{i.i.d.}\sim P_{\epsilon, p, Q}, $$ with small Type-1 and Type-2 errors which contradicts the assumption that $\inf_{Q_0, Q_1} \TV(P^{\otimes n}_{\epsilon_0, p_0, Q_0}, P^{\otimes n}_{\epsilon, p, Q_1}) \leq \alpha$. Here, for simplicity, we would omit the details. 
\end{proof}

The result of Theorem \ref{thm:lower} is followed directly by a combination of Lemma \ref{Lem: TV to lower bound} and Theorem \ref{thm:test-low}.


\subsection{Proofs for Supporting Lemmas of Theorem \ref{thm:upper}} \label{sec:thm-upper-supporting-lemma}

\subsubsection{Proof of Lemma \ref{lm:r-property-length-bino}}
First, we extend the definitions of $\bar{r}(p, \epsilon)$ and $\underline{r}(p, \epsilon)$ by defining $\bar{r}(p, \epsilon)= \underline{r}(p, \epsilon) = 0$ when $ p < 0 $ or $p > 1$. With this extended definition, the relation $\underline{r}(p,\epsilon)=\overline{r}(1-p,\epsilon)$ holds for all $p \in \bbR$ and $\epsilon \in [0,1]$ and in order to show \eqref{ineq:r-length-prop2}, it is enough to show 
\begin{equation*}\label{ineq:r-length-prop2-simp}
	\begin{split}
		\bar{r}( p - C\bar{r}(p, \epsilon), \epsilon ) \leq C\bar{r}(p, \epsilon), \quad \forall p \in [c/m, 1-c/m], \\
		\underline{r}( p + C\underline{r}(p, \epsilon), \epsilon ) \leq C\underline{r}(p, \epsilon), \quad \forall p \in [c/m, 1-c/m].
	\end{split}
\end{equation*} We claim that in the above two equations, it is enough to show the first one, as the second equation is equivalent to the first one as 
\begin{equation*}
	\begin{split}
		& \underline{r}( p + C\underline{r}(p, \epsilon), \epsilon ) \leq C\underline{r}(p, \epsilon), \quad \forall p \in [c/m, 1-c/m] \\
	\Longleftrightarrow	& \bar{r}(1 - p - C\bar{r}(1-p, \epsilon), \epsilon ) \leq C \bar{r}(1-p, \epsilon), \quad \forall p \in [c/m, 1-c/m] \quad (\text{as }\underline{r}(p,\epsilon)=\overline{r}(1-p,\epsilon)) \\
	\overset{(a)}\Longleftrightarrow	& \bar{r}( p - C\bar{r}(p, \epsilon), \epsilon ) \leq C\bar{r}(p, \epsilon), \quad \forall p \in [c/m, 1-c/m], 
	\end{split}
\end{equation*} where (a) is because if $p \in [c/m, 1-c/m]$, we also have $1-p \in [c/m, 1-c/m]$.  

Fix any $\epsilon \in [0, \epsilon_{\max}]$, let us denote $A = \epsilon + \sqrt{\frac{\log(24/\alpha)}{2n}}$. By assumption, $A$ is less than a sufficiently small constant. We divide the rest of the proof into two cases.
\begin{itemize}[leftmargin=*]
	\item (Case 1: $\log(1/A) \geq 4m$) In this case, by Lemma \ref{lm:test-rate-CI-length-connection}, we have $\bar{r}(p, \epsilon) \gtrsim 1/m$ for all $p \in [c/m, 1-c/m]$. In addition, by Lemma \ref{lm:additional-r-property} (ii) and $\bar{r}( p - C'\bar{r}(p, \epsilon), \epsilon ) \leq \frac{1}{m}$ for any $C' > 0$. So there exists $C_0$ such that $\bar{r}(p - C_0\bar{r}(p, \epsilon), \epsilon) \leq C_0 \bar{r}(p, \epsilon)$. Note that the same conclusion holds if we replace $C_0$ by any $C \geq C_0$.

\item (Case 2: $\log(1/A) < 4m$)  By Lemma \ref{lm:test-rate-CI-length-connection}, it is easy to check that for any $p \in [c/m, 1-c/m]$, 
\begin{equation*}
\begin{split}
      \overline{r}(p, \epsilon) &\asymp \left(\sqrt{\frac{p(1-p)}{m}}\left(\frac{1}{\sqrt{\log n}}+\frac{1}{\sqrt{\log(1/\epsilon)}}\right) + \frac{1}{m} \right) \wedge (1-p)\\
      & \asymp \sqrt{\frac{p(1-p)}{m}}\left(\frac{1}{\sqrt{\log n}}+\frac{1}{\sqrt{\log(1/\epsilon)}}\right) + \frac{1}{m}.
\end{split}
\end{equation*} Thus, for any $C_0 > 1$ and $p \in [c/m,1 - c/m]$ such that $p - C_0\bar{r}(p, \epsilon) \geq 0$,
\begin{equation*} \label{ineq:r-bar-shift-upper}
\begin{split}
		\bar{r}(p - C_0\bar{r}(p, \epsilon), \epsilon) & \lesssim \sqrt{ \frac{(p - C_0\bar{r}(p, \epsilon))(1-p + C_0\bar{r}(p, \epsilon))}{m} } \left(\frac{1}{\sqrt{\log n}}+\frac{1}{\sqrt{\log(1/\epsilon)}}\right) +  \frac{1}{m} \\
		& \lesssim \sqrt{ (C_0+1)\frac{p(1-p)}{m} }\left(\frac{1}{\sqrt{\log n}}+\frac{1}{\sqrt{\log(1/\epsilon)}}\right) +  \frac{1}{m} \quad(\text{as } \bar{r}(p, \epsilon) \lesssim 1-p) \\
        & \lesssim \sqrt{C_0} \bar{r}(p, \epsilon).
\end{split}
\end{equation*} 
Therefore, as long as $C_0$ is large enough, we have $\bar{r}(p - C_0\bar{r}(p, \epsilon), \epsilon) \leq C_0  \bar{r}(p, \epsilon)$ and the same conclusion holds if we replace $C_0$ by any $C \geq C_0$.

\end{itemize}
In summary, there exists $C_0 > 0$ such that  
\begin{equation*}
	\bar{r}( p - C_0\bar{r}(p, \epsilon), \epsilon ) \leq  C_0\bar{r}(p, \epsilon), \quad \forall p \in [c/m, 1-c/m],
\end{equation*} and the same conclusion holds if we replace $C_0$ by any $C \geq C_0$. Finally, we note that the above analysis holds for all $\epsilon \in [0, \epsilon_{\max}]$. This finishes the proof of this lemma.

\subsubsection{Proof of Lemma \ref{lm:connection-hatp-barp}}
{\noindent \bf (Proof of the First Statement)} We will prove the statement regarding $\widebar{p}_{\rm{left}}$ and $\wh{p}_{\rm{left}}$, while a similar proof works for the other part. Let us give a similar characterization of $\widebar{p}_{\rm{left}}$ as $\wh{p}_{\rm{left}}$ in \eqref{eq:pleft-third}. Specifically, we will show the following claim: 
\begin{equation} \label{eq:barp-characterization}
	\begin{split}
		\widebar{p}_{\rm{left}} = \left\{ \begin{array}{ll}
			 \inf\{p \in [0,1-1/m]:  \max_{\epsilon \in \cE} \min_{q \in [0,p] } \phi_{q, \epsilon}^+ = 0  \} & \text{ if }  \max_{\epsilon \in \cE} \min_{q \in [0,1-1/m]} \phi_{q, \epsilon}^+ = 0\\
			 \inf \{ p \in (1- \frac{1}{m} ,1]: \phi_{p, \epsilon}^+ = 0  \} & \text{ if }  \max_{\epsilon \in \cE} \min_{q \in [0,1-1/m]} \phi_{q, \epsilon}^+ = 1.
		\end{array}  \right.
	\end{split}
\end{equation} The proof of this claim is very similar to the proof of Lemma \ref{lm:p-hat-charac}, so we omit the proof here for simplicity. 

As a result, if $\widebar{p}_{\rm{left}} > 1-1/m$, \eqref{eq:barp-characterization} implies that $\max_{\epsilon \in \cE} \min_{q \in [0,1-1/m]} \phi_{q, \epsilon}^+ = 1$, which further implies that $\max_{\epsilon \in \cE} \min_{q \in S_m \setminus \{1\}} \phi_{q, \epsilon}^+ = 1$. Then
\begin{equation*}
	\wh{p}_{\rm{left}} \overset{ \textnormal{Lemma }\ref{lm:p-hat-charac} }= \inf \{ p \in (1- 1/m ,1]: \phi_{p, \epsilon}^+ = 0  \} \overset{ \eqref{eq:barp-characterization} }= \widebar{p}_{\rm{left}}.
\end{equation*}
\vskip.2cm
{\noindent \bf (Proof of the Second Statement)} If we can show that $\wh{p}_{\rm{left}} \geq \widebar{p}_{\rm{left}} -1/m$ and $\wh{p}_{\rm{right}} \leq \widebar{p}_{\rm{right}} +1/m$, then the claim follows. In the following, we will show $ \wh{p}_{\rm{left}} \geq \widebar{p}_{\rm{left}} -1/m$, while the proof for $ \wh{p}_{\rm{right}} \leq \widebar{p}_{\rm{right}} +1/m$ is similar. If $\widebar{p}_{\rm{left}} > 1-1/m$, the conclusion follows directly from the first statement. Now we consider $\widebar{p}_{\rm{left}} \leq 1-1/m$. In addition, we note that if $\wh{p}_{\rm{left}} \geq 1-1/m$, we clearly have $\wh{p}_{\rm{left}} \geq \widebar{p}_{\rm{left}} -1/m$. So without loss of generality, we just need to focus on $\wh{p}_{\rm{left}} < 1-1/m$ as well. Then by Lemma \ref{lm:p-hat-charac}, we know when $\wh{p}_{\rm{left}} < 1-1/m$, then it is determined by the following formula: 
\begin{equation*}
	\begin{split}
		\wh{p}_{\rm{left}} =  \inf\{p \in S_m\setminus \{1\}:  \max_{\epsilon \in \cE} \min_{q \in [0,p+1/m] \cap (S_m \setminus \{1\}) } \phi_{q, \epsilon}^+ = 0  \}.
	\end{split}
\end{equation*}  In another way of speaking, $\max_{\epsilon \in \cE} \min_{q \in [0,\wh{p}_{\rm{left}}+1/m] \cap (S_m \setminus \{1\}) } \phi_{q, \epsilon}^+ = 0.$
This implies that $$\max_{\epsilon \in \cE} \min_{q \in [0,\wh{p}_{\rm{left}}+1/m] }\phi_{q, \epsilon}^+ = 0.$$ Recall that $\widebar{p}_{\rm{left}}=  \inf \{p \in [0,1]:  {\psi}_{p,\epsilon}^+ = 0 \text{ for all }\epsilon\in\mathcal{E} \} = \inf \{p \in [0,1]: \max_{\epsilon \in \cE} \min_{q \in [0,p]}{\phi}_{q,\epsilon}^+ = 0 \}$. Thus, $\widebar{p}_{\rm{left}} \leq \wh{p}_{\rm{left}}+1/m$ and this finishes the proof.

\subsection{Proof of Theorem \ref{thm:test-low}}
For simplicity, we will show \eqref{eq:l-s-p} holds for all $p \in [0, 1/2]$. By symmetry, a similar argument can also be applied to show \eqref{eq:l-s-p} also holds for $p \in [1/2, 1]$, since $X \sim P_{\epsilon,p, Q}$ implies $m - X \sim P_{\epsilon, 1-p, Q'}$ for some $Q'$. We will divide the rest of the proof into two parts based on different ranges of $p$.
\vskip.2cm
{\noindent \bf (Part I: $p \in [0, \frac{1}{4m} \left( \frac{\alpha}{n} + \epsilon  \right) ] $)} We will divide the proof into two cases based on the magnitude of $n$ and $\epsilon$.
\begin{itemize}[leftmargin=*]
\item (Case 1: $\alpha/n \geq \epsilon$) In this case, $p \in [0, \frac{\alpha}{2mn}] $ and $\ell(n,\epsilon,m,p) = p + \frac{1}{m} \left( \frac{1}{n} + \epsilon \right) \leq \frac{\alpha}{2mn} + \frac{2}{mn}$. We will show that when
	$r\leq c\ell(n,\epsilon,m,p) \leq c (2 + \alpha/2) \frac{1}{mn}$ for some small enough $c > 0$, then $\inf_{Q_0,Q_1} \TV\left(P^{\otimes n}_{\epsilon_{\max},p + r,Q_0}, P^{\otimes n}_{\epsilon, p, Q_1}\right) \leq \alpha$. First, it is easy to check that when $c$ is small, we have $p + r \leq 1$. Now, we take $Q_0 = \mathrm{Binomial}(m,p+r)$ and $Q_1 = \mathrm{Binomial}(m,p)$. Then 
	\begin{equation} \label{ineq:lower-part1-case1}
\TV\left(P_{\epsilon_{\max},p + r,Q_0}, P_{\epsilon, p, Q_1}\right) = \TV(P_{p+r},P_{p}) \overset{\textnormal{Lemma }\ref{lm:TV monotonicity}}\leq  \TV(P_{p+r},P_{0}) = 1 - (1-p-r)^m \leq m (p + r) \leq \frac{\alpha}{n},
	\end{equation} where the last inequality holds as long as we take $c \leq \frac{\alpha}{4 + \alpha}$.
	Then by the property of TV distance on the product measure, we get 
\begin{equation*}
  \inf_{Q_0,Q_1} \TV\left(P^{\otimes n}_{\epsilon_{\max},p + r,Q_0}, P^{\otimes n}_{\epsilon, p, Q_1}\right)  \leq n\TV(P_{p+r},P_{p}) \overset{\eqref{ineq:lower-part1-case1}} \leq \alpha.
\end{equation*}

	\item (Case 2: $\alpha /n \leq \epsilon$) In this case, $p \in [0, \frac{1}{2m} \epsilon] $ and $\ell(n,\epsilon,m,p) = p + \frac{1}{m} \left( \frac{1}{n} + \epsilon \right) \leq \frac{1}{2m} \epsilon + \frac{(1+ 1/\alpha) \epsilon}{m} $. We will show that when
	$r\leq c\ell(n,\epsilon,m,p) \leq c ((1 + 1/\alpha) + 1/2) \frac{\epsilon}{m}$ for some small enough $c > 0$, then $\inf_{Q_0,Q_1} \TV\left(P^{\otimes n}_{\epsilon_{\max},p + r,Q_0}, P^{\otimes n}_{\epsilon, p, Q_1}\right) =0$. First, it is easy to check that when $c$ is small, we have $p + r \leq 1$. Now, we take $Q_0 = \mathrm{Binomial}(m,p+r)$, and $Q_1$ to have the following probability mass function:
\begin{equation}\label{q_1 for large p}
    q_1(k)= \frac{1}{\epsilon}\binom{m}{k}(p+r)^k(1 - p-r)^{m-k} - \frac{1 - \epsilon}{\epsilon} \binom{m}{k}p^k(1 - p)^{m-k}, \quad \forall k \in [m] \cup \{0\}.
\end{equation} It is easy to see that $P_{ \epsilon_{\max}, p+r, Q_0}$ and $P_{\epsilon,p,Q_1}$ exactly match as long as $Q_1$ is a valid distribution. This will directly imply the result. So, we just need to verify $Q_1$ is a valid distribution. It is easy to check that $\sum_{k=0}^{m}q_1(k)=1$. To show that the formula \eqref{q_1 for large p} is a valid probability mass function, we only need to verify that $q_1(k)\geq 0$ for all $k \in [m] \cup \{0\}$. This is true because for any $p > 0$ and $k \in [m] \cup \{0\}$, we have
\begin{equation*}
    \begin{split}
        &\frac{\binom{m}{k}(p+r)^k(1 - r-p)^{m-k}}{\binom{m}{k}p^k(1 - p)^{m-k}} = \left(\frac{p+r}{p}\right)^k \left(\frac{1 - r-p}{1 - p}\right)^{m - k} \\
       & \geq \left(\frac{1 - r-p}{1 - p}\right)^m \geq \left(1 - r -p\right)^m \geq 1 - m(p+r) \geq 1 - \epsilon \left( 1/2 +  c (1.5 + 1/\alpha) \right)\geq 1- \epsilon
    \end{split}
\end{equation*}
where the last inequality holds as long as $c\leq \frac{1}{2(1.5 + 1/\alpha)}$. When $p = 0$, we also have $q_1(k) \geq 0$ for all $k \in [m] \cup \{0\}$ since $$(1 - r)^m \geq 1 - mr \geq 1 -  c\epsilon(1.5 + 1 / \alpha) \geq 1 - \epsilon,$$
where the last inequality holds as long as $c\leq \frac{1}{1.5 + 1/\alpha}$.
\end{itemize}

\vskip.2cm
{\noindent \bf (Part II: $p \in (\frac{1}{4m} \left( \frac{\alpha}{n} + \epsilon  \right),1/2 ] $)} We first divide the proof into two cases based on the value of $m$. 

\begin{itemize}[leftmargin=*]
	\item (Case 1: $ \frac{1}{\sqrt{m}} \geq \frac{1}{2} ( \frac{1}{\sqrt{\log n}}+\frac{1}{\sqrt{\log(1/\epsilon)}} ) $) In this case, $ \frac{1}{m} \geq \sqrt{\frac{p(1-p)}{m}}\left(\frac{1}{\sqrt{\log n}}+\frac{1}{\sqrt{\log(1/\epsilon)}}\right) $ for all $p \in [0,1/2]$. Thus, 
	\begin{equation*}
		\begin{split}
			\ell(n,\epsilon,m,p) \leq \frac{2}{m} \wedge p + \frac{1}{m} \left( \frac{1}{n} + \epsilon \right) \leq \frac{2}{m} \wedge p + \frac{1}{m} \wedge \frac{4}{\alpha} p \leq \frac{3}{m} \wedge \frac{5}{\alpha}p,
		\end{split}
	\end{equation*} where the second inequality is because of the regime of $p$ in this case. Next, we will show that when
	$r\leq c\ell(n,\epsilon,m,p) \leq c \left( \frac{3}{m} \wedge \frac{5}{\alpha} p \right)$ for some small enough $c > 0$, 
	$
		\inf_{Q_0,Q_1} \TV\left(P^{\otimes n}_{\epsilon_{\max},p - r,Q_0}, P^{\otimes n}_{\epsilon, p, Q_1}\right) \leq \alpha.
	$ Note that as long as $c \leq \alpha/5$, we have $p-r \geq 0$.
	Now, we construct $Q_1 = \mathrm{Binomial}(m, p)$. To construct $Q_0$, we define its probability mass function as
  \begin{equation}\label{q_0 for small p}
    q_0(k)=\frac{1}{\epsilon_{\max}}\binom{m}{k}p^{k}(1-p)^{m-k}-\frac{1-\epsilon_{\max}}{\epsilon_{\max}}\binom{m}{k}(p - r)^k (1 - (p - r))^{m-k}, \quad \forall k \in [m] \cup \{0\}.
\end{equation} As long as the formula \eqref{q_0 for small p} is valid probability mass function, the distributions $P_{\epsilon_{\max}, p - r, Q_0}$ and $P_{\epsilon, p, Q_1}$ exactly match, i.e., $\TV(P^{\otimes n}_{\epsilon_{\max},p - r,Q_0}, P^{\otimes n}_{\epsilon, p, Q_1}) = 0$ and it implies our result. Next, we verify $Q_0$ is a valid distribution. It is easy to check $\sum_{k=0}^{m} q_0(k) = 1$, so we just need to show $q_0(k) \geq 0$ for all $k \in [m] \cup \{0\}$ to confirm that $Q_0$ is a valid distribution.
\begin{equation}\label{Cond: lower bound first case}
    \begin{split}
        & q_0(k) \geq 0, \quad \forall k \in [m] \cup \{0\} 
    \Longleftrightarrow    \left(\frac{p - r}{p}\right)^k \left(\frac{1 - (p - r)}{1 - p}\right)^{m - k} \leq \frac{1}{1-\epsilon_{\max}}, \quad \forall k\in[m]\cup\{0\}\\
        \overset{(a)}{\Longleftrightarrow} & \left(\frac{1 - p + r}{1 - p}\right)^{m}\leq \frac{1}{1-\epsilon_{\max}} 
        \Longleftrightarrow  m\log\left( \frac{1 - p + r}{1 - p} \right) \leq \log \left(\frac{1}{1 - \epsilon_{\max}}\right)\\
        \overset{(b)}{\Longleftarrow} & \frac{mr}{1 - p} \leq \log\left(\frac{1}{1-\epsilon_{\max}}\right)
        \overset{(c)}\Longleftarrow  r \leq \frac{\log\left(\frac{1}{1-\epsilon_{\max}}\right)}{2m},
    \end{split}
\end{equation}
where (a) is because $\left(\frac{p - r}{p}\right)^k \left(\frac{1 - (p - r)}{1 - p}\right)^{m - k}$ is decreasing in $k$; (b) is because $\log (1+x) \leq x$ for all $x > - 1$; (c) is because $p \leq 1/2$. Notice that the last condition in \eqref{Cond: lower bound first case} is satisfied as long as we choose $c \leq \log\left(\frac{1}{1-\epsilon_{\max}}\right)/6$.

	\item (Case 2: $ \frac{1}{\sqrt{m}} < \frac{1}{2} ( \frac{1}{\sqrt{\log n}}+\frac{1}{\sqrt{\log(1/\epsilon)}} ) $) In this case, there exists $p^* \in [0,1/2]$ such that $ \frac{1}{m} = \sqrt{\frac{p^*(1-p^*)}{m}}\left(\frac{1}{\sqrt{\log n}}+\frac{1}{\sqrt{\log(1/\epsilon)}}\right)$. It is easy to verify $p^* \asymp \frac{\log n \wedge \log(1/\epsilon) }{m} \geq \frac{1}{4m} \left( \frac{\alpha}{n} + \epsilon  \right)$. 

    When $p \in (\frac{1}{4m} \left( \frac{\alpha}{n} + \epsilon  \right), C^* p^*]$, where $C^* > 0$ is some large constant which could depend on $\alpha$ and will be specified later, we have 
$
			\ell(n,\epsilon,m,p)\lesssim \frac{1}{m} \wedge p,
$ where the notation $\lesssim$ hide the dependence on $\alpha$ as well in this proof. Then following the analysis as in the Part II Case 1, we have as long as $c$ is sufficiently small, we can construct $Q_0$ and $Q_1$ such that $\TV\left(P^{\otimes n}_{\epsilon_{\max},p - r,Q_0}, P^{\otimes n}_{\epsilon, p, Q_1}\right)  = 0$.
	
Now, let us move to the case $p \in (C^* p^*, 1/2]$ (notice that if $C^* p^* > 1/2$, then this regime does not exist and we are done). Next, we consider two scenarios based on the magnitude of $n$ and $1/\epsilon$.
\vskip.2cm
{\noindent \bf (Scenario 1: $n \leq 1/\epsilon$)} In this case $\log n \leq \log(1/\epsilon)$. Thus $p > C^* p^* \geq C_1 \frac{\log (n / \alpha)}{m}$ for some $C_1$ depending on $\alpha$ and $C^*$. In addition, 
\begin{equation*}
		\begin{split}
			\ell(n,\epsilon,m,p)\lesssim \sqrt{\frac{p(1-p)}{m}}\left(\frac{1}{\sqrt{\log n}}+\frac{1}{\sqrt{\log(1/\epsilon)}}\right) \lesssim \sqrt{\frac{p(1-p)}{m}}\frac{1}{\sqrt{\log (n/\alpha)}}.
		\end{split}
	\end{equation*} Next, we are going to show that when $r \leq  c \sqrt{\frac{p (1 - p)}{m \log (n / \alpha)}}$ for some small enough $c>0$, then we can construct $Q_0$ and $Q_1$ such that $\TV\left(P^{\otimes n}_{\epsilon_{\max},p - r,Q_0}, P^{\otimes n}_{\epsilon, p, Q_1}\right)  \leq \alpha$.

In particular, we construct $Q_1 = \mathrm{Binomial}(m, p)$ and define the probability mass function of $Q_0$ by
\begin{equation*}
    q_0(k)=\frac{1}{\epsilon_{\max}}\binom{m}{k}p^k(1 - p)^{m - k} - \frac{1-\epsilon_{\max}}{\epsilon_{\max}}\frac{\indi \{k\geq mt_n\}}{\bbP(\mathrm{Binomial}(m, p - r)\geq mt_n)} \binom{m}{k}(p - r)^k(1 - p + r)^{m - k},
\end{equation*}
for all $ k\in[m]\cup\{0\}$, where 
\begin{equation*}
    t_n = p - 8\sqrt{\frac{p (1 - p) \log (n / \alpha)}{m}}.
\end{equation*}
Then, we have
\begin{equation} \label{ineq:t-positive-check}
    t_n \geq p - 8\sqrt{\frac{p \log (n / \alpha)}{m}} \geq \sqrt{\frac{C_1p\log (n / \alpha)}{m}} - 8 \sqrt{\frac{p\log (n / \alpha)}{m}} \geq 
    \Big(\sqrt{C_1} - 8\Big)\sqrt{\frac{p\log(n / \alpha)}{m}},
\end{equation}
where in the first inequality we use the fact $p \geq C_1\frac{\log (n / \alpha)}{m}$. Therefore, we have $t_n \geq 0$ as long as $C_1 \geq 64$, which also adds a lower bound on $C^*$. Also, when $n \geq 3$, we have
\begin{equation*}
    p - t_n = 8\sqrt{\frac{p (1 - p) \log(n / \alpha)}{m}} \geq 8\sqrt{\frac{p (1 - p)}{m \log(n / \alpha)}} \geq \frac{8r}{c}
\end{equation*}
where the first inequality is because $\log(n / \alpha) \geq \log(3) > 1$ and in the second inequality we use the fact that $r \leq c \sqrt{\frac{p (1 - p)}{m \log (n / \alpha)}}$. Thus, we have $(p - t_n)/2 > r$ when $c < 4$. In particular, this implies that $p > 2r \geq r$ when $c$ is sufficiently small.

Next, we derive the conditions for $Q_0$ to be a valid distribution. It is easy to check that $\sum_{k=0}^{m}q_0(k)=1$. Therefore, to ensure that $Q_0$ is a valid distribution,  we need to show that $q_0(k) \geq 0$ for all $k \in [m] \cup \{0\}$, which is guaranteed by
\begin{equation}\label{Cond: Lower bound with n}
    \begin{split}
        & p^k (1 - p)^{m - k} \geq  \frac{(1-\epsilon_{\max})(p - r)^k (1 - p + r)^{m - k}}{\bbP(\mathrm{Binomial}(m,p - r)\geq mt_n)}, \quad \forall k\geq mt_n\\
        \Longleftrightarrow & \left(\frac{p}{p - r}\right)^k\left(\frac{1 - p}{1-p + r}\right)^{m-k} \geq \frac{1-\epsilon_{\max}}{\bbP(\mathrm{Binomial}(m, p - r)\geq mt_n)}, \quad \forall k \geq mt_n \\
        \overset{(a)}\Longleftarrow & \left(\frac{p}{p - r}\right)^{mt_n}\left(\frac{1 - p}{1 - p + r}\right)^{m(1-t_n)} \geq \frac{1-\epsilon_{\max}}{\bbP(\mathrm{Binomial}(m, p - r)\geq mt_n)} \\
        \Longleftrightarrow & t_n\log\left(\frac{p}{p - r}\right) + (1 - t_n) \log \left(\frac{1 - p}{1- p + r}\right) \geq \frac{\log((1-\epsilon_{\max})/\bbP(\mathrm{Binomial}(m,p - r)\geq mt_n))}{m} \\
        \overset{(b)}{\Longleftarrow} & \frac{(t_n - p) r}{p(1 - p)} \geq \frac{\log((1-\epsilon_{\max})/\bbP(\mathrm{Binomial}(m,p - r)\geq mt_n))}{m} \\
        \Longleftrightarrow & r \leq \frac{p (1 - p)\log(\bbP(\mathrm{Binomial}(m, p - r)\geq mt_n)/(1-\epsilon_{\max}))}{m(p - t_n)},
    \end{split}
\end{equation}
where (a) is because $\left(\frac{p}{p - r}\right)^k\left(\frac{1 - p}{1-p + r}\right)^{m-k}$ is increasing in $k$; (b) is because $\log (1+x)\geq x/(1+x)$ for all $x>-1$. 
In addition, when $Q_0$ is a valid distribution, we have
\begin{equation} \label{Ineq: Bound for TV with n}
    \begin{split}
        &\TV(P_{\epsilon_{\max},p - r, Q_0},P_{\epsilon, p, Q_1}) \\
        &= \frac{1}{2}\sum_{k=0}^{m}(1-\epsilon_{\max})\left|1-\frac{\indi \{k\geq mt_n\}}{\bbP(\mathrm{Binomial}(m,p - r)\geq mt_n)}\right| \binom{m}{k}(p - r)^k(1 - p + r)^{m - k}\\
        &= (1-\epsilon_{\max})\bbP(\mathrm{Binomial}(m,p - r)< m t_n) \leq \bbP(\mathrm{Binomial}(m,p - r)< m t_n)\\
        &\overset{\textnormal{Lemma } \ref{Lem: Chernoff bound}}{\leq} \exp(-mD(\mathrm{Bernoulli}(t_n)\parallel \mathrm{Bernoulli}(p - r)))\\
        &\overset{(a)}{\leq}  \exp\left(-\frac{m(p - t_n - r)^2}{2}\left(\frac{1}{(\sqrt{t_n}+\sqrt{p - r})^2}+\frac{1}{(\sqrt{1 - t_n}+\sqrt{1 - p + r})^2}\right)\right)\\
        & \overset{(b)}\leq \exp\left(-\frac{m(p - t_n - (p - t_n)/2)^2}{2}\left(\frac{1}{4(p - r)}+\frac{1}{4(1-t_n)}\right)\right)\\
        & \overset{(c)}{\leq} \exp\left(-\frac{m(p - t_n)^2}{32}\left(\frac{1}{2p} + \frac{1}{2(1 - p)}\right)\right)  = \exp\left(-\frac{m(p-t_n)^2}{64p(1-p)}\right) \overset{(d)}{=} \frac{\alpha}{n},
    \end{split}
\end{equation}
where in (a) we use the property that squared Hellinger distance between two distributions is less than or equal to the Kullback-Leibler divergence; in (b) we use the fact that $(p - t_n)/2 > r$ when $c$ is sufficiently small, which implies both $t_n \leq p - r$ and $1 - t_n \geq 1 - p + r$; in (c) we use $p -r \leq 2p$ and $1-t_n \leq 1 \leq 2(1-p)$; (d) is by the choice of $t_n$. Then, the following holds by the property of TV distance on the product measure:
\begin{equation*}
    \TV\left(P^{\otimes n}_{\epsilon_{\max},p - r,Q_0},P^{\otimes n}_{\epsilon, p, Q_1}\right) \leq  n\TV(P_{\epsilon_{\max},p - r, Q_0},P_{\epsilon,p,Q_1}) \leq \alpha.
\end{equation*}
On the other hand, from the derivation of \eqref{Ineq: Bound for TV with n}, we also obtain the inequality $\mathbb{P}(\mathrm{Binomial}(m, p-r) \geq m t_n) \geq 1 - \frac{\alpha}{n}$, so the last condition in \eqref{Cond: Lower bound with n} for ensuring $Q_0$ is a valid distribution is implied by
\begin{equation*}
    \begin{split}
       & r \leq \frac{\log((1 - \frac{\alpha}{n})/(1-\epsilon_{\max}))p(1-p)}{m(p-t_n)} 
         \overset{(a)}{\Longleftarrow}  r \leq \frac{\log((1 -\epsilon_{\max} / 2)/(1-\epsilon_{\max}))p(1-p)}{m(p-t_n)} \\
        \overset{(b)} \Longleftrightarrow & r \leq  \frac{\log((1 - \epsilon_{\max} / 2)/(1-\epsilon_{\max}))}{8}\sqrt{\frac{p(1-p)}{m\log(n/\alpha)}},
    \end{split}
\end{equation*}
where (a) holds when $\epsilon_{\max}\geq \frac{2\alpha}{n}$ and (b) is by the setting of $t_n$. The above conditions are satisfied whenever $r \leq c \sqrt{\frac{p (1 - p)}{m \log (n / \alpha)}}$ for some sufficiently small $c > 0$ only depending on $\epsilon_{\max}$.

\vskip.2cm
{\noindent \bf (Scenario 2: $n \geq 1/\epsilon$)} In this case $\log n \geq \log(1/\epsilon)$. Thus $p > C^* p^* \geq C_2 \frac{\log (1/\epsilon)}{m}$ for some $C_2$ depending on $\alpha$ and $C^*$. In addition, 
\begin{equation*}
		\begin{split}
			\ell(n,\epsilon,m,p)\lesssim \sqrt{\frac{p(1-p)}{m}}\left(\frac{1}{\sqrt{\log n}}+\frac{1}{\sqrt{\log(1/\epsilon)}}\right) \lesssim \sqrt{\frac{p(1-p)}{m}}\frac{1}{\sqrt{\log (1/\epsilon)}}.
		\end{split}
	\end{equation*} Next, we are going to show that when $p\geq C_2\frac{\log(1 / \epsilon)}{m}$ and $r \leq  c \sqrt{\frac{p (1 - p)}{m \log (1/ \epsilon)}}$ for large $C_2>0$ and some small enough $c>0$, then $\inf_{Q_0, Q_1}\TV\left(P^{\otimes n}_{\epsilon_{\max},p - r,Q_0}, P^{\otimes n}_{\epsilon, p, Q_1}\right)  \leq \alpha$.

 In this case, let us define
 \begin{equation*}
     r_{\epsilon} = \frac{c(\epsilon_{\max})}{16}\sqrt{\frac{p (1 - p)}{m \log(1 / \epsilon)}},
 \end{equation*}
 where $c(\epsilon_{\max}) = \log((1 - \epsilon_{\max}/2)/ (1 - \epsilon_{\max}))$. It is sufficient to verify that when $\epsilon \in [0, \epsilon_{\max}/2]$, if $r \leq r_{\epsilon}$, then $\inf_{Q_0,Q_1} \TV\left(P^{\otimes n}_{\epsilon_{\max},p - r,Q_0},P^{\otimes n}_{\epsilon, p, Q_1}\right)\leq \alpha$. This is because when $\epsilon \in [\epsilon_{\max}/2, \epsilon_{\max}]$, there exist constants $C_2'$ and $c'$ such that when $p \geq C_2'\frac{\log(1 / \epsilon)}{m} \geq C_2\frac{\log(2 / \epsilon_{\max})}{m}$ and $r \leq c'\sqrt{\frac{p (1 - p)}{m \log(1 / \epsilon )}} \leq \frac{c(\epsilon_{\max})}{16}\sqrt{\frac{p (1 - p)}{m \log(2 / \epsilon_{\max})}} = r_{\epsilon_{\max}/2}$, then
\begin{equation*}
    \inf_{Q_0,Q_1} \TV\left(P^{\otimes n}_{\epsilon_{\max}, p - r, Q_0},P^{\otimes n}_{\epsilon, p, Q_1}\right) \leq \inf_{Q_0,Q_1} \TV\left(P^{\otimes n}_{\epsilon_{\max}, p - r,Q_0},P^{\otimes n}_{\epsilon_{\max}/2, p, Q_1}\right)\leq \alpha,
\end{equation*}
where the first inequality is because $\{P_{\epsilon_{\max}/2, p, Q_1} : Q_1\} \subseteq \{P_{\epsilon, p, Q_1}: Q_1\}$ when $\epsilon \geq \epsilon_{\max}/2$. From now on, we assume $\epsilon \in [0, \epsilon_{\max}/2]$. Then, we have
\begin{equation*}
    r_{\epsilon} =\frac{c(\epsilon_{\max})}{16}\sqrt{\frac{p (1 - p)}{m \log(1 / \epsilon)}} \leq \frac{c(\epsilon_{\max}) p}{16\sqrt{C_2}\log(1 / \epsilon)} \leq \frac{c(\epsilon_{\max}) p}{16 \sqrt{C_2}\log(2 / \epsilon_{\max})},
\end{equation*}
where the first inequality is because $p \geq C_2 \frac{\log(1 / \epsilon)}{m}$ and in the second inequality we use the fact that $\epsilon \leq \epsilon_{\max}/2$. Therefore, we have $r_\epsilon \leq p/2$ when $C_2 \geq \left(\frac{c(\epsilon_{\max})}{8\log(2 / \epsilon_{\max})}\right)^2$, which also adds a lower bound on $C^*$. Accordingly, we choose $C_2$ sufficiently large, depending only on $\epsilon_{\max}$, and consider the case where $r \leq r_{\epsilon} \leq p/2$ is satisfied. In this case, we define the probability mass function of $Q_1$ by
\begin{equation}\label{Def: q_1 with epsilon}
q_1(k)=
\begin{cases}
a_k & (1-\epsilon)p^k (1 - p)^{m - k} > (1-\epsilon_{\max})(p - r)^k (1 - p + r)^{m - k} \\
\frac{(1-\epsilon_{\max})\binom{m}{k}(p - r)^k (1 - p + r)^{m - k}}{\epsilon} & (1-\epsilon)p^k (1 - p)^{m - k} \leq (1-\epsilon_{\max})(p - r)^k (1 - p + r)^{m - k},
\end{cases}
\end{equation}
for all $k \in [m] \cup \{0\}$, where $a_k \geq 0$ are arbitrary nonnegative values chosen so that $\sum_{k=0}^m q_1(k) = 1$ if such a choice is possible. Suppose that the formula \eqref{Def: q_1 with epsilon} is a valid probability mass function. Then, we can define the valid probability mass function of $Q_0$ by
\begin{equation*}
    q_0(k) = \frac{(1-\epsilon)\binom{m}{k}p^k ( 1 - p)^{m - k} + \epsilon q_1(k) - (1-\epsilon_{\max})\binom{m}{k}(p - r)^k (1 - p + r)^{m - k}}{\epsilon_{\max}},
\end{equation*}
for all $k \in [m] \cup \{0 \}$. Then, the distributions $P_{\epsilon_{\max}, p-r ,Q_0}$ and $P_{\epsilon, p, Q_1}$ exactly match, which implies our result. Note that we can define $q_1(k)$ as in \eqref{Def: q_1 with epsilon} only if there exist nonnegative values $a_k \geq 0$ such that $\sum_{k=0}^m q_1(k) = 1$. This is guaranteed if 
\begin{equation}\label{Cond: Valid q_1 with epsilon}
    \sum_{k \in \mathcal{S}(p, r, \epsilon)} \frac{(1-\epsilon_{\max}) \binom{m}{k}(p - r)^k (1 - p + r)^{m - k}}{\epsilon} \leq 1,
\end{equation}
where
\begin{equation*}
\mathcal{S}(p, r, \epsilon) 
= \left\{k \in [m] \cup \{0\} : (1-\epsilon)p^k (1 - p)^{m - k} \leq (1-\epsilon_{\max})(p - r)^k (1 - p + r)^{m - k} \right\}.    
\end{equation*}
It is easy to check that
\begin{equation*}
    \mathcal{S}(p, r, \epsilon) 
= \left\{k \in [m] \cup \{0\} : 
\frac{k}{m} \leq 
\frac{
\log \left(\frac{1 - p + r}{1 - p}\right) - \frac{1}{m} \log \left( \frac{1 - \epsilon}{1 - \epsilon_{\max}} \right)
}{
\log \left(\frac{p}{p - r}\right) + \log \left(\frac{1 - p + r}{1 - p}\right) 
}
\right\}.
\end{equation*}
For $\epsilon \in [0, \epsilon_{\max}/2]$, we upper bound the threshold value appearing in the definition of $\mathcal{S}(p, r, \epsilon)$ as follows:
\begin{equation}\label{Ineq: Upper bound for threshold}
     \begin{split}
         &\frac{
\log \left(\frac{1 - p + r}{1 - p}\right) - \frac{1}{m} \log \left( \frac{1 - \epsilon}{1 - \epsilon_{\max}} \right)
}{
\log \left(\frac{p}{p - r}\right) + \log \left(\frac{1 - p + r}{1 - p}\right)} \overset{(a)}{\leq}  \frac{1}{\frac{\log \left(\frac{p}{p - r}\right)}{\log\left(\frac{1 - p + r}{1 - p}\right)} + 1} - \frac{\frac{c(\epsilon_{\max})}{m}}{\log \left(\frac{p}{p - r}\right) + \log \left(\frac{1 - p + r}{1 - p}\right) } \\
         \overset{(b)}{\leq} & p -\frac{c(\epsilon_{\max})(p - r) (1 - p)}{m r(1 - r)}
         \overset{(c)}{\leq}  p -\frac{c(\epsilon_{\max})p(1-p)}{2 m r}
         \leq   p -\frac{c(\epsilon_{\max})p(1-p)}{2 m r_{\epsilon}} \\
         = & p - 8\sqrt{\frac{p (1 - p) \log(1 / \epsilon)}{m}},
     \end{split}
\end{equation}
where (a) is because $\epsilon \leq \epsilon_{\max}/2$; in (b) we use the fact that $x/(1+x)\leq \log (1+x) \leq x$ when $x>-1$; (c) is because $r \leq p/2$. Now, define
\begin{equation*}
    t_\epsilon =  p - 8\sqrt{\frac{p (1 - p) \log(1 / \epsilon)}{m}}.
\end{equation*}
Since $p \geq C_2\frac{\log(1 / \epsilon)}{m}$, choosing $C_2 \geq 64$ ensures that $t_\epsilon \geq 0$ following the same analysis as in \eqref{ineq:t-positive-check}. Then for any $\epsilon \in [0, \epsilon_{\max}/2]$, we have
\begin{equation}\label{Ineq: r and p - t}
    \begin{split}
        2r_{\epsilon} & = \frac{\log((1 - \epsilon_{\max}/2)/(1 - \epsilon_{\max}))}{8}\sqrt{\frac{p(1 - p)}{m\log(1 / \epsilon)}} \\
        & = \frac{\log \Big( 1 + \frac{\epsilon_{\max}}{2(1 - \epsilon_{\max})}\Big)}{8}\sqrt{\frac{p(1 - p)}{m\log(1 / \epsilon)}} \overset{(a)}\leq \frac{\log (1 + \epsilon_{\max})}{8}\sqrt{\frac{p(1 - p)}{m\log(1 / \epsilon)}} \\
        & \overset{(b)}\leq \frac{\log (2/\epsilon_{\max})}{8}\sqrt{\frac{p(1 - p)}{m\log(1 / \epsilon)}} \overset{(c)}\leq \frac{\log (1/\epsilon)}{8}\sqrt{\frac{p(1 - p)}{m\log(1 / \epsilon)}} \leq 8\sqrt{\frac{p(1 - p)\log(1 / \epsilon)}{m}} = p - t_{\epsilon},
    \end{split}
\end{equation}
where (a) and (b) hold for all $\epsilon_{\max} \leq 1/2$, and in (c) we use the fact that $\epsilon \leq \epsilon_{\max}/2$. Then the condition \eqref{Cond: Valid q_1 with epsilon} is implied by
\begin{equation*} \label{Sufficient condition for valid q_1 Case 2}
    \begin{split}
        \eqref{Cond: Valid q_1 with epsilon} \Longleftrightarrow & \bbP\left(\mathrm{Binomial}(m, p - r) \leq m \frac{
\log \left(\frac{1 - p + r}{1 - p}\right) - \frac{1}{m} \log \left( \frac{1 - \epsilon}{1 - \epsilon_{\max}} \right)
}{
\log \left(\frac{p}{p - r}\right) + \log \left(\frac{1 - p + r}{1 - p}\right)}  \right)\leq \frac{\epsilon}{1-\epsilon_{\max}}\\
    \overset{\textnormal{Lemma }\ref{lm:Binomial-prop}\,(iii), \eqref{Ineq: Upper bound for threshold} }{\Longleftarrow} & \bbP \left(\mathrm{Binomial}(m, p - r_{\epsilon}) \leq m t_{\epsilon}\right)\leq \frac{\epsilon}{1-\epsilon_{\max}} \\
    \overset{(a)}{\Longleftarrow} & D\left(\mathrm{Bernoulli}(t_{\epsilon}) \| \mathrm{Bernoulli}(p - r_{\epsilon})\right)\geq \frac{\log\left((1-\epsilon_{\max})/\epsilon\right)}{m}\\
    \overset{(b)}{\Longleftarrow} & \frac{(p - r_{\epsilon} - t_{\epsilon})^2}{2}\left(\frac{1}{(\sqrt{t_{\epsilon}}+\sqrt{p -r_{\epsilon}})^2}+\frac{1}{(\sqrt{1-t_{\epsilon}}+\sqrt{1 - p + r_{\epsilon}})^2}\right) \geq \frac{\log\left( 1/\epsilon\right)}{m}\\
    \Longleftarrow & \frac{(p - r_{\epsilon} - t_{\epsilon})^2}{8}\left(\frac{1}{p - r_{\epsilon}}+\frac{1}{1-t_{\epsilon}}\right) \geq \frac{\log\left(1/\epsilon\right)}{m}
    \overset{(c)} \Longleftarrow  \frac{(p - r_{\epsilon} - t_\epsilon)^2}{16 p (1 - p)} \geq \frac{\log\left(1/\epsilon\right)}{m} \\
    \overset{\eqref{Ineq: r and p - t}} \Longleftarrow &  \frac{(p - t_\epsilon)^2}{64 p (1 - p)} \geq \frac{\log\left(1/\epsilon\right)}{m},
    \end{split}  
\end{equation*}
where in (a) we use the Chernoff bound for the binomial distribution (see Lemma~\ref{Lem: Chernoff bound}); in (b) we use the property that squared Hellinger distance between two distributions is less than or equal to the Kullback-Leibler divergence; in (c) we use the fact that $p - r_{\epsilon}\leq 2p$ and $1 - t_{\epsilon} \leq 1 \leq 2(1 -p)$. The above conditions are satisfied by the setting of $t_{\epsilon}$. This finishes the proof for Case 2 of Part~II and also finishes the proof of this theorem.
\end{itemize}


\subsection{Proof of Theorem \ref{thm:test-up}}

By symmetry, we only show the guarantee for $\phi_{p,\epsilon}^+$ as the guarantee for $\phi_{p,\epsilon}^-$ follows directly by the choice of $\underline{t}(p,\epsilon)$, $\underline{r}(p,\epsilon)$ and $\underline{\tau}(p,\epsilon)$. 
We divide the rest of the proof into two parts: simultaneous Type-1 error control and Type-2 error control.
\vskip.2cm

{\noindent \bf (Part 1: Simultaneous Type-1 error control)} Let us first focus on $p \in [0, 1-1/m]$. A key lemma we are going to use is the following and its proof is given in the subsequent subsections.

\begin{Lemma} \label{lm:r-t-property}
Suppose $ \frac{\log(2/\alpha)}{n} + \epsilon_{\max}$ is less than a sufficiently small universal constant. Then for the choices of $\overline{t}(p, \epsilon)$ and $\overline{r}(p, \epsilon)$ defined in (\ref{def:t}) and (\ref{def:r}), we have 
	\begin{itemize}
		\item (i) $\overline{t}(p, \epsilon) \in [p/2,p]$ and $1-\overline{t}(p, \epsilon) \in [1-p, \frac{3}{2} (1-p) ]$ for all $p \in [0,1-1/m]$ and all $ \epsilon \in [0, \epsilon_{\max}]$;
		\item (ii) $p + \bar{r}(p,\epsilon) \in [0,1]$ for all $p \in \left[0, 1\right]$ and all $\epsilon \in [0, \epsilon_{\max}]$, moreover, the function $p \mapsto p + \bar{r}(p,\epsilon)$ is strictly increasing in $p$ for $p \in [0,1]$ given any fixed $\epsilon \in [0, \epsilon_{\max}]$;
		\item (iii) For all $\epsilon \in [0, \epsilon_{\max}]$ and all $p\in \left[0,1-1/m\right]$, \begin{equation*} \label{ineq:r-t-property-1}
		P_{p + \overline{r}(p, \epsilon)}\left( X \leq m  \overline{t}(p, \epsilon) \right) > 10 \left( \epsilon + \sqrt{\frac{\log(24/\alpha)}{2n}} \right).
	\end{equation*} 
	In addition, for all $\epsilon \in [0, \epsilon_{\max}]$ and all $p\in \left(1- \frac{1}{m}, 1- \frac{4}{m} \left( \frac{10 \log(24/\alpha) }{n} + 3 \epsilon  \right) \right]$, 
	\begin{equation*} \label{ineq:r-t-property-2}
		P_{p}\left( X \leq m  \overline{t}(p, \epsilon) \right) \geq 6 \epsilon + \frac{20\log(24/\alpha)}{n}.
	\end{equation*}
	\end{itemize}
\end{Lemma}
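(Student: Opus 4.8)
Throughout write $A = A(\epsilon) := \epsilon + \sqrt{\log(24/\alpha)/(2n)}$ and $L = L(\epsilon) := \log(1/A)$; by the smallness assumption on $\frac{\log(2/\alpha)}{n}+\epsilon_{\max}$ we may treat $A$ as arbitrarily small, hence $L$ as arbitrarily large. For $p\in[0,1-1/m]$ set $\delta := p-\overline t(p,\epsilon) = \min\{p(1-p)/2,\ \tfrac18\sqrt{p(1-p)L/m}\}$; the second branch is active exactly when $p(1-p)\ge L/(16m)$, in which case $\overline r(p,\epsilon)=2\sqrt{p(1-p)/(mL)}$ (``Case B''), and in the first branch $\overline r(p,\epsilon)=1/(2m)$ (``Case A''); the two expressions agree at $p(1-p)=L/(16m)$, so $\overline r(\cdot,\epsilon)$ is continuous on $[0,1-1/m]$. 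Given this, parts (i) and (ii) are elementary. For (i): $\delta\le p(1-p)/2\le\min\{p,1-p\}/2$ yields $\overline t\in[p/2,p]$ and $1-\overline t\in[1-p,\tfrac32(1-p)]$ (nonnegativity of $\delta$ uses $A<1$). For (ii): on $(1-1/m,1]$, $p+\overline r=p+(1-\tfrac1{6e})(1-p)\in[p,1]$ with derivative $\tfrac1{6e}>0$; at $p=0$, $p+\overline r=\tfrac1{2m}\in[0,1]$; on $(0,1-1/m]$, $p+\overline r\le1$ is checked separately (Case A: $p+\tfrac1{2m}\le1-\tfrac1{2m}$ from $p\le1-1/m$; Case B: reduces to $\tfrac{4p}{mL}\le1-p$, which follows from $1-p\ge1/m$ and $L\ge4$). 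Strict monotonicity of $p\mapsto p+\overline r(p,\epsilon)$ holds piecewise (slope $1$ on Case A pieces; derivative $1+\tfrac{1-2p}{\sqrt{mL\,p(1-p)}}>0$ on the Case B piece because there $\sqrt{mL\,p(1-p)}\ge L/4>1$), is continuous across the Case A/B transition, and across the seam $p=1-1/m$ the value jumps \emph{up} for $L$ large, which only helps; the seams require a short check that left value $\le$ right value using $L$ large.

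\emph{Part (iii), the case $p\in(1-1/m,1]$.} Here $\overline t(p,\epsilon)=1-1/m$, so $P_p(X\le m\overline t)=1-p^m\ge1-e^{-m(1-p)}$. Writing $\beta:=1-p\ge\tfrac4m\big(\tfrac{10\log(24/\alpha)}{n}+3\epsilon\big)$ and using $1-e^{-x}\ge\min\{x/2,\,1-e^{-1}\}$ gives $1-p^m\ge\min\big\{\tfrac{20\log(24/\alpha)}{n}+6\epsilon,\ 1-e^{-1}\big\}$, and the first term is the smaller one since $\tfrac{\log(24/\alpha)}{n}+\epsilon_{\max}$ is small; this is exactly the claim.

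\emph{Part (iii), the case $p\in[0,1-1/m]$.} We must show $P(X\le m\overline t)>10A=10e^{-L}$ for $X\sim\mathrm{Bin}(m,q)$, $q=p+\overline r(p,\epsilon)$. The key identity is $q-\overline t=\overline r+\delta$, equal to $\tfrac1{2m}+\tfrac{p(1-p)}2$ in Case A and $\sqrt{p(1-p)/m}\,\big(\tfrac2{\sqrt L}+\tfrac{\sqrt L}{8}\big)$ in Case B. The plan is to invoke a reverse Chernoff (Bahadur--Rao type) lower bound for binomial left tails: when the threshold $k=\lfloor m\overline t\rfloor$ is a constant fraction of the mean $mq$ and $k\ge1$, summing the point masses over a $\sqrt{mq(1-q)}$-window around $k$ gives $P(X\le m\overline t)\ge c_0\,e^{-mD(\mathrm{Bern}(\overline t)\|\mathrm{Bern}(q))}$ for a universal $c_0>0$, with no $m$-dependent loss — this uses $\overline t(1-\overline t)\asymp q(1-q)$, valid by part (i) together with $q\asymp p$ and $1-q\asymp1-p$ when $L$ is large. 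It then suffices to show $mD(\mathrm{Bern}(\overline t)\|\mathrm{Bern}(q))\le(1-c_1)L$ for a fixed $c_1>0$, since then $c_0e^{-mD}\ge c_0A^{1-c_1}\ge10A$ for $A$ small. Using $D(\mathrm{Bern}(a)\|\mathrm{Bern}(b))\le\chi^2(\mathrm{Bern}(a)\|\mathrm{Bern}(b))=(a-b)^2/(b(1-b))$: in Case A one gets $mD\le\tfrac{m(q-\overline t)^2}{q(1-q)}\le\tfrac{(1+mp)^2}{1+2mp}\le1+\tfrac{mp}{2}$, and $mp\le L/8$ throughout Case A (for $p\le1/2$; $p\ge1/2$ by symmetry $X\mapsto m-X$), so $mD\le1+L/16$; in Case B, $q(1-q)/(p(1-p))\to1$ as $L\to\infty$, so $mD\le(1+o(1))\big(\tfrac4L+\tfrac12+\tfrac L{64}\big)\le L/16$ for $L$ large. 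The constant $\tfrac18$ in $\delta$ is exactly what makes the dominant term $L/64\ll L$, producing the needed slack. When $mp$ is too small for the window argument (so $m\overline t<1$), instead bound $P(X\le m\overline t)\ge P(X=0)=(1-q)^m$ below by an absolute constant, using $q=O(1/m)$ there (with $m=1,2$ done by hand); this regime is always Case A, since Case B forces $mp\gtrsim L$.

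\emph{Main obstacle.} The delicate point is the quantitative estimate in the last paragraph: one needs $mD(\mathrm{Bern}(\overline t)\|\mathrm{Bern}(q))$ to be a fixed fraction of $L$ strictly below $L$, uniformly over all $p\in[0,1-1/m]$ and all $\epsilon\le\epsilon_{\max}$, coupled with a reverse-Chernoff bound that loses no polynomial-in-$m$ factor — this is what forces the comparisons $\overline t(1-\overline t)\asymp q(1-q)\asymp p(1-p)$ and the careful bookkeeping between Case A, Case B, and the small-$mp$ boundary regime. Parts (i), (ii), and the $p$-near-$1$ case of (iii) are routine by comparison.
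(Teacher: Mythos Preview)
Your approach is essentially the same as the paper's: change of measure from $q=p+\overline r$ to $\overline t$, bound the anti-concentration probability $P_{\overline t}(X\in W)$ for a $\sqrt{m\overline t(1-\overline t)}$-window $W$ via Berry--Esseen, and show the exponent (your $mD(\mathrm{Bern}(\overline t)\|\mathrm{Bern}(q))$ via $\chi^2$; the paper's surrogate $16m(p-\overline t)^2/(p(1-p))$) is a fixed fraction of $L$ strictly below $L$. Your $\chi^2$ bound is a bit cleaner and gives a tighter constant ($\approx L/64$ versus $L/4$), but the structure is identical. Parts (i), (ii), and the $p\in(1-1/m,1]$ case of (iii) are fine; your $1-e^{-x}\ge x/2$ argument there is a legitimate alternative to the paper's $f(x)=(1-x)^{1/m}-(1-2x/m)\ge0$ calculation.

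There is, however, a real gap in your boundary analysis for (iii) on $[0,1-1/m]$. You handle the small-$mp$ regime ($m\overline t<1$) by bounding $P_q(X=0)=(1-q)^m$, which matches the paper's Case~1. But you do not address the mirror boundary: when $p$ is within $O(1)/m$ of $1-1/m$ (the paper's Case~3), one has $m(1-\overline t)=O(1)$, hence $m\overline t(1-\overline t)=O(1)$, and Berry--Esseen gives no useful lower bound on $P_{\overline t}(X\in W)$ --- the error term $C/\sqrt{m\overline t(1-\overline t)}$ swamps the signal. Your appeal to ``symmetry $X\mapsto m-X$'' handles only the $mD$ estimate (since $D(\mathrm{Bern}(a)\|\mathrm{Bern}(b))=D(\mathrm{Bern}(1-a)\|\mathrm{Bern}(1-b))$); it does nothing for the anti-concentration prefactor $c_0$, because after the symmetry you need a \emph{right}-tail reverse Chernoff with the same degenerate variance $m\overline t(1-\overline t)$. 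The paper closes this gap by explicit case analysis: $m\le40$ is handled by $P_q(X=0)\ge(1/(2m))^m$, and for $m\ge40$ it further splits $[p_2,1-2/m]$ (where a numerical Berry--Esseen check gives $P_{\overline t}(X\in W)\ge1/1000$) from $[1-2/m,1-1/m]$ (where a direct computation of $P_q(X\le m-3)$ yields $\ge1/200$). None of this is deep, but it cannot be absorbed into the symmetry remark.
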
  
To show the simultaneous Type-1 error control in this regime, we show the equivalent statement: for all $p \in [0,1-1/m]$,
\begin{equation*}
    \underset{Q}{\inf} P_{\epsilon_{\max},p,Q}(\phi_{p, \epsilon}^+=0 \text{ for all }\epsilon\in[0,\epsilon_{\max}])\geq 1-\alpha/12.
\end{equation*} First, by the DKW inequality (see Lemma \ref{lm:DKW}), we have with probability at least $1 - \alpha/12$, the following event holds:
	\begin{equation*}
		(E) = \{ \textnormal{for all } x \in \bbR, |F_n(x) - P_{\epsilon_{\max}, p, Q}(X \leq x)| \leq \sqrt{ \log (24/\alpha)/(2n) } \}.
	\end{equation*} Then for any $Q$ and $p \in [0,1-1/m]$,
\begin{equation} \label{ineq:type-1-regime1}
	\begin{split}
		&\phi_{p, \epsilon}^+=0 \text{ for all }\epsilon\in[0,\epsilon_{\max}]
		\Longleftrightarrow  F_n(m \bar{t}(p, \epsilon ) ) \geq \bar{\tau}(p, \epsilon), \forall \epsilon\in[0,\epsilon_{\max}] \\
		\overset{(E)}\Longleftarrow &P_{\epsilon_{\max}, p, Q }(X \leq m \bar{t}(p, \epsilon )) \geq \bar{\tau}(p, \epsilon) + \sqrt{ \frac{\log(24/\alpha)}{2n} }, \forall \epsilon\in[0,\epsilon_{\max}]\\
		\Longleftarrow & (1 - \epsilon_{\max} ) P_{p}(X \leq m\bar{t}(p, \epsilon )) \geq \bar{\tau}(p, \epsilon) + \sqrt{ \frac{\log(24/\alpha)}{2n} }, \forall \epsilon\in[0,\epsilon_{\max}] \\
		\overset{(a)}\Longleftrightarrow &  (1 - \epsilon_{\max} ) P_{p}(X \leq m\bar{t}(p, \epsilon )) \geq \frac{11}{10} P_{p + \overline{r}(p, \epsilon)} \left( X \leq m \overline{t}(p,\epsilon) \right) + \sqrt{ \frac{\log(24/\alpha)}{2n} }, \forall \epsilon\in[0,\epsilon_{\max}] \\
		\overset{\text{Lemma } \ref{lm:r-t-property}\,(iii) }\Longleftarrow &  (1 - \epsilon_{\max} ) P_{p}(X \leq m\bar{t}(p, \epsilon )) > \frac{12}{10} P_{p + \overline{r}(p, \epsilon)} \left( X \leq m \overline{t}(p,\epsilon) \right), \forall \epsilon\in[0,\epsilon_{\max}]\\
		\Longleftrightarrow &  P_{p}(X \leq m\bar{t}(p, \epsilon )) > \frac{12}{10(1 - \epsilon_{\max} )} P_{p + \overline{r}(p, \epsilon)} \left( X \leq m \overline{t}(p,\epsilon) \right), \forall \epsilon\in[0,\epsilon_{\max}],
	\end{split}
\end{equation} where (a) is by the definition of $\bar{\tau}(p, \epsilon)$. When $p = 0$, the sufficient condition at the end of \eqref{ineq:type-1-regime1} is implied by the following condition:
\begin{equation*}
	\begin{split}
		P_0(X = 0) > \frac{12}{10(1 - \epsilon_{\max} )} P_{1/(2m)}(X = 0) & \Longleftrightarrow 1 > \frac{12}{10(1 - \epsilon_{\max} )} \left( 1- \frac{1}{2m} \right)^m 
		\Longleftarrow 1 > \frac{12e^{-1/2}}{10(1 - \epsilon_{\max} )},
	\end{split}
\end{equation*} which is satisfied as long as $\epsilon_{\max}$ is small. For $p \in (0, 1-1/m]$, the sufficient condition at the end of \eqref{ineq:type-1-regime1} is implied by
\begin{equation} \label{ineq:type-1-regime2}
    \begin{split}
    &P_{p}(X \leq m\bar{t}(p, \epsilon )) > \frac{12}{10(1 - \epsilon_{\max} )} P_{p + \overline{r}(p, \epsilon)} \left( X \leq m \overline{t}(p,\epsilon) \right)\\
       \Longleftrightarrow &\sum_{k\leq m\bar{t}(p,\epsilon)}\binom{m}{k}p^k(1-p)^{m-k}>  \frac{12}{10(1 - \epsilon_{\max} )} \sum_{k\leq m\bar{t}(p,\epsilon)}\binom{m}{k}(p+\bar{r}(p,\epsilon))^k(1-p-\bar{r}(p,\epsilon))^{m-k}\\
        \Longleftarrow& p^k(1-p)^{m-k} > \frac{12}{10(1 - \epsilon_{\max} )} (p+\bar{r}(p,\epsilon))^k(1-p-\bar{r}(p,\epsilon))^{m-k}, \quad \forall k\leq m\bar{t}(p,\epsilon)\\
        \Longleftrightarrow& \min_{k\leq m\bar{t}(p,\epsilon)} \left(\frac{p}{p+\bar{r}(p,\epsilon)}\right)^k\left(\frac{1-p}{1-p-\bar{r}(p,\epsilon)}\right)^{m-k}> \frac{12}{10(1 - \epsilon_{\max} )} \\
        \Longleftarrow&\left(\frac{p}{p+\bar{r}(p,\epsilon)}\right)^{m\bar{t}(p,\epsilon)}\left(\frac{1-p}{1-p-\bar{r}(p,\epsilon)}\right)^{m(1-\bar{t}(p,\epsilon))}>  \frac{12}{10(1 - \epsilon_{\max} )} \\
        \Longleftrightarrow & \bar{t}(p,\epsilon)\log\left(1-\frac{\bar{r}(p,\epsilon)}{p+\bar{r}(p,\epsilon)}\right)+(1-\bar{t}(p,\epsilon))\log\left(1+\frac{\bar{r}(p,\epsilon)}{1-p-\bar{r}(p,\epsilon)}\right)> \frac{\log\left(  \frac{12}{10(1 - \epsilon_{\max} )} \right)}{m} \\
        \overset{(a)}{\Longleftarrow}&-\bar{t}(p,\epsilon)\frac{\bar{r}(p,\epsilon)}{p}+(1-\bar{t}(p,\epsilon))\frac{\bar{r}(p,\epsilon)}{1-p} > \frac{\log\left(  \frac{12}{10(1 - \epsilon_{\max} )} \right)}{m} 
        \overset{(b)}\Longleftarrow  \bar{r}(p,\epsilon) \geq \frac{p(1-p)}{4m(p-\bar{t}(p,\epsilon))},
    \end{split}
\end{equation} where (a) is because $\log(1+x)\geq x/(1+x)$ for all $x>-1$ and (b) is because $\log\left(  \frac{12}{10(1 - \epsilon_{\max} )} \right) < 1/4$ as long as $\epsilon_{\max} \leq 0.05$. Notice that the last condition in \eqref{ineq:type-1-regime2} is satisfied by the choice of $\bar{r}(p,\epsilon)$ for all $\epsilon\in[0,\epsilon_{\max}]$ and all $p \in (0,1-1/m]$. Thus, we have shown the simultaneous Type-1 error control for all $p \in [1-1/m]$.

Now, we move onto the regime $p \in (1-1/m,1]$. A key lemma we are going to use is the following one regarding the estimation under the corrupted Bernoulli model and its proof is given in the subsequent subsections.
\begin{Lemma} \label{lm:bernoulli-estimation} Suppose $X_1, \ldots, X_n \overset{i.i.d.}\sim (1 - \epsilon )\textnormal{Bernoulli}(p) + \epsilon Q $. Then for any $\epsilon \in [0,1/4]$,  we have
\begin{equation*}\label{ineq:ber-est-pro2}
	\bbP\left(\frac{\sum_{i=1}^n \indi\{X_i = 1\} }{n} \leq \frac{1}{2} p - \frac{3\log(2/\alpha)}{n} \right) \leq \alpha.
\end{equation*}
\end{Lemma}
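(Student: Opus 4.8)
The plan is to remove the contamination by a stochastic domination argument and then apply a standard binomial Chernoff bound. The key observation is that $\indi\{X_i=1\}$ can only be made \emph{larger} by the contaminated samples compared with a clean $\textnormal{Bernoulli}((1-\epsilon)p)$ draw, so controlling its \emph{lower} tail reduces to the uncontaminated problem, where the constraint $\epsilon\le 1/4$ only costs a constant factor in the mean.

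Concretely, I would realize the model through the usual coupling: write $X_i=(1-B_i)Y_i+B_iW_i$ with $B_1,\dots,B_n\overset{i.i.d.}\sim\textnormal{Bernoulli}(\epsilon)$, $Y_1,\dots,Y_n\overset{i.i.d.}\sim\textnormal{Bernoulli}(p)$, and $W_1,\dots,W_n\overset{i.i.d.}\sim Q$, all mutually independent. On $\{B_i=0\}$ we have $X_i=Y_i\in\{0,1\}$, so $\indi\{X_i=1\}=Y_i=(1-B_i)Y_i$; on $\{B_i=1\}$ we have $(1-B_i)Y_i=0\le\indi\{X_i=1\}$. Hence $\sum_{i=1}^n\indi\{X_i=1\}\ge\sum_{i=1}^n(1-B_i)Y_i$ pointwise, and $\sum_{i=1}^n(1-B_i)Y_i\sim\textnormal{Binomial}(n,(1-\epsilon)p)$ since $(1-B_i)Y_i\sim\textnormal{Bernoulli}((1-\epsilon)p)$. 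Writing $a:=\tfrac{np}{2}-3\log(2/\alpha)$, this gives
\begin{equation*}
\bbP\!\left(\frac1n\sum_{i=1}^n\indi\{X_i=1\}\le\frac p2-\frac{3\log(2/\alpha)}{n}\right)
=\bbP\!\left(\sum_{i=1}^n\indi\{X_i=1\}\le a\right)
\le\bbP\big(T\le a\big),\qquad T\sim\textnormal{Binomial}\big(n,(1-\epsilon)p\big).
\end{equation*}

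It then remains to bound $\bbP(T\le a)$. If $a<0$ the probability is $0$; otherwise $p>0$, and with $\mu:=\bbE T=n(1-\epsilon)p$ the assumption $\epsilon\le 1/4$ gives $\mu\ge\tfrac34 np$ and
\begin{equation*}
\mu-a=np\Big(\tfrac12-\epsilon\Big)+3\log(2/\alpha)\ge\tfrac{np}{4}+3\log(2/\alpha)>0.
\end{equation*}
The Chernoff bound for the binomial lower tail (Lemma~\ref{Lem: Chernoff bound}), together with the elementary estimate $D\big(\textnormal{Bernoulli}(q)\,\|\,\textnormal{Bernoulli}(\pi)\big)\ge(\pi-q)^2/(2\pi)$ for $q\le\pi$, yields $\bbP(T\le a)\le\exp\!\big(-(\mu-a)^2/(2\mu)\big)$. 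Applying $(x+y)^2\ge 4xy$ with $x=\tfrac{np}{4}$ and $y=3\log(2/\alpha)$ gives $(\mu-a)^2\ge 3np\log(2/\alpha)\ge 2\mu\log(2/\alpha)$, the last step using $\mu\le np$; hence $\bbP(T\le a)\le\exp(-\log(2/\alpha))=\alpha/2\le\alpha$, which is the claim.

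I do not anticipate a genuine obstacle: once the coupling is in place everything is elementary. The only point requiring a bit of care is the low-signal regime where $np$ is comparable to $\log(2/\alpha)$ — in particular the degenerate case $p=0$, where the event in the statement is empty — and the inequality $(x+y)^2\ge 4xy$ is exactly what absorbs the additive $\log(2/\alpha)$ term uniformly across that regime.
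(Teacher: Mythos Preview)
Your proof is correct and follows essentially the same approach as the paper: reduce to a clean binomial via stochastic domination (you make the coupling explicit, yielding $\sum_i\indi\{X_i=1\}\succeq\textnormal{Binomial}(n,(1-\epsilon)p)$), then apply a binomial lower-tail bound. The only difference is that you use the Chernoff bound (Lemma~\ref{Lem: Chernoff bound}) together with the estimate $D(q\,\|\,\pi)\ge(\pi-q)^2/(2\pi)$, whereas the paper invokes Bernstein's inequality (Lemma~\ref{lm:Binomial-prop}\,(ii)) with $C=3$; both choices deliver the required constants.
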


Now we prove the simultaneous Type-1 error control for $p \in (1-1/m,1]$. Note that in this case
\begin{equation*}
\begin{split}
	\phi_{p,\epsilon}^+ &= \indi\left\{ \frac{1}{n} \sum_{i=1}^n \indi \{X_i \leq m-1\} < \frac{1}{2}(1-p^m) - \frac{3\log(24/\alpha)}{n} \right\},
\end{split}
\end{equation*}
and it is independent of $\epsilon$. In addition, given $X_1, \ldots, X_n \overset{i.i.d.}\sim P_{\epsilon_{\max}, p, Q}$, $\indi\{X_1 \leq m - 1 \}, \ldots, \indi\{X_n \leq m - 1 \} \overset{i.i.d.}\sim (1- \epsilon_{\max})\textnormal{Bernoulli}(1 - p^m ) + \epsilon_{\max} \textnormal{Bernoulli}( Q(X \leq m - 1))$. Thus for all $p \in (1-1/m,1]$,
\begin{equation*}
	\begin{split}
		\underset{Q}{\sup}P_{\epsilon_{\max},p,Q}\left(\underset{\epsilon\in[0,\epsilon_{\max}]}{\sup}\phi_{p,\epsilon}^+ = 1 \right) 
		& = \underset{Q}{\sup}P_{\epsilon_{\max},p,Q}\left(\frac{1}{n} \sum_{i=1}^n \indi \{X_i \neq m\} < \frac{1}{2}(1-p^m) - \frac{3\log(24/\alpha)}{n} \right)\\
		& \overset{\text{Lemma } \ref{lm:bernoulli-estimation} }\leq \alpha/12.
	\end{split}
\end{equation*} This finishes the proof for the simultaneous Type-1 error control for all $p \in [0,1]$. 

\vskip.2cm

{\noindent \bf (Part 2: Type-2 error control)} First, we note that as we have proved in Lemma \ref{lm:r-t-property} (ii), $p + \bar{r}(p, \epsilon ) \in [0,1]$, so the quantity $P_{\epsilon,p + r,Q}(\phi_{p, \epsilon}^+ = 0)$ is well defined. Also note that given any $\epsilon \in [0, \epsilon_{\max}]$, $p \in \left[0, 1 - \frac{4}{m}\left( \frac{10 \log(24/\alpha)}{n} + 3 \epsilon \right) \right ]$ and $r \in [\bar{r} (p, \epsilon), 1 - p]$,
\begin{equation} \label{ineq:type2 with rbar}
    \begin{split}
        & P_{\epsilon,p + r,Q}(\phi_{p, \epsilon}^+ = 0)= P_{\epsilon,p + r,Q}\left(\frac{1}{n}\sum_{i=1}^n \indi \{X_i\leq m\bar{t}(p,\epsilon)\} \geq \bar{\tau}(p,\epsilon)\right)\\
        & = (1 - \epsilon ) P_{p+r} \left(\frac{1}{n}\sum_{i=1}^n \indi \{X_i\leq m\bar{t}(p,\epsilon)\} \geq \bar{\tau}(p,\epsilon)\right) + \epsilon Q\left(\frac{1}{n}\sum_{i=1}^n \indi \{X_i\leq m\bar{t}(p,\epsilon)\} \geq \bar{\tau}(p,\epsilon)\right) \\
    & \overset{(a)}\leq  (1 - \epsilon ) P_{p+\bar{r}(p, \epsilon)} \left(\frac{1}{n}\sum_{i=1}^n \indi \{X_i\leq m\bar{t}(p,\epsilon)\} \geq \bar{\tau}(p,\epsilon)\right) + \epsilon Q\left(\frac{1}{n}\sum_{i=1}^n \indi \{X_i\leq m\bar{t}(p,\epsilon)\} \geq \bar{\tau}(p,\epsilon)\right) \\
    &= P_{\epsilon,p + \bar{r}(p, \epsilon),Q}(\phi_{p, \epsilon}^+=0),
    \end{split}
\end{equation} where in (a) we use the fact that Binomial($m,p+r$) stochastically dominates Binomial($m,p+\bar{r}(p, \epsilon)$), i.e., there exists a coupling of $X$ and $Y$ such that $X \sim \textnormal{Binomial}(m, p+r)$, $Y \sim \textnormal{Binomial}(m, p+\bar{r}(p, \epsilon))$, and $X \geq Y$ almost surely. Therefore, to demonstrate Type-2 error control, it suffices to show that $ P_{\epsilon,p + \bar{r}(p, \epsilon),Q}(\phi_{p, \epsilon}^+=0)\leq \alpha/12$ for all $\epsilon \in [0, \epsilon_{\max}]$, all $p \in \left[0, 1 - \frac{4}{m}\left( \frac{10 \log(24/\alpha)}{n} + 3 \epsilon \right) \right ]$.

 Again, we first focus on $p \in [0, 1-1/m]$. In this case, for any $\epsilon \in [0, \epsilon_{\max}]$ and $Q$,
\begin{equation} \label{ineq:type-2-regime1}
	\begin{split}
		&P_{\epsilon,p + \bar{r}(p, \epsilon),Q}(\phi_{p, \epsilon}^+ = 0) = P_{\epsilon,p + \bar{r}(p, \epsilon),Q}\left(\frac{1}{n}\sum_{i=1}^n \indi \{X_i\leq m\bar{t}(p,\epsilon)\} \geq \bar{\tau}(p,\epsilon)\right) \\
		& = P_{\epsilon,p + \bar{r}(p, \epsilon),Q}\Bigg(\frac{1}{n}\sum_{i=1}^n \indi \{X_i\leq m\bar{t}(p,\epsilon)\} - P_{\epsilon,p + \bar{r}(p, \epsilon),Q}(X \leq m\bar{t}(p,\epsilon)) \\
		& \quad \quad \quad \quad \quad \quad \geq \bar{\tau}(p,\epsilon) - P_{\epsilon,p + \bar{r}(p, \epsilon),Q}(X \leq m\bar{t}(p,\epsilon))\Bigg).
	\end{split}
\end{equation} By Hoeffding's inequality (see Lemma \ref{lm:Binomial-prop} (i)), we know that the probability in \eqref{ineq:type-2-regime1} is bounded by $\alpha/12$ if 
\begin{equation} \label{ineq:type-2-regime1-suff-cond}
	\begin{split}
		& \bar{\tau}(p,\epsilon) - P_{\epsilon,p + \bar{r}(p, \epsilon),Q}(X \leq m\bar{t}(p,\epsilon)) \geq \sqrt{ \frac{\log(24/\alpha)}{2n} } \\
		\overset{(a)}\Longleftarrow & \bar{\tau}(p,\epsilon) \geq \epsilon +  \sqrt{ \frac{\log(24/\alpha)} {2n} } + P_{p + \bar{r}(p, \epsilon)}(X \leq m\bar{t}(p,\epsilon)) \\
		\overset{(b)}\Longleftrightarrow & \frac{1}{10} P_{p + \bar{r}(p, \epsilon)}(X \leq m\bar{t}(p,\epsilon)) \geq \epsilon +  \sqrt{ \frac{\log(24/\alpha)} {2n} }
	\end{split}
\end{equation} where (a) is because $P_{\epsilon,p + \bar{r}(p, \epsilon),Q}(X \leq m\bar{t}(p,\epsilon))  \leq \epsilon + P_{p + \bar{r}(p, \epsilon)}(X \leq m\bar{t}(p,\epsilon))  $ and in (b) we plug in the definition of $\bar{\tau}(p, \epsilon)$ when $p \in [0,1-1/m]$. Notice that the last condition at the end of \eqref{ineq:type-2-regime1-suff-cond} is satisfied for all $\epsilon \in [0, \epsilon_{\max}]$ and all $p \in [0, 1-1/m]$ by Lemma \ref{lm:r-t-property} (iii). Thus, we have shown that $ P_{\epsilon,p + \bar{r}(p, \epsilon),Q}(\phi_{p, \epsilon}^+=0)\leq \alpha$ for all $\epsilon \in [0, \epsilon_{\max}]$ and all $p \in [0, 1-1/m]$.

Now, let us show the Type-2 error control when $p \in \left(1-1/m, 1 - \frac{4}{m}\left( \frac{10 \log(24/\alpha)}{n} + 3 \epsilon \right) \right ]$. For any $\epsilon \in [0, \epsilon_{\max}]$ and $Q$,
\begin{equation}\label{ineq:type-2-regime2}
	\begin{split}
&P_{\epsilon,p + \bar{r}(p, \epsilon),Q}(\phi_{p, \epsilon}^+ = 0) = P_{\epsilon,p + \bar{r}(p, \epsilon),Q}\left(\frac{1}{n}\sum_{i=1}^n \indi \{X_i\leq m\bar{t}(p,\epsilon)\} \geq \bar{\tau}(p,\epsilon)\right) \\
		& = P_{\epsilon,p + \bar{r}(p, \epsilon),Q}\left(\frac{1}{n}\sum_{i=1}^n \indi \{X_i \neq m\} \geq  \frac{1}{2}(1 - p^m) - \frac{3 \log(24/\alpha)}{n} \right)\\
		& = P_{\epsilon,p + \bar{r}(p, \epsilon),Q}\Bigg(\frac{1}{n}\sum_{i=1}^n \indi \{X_i \neq m\} - P_{\epsilon,p + \bar{r}(p, \epsilon),Q}(X \neq m) \\
		& \quad \quad \quad \geq  \frac{1}{2}(1 - p^m) - \frac{3 \log(24/\alpha)}{n} - P_{\epsilon,p + \bar{r}(p, \epsilon),Q}(X \neq m) \Bigg).
	\end{split}
\end{equation} By Bernstein's inequality (see Lemma \ref{lm:Binomial-prop} (ii) and take $C$ there to be $2$, $\alpha = \alpha/12$), we have the last probability in \eqref{ineq:type-2-regime2} is bounded by $\alpha/12$ if 
\begin{equation}\label{ineq:type-2-regime2-suff-cond}
	\begin{split}
		&\frac{1}{2}(1 - p^m) - \frac{3 \log(24/\alpha)}{n} - P_{\epsilon,p + \bar{r}(p, \epsilon),Q}(X \neq m) \geq \frac{1}{2} P_{\epsilon,p + \bar{r}(p, \epsilon),Q}(X \neq m) + \frac{2 \log(24/\alpha)}{n}\\
		&\Longleftarrow  \frac{1}{2}(1 - p^m)  \geq \frac{3}{2} P_{\epsilon,p + \bar{r}(p, \epsilon),Q}(X \neq m) + \frac{5 \log(24/\alpha)}{n}\\
		& \Longleftarrow 1 - p^m  \geq 3 P_{p + \bar{r}(p, \epsilon)}(X \neq m) + 3 \epsilon + \frac{10 \log(24/\alpha)}{n}  \Longleftarrow \left\{ \begin{array}{l}
			1 - p^m \geq 6 \epsilon + \frac{20\log(24/\alpha)}{n} \\
			1-p^m \geq 6 P_{p+\bar{r}(p, \epsilon)}(X \neq m)
		\end{array} \right. \\
		& \overset{(a)}\Longleftrightarrow \left\{ \begin{array}{l}
			P_p(X \leq m \bar{t}(p, \epsilon) ) \geq 6 \epsilon + \frac{20\log(24/\alpha)}{n} \\
			1-p^m \geq 6 (1 - (p +\bar{r}(p, \epsilon))^m ),
		\end{array} \right.
	\end{split}
\end{equation} where in (a) we plug in the definition of $\bar{t}(p, \epsilon)$ in this regime. Notice that the first condition at the end of \eqref{ineq:type-2-regime2-suff-cond} is satisfied for all $p \in \left(1-1/m, 1 - \frac{4}{m}\left( \frac{10 \log(24/\alpha)}{n} + 3 \epsilon \right) \right ]$ as we have shown in Lemma~\ref{lm:r-t-property}~(iii). Next, we are going to show that given any $\epsilon \in [0, \epsilon_{\max}]$, $$1-p^m \geq 6 (1 - (p +\bar{r}(p, \epsilon))^m ) \,\text{ holds for all }\,  p \in (1-1/m,1] \text{  with  } \bar{r}(p, \epsilon) = (1 - 1/(6e) )(1-p).$$

When $m = 1$,
\begin{equation*}
	\begin{split}
		1-p^m \geq 6 (1 - (p +\bar{r}(p, \epsilon))^m ) \Longleftrightarrow 1-p \geq 6 (1 - (p +\bar{r}(p, \epsilon)) ) \Longleftrightarrow 1-p \geq \frac{1}{e} (1-p)
	\end{split}
\end{equation*} and it clearly holds. Now we consider $m \geq 2$. Let $f(p) = 1-p^m - 6\left( 1- ( p +(1-1/(6e))(1-p) )^m \right)$. Then $f(1) = 0$. If we can show $f'(p) \leq 0$ for all $p \in (1-1/m,1]$, then it implies that $f(p) \geq 0$ for all $p \in (1-1/m,1]$. 
\begin{equation*}
	\begin{split}
		f'(p)& = mp^{m-1} \left( \frac{1}{e} \left( 1 + \frac{1 - 1/(6e)}{p} - (1-1/(6e)) \right)^{m-1} - 1  \right) \\
		& \leq  mp^{m-1} \left( \frac{1}{e} \left( 1 + \frac{1 - 1/(6e)}{1-1/m} - (1-1/(6e)) \right)^{m-1} - 1  \right) \\
		& =  mp^{m-1} \left( \frac{1}{e} \left( 1 + \frac{1 - 1/(6e)}{m-1}  \right)^{m-1} - 1  \right) \leq mp^{m-1} \left( \frac{1}{e} e^{1-1/(6e)} - 1  \right) = mp^{m-1} \left( e^{-1/(6e)} - 1 \right) \\
		& < 0.
	\end{split}
\end{equation*} This shows $f(p) \geq 0$ for all $p \in (1-1/m,1]$ and finishes the proof for the Type-2 error control. This also finishes the proof of this theorem.

\subsubsection{Proof of Lemma \ref{lm:r-t-property}}

For convenience, let us denote $A = \epsilon + \sqrt{\frac{\log(24/\alpha)}{2n}}$. By assumption, $A$ is less than a sufficiently small constant. We will also simply write $\overline{t}(p, \epsilon)$ and $\overline{r}(p, \epsilon)$ as $t$ and $r$ in the proof, while we should keep in mind that they depend on $(p, \epsilon)$. Recall that when $p \in [0, 1-1/m]$,
\begin{equation*}
	t =  p - \min \left\{\frac{p (1 - p)}{2}, \frac{1}{8} \sqrt{\frac{p (1 - p) \log\left(1/A\right)}{m}} \right\}
\end{equation*} and $r = \frac{1}{2m}$ when $p = 0$ and $r = \frac{p(1-p)}{4m(p - t )}$ when $p \in (0, 1-1/m]$. Now let us also define two quantities $0\leq p_1 \leq 1/2 \leq p_2 \leq 1$ as follows: if $\log(1/A) \geq 4m$, set $p_1 = p_2 = 1/2$ and if $\log(1/A) < 4m$, set $0\leq p_1 < 1/2 < p_2 \leq 1$ be the solution of the equation
\begin{equation*}
	\log(1/A) = 16 m p(1-p).
\end{equation*} Notice that when $p = 1/m$ or $1-1/m$, $16 m p(1-p) \leq 16 \leq \log(1/A)$ as $A$ is less than a sufficiently small constant. As a result, we have $1/m \leq p_1 \leq 1/2 \leq p_2 \leq 1-1/m$ when $m \geq 2$. By the same argument, we also have $p_2 \leq 1-2/m$ when $m \geq 4$.
\vskip.2cm
{\noindent \bf (Part I: Proof of Claim (i))} When $p \in [0,1-1/m]$, $t \leq p$ as $\log(1/A) \geq 0$. At the same time, 
\begin{equation*}
	\begin{split}
		&t \geq p - p(1-p)/2 \geq p - \frac{\min \{p, 1-p\}}{2} = \max \{ p/2, (3p-1)/2 \},
	\end{split}
\end{equation*} and it implies that $1 - p \leq 1 - t \leq \frac{3}{2}(1 - p)$. 

\vskip.2cm
{\noindent \bf (Part II: Proof of Claim (ii))} We first show that the function $p \mapsto p + \bar{r}(p,\epsilon)$ is strictly increasing in $p$ for $p \in [0,1]$ given any fixed $\epsilon \in [0, \epsilon_{\max}]$. 
\begin{itemize}[leftmargin=*]
	\item (Case 1: $\log(1/A) \geq 4m$) By Lemma \ref{lm:additional-r-property} (ii),
	\begin{equation*}
		\begin{split}
			p + r = \left\{ \begin{array}{ll}
				p+\frac{1}{2m}, & \textnormal{ if } p \in [0,1-1/m],\\
				1 + \frac{1}{6e} p - \frac{1}{6e}, & \textnormal{ if } p \in (1-1/m, 1].
			\end{array}  \right.
		\end{split}
	\end{equation*} It is easy to check that the function $p \mapsto p + r$ is strictly increasing in $p$ for $p \in [0,1]$ given any fixed $\epsilon \in [0, \epsilon_{\max}]$. Notice that $p + r \in [0,1]$ for any $p, \epsilon \in [0,1]$.
	\item (Case 2: $\log(1/A) < 4m$) In this case, by Lemma \ref{lm:additional-r-property} (iii), 
\begin{equation*}
	\begin{split}
		p + r = \left\{ \begin{array}{ll}
				 \frac{1}{2m} + p , & \textnormal{ if } p \in [0,p_1],\\
				2 \sqrt{ \frac{p(1-p)}{m \log(1/A)} } + p, & \textnormal{ if } p \in (p_1, p_2), \\
				\frac{1}{2m} + p, & \textnormal{ if } p \in [p_2,1-1/m], \\
				1 + \frac{1}{6e} p - \frac{1}{6e}, & \textnormal{ if } p \in (1-1/m, 1].
			\end{array}  \right.
	\end{split}
\end{equation*} It is easy to check $p+r$ is increasing with respect to $p$ when $p \in [p_2,1]$. In addition, by design, $p+r$ is a continuous function with respect to $p$ when $p \in [0,1-1/m]$. So to prove the claim, we just need to show $p+r$ is increasing with respect to $p$ when $p \in [0,1-1/m]$. In view of the expression of $p+r$, it is sufficient to show $p+r$ is increasing with respect to $p$ when $p \in (p_1, p_2)$. Notice that when $p \in (p_1, p_2)$, by construction $\log(1/A) < 16mp(1-p)$. Then
\begin{equation*}
	\begin{split}
		\frac{\partial (p+r)}{\partial p} &= 1 + \left(\frac{p(1-p)}{m \log(1/A)} \right)^{-1/2} \frac{1-2p}{m \log(1/A)} = 1 + \frac{1-2p}{\sqrt{mp(1-p) \log(1/A)}} \\
		& \left\{ \begin{array}{ll}
			\geq 0, & \text{ if } p\in (p_1, \frac{1}{2}],\\
			  > 1 - \frac{4(2p-1)}{\log(1/A)} \geq 0, & \text{ if } p \in (1/2, p_2),
		\end{array} \right.
	\end{split}
\end{equation*} where the inequality in $p \in (1/2, p_2)$ is because $A$ is less than a sufficiently small constant.
 
\end{itemize}

\vskip.2cm
{\noindent \bf (Part III: Proof of Claim (iii))}
First, notice that when $m = 1$, $[0,1-1/m] = \{0\}$. In addition, when $p = 0$, we have $t =0$ and $r = 1/2$. So
\begin{equation*}
	P_{p+r}(X \leq m t) = P_{1/2}( X = 0) = 1/2 > 10 A
\end{equation*} since $A$ is less than a sufficiently small constant. For the rest of the proof, we focus on the setting $m$ is a positive integer greater than or equal to $2$ and we divide the rest of the proof into four cases. 
\vskip.2cm
\begin{itemize}[leftmargin=*]
	\item (Case 1: $p \in [0,p_1]$) In this case, $t = p - \frac{p (1 - p)}{2}$, $r = 1/(2m)$ and $\log(1/A) \geq 16mp_1 (1-p_1) \geq 8mp_1$ where the second inequality is because $p_1 \leq 1/2$. Then
\begin{equation*}
	\begin{split}
		P_{p+r}(X \leq m t) &= P_{p+ \frac{1}{2m} } \left( X \leq m t \right) \geq P_{p+ \frac{1}{2m} }\left( X = 0 \right) = \left( 1-p - \frac{1}{2m} \right)^m  \geq \left( 1-p_1 - \frac{1}{2m} \right)^m.
	\end{split}
\end{equation*} A sufficient condition for $\left( 1-p_1 - \frac{1}{2m} \right)^m > 10 A$ is derived as follows:
\begin{equation*}
	\begin{split}
		 & \left( 1-p_1 - \frac{1}{2m} \right)^m > 10 A
		\Longleftrightarrow  m \log \left( 1-p_1 - \frac{1}{2m} \right) > \log(A) + \log(10) \\
		\Longleftarrow & m \frac{-p_1 - \frac{1}{2m} }{1 - p_1 - \frac{1}{2m}} > \log(A) + \log(10) \quad (\textnormal{as }\log(1+x) \geq \frac{x}{1+x}, \forall x > -1 )\\
		\Longleftrightarrow & \log(1/A) - \frac{mp_1 + \frac{1}{2} }{1 - p_1 - \frac{1}{2m}} > \log (10) \\ 
		\Longleftarrow & \log(1/A) - \frac{mp_1 + \frac{1}{2} }{1 - 1/2 - 1/4} > \log (10) \quad (\textnormal{as } m \geq 2, p_1 \leq 1/2)\\
		 \Longleftarrow & \log(1/A) - \frac{1}{2} \log(1/A) > \log(10) + 2 \quad (\textnormal{as } \log(1/A) \geq 8mp_1) 
		 \Longleftrightarrow  \log(1/A) > 4 + 2 \log(10).
	\end{split}
\end{equation*} Notice that the last condition above is satisfied since $A$ is less than a sufficiently small constant. So we have shown $P_{p+r}(X \leq m t) > 10A$ when $p \in [0, p_1]$. 
\item (Case 2: $p \in (p_1,p_2)$) Note that when $\log(1/A) \geq 4m$, this case does not exist. But when this regime is nonempty, we have 
\begin{equation} \label{ineq:t-r-range-case2}
	\begin{split}
		&t = p - \frac{1}{8} \sqrt{\frac{p (1 - p) \log\left(1/A\right)}{m}},\,\log(1/A) < 16mp(1-p)\, \textnormal{ and } \,  r = 2 \sqrt{ \frac{p(1-p)}{m \log(1/A)} }.
	\end{split}
\end{equation} Moreover, since $A$ is less than a sufficiently small constant, we have 
\begin{equation}\label{ineq:t-r-range-prop3-case2}
	\begin{split}
		p +r -t \leq 2 (p - t) \quad \textnormal{ and } \quad 1 - p -r \geq \frac{1}{2}(1-p),
	\end{split}
\end{equation} where the first inequality is easy to check and the second inequality is because 
\begin{equation*}
	\begin{split}
		1 - p -r \geq \frac{1}{2}(1-p) \Longleftrightarrow 1 -p \geq 2 r \Longleftrightarrow 1- p \geq 4 \sqrt{ \frac{p(1-p)}{m \log(1/A)} } \Longleftarrow \sqrt{m p(1-p)} \geq 4/\sqrt{\log(1/A)}, 
	\end{split}
\end{equation*} and the last condition is clearly true as $\log(1/A) < 16mp(1-p)$ and $A$ is small.

Next, we aim to provide a lower bound for $P_{p+r}(X\leq mt)$. 
\begin{equation}\label{Lower bound for CDF of Binomial distribution}
    \begin{split}
        &P_{p+r}(X\leq mt) = \sum_{k\leq mt}\binom{m}{k}(p+r)^k(1-p-r)^{m-k}\\
        &\overset{(a)}\geq \sum_{mt-2\sqrt{mt(1-t)}< k\leq mt}\binom{m}{k}t^k(1-t)^{m-k}\left(\frac{p+r}{t}\right)^k\left(\frac{1-p-r}{1-t}\right)^{m-k}\\
        &\geq \sum_{mt-2\sqrt{mt(1-t)}< k\leq mt}\binom{m}{k}t^{k}(1-t)^{m-k}\min_{mt-2\sqrt{mt(1-t)}<k'\leq mt}\left(\frac{p+r}{t}\right)^{k'}\left(\frac{1-p-r}{1-t}\right)^{m-k'}\\
         &\overset{(b)}{\geq} \sum_{mt-2\sqrt{mt(1-t)}< k\leq mt}\binom{m}{k}t^{k}(1-t)^{m-k}\left(\frac{p+r}{t}\right)^{mt-2\sqrt{mt(1-t)}}\left(\frac{1-p-r}{1-t}\right)^{m(1-t)+2\sqrt{mt(1-t)}}\\
        &=P_t\left(mt-2\sqrt{mt(1-t)}<X\leq mt\right)\\
        & \quad \times \exp\left(-mD(\mathrm{Bernoulli}(t) \parallel \mathrm{Bernoulli}(p+r))-2\sqrt{mt(1-t)}\left(\log\left(\frac{p+r}{t}\right)+\log\left(\frac{1-t}{1-p-r}\right)\right)\right)\\
        &\overset{(c)}{\geq}P_t\left(mt-2\sqrt{mt(1-t)} < X\leq mt\right)\\
        & \quad \times  \exp\left(-\frac{m(p+r-t)^2}{(p+r)(1-p-r)}-2\sqrt{mt(1-t)}\left(\frac{p+r-t}{t}+\frac{p+r-t}{1-p-r}\right)\right),
    \end{split}
\end{equation} where in (a), we use the fact 
\begin{equation*}
	\begin{split}
		mt - 2 \sqrt{mt(1-t)} &\geq \sqrt{mt} \left( \sqrt{mt} - 2 \right) \overset{ \textnormal{Claim (i)} }\geq   \sqrt{mt} \left( \sqrt{mp/2} - 2 \right) \geq \sqrt{mt} \left( \sqrt{mp(1-p)/2} - 2 \right) \\
		& \overset{\eqref{ineq:t-r-range-case2}}\geq \sqrt{mt} \left( \frac{1}{4} \sqrt{\log (1/A)/2} - 2 \right) \geq 0;
	\end{split}
\end{equation*} (b) is because $\left(\frac{p+r}{t}\right)^{k}\left(\frac{1-p-r}{1-t}\right)^{m-k}$ is increasing in $k$ when $k \geq 0$ as $p \geq t$; in (c), we use the inequality $\log(1+x) \leq x$ for all $x > -1$ and the fact that $\chi^2(P\|Q) \geq D(P\|Q)$ for any two distributions $P, Q$.

Next, we bound the two terms $P_t\left(mt-2\sqrt{mt(1-t)} < X\leq mt\right)$ and $$\exp\left(-\frac{m(p+r-t)^2}{(p+r)(1-p-r)}-2\sqrt{mt(1-t)}\left(\frac{p+r-t}{t}+\frac{p+r-t}{1-p-r}\right)\right)$$ at the end of \eqref{Lower bound for CDF of Binomial distribution} separately. Let $\Phi(\cdot)$ denote the CDF of standard Gaussian. Then, by Berry-Esseen theorem (see Lemma \ref{Lem: Berry esseen}), we have
\begin{equation}\label{Application of Berry Esseen Theorem}
    \begin{split}
         &P_{t} \Big (mt-2\sqrt{m t (1 - t)} < X \leq m t \Big)
         \geq (\Phi(0)-\Phi(-2)) - \frac{7(1 - 2 t (1 - t))}{10\sqrt{m t (1 - t)}} -\frac{1}{3\sqrt{m}} \\
        & \overset{\textnormal{Claim (i)} }\geq (\Phi(0)-\Phi(-2)) - \frac{7 \sqrt{2} }{10\sqrt{m p (1 - p)}} - \frac{1}{3\sqrt{m}}\overset{\eqref{ineq:t-r-range-case2}}{\geq}  (\Phi(0)-\Phi(-2)) - \frac{7\sqrt{2} \times 4 }{10 \sqrt{\log(1/A)}} - \frac{1}{3\sqrt{m}} \\ 
        & \geq 0.1,
    \end{split}
\end{equation}
where the last inequality holds as long as $A$ is less than a sufficiently small constant. At the same time,
\begin{equation} \label{ineq:case2-exponent-bound}
	\begin{split}
		&-\frac{m(p+r-t)^2}{(p+r)(1-p-r)}-2\sqrt{mt(1-t)}\left(\frac{p+r-t}{t}+\frac{p+r-t}{1-p-r}\right) \\
		& \overset{\textnormal{Claim (i)},\eqref{ineq:t-r-range-prop3-case2}}\geq - \frac{ 4 m (p-t)^2 }{ \frac{1}{2} p(1-p) } - 2 \sqrt{ \frac{3}{2} mp (1-p) } \left( \frac{4(p-t)}{p} + \frac{4(p-t)}{1-p} \right) \\
		& =  - \frac{ 8 m (p-t)^2 }{ p(1-p) } - 8 \sqrt{ \frac{3}{2} \frac{m(p-t)^2}{p(1-p)}}  \geq - \frac{ 16 m (p-t)^2 }{ p(1-p) }, 
	\end{split}
\end{equation} where in the last inequality, we use the fact that $ \frac{m(p-t)^2}{p(1-p)} = \frac{\log(1/A)}{64}$, which is greater than a sufficiently large constant as long as $A$ is less than a sufficiently small constant. 

By plugging \eqref{Application of Berry Esseen Theorem} and \eqref{ineq:case2-exponent-bound} into \eqref{Lower bound for CDF of Binomial distribution}, we have 
\begin{equation*}
	P_{p+r}(X\leq mt) \geq \frac{1}{10} \exp \left( - \frac{ 16 m (p-t)^2 }{ p(1-p) } \right),
\end{equation*} and a sufficient condition to guarantee $P_{p+r}(X\leq mt) > 10A$ is given as follows,
\begin{equation*}
	\begin{split}
		P_{p+r}(X\leq mt) > 10A &\Longleftarrow \exp \left( - \frac{ 16 m (p-t)^2 }{ p(1-p) } \right) > 100 A \\
		& \Longleftrightarrow \frac{ 16 m (p-t)^2 }{ p(1-p) } < \log(1/A) - \log(100)  \Longleftarrow \frac{ 16 m (p-t)^2 }{ p(1-p) } \leq  \frac{1}{4} \log(1/A) \\
		& \Longleftrightarrow |p - t| \leq \frac{1}{8} \sqrt{\frac{p (1 - p) \log\left(1/A\right)}{m}}.
	\end{split}
\end{equation*} Notice that the last condition is satisfied by the choice of $t$. Thus, we have shown that in this regime, we also have $P_{p+r}(X\leq mt) > 10A$.

\item (Case 3: $p \in [p_2,1-1/m]$) In this case,
\begin{equation} \label{ineq:t-r-property-case3}
	t = p(1+p)/2,\quad  \log(1/A) \geq 16mp(1-p)\quad  \textnormal{ and }\quad r = \frac{1}{2m}. 
\end{equation}

 Let us first consider the case $2\leq m \leq 40$. In this case,
\begin{equation*}
	\begin{split}
		P_{p+r}(X\leq mt)   \geq P_{p + \frac{1}{2m} } (X = 0) = (1-p - \frac{1}{2m} )^m \geq (\frac{1}{m} -  \frac{1}{2m} )^m > 10 A,
	\end{split}
\end{equation*} where the last inequality holds as $m \leq 40$ and $A$ is sufficiently small. 

Next, we consider $m \geq 40$. We further divide the proof into two scenarios: $p \in [p_2, 1 - \frac{2}{m}  ]$ and $p \in [1 - \frac{2}{m}, 1 - \frac{1}{m}]$ since $p_2 \leq 1- \frac{2}{m}$. 
\vskip.2cm
{\bf Scenario 1: $p \in [p_2, 1 - \frac{2}{m} ]$}. In this regime, we have
\begin{equation}\label{ineq:t-r-range-prop3-case3}
	\begin{split}
		p +r -t= \frac{p(1-p)}{2} + \frac{1}{2m} \leq p(1-p) \quad \textnormal{ and } \quad 1 - p -r = 1 - p - \frac{1}{2m} \geq \frac{1}{2}(1-p),
	\end{split}
\end{equation} where the first inequality is because $p(1-p) \geq 1/2 \cdot \frac{2}{m} = \frac{1}{m} $ for all $p \in [p_2, 1 - \frac{2}{m}]$ and the second inequality is also straightforward to check. Next, we observe that the lower bound of $P_{p+r}(X\leq mt)$ derived in \eqref{Lower bound for CDF of Binomial distribution} still holds in this regime, as conditions required there still hold:
\begin{equation*}
	mt - 2 \sqrt{mt(1-t)} \geq \sqrt{mt} \left( \sqrt{mt} - 2 \right) \overset{ \textnormal{Claim (i)} }\geq   \sqrt{mt} \left( \sqrt{mp/2} - 2 \right) \geq \sqrt{mt} \left( \sqrt{m/4} - 2 \right) > 0 
\end{equation*} since $p \geq p_2 \geq 1/2$ and $m \geq 40$. Next, we bound the two terms $P_t\left(mt-2\sqrt{mt(1-t)} < X\leq mt\right)$ and $\exp\left(-\frac{m(p+r-t)^2}{(p+r)(1-p-r)}-2\sqrt{mt(1-t)}\left(\frac{p+r-t}{t}+\frac{p+r-t}{1-p-r}\right)\right)$ at the end of \eqref{Lower bound for CDF of Binomial distribution} separately. First, by Berry-Esseen theorem (see Lemma \ref{Lem: Berry esseen}), we have
\begin{equation}\label{ineq:case3-berry-ess-const}
	\begin{split}
		 & P_{t} \Big (mt-2\sqrt{m t (1 - t)} < X \leq m t \Big)  \geq  (\Phi(0)-\Phi(-2)) - \frac{7}{10\sqrt{m t (1 - t)}} -\frac{1}{3\sqrt{m}} \\
        & =  (\Phi(0)-\Phi(-2)) - \frac{7}{10\sqrt{m \frac{p(1+p)}{2} (1 - p(1+p)/2)}} -\frac{1}{3\sqrt{m}} \\
        & \overset{(a)}\geq (\Phi(0)-\Phi(-2)) - \frac{7}{10\sqrt{m \frac{(1 - \frac{2}{m} )(2 - \frac{2}{m} )}{2} (1 - (1 - \frac{2}{m} )(2 - \frac{2}{m} )/2)}} -\frac{1}{3\sqrt{m}}\\
        & = (\Phi(0)-\Phi(-2)) - \frac{7}{10\sqrt{(1 - \frac{2}{m} )(1 - \frac{1}{m} ) (3 - \frac{2}{m} )}} -\frac{1}{3\sqrt{m}} \\
        & \geq (\Phi(0)-\Phi(-2)) - \frac{7}{10\sqrt{(1 - \frac{2}{40} )(1 - \frac{1}{40} ) (3 - \frac{2}{40} )}} -\frac{1}{3\sqrt{40}} \quad (\text{as } m \geq 40) \\
        & \geq 1/1000
	\end{split}
\end{equation} where (a) is because when $p \in [p_2, 1 - \frac{2}{m} ]$, since $p_2 \geq 1/2$ and $m \geq 40$, $\frac{p(1+p)}{2} (1 - p(1+p)/2)$ achieves its minimum when $p = 1 - \frac{2}{m} $. At the same time,
\begin{equation} \label{ineq:case3-exponent-bound}
	\begin{split}
		&-\frac{m(p+r-t)^2}{(p+r)(1-p-r)}-2\sqrt{mt(1-t)}\left(\frac{p+r-t}{t}+\frac{p+r-t}{1-p-r}\right) \\
		& \overset{ \text{Claim (i), } \eqref{ineq:t-r-range-prop3-case3} }\geq \frac{-m p^2 (1-p)^2 }{ \frac{1}{2} p(1-p)} - 2 \sqrt{ \frac{3}{2} mp(1-p)} \left(  \frac{p(1-p)}{p/2} + \frac{p(1-p)}{ (1-p)/2  } \right) \\
		& = -2mp(1-p) - 2\sqrt{6mp(1-p)}. 
	\end{split}
\end{equation}
By plugging \eqref{ineq:case3-berry-ess-const} and \eqref{ineq:case3-exponent-bound} into \eqref{Lower bound for CDF of Binomial distribution}, we have 
\begin{equation*}
	P_{p+r}(X\leq mt) \geq \frac{1}{1000} \exp \left( -2mp(1-p) - 2\sqrt{6mp(1-p)}\right),
\end{equation*} and a sufficient condition to guarantee $P_{p+r}(X\leq mt) > 10A$ is given as follows,
\begin{equation*}
	\begin{split}
		P_{p+r}(X\leq mt) > 10A & \Longleftrightarrow \frac{1}{1000} \exp \left( -2mp(1-p) - 2\sqrt{6mp(1-p)}\right) > 10A \\
		& \Longleftrightarrow -2mp(1-p) - 2\sqrt{6mp(1-p)} > 4\log(10) + \log(A)\\
		& \Longleftrightarrow \log(1/A) -2mp(1-p) - 2\sqrt{6mp(1-p)} > 4\log(10)\\
		& \overset{\eqref{ineq:t-r-property-case3} }\Longleftarrow \log(1/A) - \frac{1}{8} \log(1/A) - 2 \sqrt{ \frac{6}{16} \log(1/A) } > 4 \log(10), 
	\end{split}
\end{equation*} and the last inequality holds as $A$ is sufficiently small.

\vskip.2cm
{\bf Scenario 2: $p \in [1 - \frac{2}{m}, 1 - \frac{1}{m}]$}.
In this regime,
\begin{equation*}
	\begin{split}
		&P_{p+r}(X\leq mt)= P_{p + \frac{1}{2m} } (X \leq mp(1+p)/2) \overset{\textnormal{Lemma }\ref{lm:Binomial-prop}\,(iii) } \geq P_{1 - \frac{1}{m} + \frac{1}{2m}  }(X \leq mp(1+p)/2) \\
		& \geq P_{1 - \frac{1}{2m} }( X \leq m(1- 2/m )( 2 - 2/m )/2) \quad (\textnormal{as } p \geq 1 -2/m ) \\
		& = P_{1 - \frac{1}{2m} }( X \leq (1- 2/m )( m - 1 )) \geq P_{1 - \frac{1}{2m} }(X \leq m - 3) \\
& = 1 - ( 1 - \frac{1}{2m} )^m - m \cdot \frac{1}{2m}\cdot ( 1 - \frac{1}{2m} )^{m-1} - \frac{m(m-1)}{2} \cdot \frac{1}{(2m)^2} \cdot ( 1 - \frac{1}{2m} )^{m-2}\\
		& \geq 1 - \exp(-1/2) - \frac{1}{2} \frac{1}{1 - \frac{1}{2m} } \exp(-1/2) - \frac{1}{8 ( 1- \frac{1}{2m})^2} \exp(-1/2) \quad (\text{as } (1+x/n)^n \leq e^x) \\
		& \geq 1 - \exp(-1/2) \left( 1 + \frac{1}{2 - \frac{1}{40} } + \frac{1}{2( 2 - \frac{1}{40} )^2} \right)  \quad (\textnormal{as } m \geq 40 )\\
		& \geq 1/200 > 10 A,
	\end{split}
\end{equation*} where the last inequality holds as $A$ is sufficiently small.

\item (Case 4: $p \in \Big(1-1/m,1- \frac{4}{m} \left( \frac{10 \log(24/\alpha) }{n} + 3 \epsilon  \right) \Big]$ )
Notice that in this regime $t = 1 - 1/m$. Thus, $P_{p}(X\leq mt) = P_p(X \neq m) = 1 - p^m$. Given any positive integer $m \geq 1$, let $f(x) = (1-x)^{1/m} - (1 - \frac{2x}{m} ) $. So $f(0) = 0 $ and $ f'(x) = \frac{2 - (1 -x)^{1/m - 1}}{m} \geq \frac{2 - (1 - x)^{-1}}{m} \geq 0$ if $x \in [0, 1/2]$. Thus $f(x) \geq 0$ when $x \in [0,1/2]$. Then 
\begin{equation*}
	\begin{split}
		&P_{p}(X\leq mt) \geq 6 \epsilon + \frac{20 \log(24/\alpha) }{n} \\
		&\Longleftrightarrow \left(1 - \left(6 \epsilon + \frac{20 \log(24/\alpha) }{n}\right) \right)^{1/m} \geq p \\
		& \Longleftarrow 1 - \frac{4}{m} \left( 3 \epsilon + \frac{10 \log(24/\alpha) }{n}\right) \geq p \quad (\text{as } f(x) \geq 0, \forall x \in [0, 1/2] \text{ and } 6\epsilon_{\max} + \frac{20\log(24/\alpha)}{n} \leq \frac{1}{2}).
	\end{split}
\end{equation*} The last condition is satisfied for $p$ in this regime.

\end{itemize}
This finishes the proof of this lemma.

\subsubsection{Proof of Lemma \ref{lm:bernoulli-estimation}} 
Notice that $\indi\{X_i = 1\}$ stochastically dominates $(1-\epsilon)\textnormal{Bernoulli}(p)$ for any $i$. Since $X_i$s are independent, $\sum_{i=1}^n \indi\{X_i = 1\}$ stochastically dominates $(1-\epsilon)$Binomial$(n,p)$. By property of stochastic dominance and Bernstein's inequality (see Lemma \ref{lm:Binomial-prop} (ii)), we have for any $C \geq 4/3$,
\begin{equation*}
\begin{split}
	& \bbP \left(  \frac{\sum_{i=1}^n \indi\{X_i = 1\}}{n} \leq (1-\epsilon) \left(p - \frac{p(1-p)}{C} - \frac{C \log(2/\alpha)}{n} \right) \right) \\
	\leq & \bbP\left( (1-\epsilon)\textnormal{Binomial}(n,p) \leq (1-\epsilon)n \left(p - \frac{p(1-p)}{C} - \frac{C \log(2/\alpha)}{n} \right) \right) \leq \alpha.
\end{split}
\end{equation*} By taking $C= 3$, the result follows as long as $2(1-\epsilon)/3 \geq 1/2$, i.e., $\epsilon \leq 1/4$. This finishes the proof of this lemma.

\subsection{Proof of Theorem \ref{thm:type2}}

The proof for the Type-2 error control of $\psi_{p,\epsilon}^-$ is similar to the one of $\psi_{p,\epsilon}^+$ by the choice of $\underline{t}(p,\epsilon)$, $\underline{r}(p,\epsilon)$ and $\underline{\tau}(p,\epsilon)$. We focus on the proof for $\psi_{p,\epsilon}^+$. We divide the proof into two cases.

\vskip.2cm
{\noindent \bf (Case 1: $p \in [0,1-1/m]$ for $\psi_{p,\epsilon}^+$ and $p \in [1/m,1]$ for $\psi_{p,\epsilon}^-$)}  Given any $Q$, $p \in [0,1-1/m]$, $\epsilon \in [0, \epsilon_{\max}]$ and $r\in[\overline{r}(p, \epsilon),1-p]$, by the DKW inequality (see Lemma \ref{lm:DKW}), we have with probability at least $1 - \alpha/12$, the following event holds:
	\begin{equation*}
		(E) = \{ \textnormal{for all } x \in \bbR, |F_n(x) - P_{\epsilon, p+r, Q}(X \leq x)| \leq \sqrt{ \log (24/\alpha)/(2n) } \}.
	\end{equation*} In this case, we will show
	\begin{equation*}
		\begin{split}
			\underset{Q}{\sup}P_{\epsilon,p+r,Q}\left(\psi_{q,\epsilon}^+ = 0\right)= \underset{Q}{\sup}P_{\epsilon,p+r,Q}\left( \min_{q \in [0,p]} \phi_{q,\epsilon}^+ = 0\right)\leq \alpha/12,
		\end{split}
	\end{equation*} and it is equivalent to show 
	\begin{equation} \label{ineq:monotone-type2-regime1+-goal}
		\inf_Q P_{\epsilon,p+r,Q}\left( \phi_{q,\epsilon}^+ = 1, \forall q \in [0,p]\right)\geq 1-\alpha/12.
	\end{equation} Given $(E)$ happens, a sufficient condition for $\phi_{q,\epsilon}^+ = 1, \forall q \in [0,p]$ is given as follows.
\begin{equation} \label{ineq:monotone-type2-regime1+}
	\begin{split}
			& \forall q \in [0,p], \phi_{q,\epsilon} ^ + = 1 
		\Longleftrightarrow  \forall q \in [0,p], \frac{1}{n} \sum_{i=1}^n \indi \left\{ X_i \leq m \bar{t} (q, \epsilon) \right\} < \overline{\tau} (q, \epsilon) \\
		\overset{(E)}\Longleftarrow & \forall q \in [0,p], P_{\epsilon, p+r, Q}(X \leq m \bar{t} (q, \epsilon)) + \sqrt{ \frac{\log(24/\alpha)}{2n} } < \overline{\tau} (q, \epsilon)\\
		\overset{(a)}\Longleftrightarrow & \forall q \in [0,p], P_{\epsilon, p+r, Q}(X \leq m \bar{t} (q, \epsilon)) + \sqrt{ \frac{\log(24/\alpha)}{2n} } < \frac{11}{10} P_{q+\bar{r}(q, \epsilon)}(X \leq m \bar{t} (q, \epsilon)) \\
		\overset{(b)}\Longleftarrow & \forall q \in [0,p], P_{p+r}(X \leq m \bar{t} (q, \epsilon)) + \epsilon + \sqrt{ \frac{\log(24/\alpha)}{2n} } < \frac{11}{10} P_{q+\bar{r}(q, \epsilon)}(X \leq m \bar{t} (q, \epsilon)) \\
		\Longleftarrow & \left\{ \begin{array}{l}
			\forall q \in [0,p],  \epsilon + \sqrt{ \frac{\log(24/\alpha)}{2n} } < \frac{1}{10} P_{q+\bar{r}(q, \epsilon)}(X \leq m \bar{t} (q, \epsilon)) \\
			\forall q \in [0,p], P_{p+r}(X \leq m \bar{t} (q, \epsilon)) \leq P_{q+\bar{r}(q, \epsilon)}(X \leq m \bar{t} (q, \epsilon)) 
		\end{array} \right. \\
		\Longleftarrow & \left\{ \begin{array}{l}
			\forall q \in [0,p],  \epsilon + \sqrt{ \frac{\log(24/\alpha)}{2n} } < \frac{1}{10} P_{q+\bar{r}(q, \epsilon)}(X \leq m \bar{t} (q, \epsilon))\\
			\forall q \in [0,p], p +r \geq  q+\bar{r}(q, \epsilon) \,\, (\text{as }\text{Lemma } \ref{lm:Binomial-prop}\,(iii))
		\end{array} \right.\\
		\Longleftarrow & \left\{ \begin{array}{l}
			\forall q \in [0,p],  \epsilon + \sqrt{ \frac{\log(24/\alpha)}{2n} } < \frac{1}{10} P_{q+\bar{r}(q, \epsilon)}(X \leq m \bar{t} (q, \epsilon)) \\
			 p +r \geq  p+\bar{r}(p, \epsilon) \,\, (\text{as } \text{Lemma } \ref{lm:r-t-property}\,(ii)).
		\end{array} \right.
	\end{split}
\end{equation} Here (a) is by the definition of $\bar{\tau}(q, \epsilon)$ and (b) is because $P_{\epsilon, p+r, Q}(X \leq m \bar{t} (q, \epsilon)) \leq \epsilon + P_{p+r}(X \leq m \bar{t} (q, \epsilon))$. Note that the last two conditions in \eqref{ineq:monotone-type2-regime1+} are satisfied since $r \geq \bar{r}(p, \epsilon)$ and by Lemma~\ref{lm:r-t-property}~(iii) given $p\in [0,1-1/m]$. Thus, we have shown \eqref{ineq:monotone-type2-regime1+-goal} for all $p \in [0,1-1/m]$, all $\epsilon \in [0, \epsilon_{\max}]$ and all $r\in[\overline{r}(p, \epsilon),1-p]$. 

\vskip.2cm
{\noindent \bf (Case 2: $p \in \left(1-1/m, 1 - \frac{4}{m}\left( \frac{10 \log(24/\alpha)}{n} + 3 \epsilon \right) \right ]$ for $\psi_{p,\epsilon}^+$ and $p \in \left[\frac{4}{m}\left( \frac{10 \log(24/\alpha)}{n} + 3 \epsilon \right) ,1/m\right)$ for $\psi_{p,\epsilon}^-$)} In this case, we will leverage the following observation that while in general $\phi_{p, \epsilon}^{+}$ and $\phi_{p, \epsilon}^{-}$ are not monotone with respect to $p$ when $p \in [0,1]$, they are monotone with respect to $p$ when $p$ is at the boundary. Its proof is also straightforward by directly checking and we omit it here.
\begin{Lemma}\label{lm:monotone-test-boundary}
	When $p \in [0,1/m)$, $\phi_{p, \epsilon}^{-}$ is non-decreasing as $p$ increases; when $p \in (1-1/m,1]$, $\phi_{p, \epsilon}^{+}$ is non-increasing as $p$ increases.
\end{Lemma}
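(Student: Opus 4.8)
The plan is a direct verification: on each of the two boundary intervals the cutoff location inside the relevant test statistic collapses to a constant, so that only the rejection threshold retains a dependence on $p$, and that dependence is monotone. Concretely, for $p\in(1-1/m,1]$ the definition \eqref{def:t} gives $\overline{t}(p,\epsilon)=1-1/m$ for every such $p$, independently of $\epsilon$, so $m\,\overline{t}(p,\epsilon)=m-1$ and \eqref{eq:phi+} reads
\[
\phi_{p,\epsilon}^{+} \;=\; \indi\!\left\{\frac{1}{n}\sum_{i=1}^{n} \indi\{X_i \neq m\} \;<\; \overline{\tau}(p,\epsilon)\right\},\qquad \overline{\tau}(p,\epsilon)=\tfrac12\bigl(1-p^{m}\bigr)-\tfrac{3\log(24/\alpha)}{n}.
\]
The empirical average on the left does not involve $p$, whereas $p\mapsto\overline{\tau}(p,\epsilon)$ is strictly decreasing on $(1-1/m,1]$ because $p\mapsto p^{m}$ is increasing. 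Hence for $1-1/m<p_{1}\le p_{2}\le 1$ we get $\overline{\tau}(p_{1},\epsilon)\ge\overline{\tau}(p_{2},\epsilon)$, and therefore $\phi_{p_{2},\epsilon}^{+}\le\phi_{p_{1},\epsilon}^{+}$ pointwise in the data; that is, $\phi_{p,\epsilon}^{+}$ is non-increasing in $p$ on $(1-1/m,1]$.

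For $\phi_{p,\epsilon}^{-}$ on $[0,1/m)$ I would invoke the symmetry relations $\underline{t}(p,\epsilon)=1-\overline{t}(1-p,\epsilon)$ and $\underline{\tau}(p,\epsilon)=\overline{\tau}(1-p,\epsilon)$: as $p$ ranges over $[0,1/m)$, $1-p$ ranges over $(1-1/m,1]$, so $\underline{t}(p,\epsilon)=1/m$ is constant, $m\,\underline{t}(p,\epsilon)=1$, and $\underline{\tau}(p,\epsilon)=\tfrac12\bigl(1-(1-p)^{m}\bigr)-\tfrac{3\log(24/\alpha)}{n}$ is increasing in $p$. Then \eqref{eq:phi-} becomes $\phi_{p,\epsilon}^{-}=\indi\{\tfrac1n\sum_{i=1}^{n}\indi\{X_i\ge1\}<\underline{\tau}(p,\epsilon)\}$ with a $p$-free statistic and an increasing threshold, so $\phi_{p,\epsilon}^{-}$ is non-decreasing in $p$. (Alternatively this half follows from the first via the distributional identity $X\sim P_{\epsilon,p,Q}\iff m-X\sim P_{\epsilon,1-p,Q'}$.)

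I do not anticipate any real obstacle; the only care needed is bookkeeping with the piecewise formulas in \eqref{def:t-tau-r}, namely confirming that on $(1-1/m,1]$ one is genuinely in the second branch of $\overline{t}$ (so the cutoff is the constant $1-1/m$ rather than the $p$-dependent first-branch expression) and that the matching branch of $\overline{\tau}$ is used, and symmetrically for $\underline{t},\underline{\tau}$. Once that is pinned down the claimed monotonicity is immediate.
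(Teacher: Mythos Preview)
Your proposal is correct and follows exactly the approach the paper indicates: the paper states that the proof ``is also straightforward by directly checking and we omit it here,'' and your argument---observing that on the boundary interval the cutoff $\overline{t}$ (resp.\ $\underline{t}$) collapses to a constant while the threshold $\overline{\tau}$ (resp.\ $\underline{\tau}$) is monotone in $p$---is precisely that direct check.
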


Now, let us consider the Type-2 error control of $\psi_{p,\epsilon}^+$. Given any $Q$, $\epsilon \in [0, \epsilon_{\max}]$, $r\in[\overline{r}(p, \epsilon),1-p]$ and $p \in \left(1-1/m, 1 - \frac{4}{m}\left( \frac{10 \log(24/\alpha)}{n} + 3 \epsilon \right) \right ]$,
\begin{equation}\label{ineq:monotone-type2-regime2+}
	\begin{split}
		\underset{Q}{\sup}P_{\epsilon,p+r,Q}\left(\psi_{p,\epsilon}^+ = 0\right)&= \underset{Q}{\sup}P_{\epsilon,p+r,Q}\left( \min_{q \in [0,p]} \phi_{q,\epsilon}^+ = 0\right) \\
		& = \sup_Q P_{\epsilon,p+r,Q}\left( \left(\min_{q \in [0,1-1/m]} \phi_{q,\epsilon}^+\right) \wedge \left(\min_{q \in (1-1/m,p]}\phi_{q,\epsilon}^+ \right)  = 0\right) \\
		& \overset{\text{Lemma } \ref{lm:monotone-test-boundary} }= \sup_Q P_{\epsilon,p+r,Q}\left( \left(\min_{q \in [0,1-1/m]} \phi_{q,\epsilon}^+\right) \wedge \phi_{p,\epsilon}^+  = 0\right) \\
		& \leq \sup_Q P_{\epsilon,p+r,Q}\left( \min_{q \in [0,1-1/m]} \phi_{q,\epsilon}^+  = 0\right) + \sup_Q P_{\epsilon,p+r,Q}\left(  \phi_{p,\epsilon}^+  = 0\right) \\
		& \overset{\text{Theorem } \ref{thm:test-up} }\leq \sup_Q P_{\epsilon,p+r,Q}\left( \min_{q \in [0,1-1/m]} \phi_{q,\epsilon}^+  = 0\right)  + \alpha/12.
	\end{split}
\end{equation} Next we bound $\sup_Q P_{\epsilon,p+r,Q}\left( \min_{q \in [0,1-1/m]} \phi_{q,\epsilon}^+  = 0\right)$ and its analysis is almost the same as the one in \eqref{ineq:monotone-type2-regime1+}, so we only sketch for simplicity. By the DKW inequality, we have with probability at least $1 - \alpha/12$, the following event holds:
	\begin{equation*}
		(E) = \{ \textnormal{for all } x \in \bbR, |F_n(x) - P_{\epsilon, p+r, Q}(X \leq x)| \leq \sqrt{ \log (24/\alpha)/(2n) } \}.
	\end{equation*}
Given $(E)$ happens, a sufficient condition for $\phi_{q,\epsilon}^+ = 1, \forall q \in [0,1-1/m]$ is given as follows.
\begin{equation*}
	\begin{split}
			& \forall q \in [0,1-1/m], \phi_{q,\epsilon} ^ + = 1 
		\Longleftrightarrow  \forall q \in [0,1-1/m], \frac{1}{n} \sum_{i=1}^n \indi \left\{ X_i \leq m \bar{t} (q, \epsilon) \right\} < \overline{\tau} (q, \epsilon) \\
		\overset{(E)}\Longleftarrow & \forall q \in [0,1-1/m], P_{\epsilon, p+r, Q}(X \leq m \bar{t} (q, \epsilon)) + \sqrt{ \frac{\log(24/\alpha)}{2n} } < \overline{\tau} (q, \epsilon)\\
		\Longleftrightarrow & \forall q \in [0,1-1/m], P_{\epsilon, p+r, Q}(X \leq m \bar{t} (q, \epsilon)) + \sqrt{ \frac{\log(24/\alpha)}{2n} } < \frac{11}{10} P_{q+\bar{r}(q, \epsilon)}(X \leq m \bar{t} (q, \epsilon)) \\
		\Longleftarrow & \forall q \in [0,1-1/m], P_{p+r}(X \leq m \bar{t} (q, \epsilon)) + \epsilon + \sqrt{ \frac{\log(24/\alpha)}{2n} } < \frac{11}{10} P_{q+\bar{r}(q, \epsilon)}(X \leq m \bar{t} (q, \epsilon)) \\
		\Longleftarrow & \left\{ \begin{array}{l}
			\forall q \in [0,1-1/m],  \epsilon + \sqrt{ \frac{\log(24/\alpha)}{2n} } < \frac{1}{10} P_{q+\bar{r}(q, \epsilon)}(X \leq m \bar{t} (q, \epsilon)) \\
			\forall q \in [0,1-1/m], P_{p+r}(X \leq m \bar{t} (q, \epsilon)) \leq P_{q+\bar{r}(q, \epsilon)}(X \leq m \bar{t} (q, \epsilon)) 
		\end{array} \right. \\
		\Longleftarrow & \left\{ \begin{array}{l}
			\forall q \in [0,1-1/m],  \epsilon + \sqrt{ \frac{\log(24/\alpha)}{2n} } < \frac{1}{10} P_{q+\bar{r}(q, \epsilon)}(X \leq m \bar{t} (q, \epsilon))\\
			\forall q \in [0,1-1/m], p +r \geq  q+\bar{r}(q, \epsilon) \,\, (\text{as }\text{Lemma } \ref{lm:Binomial-prop}\,(iii))
		\end{array} \right.\\
		\Longleftarrow & \left\{ \begin{array}{l}
			\forall q \in [0,1-1/m],  \epsilon + \sqrt{ \frac{\log(24/\alpha)}{2n} } < \frac{1}{10} P_{q+\bar{r}(q, \epsilon)}(X \leq m \bar{t} (q, \epsilon)) \\
			 p +r \geq  1-1/m+\bar{r}(1-1/m, \epsilon) \,\, (\text{as } \text{Lemma } \ref{lm:r-t-property}\,(ii))
		\end{array} \right.\\
		\Longleftarrow & \left\{ \begin{array}{l}
			\forall q \in [0,1-1/m],  \epsilon + \sqrt{ \frac{\log(24/\alpha)}{2n} } < \frac{1}{10} P_{q+\bar{r}(q, \epsilon)}(X \leq m \bar{t} (q, \epsilon)) \\
			 p +\bar{r}(p, \epsilon) \geq  1-1/m+\bar{r}(1-1/m, \epsilon) \,\, (\text{as } r \geq \bar{r}(p, \epsilon)).
		\end{array} \right.
	\end{split}
\end{equation*}
 Note that the last two conditions above are satisfied due to Lemma \ref{lm:r-t-property} (iii) given $q\in [0,1-1/m]$ and Lemma \ref{lm:r-t-property} (ii). This shows that $\sup_Q P_{\epsilon,p+r,Q}\left( \min_{q \in [0,1-1/m]} \phi_{q,\epsilon}^+  = 0\right) \leq \alpha/12$. In view of \eqref{ineq:monotone-type2-regime2+}, we have shown $\sup_QP_{\epsilon,p+r,Q}\left(\psi_{p,\epsilon}^+ = 0\right) \leq \alpha/6$.

\subsection{Proof of Proposition \ref{prop:binom-end-pionts}}

We first present a convenient lemma, which provides another characterization for $\wh{p}_{\rm{left}} $ and $\wh{p}_{\rm{right}}$.
\begin{Lemma}\label{lm:p-hat-charac}
	The set $\widehat{\CI}$ defined by (\ref{eq:ci-dis}) is an interval whose endpoints are given by
	\begin{equation} \label{eq:pleft-third}
		\wh{p}_{\rm{left}} = \left\{ \begin{array}{ll}
			 \inf\{p \in S_m\setminus \{1\}:  \max_{\epsilon \in \cE} \min_{q \in [0,p+1/m] \cap (S_m \setminus \{1\}) } \phi_{q, \epsilon}^+ = 0  \} & \text{ if }  \max_{\epsilon \in \cE} \min_{q \in S_m \setminus \{1\}} \phi_{q, \epsilon}^+ = 0\\
			 \inf \{ p \in (1- \frac{1}{m} ,1]: \phi_{p, \epsilon'}^+ = 0  \} & \text{ if }  \max_{\epsilon \in \cE} \min_{q \in S_m \setminus \{1\}} \phi_{q, \epsilon}^+ = 1,
		\end{array}  \right.
	\end{equation} and 
	\begin{equation}\label{eq:pright-third}
		\begin{split}
					\wh{p}_{\rm{right}} = \left\{ \begin{array}{ll}
			 \sup\{p \in S_m\setminus \{0\}:  \max_{\epsilon \in \cE} \min_{q \in [0,p-1/m] \cap (S_m \setminus \{0\}) } \phi_{q, \epsilon}^- = 0  \} & \text{ if }  \max_{\epsilon \in \cE} \min_{q \in S_m \setminus \{0\}} \phi_{q, \epsilon}^- = 0\\
			 \sup \{ p \in [0, \frac{1}{m} ): \phi_{p, \epsilon'}^- = 0  \} & \text{ if }  \max_{\epsilon \in \cE} \min_{q \in S_m \setminus \{0\}} \phi_{q, \epsilon}^- = 1,
		\end{array}  \right.
		\end{split}
	\end{equation}
    where $\epsilon' \in [0,\epsilon_{\max}]$ can be chosen arbitrarily.
\end{Lemma}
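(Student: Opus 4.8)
The plan is to reduce everything to monotonicity of the discretized tests in $p$. The first step is to show that for every fixed $\epsilon\in\cE$ the map $p\mapsto\widehat\psi^{+}_{p,\epsilon}$ is non-increasing on $[0,1]$ and $p\mapsto\widehat\psi^{-}_{p,\epsilon}$ is non-decreasing on $[0,1]$. On $[0,1-1/m]$ the first claim is immediate from \eqref{eq:disc-test+}: $\lceil mp\rceil$ is non-decreasing in $p$, so the index set $[0,\lceil mp\rceil/m]\cap S_m$ grows with $p$ and $\min_{q\in[0,\lceil mp\rceil/m]\cap S_m}\phi^{+}_{q,\epsilon}$ can only decrease. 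On $(1-1/m,1]$ we have $\widehat\psi^{+}_{p,\epsilon}=\phi^{+}_{p,\epsilon}\wedge\min_{q\in S_m\setminus\{1\}}\phi^{+}_{q,\epsilon}$, where the second term does not depend on $p$ and $\phi^{+}_{p,\epsilon}$ is non-increasing in $p$ there by Lemma~\ref{lm:monotone-test-boundary}, so the minimum is non-increasing. Finally, at the junction $p=1-1/m$ the left value equals $\min_{q\in S_m\setminus\{1\}}\phi^{+}_{q,\epsilon}$ while every value on $(1-1/m,1]$ is $\le\min_{q\in S_m\setminus\{1\}}\phi^{+}_{q,\epsilon}$, so monotonicity persists across the junction. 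The statements for $\widehat\psi^{-}_{p,\epsilon}$ are the mirror image, with the junction at $p=1/m$. Since $\cE$ is finite, taking $\max_{\epsilon\in\cE}$ preserves monotonicity.

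Consequently $A^{+}:=\{p\in[0,1]:\widehat\psi^{+}_{p,\epsilon}=0\text{ for all }\epsilon\in\cE\}$ is upward closed in $[0,1]$ and $A^{-}:=\{p\in[0,1]:\widehat\psi^{-}_{p,\epsilon}=0\text{ for all }\epsilon\in\cE\}$ is downward closed, so $\widehat{\CI}=A^{+}\cap A^{-}$ is an interval. A one-line computation with \eqref{def:t-tau-r} shows $\overline t(1,\epsilon)=1-1/m$ and $\overline\tau(1,\epsilon)=-3\log(24/\alpha)/n<0$, hence $\phi^{+}_{1,\epsilon}=0$ and $\widehat\psi^{+}_{1,\epsilon}=0$; likewise $\widehat\psi^{-}_{0,\epsilon}=0$. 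Thus $1\in A^{+}$, $0\in A^{-}$, both sets are nonempty, and the endpoints of $\widehat{\CI}$ are $\widehat p_{\mathrm{left}}=\inf A^{+}$ and $\widehat p_{\mathrm{right}}=\sup A^{-}$. By the symmetry $X\mapsto m-X$, $p\mapsto 1-p$ — which turns $\phi^{+}$ into $\phi^{-}$ and exchanges $\overline t,\overline\tau,S_m\setminus\{1\}$ with $\underline t,\underline\tau,S_m\setminus\{0\}$ — it is enough to derive \eqref{eq:pleft-third}.

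To identify $\inf A^{+}$ I would split on $M:=\max_{\epsilon\in\cE}\min_{q\in S_m\setminus\{1\}}\phi^{+}_{q,\epsilon}$. If $M=1$: for $p\le1-1/m$ the index set is contained in $S_m\setminus\{1\}$, so $\widehat\psi^{+}_{p,\epsilon}\ge\min_{q\in S_m\setminus\{1\}}\phi^{+}_{q,\epsilon}$ for each $\epsilon$, whence $\max_{\epsilon}\widehat\psi^{+}_{p,\epsilon}\ge M=1$ and $A^{+}\cap[0,1-1/m]=\varnothing$; on $(1-1/m,1]$ the test $\phi^{+}_{p,\epsilon}$ is independent of $\epsilon$, so $\max_{\epsilon}\widehat\psi^{+}_{p,\epsilon}=\phi^{+}_{p,\epsilon'}\wedge M=\phi^{+}_{p,\epsilon'}$, which gives the second branch of \eqref{eq:pleft-third}. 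If $M=0$: then $\widehat\psi^{+}_{1-1/m,\epsilon}=\min_{q\in S_m\setminus\{1\}}\phi^{+}_{q,\epsilon}$ forces $[1-1/m,1]\subseteq A^{+}$, so $\inf A^{+}$ is governed by $[0,1-1/m]$. Writing $m_j:=\max_{\epsilon\in\cE}\min_{q\in\{0,1/m,\dots,j/m\}}\phi^{+}_{q,\epsilon}$, which is non-increasing in $j$, a short analysis of the ceiling function gives $j/m\in A^{+}\iff m_j=0$ and $(j/m,(j{+}1)/m)\subseteq A^{+}\iff m_{j+1}=0$, so $A^{+}\cap[0,1-1/m]=((j^{*}-1)/m,1-1/m]$ with $j^{*}:=\min\{j:m_j=0\}$ (interpreted as $\inf A^{+}=0$ when $m_0=0$). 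On the other hand $m_{m-1}=M=0$, so $(m-1)/m$ belongs to $\{p\in S_m\setminus\{1\}:\max_{\epsilon}\min_{q\in[0,p+1/m]\cap(S_m\setminus\{1\})}\phi^{+}_{q,\epsilon}=0\}$, while for $p=j/m$ with $j\le m-2$ the condition there is exactly $m_{j+1}=0$; hence that set is $\{(j^{*}-1)/m,\dots,(m-1)/m\}$, its infimum is $(j^{*}-1)/m=\inf A^{+}$, and this is the first branch of \eqref{eq:pleft-third}. Formula \eqref{eq:pright-third} follows by symmetry.

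The part I expect to be most delicate is the bookkeeping in the last step: making the ceiling $\lceil mp\rceil$ in \eqref{eq:disc-test+}, the shift $p\mapsto p+1/m$, and the deletion of the point $1$ from the index set in \eqref{eq:pleft-third} fit together so that $\inf A^{+}$ equals \emph{exactly} the displayed infimum, and checking that the dichotomy on $M$ is the right one — the subtlety being that on $(1-1/m,1]$ the quantities $\overline t(p,\epsilon)$ and $\overline\tau(p,\epsilon)$ no longer depend on $\epsilon$, so the boundary piece must be treated separately from the interior. By contrast the monotonicity and the ``interval'' conclusion in the first two steps are routine once Lemma~\ref{lm:monotone-test-boundary} is in hand.
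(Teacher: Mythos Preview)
Your proposal is correct and follows essentially the same approach as the paper: both arguments establish monotonicity of $\widehat\psi^{+}_{p,\epsilon}$ in $p$, split on the value of $M=\max_{\epsilon\in\cE}\min_{q\in S_m\setminus\{1\}}\phi^{+}_{q,\epsilon}$, and then compute $\inf A^{+}$ in each case. Your treatment of the ceiling/shift bookkeeping via the sequence $m_j$ and the threshold $j^{*}$ is more explicit than the paper's, which simply writes down a chain of equalities without unpacking the step from $\inf\{p\in(0,1-1/m]:\ldots\}$ to $\inf\{p\in S_m\setminus\{1\}:\ldots\}$; your version would make that transition transparent.
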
 The proof of Lemma \ref{lm:p-hat-charac} is provided in the subsequent subsections. Next, we first show that $\wh{p}_{\rm{left}} $ and $\wh{p}_{\rm{right}}$ defined in \eqref{eq:left} and \eqref{eq:right} are the unification of the ones in \eqref{eq:pleft-third} and \eqref{eq:pright-third}. So they are also the endpoints of $\wh{\CI}$ by Lemma \ref{lm:p-hat-charac}. Then we show the output of the Algorithm \ref{alg:CI-endpoints} are the $\wh{p}_{\rm{left}}$ and $\wh{p}_{\rm{right}}$ defined in \eqref{eq:pleft-third} and \eqref{eq:pright-third}.
\vskip.2cm
{\noindent \bf (Part I: Equivalence of \eqref{eq:left}/\eqref{eq:right} and \eqref{eq:pleft-third}/\eqref{eq:pright-third})} We will present the proof for the equivalence between \eqref{eq:left} and \eqref{eq:pleft-third}, while the proof for the equivalence between \eqref{eq:right} and \eqref{eq:pright-third} is similar and we omit it here. We will show separately that when $\max_{\epsilon \in \cE} \min_{q \in S_m \setminus \{1\}} \phi_{q, \epsilon}^+ = 0$ and $\max_{\epsilon \in \cE} \min_{q \in S_m \setminus \{1\}} \phi_{q, \epsilon}^+ = 1$, \eqref{eq:left} can be written as the cases in \eqref{eq:pleft-third}. Let us denote 
	\begin{equation*}
		\begin{split}
			S_{\rm{left}} = \left\{p\in S_m\cup\left[1-\frac{1}{m},1\right]: \max_{\epsilon\in\mathcal{E}}\left(\phi_{p,\epsilon}^+\wedge\min_{q\in [0,p+1/m] \cap (S_m \setminus \{1\} )}\phi_{q,\epsilon}^+\right)=0\right\}.
		\end{split}
	\end{equation*} Note that $\max_{\epsilon\in\mathcal{E}}\left(\phi_{p,\epsilon}^+\wedge\min_{q\in [0,p+1/m] \cap (S_m \setminus \{1\} ) }\phi_{q,\epsilon}^+\right)$ is non-increasing in $p$ when $p \in S_m\cup\left[1-\frac{1}{m},1\right]$. Thus, for any $p_1,p_2 \in S_m \cup [1-\tfrac{1}{m},1]$ with $p_1 \leq p_2$, if $p_1 \in S_{\rm left}$ then $p_2 \in S_{\rm left}$.
\begin{itemize}[leftmargin=*]
	\item (Case 1: $\max_{\epsilon \in \cE} \min_{q \in S_m \setminus \{1\}} \phi_{q, \epsilon}^+ = 0$) In this case, it is easy to check $1-1/m \in S_{\rm{left}}$. Thus $\wh{p}_{\rm{left}}$ in \eqref{eq:left} is less than or equal to $1-1/m$. Then 
		\begin{equation*}
		\begin{split}
			\wh{p}_{\rm{left}} \text{ in } \eqref{eq:left} &= \inf\left\{p\in S_m\cup\left[1-\frac{1}{m},1\right]:  \max_{\epsilon\in\mathcal{E}}\left(\phi_{p,\epsilon}^+\wedge\min_{q\in [0,p+1/m] \cap (S_m \setminus \{1\} ) }\phi_{q,\epsilon}^+\right)=0\right\} \\
			& = \inf\left\{p\in S_m\setminus \{1\} :  \max_{\epsilon\in\mathcal{E}}\left(\phi_{p,\epsilon}^+\wedge\min_{q\in [0,p+1/m] \cap (S_m \setminus \{1\} ) }\phi_{q,\epsilon}^+\right)=0\right\} \\
			& \overset{(a)}= \inf\left\{p\in S_m\setminus \{1\} :  \max_{\epsilon\in\mathcal{E}}\left(\min_{q\in [0,p+1/m] \cap (S_m \setminus \{1\} ) }\phi_{q,\epsilon}^+\right)=0\right\} =\wh{p}_{\rm{left}} \text{ in } \eqref{eq:pleft-third},
		\end{split}
	\end{equation*} where (a) is because for any $p\in S_m\setminus \{1\} $, $\min_{q\in [0,p+1/m] \cap (S_m \setminus \{1\} ) }\phi_{q,\epsilon}^+ \leq \phi_{p,\epsilon}^+$.
	\item (Case 2: $\max_{\epsilon \in \cE} \min_{q \in S_m \setminus \{1\}} \phi_{q, \epsilon}^+ = 1$) In this case, it is easy to check $1-1/m \notin S_{\rm{left}}$. Thus $\wh{p}_{\rm{left}}$ in \eqref{eq:left} is greater than or equal to $1-1/m$. As a result,
	\begin{equation} \label{eq:pleft-equi-case2}
		\begin{split}
			&\wh{p}_{\rm{left}} \text{ in } \eqref{eq:left} = \inf\left\{p\in S_m\cup\left[1-\frac{1}{m},1\right]:  \max_{\epsilon\in\mathcal{E}}\left(\phi_{p,\epsilon}^+\wedge\min_{q\in [0,p+1/m] \cap (S_m \setminus \{1\} ) }\phi_{q,\epsilon}^+\right)=0\right\} \\
			& \overset{(a)}= \inf\left\{p\in \left(1-\frac{1}{m},1\right]:  \max_{\epsilon\in\mathcal{E}}\left(\phi_{p,\epsilon}^+\wedge\min_{q\in [0,p+1/m] \cap (S_m \setminus \{1\} ) }\phi_{q,\epsilon}^+\right)=0\right\}\\
			& = \inf\left\{p\in \left(1-\frac{1}{m},1\right]:  \max_{\epsilon\in\mathcal{E}}\left(\phi_{p,\epsilon}^+\wedge\min_{q\in S_m \setminus \{1\}  }\phi_{q,\epsilon}^+\right)=0\right\} \\
			& \overset{(b)}= \inf\left\{p\in \left(1-\frac{1}{m},1\right]:  \max_{\epsilon\in\mathcal{E}^*}\left(\phi_{p,\epsilon}^+\wedge\min_{q\in S_m \setminus \{1\}  }\phi_{q,\epsilon}^+\right)=0 \text{ and } \max_{\epsilon\in\cE \setminus \mathcal{E}^*}\left(\phi_{p,\epsilon}^+\wedge\min_{q\in S_m \setminus \{1\}  }\phi_{q,\epsilon}^+\right)=0\right\} \\
			& = \inf\left\{p\in \left(1-\frac{1}{m},1\right]:  \max_{\epsilon\in\mathcal{E}^*}\phi_{p,\epsilon}^+=0 \right\} \overset{(c)}= \inf\left\{p\in \left(1-\frac{1}{m},1\right]:  \phi_{p,\epsilon'}^+=0 \right\}  = \wh{p}_{\rm{left}} \text{ in } \eqref{eq:pleft-third},
		\end{split}
	\end{equation} where (a) is because $ 1-1/m \notin S_{\rm{left}}$, which implies that any $p \in S_m \setminus \{1\}$ does not belong to $S_{\rm left}$; in (b), $\cE^*:= \{ \epsilon \in \cE: \min_{q \in S_m \setminus \{1\}} \phi_{q, \epsilon}^+ = 1 \}$ and (c) holds for any $\epsilon' \in [0, \epsilon_{\max}]$ because on $p \in \left(1-\frac{1}{m},1\right]$, $\phi_{p,\epsilon'}^+$ is independent of $\epsilon'$. 
\end{itemize}

\vskip.2cm
{\noindent \bf (Part II: Algorithm \ref{alg:CI-endpoints} computes \eqref{eq:pleft-third} and \eqref{eq:pright-third})} We show the output of $\wh{p}_{\rm{left}}$ of Algorithm \ref{alg:CI-endpoints} is the same as the one defined in \eqref{eq:pleft-third}, while the proof for $\wh{p}_{\rm{right}}$ is similar. First, it is easy to check that 
\begin{equation} \label{eq:pleft-boundary}
	\begin{split}
		 &\{ p \in (1- 1/m ,1]: \phi_{p, \epsilon}^+ = 0  \} \\
		 & = \left[  \left[1-\left(\frac{2}{n}\sum_{i=1}^n\indi\{X_i\leq m-1\}+\frac{6\log(24/\alpha)}{n}\right)\wedge 1\right]^{1/m} \vee \left( 1- \frac{1}{m} \right),1 \right] \, \setminus\, \left\{1 - \frac{1}{m}\right\}.
	\end{split}
\end{equation} Note that the left boundary at the right-hand side of \eqref{eq:pleft-boundary} is exactly $\wh{p}_{\rm{left}}$ in \eqref{eq:end-start-left}. 

Now we walk to the first if statement in Step 3 of Algorithm \ref{alg:CI-endpoints}. The for loop tries to check whether the condition $\max_{\epsilon \in \cE} \min_{q \in [0,p+1/m] \cap (S_m \setminus \{1\}) } \phi_{q, \epsilon}^+ = 0$ holds for some $p \in S_m\setminus \{1\}$. If there is a $p \in S_m\setminus \{1\}$ such that $\max_{\epsilon \in \cE} \min_{q \in [0,p+1/m] \cap (S_m \setminus \{1\}) } \phi_{q, \epsilon}^+ = 0$ holds, then the for loop computes
\begin{equation*}
	\begin{split}
		   \inf\{p \in S_m\setminus \{1\}:  \max_{\epsilon \in \cE} \min_{q \in [0,p+1/m] \cap (S_m \setminus \{1\}) } \phi_{q, \epsilon}^+ = 0  \}
	\end{split}
\end{equation*} and by \eqref{eq:pleft-third}, we know this quantity is $\wh{p}_{\rm{left}}$.

If there is no $p \in S_m \setminus \{1\}$ such that $\max_{\epsilon \in \cE} \min_{q \in [0,p+1/m] \cap (S_m \setminus \{1\}) } \phi_{q, \epsilon}^+ = 0$ holds, then it implies $\max_{\epsilon\in\mathcal{E}}\min_{q\in S_m \setminus \{1\} }\phi_{q,\epsilon}^+=1$, then by \eqref{eq:pleft-third}, we know 
\begin{equation*}
	\begin{split}
		\wh{p}_{\rm{left}} \text{ in } \eqref{eq:pleft-third}& = \inf \{ p \in (1- 1/m ,1]: \phi_{p, \epsilon}^+ = 0  \} \overset{\eqref{eq:pleft-boundary}}= \wh{p}_{\rm{left}} \text{ in } \eqref{eq:end-start-left},
	\end{split}
\end{equation*} which is also the output of the algorithm. This finishes the proof of this proposition.

\subsubsection{Proof of Lemma \ref{lm:p-hat-charac}}
We will present the proof for the equivalence of the left endpoint of $\wh{\CI}$ and \eqref{eq:pleft-third}, while the proof for the equivalence of the right endpoint of $\wh{\CI}$ and \eqref{eq:pright-third} is similar and we omit it here. We will show separately that when $\max_{\epsilon \in \cE} \min_{q \in S_m \setminus \{1\}} \phi_{q, \epsilon}^+ = 0$ and $\max_{\epsilon \in \cE} \min_{q \in S_m \setminus \{1\}} \phi_{q, \epsilon}^+ = 1$, the left endpoint of $\wh{\CI}$ can be written as the cases in \eqref{eq:pleft-third}. Let us denote 
	\begin{equation*}
		\begin{split}
			S_{\rm{left}} = \left\{p \in [0,1]:  \wh{\psi}_{p,\epsilon}^+ = 0 \text{ for all }\epsilon\in\mathcal{E}\right\}.
		\end{split}
	\end{equation*} Note that $\wh{\psi}_{p,\epsilon}^+$ is non-increasing in $p$ when $p \in [0,1]$, thus $S_{\rm{left}}$ is an interval. Thus $\wh{\CI}$ is also an interval.
\begin{itemize}[leftmargin=*]
	\item (Case 1: $\max_{\epsilon \in \cE} \min_{q \in S_m \setminus \{1\}} \phi_{q, \epsilon}^+ = 0$) In this case, it is easy to check $1-1/m \in S_{\rm{left}}$. Thus the left endpoint of $\wh{\CI}$ is less or equal to $1-1/m$. When $m = 1$, it is easy to check that $\textnormal{left endpoint of }\wh{\CI} =0 = \wh{p}_{\rm{left}} \text{ in } \eqref{eq:pleft-third}$. When $m \geq 2$,
	\begin{equation*}
		\begin{split}
			&\textnormal{left endpoint of }\wh{\CI}  = \inf \left\{p \in [0,1]:  \wh{\psi}_{p,\epsilon}^+ = 0 \text{ for all }\epsilon\in\mathcal{E}\right\} \\
			& = \inf \left\{p \in [0,1-1/m]:  \min_{q\in[0, \lceil mp \rceil/m ]\cap S_m}\phi_{q,\epsilon}^+ = 0 \text{ for all }\epsilon\in\mathcal{E}\right\}\\
			& = \inf \left\{p \in (0,1-1/m]:  \min_{q\in[0, \lceil mp \rceil/m ]\cap S_m}\phi_{q,\epsilon}^+ = 0 \text{ for all }\epsilon\in\mathcal{E}\right\}\\
			& = \inf\left\{p\in S_m\setminus \{1\} :  \max_{\epsilon\in\mathcal{E}}\left(\min_{q\in [0,p+1/m] \cap (S_m \setminus \{1\} ) }\phi_{q,\epsilon}^+\right)=0\right\} =\wh{p}_{\rm{left}} \text{ in } \eqref{eq:pleft-third}.
		\end{split}
	\end{equation*}

		\item (Case 2: $\max_{\epsilon \in \cE} \min_{q \in S_m \setminus \{1\}} \phi_{q, \epsilon}^+ = 1$) In this case, it is easy to check $1-1/m \notin S_{\rm{left}}$. Thus the left endpoint of $\wh{\CI}$ is greater or equal to $1-1/m$. Then 
		\begin{equation*}
			\begin{split}
				&\textnormal{left endpoint of }\wh{\CI} = \inf \left\{p \in [0,1]:  \wh{\psi}_{p,\epsilon}^+ = 0 \text{ for all }\epsilon\in\mathcal{E}\right\} \\
			& \overset{(a)}= \inf \left\{p \in  \left(1-\frac{1}{m},1\right]:  \wh{\psi}_{p,\epsilon}^+ = 0 \text{ for all }\epsilon\in\mathcal{E}\right\}\\
			& = \inf\left\{p\in \left(1-\frac{1}{m},1\right]: \max_{\epsilon \in \cE} \left(\phi_{p,\epsilon}^+\wedge \min_{q\in S_m\backslash\{1\}}\phi_{q,\epsilon}^+\right)=0  \right\}  \overset{(b)}= \wh{p}_{\rm{left}} \text{ in } \eqref{eq:pleft-third},
			\end{split}
		\end{equation*} where (a) $ 1-1/m \notin S_{\rm{left}}$ and $S_{\rm{left}}$ is an interval; (b) follows the same analysis as in the second half of the proof in \eqref{eq:pleft-equi-case2}. 
\end{itemize}

\subsection{Proof of Lemma \ref{lm:test-rate-CI-length-connection}}
Note that the order of $\ell(n,\epsilon,m,p)$ can be directly obtained by combining the order of $\overline{r}(p, \epsilon)$ and $\underline{r}(p, \epsilon)$. Next, we will show the order of $\overline{r}(p, \epsilon)$, while the proof for $\underline{r}(p, \epsilon)$ is similar.

When $p \in [0,1-1/m]$, we have $1-p \geq 1/m$ and $1-p \gtrsim \sqrt{\frac{p(1-p)}{m}}\left(\frac{1}{\sqrt{\log n}}+\frac{1}{\sqrt{\log(1/\epsilon)}}\right)$, thus 
\begin{equation} \label{eq:order-1}
    \begin{split}
        \left(\sqrt{\frac{p(1-p)}{m}}\left(\frac{1}{\sqrt{\log n}}+\frac{1}{\sqrt{\log(1/\epsilon)}}\right) + \frac{1}{m} \right) \wedge (1-p) \asymp \sqrt{\frac{p(1-p)}{m}}\left(\frac{1}{\sqrt{\log n}}+\frac{1}{\sqrt{\log(1/\epsilon)}}\right) + \frac{1}{m}.
    \end{split}
\end{equation}
As a result,
\begin{equation*}
    \begin{split}
        \overline{r}(p, \epsilon) \overset{ \textnormal{Lemma }\ref{lm:additional-r-property} \,(i) }=  \frac{1}{2m} \vee 2 \sqrt{ \frac{p(1-p)}{m \log(1/A)} } \overset{  \eqref{eq:order-1}}\asymp   \left(\sqrt{\frac{p(1-p)}{m}}\left(\frac{1}{\sqrt{\log n}}+\frac{1}{\sqrt{\log(1/\epsilon)}}\right) + \frac{1}{m} \right) \wedge (1-p).
    \end{split}
\end{equation*}

When $p \in (1-1/m,1]$, we have $1 -p \leq \sqrt{\frac{p(1-p)}{m}}\left(\frac{1}{\sqrt{\log n}}+\frac{1}{\sqrt{\log(1/\epsilon)}}\right) + \frac{1}{m}$, thus
\begin{equation*}
    \begin{split}
         \overline{r}(p, \epsilon) \overset{ \eqref{def:r} }=  (1- 1/(6e) )(1-p) \asymp   \left(\sqrt{\frac{p(1-p)}{m}}\left(\frac{1}{\sqrt{\log n}}+\frac{1}{\sqrt{\log(1/\epsilon)}}\right) + \frac{1}{m} \right) \wedge (1-p).
    \end{split}
\end{equation*} This finishes the proof of this lemma.

\section{Proofs for the Binomial Model with Known $\epsilon$}

This section collects the proofs of Proposition \ref{prop:bench} and Theorem \ref{thm:est}.

\subsection{Proof of Proposition \ref{prop:bench}}

We begin by deriving a lower bound for $r_{\alpha}(\epsilon, p, \epsilon)$, followed by establishing an upper bound by constructing a robust confidence interval $\widehat{\CI}$ that achieves the desired inequality.

\vskip.2cm
{\noindent \bf (Part I: Lower bound)} In this part, we begin by presenting a lemma, and its proof is provided in the subsequent subsections.

\begin{Lemma}\label{Lem: TV for known epsilon}
    For any $\alpha \in (0,1)$, $\epsilon \in [0, 1]$, and $p \in [0, 1]$, there exists some constant $c > 0$ only depending on $\alpha$, such that as long as
$$r\leq c\left[\sqrt{\frac{p(1-p)}{m}}\left(\frac{1}{\sqrt{n}}+\epsilon\right)+\frac{1}{m}\left(\frac{1}{n}+\epsilon\right)\right],$$
we have
\begin{equation*}
   \textnormal{either }  \inf_{Q_0,Q_1} \TV\left(P^{\otimes n}_{\epsilon,p,Q_0}, P^{\otimes n}_{\epsilon, p + r, Q_1}\right) \leq \alpha \quad \textnormal{or}\quad    \inf_{Q_0,Q_1} \TV\left(P^{\otimes n}_{\epsilon,p,Q_0}, P^{\otimes n}_{\epsilon, p - r, Q_1}\right) \leq \alpha.
\end{equation*}
\end{Lemma}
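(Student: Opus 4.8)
The plan is to prove this two‑point indistinguishability bound directly, by constructing, for each $(p,\epsilon,m,n)$ and each admissible $r$, contamination distributions $Q_0,Q_1$ that make the two $n$‑fold product laws close in total variation. The first reduction is the symmetry $X\mapsto m-X$, which carries $P_{\epsilon,p,Q}$ to $P_{\epsilon,1-p,Q'}$ and leaves the target rate $\ell:=\sqrt{p(1-p)/m}\,(1/\sqrt n+\epsilon)+\frac1m(1/n+\epsilon)$ invariant (it depends on $p$ only through $p(1-p)$); hence it suffices to treat $p\in[0,1/2]$, and there I will aim at the alternative $p-r$ when $p\ge r$ and at $p+r$ otherwise. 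The second reduction is bookkeeping: write $\ell\asymp\ell_{\mathrm{stat}}+\ell_{\mathrm{con}}$ with $\ell_{\mathrm{stat}}:=\sqrt{p(1-p)/(mn)}+\frac1{mn}$ and $\ell_{\mathrm{con}}:=\epsilon\sqrt{p(1-p)/m}+\frac{\epsilon}{m}$, so that $r\le c\ell$ implies $r\le 2c\ell_{\mathrm{stat}}$ or $r\le 2c\ell_{\mathrm{con}}$; these two cases call for two different constructions.

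In the \emph{statistical case} ($r\le2c\ell_{\mathrm{stat}}$) I would use no contamination budget at all: taking $Q_0=P_p=\mathrm{Binomial}(m,p)$ and $Q_1=P_{p\pm r}$ makes $P_{\epsilon,p,Q_0}=P_p$ and $P_{\epsilon,p\pm r,Q_1}=P_{p\pm r}$, reducing the bound to $\TV(P_p^{\otimes n},P_{p\pm r}^{\otimes n})\le\alpha$. When $p\ge\frac1{mn}$ (so $\ell_{\mathrm{stat}}\asymp\sqrt{p(1-p)/(mn)}$ and $p\ge r$), I would combine tensorization of KL, Pinsker, and the elementary estimate $\mathrm{KL}(\mathrm{Bin}(m,p-r)\,\|\,\mathrm{Bin}(m,p))=m\,\mathrm{KL}(\mathrm{Ber}(p-r)\|\mathrm{Ber}(p))\le m\chi^2(\mathrm{Ber}(p-r)\|\mathrm{Ber}(p))=mr^2/(p(1-p))$ to get $\TV\le\sqrt{nmr^2/(2p(1-p))}\le\alpha$ for $c$ small. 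When $p<\frac1{mn}$ (so $\ell_{\mathrm{stat}}\asymp\frac1{mn}$), I would instead aim at $p+r$ and use subadditivity with $\TV(P_{p+r},P_p)\le mr$ to get $\TV\le nmr\le\alpha$.

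In the \emph{contamination case} ($r\le2c\ell_{\mathrm{con}}$) the key point is that $P_{\epsilon,p,Q_0}^{\otimes n}$ and $P_{\epsilon,p',Q_1}^{\otimes n}$ can be made to coincide exactly (total variation $0$, for every $n$) as soon as $\TV(P_p,P_{p'})\le\frac{\epsilon}{1-\epsilon}$: writing the Jordan decomposition $P_p-P_{p'}=\mu_+-\mu_-$ with common total mass $\tau=\TV(P_p,P_{p'})$, set $Q_1=\frac{1-\epsilon}{\epsilon}\mu_++R$ and $Q_0=\frac{1-\epsilon}{\epsilon}\mu_-+R$ for an arbitrary probability measure $R$ of mass $1-\frac{1-\epsilon}{\epsilon}\tau$ (nonnegative since $\tau\le\frac{\epsilon}{1-\epsilon}$); then $(1-\epsilon)P_p+\epsilon Q_0=(1-\epsilon)P_{p'}+\epsilon Q_1$. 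So it only remains to check $\TV(P_p,P_{p\pm r})\le\epsilon/2\le\frac{\epsilon}{1-\epsilon}$, which again follows from the same two estimates: for $p\ge\frac1m$ (so $\ell_{\mathrm{con}}\asymp\epsilon\sqrt{p(1-p)/m}$ and $p\ge r$), $\TV(P_p,P_{p-r})\le\sqrt{mr^2/(2p(1-p))}\le\epsilon/2$ for $c$ small; for $p<\frac1m$ (so $\ell_{\mathrm{con}}\asymp\frac{\epsilon}{m}$), $\TV(P_p,P_{p\pm r})\le mr\le\epsilon/2$ for $c$ small.

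The routine ingredients are the Bernoulli $\chi^2$ identity, verifying $p\pm r\in[0,1]$ (one checks that $r>p$ forces $p$ into the deep‑boundary subcase $p\lesssim\frac1{mn}$ or $p\lesssim\frac1m$, where the $p+r$ alternative and the crude bound are exactly what is used), and the case bookkeeping $\ell\asymp\ell_{\mathrm{stat}}+\ell_{\mathrm{con}}$ with the stated simplifications in each subregime. The one genuinely delicate step — and the one I expect to be the main obstacle — is the contamination case: the naive exact match (take $Q_0=P_{p-r}$) imposes a pointwise likelihood‑ratio constraint and only reaches $r\lesssim\epsilon/m$, while a truncated construction of the kind used in the proof of Theorem~\ref{thm:test-low} would cost a $\sqrt{\log n}$ factor; it is precisely the Jordan‑decomposition matching, which consumes only the total‑variation budget, that recovers the logarithm‑free rate $\epsilon\sqrt{p(1-p)/m}$ asserted here.
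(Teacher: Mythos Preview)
Your proposal is correct and mirrors the paper's proof almost exactly: the same symmetry reduction to $p\le 1/2$, the same split into a statistical part handled by taking $Q_i$ equal to the clean binomial and bounding $\TV(P_p^{\otimes n},P_{p\pm r}^{\otimes n})$ via Pinsker plus $\mathrm{KL}\le\chi^2$ (with a crude $\TV\lesssim m(p+r)$ bound near $p=0$), and the same contamination part handled by your ``Jordan-decomposition matching,'' which is precisely the paper's Lemma~\ref{lem: TV} (from \cite{chen2018robust}). The only differences are cosmetic: you sometimes aim at $p-r$ where the paper aims at $p+r$, and you use slightly different thresholds ($1/(mn)$ and $1/m$ versus the paper's $\alpha/(2mn)$ and $1/(2m)$), none of which affects the argument.
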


A combination of Lemma \ref{Lem: TV to lower bound} and Lemma~\ref{Lem: TV for known epsilon} shows that $$r_{\alpha}(\epsilon, p, \epsilon) \geq c\left[\sqrt{\frac{p(1-p)}{m}}\left(\frac{1}{\sqrt{n}}+\epsilon\right) + \frac{1}{m}\left(\frac{1}{n}+\epsilon\right)\right],$$ for some constant $c>0$ only depending on $\alpha$. 

 \vskip.2cm
{\noindent \bf (Part II: Upper bound)} In this part, we show that there exists a robust confidence interval $\wh{\CI}$ that achieves the desired coverage and length guarantees. Fix $p$ and $Q$. By Theorem~\ref{thm:est}, there exist $\widehat{p}$ that does not depend on $\epsilon$, and some constant $C' > 0$ only depending on $\alpha$, such that
$$
 (E) = \left\{ |\widehat{p} - p| \leq C' \left[ \sqrt{\frac{p(1 - p)}{m}} \left( \frac{1}{\sqrt{n}} + \epsilon \right) + \frac{1}{m} \left( \frac{1}{n} + \epsilon \right) \right] \right\},
$$
holds with probability at least $1 - \alpha$, assuming that $\frac{\log(2/\alpha)}{n} + \epsilon$ is less than a sufficiently small universal constant. Let
\begin{equation}\label{Def: CI using estimator}
    \widehat\CI=\left\{ q \in [0,1] : |\widehat{p} - q| \leq C' \left( A\sqrt{q(1 - q)} + B \right) \right\},
\end{equation}
where $A = \frac{1}{\sqrt{m}}\left( \frac{1}{\sqrt{n}} + \epsilon \right)$ and $ B = \frac{1}{m} \left( \frac{1}{n} + \epsilon \right)$. Then, on the event $(E)$, it is easy to check that $p \in \wh{\CI}$, which implies that $p \in \wh{\CI}$ with probability at least $1 - \alpha$.

First, it is relatively easy to see that $\widehat{\mathrm{CI}}$ is an interval. This is because $f(q) = |\wh{p} - q|$ is a convex function in $q$, which is decreasing when $q \leq \wh{p}$ and increasing when $q > \wh{p}$. In addition, $h(q) = C' \left( A\sqrt{q(1 - q)} + B \right)$ is a concave function which is increasing when $q \leq 1/2$ and decreasing when $q > 1/2$. These two functions have at most two intersection points over $q \in [0,1]$. Moreover, by their shape, it is easy to see $\widehat{\mathrm{CI}}$ is an interval by plot. 

Next, we show the length guarantee of $\widehat{\mathrm{CI}}$. On the event $(E)$, for any $q \in \widehat{\mathrm{CI}}$, we have
\begin{equation*}
    \begin{split}
        q - p & \leq  |\wh{p} - q| + |\wh{p} - p|  \overset{(a)}\leq C'\left(A\sqrt{q} + A\sqrt{p} + 2B\right) \leq C'\left(\frac{q}{2C'} + \frac{C'A^2}{2} + \frac{p}{2C'} + \frac{C'A^2}{2} + 2B\right) \\
        & = \frac{q}{2}+\frac{p}{2} + C'^2A^2+ 2 C'B,
    \end{split}
\end{equation*}
where (a) follows from the definition of $\widehat{\mathrm{CI}}$ and the fact that both $p, q \in \widehat{\mathrm{CI}}$ given $(E)$ happens. It is easy to check that $A^2 \lesssim B$. Therefore, we have
\begin{equation} \label{Ineq: q < 3phat + term}
    q \leq 3p + C_1'B,
\end{equation}
for some constant $C_1'>0$ that depends on $C'$, and hence only on $\alpha$. Therefore, on the event $(E)$, for any $q \in \wh{\CI}$, we have
\begin{equation}\label{Ineq: |q-p| upper bound 1}
    \begin{split}
       |q - p| & \overset{(E)}\leq   C'(A\sqrt{q}+A\sqrt{p}+2B) \overset{\eqref{Ineq: q < 3phat + term}} \leq C' \left( A\sqrt{3p + C_1'B} + A\sqrt{p} + 2B \right) \leq   C_2' \left(A\sqrt{p} + B\right),
    \end{split}
\end{equation}
where the last inequality holds for some constant $C_2' > 0$ that depends only on $C'$ and $C_1'$, since $A \sqrt{B} \lesssim B$. By a similar argument, given $(E)$ happens, we also have
\begin{equation}\label{Ineq: |q-p| upper bound 2}
|q - p| \leq   C_2' \left(A\sqrt{1 - p} + B\right),
\end{equation}
for any $q \in \wh{\CI}$. Recall that we have shown $\widehat{\mathrm{CI}}$ is an interval, i.e., $ = [\widehat{p}_l, \widehat{p}_r]$ for some $\widehat{p}_l \in [0, \widehat{p}]$ and $\widehat{p}_r \in [\widehat{p}, 1]$. Notice that $\wh{p}_l, \wh{p}_r \in \wh{\CI}$ and $\wh{p}_l \leq p \leq \wh{p}_r$ on the event $(E)$. Therefore, we have
\begin{equation*}
|\widehat{\mathrm{CI}}| = |p - \widehat{p}_l| + |p - \widehat{p}_r| \overset{\eqref{Ineq: |q-p| upper bound 1}, \eqref{Ineq: |q-p| upper bound 2}}\leq 2C_2' \left( A \sqrt{p \wedge (1 - p)} + B \right) \leq 2\sqrt{2}C_2'\left(A\sqrt{p(1-p)}+B\right),
\end{equation*}
with probability at least $1 - \alpha$. By setting $C = 2\sqrt{2} C_2'$, we have established the length guarantee of $\widehat{\mathrm{CI}}$, and this finishes the proof of this proposition.

\subsubsection{Proof of Lemma \ref{Lem: TV for known epsilon}}
It suffices to prove the claim for $\epsilon \in [0,1)$ and $p \in [0,1/2]$, since the case $p \in [1/2, 1]$ follows by the symmetry of the binomial distribution. To this end, it is enough to show that there exist $c_1, c_2 >0$ only depending on $\alpha$, such that as long as $r \leq c_1 \left( \sqrt{\frac{p(1 - p)}{mn}} + \frac{1}{mn} \right)$ or $r \leq c_2 \left( \sqrt{\frac{p(1 - p)}{m}}\epsilon + \frac{\epsilon}{m} \right)$, we have $ \inf_{Q_0, Q_1} \TV \Big(P^{\otimes n}_{\epsilon,p,Q_0}, P^{\otimes n}_{\epsilon,p+r, Q_1} \Big) \leq \alpha$ for all $p \in [0,1/2]$.

\vskip.2cm
{\noindent \bf (Part I: $r \leq c_1\left(\sqrt{\frac{p (1 - p)}{mn}} + \frac{1}{mn}\right) $)} In this part, we will divide the proof into two cases based on different ranges of $p$.
\begin{itemize}[leftmargin=*]
\item (Case 1: $p \leq \frac{\alpha}{2mn}$) In this case, it is easy to check that $r \leq \frac{\alpha}{2mn}$ as long as $c_1$ is sufficiently small. Then, by taking $Q_0 = \mathrm{Binomial}(m,p)$ and $Q_1 = \mathrm{Binomial}(m, p+r)$, we have 
	\begin{equation*}
		\begin{split}
	\TV\left(P_{\epsilon,p,Q_0}, P_{\epsilon, p+r, Q_1}\right) = \TV(P_{p},P_{p+r})  \overset{\textnormal{Lemma }\ref{lm:TV monotonicity}}\leq \TV(P_{p+r},P_{0})= 1 - (1-(p+r))^m  \leq \frac{\alpha}{n},
		\end{split}
	\end{equation*} 
    where the last inequality holds since $(1 - x)^m \geq 1 - m x$ for all $x \in [0,1]$ and $p + r \leq \alpha/(mn)$. Therefore, we have $\inf_{Q_0,Q_1} \TV\left(P^{\otimes n}_{\epsilon,p,Q_0}, P^{\otimes n}_{\epsilon, p, Q_1}\right) \leq n\inf_{Q_0,Q_1} \TV\left(P_{\epsilon,p,Q_0}, P_{\epsilon, p, Q_1}\right) \leq \alpha.$
\item (Case 2: $\frac{\alpha}{2mn} < p \leq 1/2$) In this case, it is easy to check that $r \leq \sqrt{2} \alpha \sqrt{\frac{p(1-p)}{mn}}$ as long as $c_1$ is sufficiently small. Then, by taking $Q_0 = \mathrm{Binomial}(m,p)$ and $Q_1 = \mathrm{Binomial}(m, p+r)$, we have 
\begin{equation*}\label{Ineq: TV known epsilon 1}
    \begin{split}
         &\TV \Big(P^{\otimes n}_{\epsilon,p,Q_0}, P^{\otimes n}_{\epsilon,p+r, Q_1} \Big)  = \TV \Big(P^{\otimes n}_{p}, P^{\otimes n}_{p+r} \Big)  \overset{(a)} \leq \sqrt{\frac{D(P^{\otimes n}_{p+r} \|P^{\otimes n}_{p})}{2}}   \\
         & = \sqrt{\frac{mn}{2}D(\mathrm{Bernoulli}(p+r)\| \mathrm{Bernoulli}(p))} \overset{(b)}\leq \sqrt{\frac{mn}{2}\chi^2(\mathrm{Bernoulli}(p+r)\| \mathrm{Bernoulli}(p))} \\
        & = \sqrt{\frac{mnr^2}{2 p (1 - p)}}  \leq \alpha,
        \end{split}
\end{equation*}
where (a) is by the Pinsker's inequality; (b) follows from the inequality $D(P\|Q) \leq \chi^2(P\|Q)$, which holds for any distribution $P$ and $Q$.

\end{itemize}

{\noindent \bf (Part II: $r \leq c_2\left(\sqrt{\frac{p (1 - p)}{m}}\epsilon + \frac{\epsilon}{m}\right) $)}  In this part, it suffices to show that $\TV(P_{p}, P_{p+r}) \leq \frac{\epsilon}{1 - \epsilon}$ and the conclusion follows from Lemma \ref{lem: TV}. We will divide the proof into two cases based on different ranges of $p$.

\begin{itemize}[leftmargin=*]
\item (Case 1: $p \leq \frac{1}{2m}$) In this case, we have $r \leq  \frac{\epsilon}{2m}$ as long as $c_2$ is sufficiently small. Then, it is easy to check that $P_p(X=i) \leq P_{p+r}(X = i)$ for all $i \in [m]$ whereas the inequality is reversed when $i = 0$. Thus $\TV(P_{p}, P_{p + r}) = P_p(X = 0) - P_{p + r}(X = 0)$. Therefore, we have
\begin{equation*}
\TV(P_{p}, P_{p + r}) =(1 - p)^m\left(1 - \left(1 - \frac{r}{1 - p}\right)^m\right) \overset{(a)}{\leq} \frac{mr}{1 - p} \overset{(b)}\leq  \frac{\epsilon}{1 - \epsilon},
\end{equation*}
where (a) holds because $(1 - x)^m \geq 1 - mx$ for all $x \in [0,1]$, and $(1 - p)^m \leq 1$; (b) is because $r \leq \frac{\epsilon}{2m}$ and $1 - p \geq 1/2$. 
\item (Case 2: $\frac{1}{2m} < p \leq 1/2$) In this case, we have $r \leq  \sqrt{\frac{2p (1 - p)}{m}}\epsilon$.
Then, it follows that
\begin{equation*}
    \begin{split}
        \TV(P_{p}, P_{p+r})  & \overset{(a)} \leq \sqrt{\frac{D(P_{p + r} \| P_{p})}{2}} = \sqrt{\frac{m}{2}D(\mathrm{Bernoulli}(p + r)\| \mathrm{Bernoulli}(p))} \\
        & \overset{(b)} \leq \sqrt{\frac{m}{2}\chi^2(\mathrm{Bernoulli}(p + r)\| \mathrm{Bernoulli}(p))}
        = \sqrt{\frac{mr^2}{2 p (1 - p)}} \leq \frac{\epsilon}{1 - \epsilon},
    \end{split}
\end{equation*}
where (a) is by the Pinsker's inequality; (b) follows from the inequality $D(P\|Q) \leq \chi^2(P\|Q)$, which holds for any distribution $P$ and $Q$. This finishes the proof of this lemma. 
 \end{itemize}

\subsection{Proof of Theorem \ref{thm:est}} \label{app:proof-est}
We begin by deriving the lower bound, followed by the upper bound.
\vskip.2cm
{\noindent \bf (Part I: Lower bound)} 
In this part, we prove the lower bound for $r_{\alpha}^{\rm est}(\epsilon, p)$ for all $p \in [0,1/2]$ with a contradiction argument; the case $p \in [1/2, 1]$ follows by symmetry. Take $c$ to be the small constant in Lemma \ref{Lem: TV for known epsilon} and let $r = \frac{c}{2}\left[\sqrt{\frac{p(1-p)}{m}}\left(\frac{1}{\sqrt{n}}+\epsilon\right)+\frac{1}{m} \left(\frac{1}{n} + \epsilon \right)\right]$, so that $p+2r \leq 1$. Suppose that $r_{\alpha}^{\rm est}(\epsilon, p) > r$ does not hold. By the definition of $r_{\alpha}^{\rm est}(\epsilon, p)$, we have $r_{\alpha}^{\rm est}(\epsilon, p, p + 2r) \leq r_{\alpha}^{\rm est}(\epsilon, p) \leq r$. Then, there exists an estimator $\wh{p}$ such that the following conditions hold:
\begin{equation}\label{Ineq: estimation error}
    \sup_Q P_{\epsilon, p, Q}(|\wh{p} - p| \geq r) \leq \alpha \quad \textnormal{and} \quad \sup_Q P_{\epsilon, p + 2r, Q}(|\wh{p} - (p + 2r)| \geq r) \leq \alpha.
\end{equation}
Since $2r = c\left[\sqrt{\frac{p(1-p)}{m}}\left(\frac{1}{\sqrt{n}}+\epsilon\right)+\frac{1}{m} \left(\frac{1}{n} + \epsilon \right)\right]$, we have
\begin{equation*}
    \begin{split}
        \alpha & \overset{\textnormal{Lemma }\ref{Lem: TV for known epsilon}}\geq \inf_{Q_0,Q_1} \TV\left(P^{\otimes n}_{\epsilon,p,Q_0}, P^{\otimes n}_{\epsilon, p + 2r, Q_1}\right)  \geq \inf_{Q} P_{\epsilon, p, Q}(\wh{p}  < p +  r) - \sup_{Q} P_{\epsilon, p + 2r, Q}(\wh{p} < p + r) \\
        & \geq \inf_{Q} P_{\epsilon, p, Q}(|\wh{p} - p| <  r) - \sup_{Q} P_{\epsilon, p + 2r, Q}(|\wh{p} - (p + 2r)| \geq  r) \overset{\eqref{Ineq: estimation error}} \geq ( 1 - \alpha) - \alpha = 1 - 2\alpha,
    \end{split}
\end{equation*}
where the second inequality is by the definition of TV distance. Since $\alpha < 1/3$, this leads to a contradiction. Therefore, the assumption does not hold, and we conclude that $r_{\alpha}^{\rm est}(\epsilon, p) > r$.   
\vskip.2cm
{\noindent \bf (Part II: Upper bound)}
In this part, we assume that $\frac{\log(2/ \alpha)}{n} + \epsilon$ is smaller than a sufficiently small constant. This further implies that both $\frac{\log(2/ \alpha)}{n}$ and $\epsilon$ are also sufficiently small. Given $X_1, \ldots, X_n$, let us denote $\cI \subseteq [n]$ as the index set of inliers. Also, let $
\mathcal{I}^c = [n] \setminus \mathcal{I}$ denote the complement set, corresponding to the observations generated from the contamination distribution. Then, by Bernstein's inequality (see Lemma \ref{lm:Binomial-prop} (ii)), we have
\begin{equation}\label{event: A}
 \# \mathcal{I}^c  < \frac{3}{2}n\epsilon + 2 \log(4 / \alpha) ,
\end{equation}
with probability at least $1 - \alpha/2$.
We note that given \eqref{event: A} holds, $\#\mathcal{I} \geq\frac{n}{2}$ as long as $\frac{\log(2/\alpha)}{n} + \epsilon$ is sufficiently small. By applying Bernstein's inequality again, we have
\begin{equation}\label{event: B1}
     \left| \frac{1}{m \, \#\mathcal{I} } \sum_{i \in \mathcal{I}} (X_i - mp) \right| < \frac{4 \log (4 / \alpha)}{3 m\, \# \mathcal{I} }  + 2\sqrt{\frac{p (1 - p) \log(4 / \alpha)}{m\, \# \mathcal{I} }},
\end{equation}
with probability at least $1-\alpha/2$. Let 
$$\wh{p} = \frac{1}{mn}\sum_{i = 1}^n X_i \indi\{0\leq X_i \leq m\}.$$ Given that \eqref{event: A} and \eqref{event: B1} hold simultaneously, we have
\begin{equation}\label{eq:estimation-error-bound}
    \begin{split}
        &|\wh{p} - p| = \left|\frac{ \# \mathcal{I} }{n}\frac{1}{m \,  \# \mathcal{I} }\sum_{i \in \mathcal{I}} (X_i \indi\{0\leq X_i \leq m\} - mp) + \frac{1}{mn}\sum_{i \in \mathcal{I}^c}(X_i \indi\{0\leq X_i \leq m\} - mp)  \right| \\
        & \leq \left| \frac{1}{m\,  \# \mathcal{I} }\sum_{i \in \mathcal{I}} (X_i - mp)\right| + \frac{ \# \mathcal{I}^c }{n}\overset{\eqref{event: A}, \eqref{event: B1}}  \leq \frac{8 \log (4 / \alpha)}{3 m n}  + 2\sqrt{\frac{ 2 p (1 - p) \log(4 / \alpha)}{mn}} + \frac{3}{2}\epsilon + \frac{2 \log (4 / \alpha) }{n}.
    \end{split}
\end{equation}
 By the union bound, \eqref{eq:estimation-error-bound} holds with probability at least $1 - \alpha$. Since \eqref{eq:estimation-error-bound} implies the conclusion we want to prove when $m$ is smaller than any constant $C$, we may assume $m$ is larger than a sufficiently large universal constant throughout the rest of the proof.

Let us define $\wh{p}_s$, $\wh{p}_l$, and $\wh{p}_g$ by 
\begin{equation*}
\begin{split}
&\wh{p}_s = 1 - \left(\frac{1}{n} \sum_{i=1}^{n} \indi \{X_i = 0\}\right)^{1/m}, \qquad \wh{p}_l = \left(\frac{1}{n} \sum_{i=1}^{n} \indi \{X_i = m\}\right)^{1/m},
\end{split}
\end{equation*}
and
\begin{equation*}
    \wh{p}_g = \argmin _{p \in [0,1]} \max_{t \in \bbR} \left|  F_n(t) - P_p(X \leq t) \right|.
\end{equation*}
 Intuitively, $\wh{p}_s$ and $\wh{p}_l$ are good estimators of $p$ when $p$ is small and large, respectively, and $\wh{p}_g$ is an estimator for the middle range of $p$ based on total variation learning \citep{gao2018robust}. To take advantage of these properties, define $\wh{p}$ as follows 
 \begin{equation*}
     \wh{p} = \begin{cases}
         \wh{p}_s & \text{if }\frac{1}{n} \sum_{i=1}^{n} \indi \{X_i = 0\} \geq \exp(-\frac{3}{2}C) \textnormal{ and } \frac{1}{n} \sum_{i=1}^{n} \indi \{X_i = m\} < \exp(-\frac{3}{2}C)\\
         \wh{p}_l & \text{if } \frac{1}{n} \sum_{i=1}^{n} \indi \{X_i = 0\} < \exp(-\frac{3}{2}C) \textnormal{ and } \frac{1}{n} \sum_{i=1}^{n} \indi \{X_i = m\} \geq \exp(-\frac{3}{2}C) \\
         \wh{p}_{g} & \text{otherwise.}
     \end{cases}
 \end{equation*}
 where $C>0$ is some sufficiently large universal constant. Suppose $m$ is sufficiently large, ensuring that $m \geq 4C$. When $p \leq C/m$, we have for any distribution $Q$,
 \begin{equation*}
     P_{\epsilon, p, Q}(X = 0) \geq (1 - \epsilon)(1 - p)^{m} \geq (1 - \epsilon)(1 - C/m)^{m} \geq \frac{1}{2}\exp(-C),
 \end{equation*}
as long as $\epsilon$ is sufficiently small and $m$ is sufficiently large. Therefore, by Bernstein's inequality, there exists an event that happens with probability at least $1 - \alpha/6$ such that
$$
     \frac{1}{n} \sum_{i=1}^{n} \indi \{X_i = 0\} \geq \exp \left(-\frac{3C}{2}C \right),
$$
as long as $\frac{\log(2 / \alpha)}{n}$ is sufficiently small and $C$ is large enough. By a similar argument, when $p \leq 1/2$, we have
$$
\frac{1}{n} \sum_{i=1}^{n}\indi\{ X_i = m \} < \exp\left( -\frac{3}{2} C \right),
$$
with probability at least $1 - \alpha/6$. In addition, when $p > 2C / m$, it holds that
$$
\frac{1}{n} \sum_{i=1}^{n} \indi \{ X_i = 0 \} < \exp\left( -\frac{3}{2} C \right),
$$
with probability at least $1 - \alpha/6$. Therefore, with probability at least $1 - \alpha/3$, $\widehat{p} = \widehat{p}_s$ when $p \leq C/m$; $\widehat{p} \in \{\widehat{p}_s, \widehat{p}_g\}$ when $C/m < p \leq 2C/m$; and $\widehat{p} = \widehat{p}_g$ when $2C/m< p \leq 1/2$. Due to the symmetry of the estimator $\widehat{p}$ and the binomial distribution, it suffices to consider the case $p \leq 1/2$. It is therefore enough to show that $\widehat{p}_s$ achieves the desired performance with probability at least $1 - 2\alpha/3$ when $p \leq 2C/m$, and $\widehat{p}_g$ does so when $C/m < p \leq 1/2$.  

\begin{itemize}[leftmargin=*]
\item (Case 1: $\widehat{p}_s$ when $p \leq 2C/m$)
 By Bernstein's inequality, we have with probability at least $1 - 2\alpha/3$, the following event happens:
\begin{equation*}
   (A) = \left\{ \left|\frac{1}{n} \sum_{i=1}^{n} \indi \{X_i = 0\} - P_{\epsilon, p, Q}(X = 0)\right| \leq \frac{4 \log(3 / \alpha)}{3n} + 2\sqrt{\frac{(1 - P_{\epsilon, p, Q}(X = 0))\log(3 / \alpha)}{n}} \right\}.
\end{equation*}
Given $(A)$ happens, we have
\begin{equation}\label{Ineq:diff bound}
    \begin{split}
        &\left| (1 - \wh{p}_s)^m - (1 - p)^m \right| 
        \leq \left| \frac{1}{n} \sum_{i=1}^{n} \indi \{X_i = 0\} - P_{\epsilon, p, Q}(X = 0) \right| + \left|P_{\epsilon, p, Q}(X = 0) - P_p(X = 0)\right| \\
        & \overset{(A)}\leq \frac{4 \log(3 / \alpha)}{3n} + 2\sqrt{\frac{(1 - (1 - \epsilon)(1 - p)^m - \epsilon Q(X= 0))\log(3 / \alpha)}{n}} + \epsilon \\
        & \leq \frac{4 \log(3 / \alpha)}{3n} + 2\sqrt{\frac{(1 - (1 - p)^m )\log(3 / \alpha)}{n}} + 2\sqrt{\frac{\epsilon\log(3 / \alpha)}{n}} + \epsilon \\
        & \leq \frac{7 \log(3 / \alpha)}{3n} + 2\sqrt{\frac{mp\log(3 / \alpha)}{n}} + 2\epsilon.
    \end{split}
\end{equation}
We now show, by contradiction, that $\wh{p}_s < \frac{3C}{m}$ under the event $(A)$, and then apply the mean value theorem to relate $(1 - \wh{p}_s)^m$ and $(1 - p)^m$. Suppose that $(A)$ occurs and $\wh{p}_s \geq 3C/m$. Then we have
\begin{equation*}\label{Ineq: ineq for contradiction}
\left| (1 - \wh{p}_s)^m - (1 - p)^m \right| \geq \left(1 - \frac{2C}{m} \right)^m - \left(1 - \frac{3C}{m} \right)^m> \frac{1}{2}\left(\exp(-2C) - \exp(-3C)\right),
\end{equation*}
where the last inequality holds when $m$ is large enough. Since $\frac{\log(2 / \alpha)}{n} + \epsilon$ is sufficiently small, this contradicts \eqref{Ineq:diff bound} and we conclude that $\wh{p}_s < 3C /m$ whenever $(A)$ happens. By the mean value theorem, there exists some $\widetilde{p}$ between $p$ and $\wh{p}_s$ such that
\begin{equation}\label{Ineq: MVT1}
(1 - \wh{p}_s)^m - (1 - p)^m = m(1 - \widetilde{p})^{m - 1} (\wh{p}_s - p).    
\end{equation}
When \eqref{Ineq:diff bound} holds, we have $\widetilde{p} \leq  \max\{\wh{p}_s, p \}< 3C/m$. Therefore, with probability at least $1 - 2\alpha/3$, we have
\begin{equation}\label{Ineq:ps-final-bound}
    \begin{split}
        m \left(1 - \frac{3C}{m}\right)^{m-1} \, |\wh{p}_s - p| 
        \leq m(1 - \widetilde{p})^{m-1} \, |\wh{p}_s - p|  \overset{\eqref{Ineq:diff bound}, \eqref{Ineq: MVT1}}\leq \frac{7 \log(3 / \alpha)}{3n} + 2\sqrt{\frac{mp\log(3 / \alpha)}{n}} + 2\epsilon.
    \end{split}
\end{equation}
We have $\left(1 - \frac{3C}{m} \right)^{m - 1} > \frac{\exp(-3C)}{2}$ when $m$ is sufficiently large. Therefore, \eqref{Ineq:ps-final-bound} implies that with probability at least $1 - 2\alpha/3$,
\begin{equation*}
        |\wh{p}_s - p|  \overset{(a)}\lesssim \sqrt{\frac{p}{mn}} + \frac{1}{m} \left( \frac{1}{n} + \epsilon \right)  \overset{(b)}{\asymp} \sqrt{\frac{p(1 - p)}{m}} \left( \frac{1}{\sqrt{n}} + \epsilon \right) + \frac{1}{m} \left( \frac{1}{n} + \epsilon \right),
\end{equation*}
where in (a), $\lesssim$ holds up to a constant depending on $\alpha$; (b) follows from the fact that $p(1-p) \asymp p$ since $p \leq 1/2$, and we are considering the case $p \lesssim 1/m$, under which $\epsilon \sqrt{\frac{p(1 - p)}{m}} \lesssim \frac{\epsilon}{m}$. This finishes the proof of Case 1.

\item (Case 2: $\widehat{p}_g$ when $C/m < p \leq 1/2$)
 By the DKW inequality (see Lemma \ref{lm:DKW}), we have with probability at least $1 - 2\alpha/3$, the following event happens:
\begin{equation*}
    (B) = \left\{\sup_{t \in \bbR} \left|F_n(t) - P_{\epsilon, p,Q}(X \leq t)\right| \leq \sqrt{\log(3 / \alpha) / (2n)}\right\}.
\end{equation*}
From now on, we work under the assumption that $(B)$ occurs. For any $Q$, we have
\begin{equation}\label{Ineq: hat p m}
    \begin{split}
        &\sup_{t \in \bbR} \left|P_{\wh{p}_g}(X \leq t) - P_p(X \leq t) \right| \leq \sup_{t \in \bbR} \left|P_{\wh{p}_g}(X \leq t) - F_n(t) \right| + \sup_{t \in \bbR} \left|F_n (t) - P_p(X \leq t) \right|   \\
        & \leq 2\sup_{t \in \bbR} \left|F_n(t) - P_p(X \leq t)\right| \leq 2\left(\sup_{t \in \bbR} \left|F_n(t) - P_{\epsilon, p, Q}(X \leq t)\right| + \sup_{t \in \bbR} \left|P_{\epsilon, p, Q}(X \leq t) - P_p(X \leq t)\right|\right) \\
        &  \leq 2\sup_{t \in \bbR} \left|F_n(t) - P_{\epsilon, p, Q}(X \leq t)\right| + 2\epsilon \overset{(B)} \leq 2\sqrt{\log(3 / \alpha) / (2n)} + 2\epsilon,
    \end{split}
\end{equation}
where the second inequality is by the definition of $\wh{p}_g$. Then, we get
\begin{equation}\label{Ineq: Berry Esseen for contradiction}
    \begin{split}
        |P_{\wh{p}_g}(X \leq mp) - 1/2| &  \leq \left|P_{\wh{p}_g}(X \leq mp) - P_p(X \leq mp) \right| + |P_{p}(X \leq mp) - \bbP(N(0,1) \leq 0)| \\
   & \overset{(a)} \leq  \sup_{t \in \bbR} \left|P_{\wh{p}_g}(X \leq t) - P_p(X \leq t) \right| + \frac{7}{20\sqrt{m p (1 - p)}} + \frac{1}{6 \sqrt{m}} \\
   & \overset{(b)}\leq 2\sqrt{\log(3 / \alpha) / (2n)} + 2\epsilon + \frac{7}{20\sqrt{C/2}} + \frac{1}{6 \sqrt{m}}  \overset{(c)} \leq \frac{1}{40},
    \end{split}
\end{equation}
where (a) is by Berry-Esseen theorem (see Lemma \ref{Lem: Berry esseen}); (b) follows from \eqref{Ineq: hat p m} and the fact that $C/m < p \leq 1/2$; (c) holds since $\frac{\log(2 / \alpha)}{n}$ and $\epsilon$ are sufficiently small, and $C$ and $m$ are large enough. 

We now show that $|\wh{p}_g - p | < \frac{7}{4}\sqrt{\frac{p(1-p)}{m}}$ by contradiction. Suppose $\wh{p}_g - p \geq \frac{7}{4}\sqrt{\frac{p(1-p)}{m}}$. Then,
\begin{equation*}\label{Ineq: Bernstein for contradiction}
    \begin{split}
        P_{\wh{p}_g}(X \leq mp) & \overset{\textnormal{Lemma }\ref{lm:Binomial-prop}\, (iii)}\leq P_{p + \frac{7}{4}\sqrt{\frac{p(1-p)}{m}}}(X \leq mp)  \leq \exp\left(- \frac{147\sqrt{mp}(1-p)}{96\sqrt{mp} + 224\sqrt{1 - p}}\right) <\frac{19}{40},
    \end{split}
\end{equation*}
where the second inequality is by Bernstein's inequality and the last inequality holds since $C/m <p$ and $C$ is sufficiently large. However, this contradicts \eqref{Ineq: Berry Esseen for contradiction}. Therefore, the assumption is invalid, and we have $\wh{p}_g - p < \frac{7}{4} \sqrt{\frac{p(1 - p)}{m}}$. By a similar argument, we also obtain $p - \wh{p}_g < \frac{7}{4} \sqrt{\frac{p(1 - p)}{m}}$, and thus $\left|\wh{p}_g - p\right| < \frac{7}{4} \sqrt{\frac{p(1 - p)}{m}}$. By the mean value theorem, for any $k \in [mp - 2\sqrt{mp(1 - p)}] \cup \{0\}$, there exists some $\widetilde{p}_k$ between $p$ and $\wh{p}_g$ such that
\begin{equation}\label{Eq: MVT pg}
\wh{p}_g^k(1 - \wh{p}_g)^{m - k} - p^k(1 - p)^{m - k}
= \widetilde{p}_k^k(1 - \widetilde{p}_k)^{m - k}\left(\frac{k}{\widetilde{p}_k} - \frac{m - k}{1 - \widetilde{p}_k}\right)(\wh{p}_g - p).
\end{equation}
Then, using $|\wh{p}_g - p| < \frac{7}{4}\sqrt{\frac{p(1-p)}{m}}$, it is easy to check that for any $k \in [mp - 2\sqrt{mp(1 - p)}] \cup \{0\}$,
\begin{equation}\label{Ineq: k-over-p-tilde}
\frac{k}{\widetilde{p}_k}
 \leq m - \frac{1}{4\sqrt{2}}\sqrt{\frac{m}{p}} \quad \text{and} \quad \frac{m - k}{1 - \widetilde{p}_k} \geq  m.
\end{equation}
Therefore, we have
\begin{equation} \label{Ineq: Ineq: hat p m 2}
    \begin{split}
        & \sup_{t \in \bbR} \left|P_{\wh{p}_g}(X \leq t) - P_p(X \leq t) \right| \geq \left|P_{\wh{p}_g}\left(X \leq mp - 2\sqrt{mp(1-p)}\right) - P_{p}\left(X \leq mp - 2\sqrt{mp(1-p)}\right) \right| \\
        &  \overset{\eqref{Eq: MVT pg}}= \left|\sum_{k \leq mp - 2\sqrt{mp(1-p)}} \binom{m}{k} \widetilde{p}_k^k(1 - \widetilde{p}_k)^{m - k}\left(\frac{k}{\widetilde{p}_k} - \frac{m - k}{1 - \widetilde{p}_k}\right) \right| \left| \wh{p}_g - p \right| \\
        & \overset{\eqref{Ineq: k-over-p-tilde}} \geq \frac{1}{4\sqrt{2}}\sqrt{\frac{m}{p}} \left| \wh{p}_g - p \right|P_{\widetilde{p}_k}\left(X \leq mp - 2\sqrt{m p (1 - p)}\right) \\
        & \overset{\textnormal{Lemma }\ref{lm:Binomial-prop}\,(iii)}\geq \frac{1}{4\sqrt{2}}\sqrt{\frac{m}{p}} \left| \wh{p}_g - p \right| P_{\max\{p, \wh{p}_g\}}\left(X \leq mp - 2\sqrt{m p (1 - p)}\right) \\
        & \overset{\eqref{Ineq: hat p m}}\geq \frac{1}{4\sqrt{2}}\sqrt{\frac{m}{p}} \left| \wh{p}_g - p \right| \left(P_{p}\left(X \leq mp - 2\sqrt{m p (1 - p)}\right) - \left(2 \sqrt{\log(3 / \alpha) / (2n)} + 2 \epsilon \right)\right) \\
        & \overset{(a)}\geq \frac{1}{4\sqrt{2}}\sqrt{\frac{m}{p}} \left| \wh{p}_g - p \right| \left(\Phi(-2) - \left(\frac{7}{20\sqrt{mp(1-p)}} + \frac{1}{6\sqrt{m}}\right) - \left(2 \sqrt{\log(3 / \alpha) / (2n)} + 2 \epsilon \right)\right) \\
        & \overset{(b)}\geq \frac{1}{4\sqrt{2}}\sqrt{\frac{m}{p}} \left| \wh{p}_g - p \right| \left(\Phi(-2) - \left(\frac{7}{20\sqrt{C/2}} + \frac{1}{6\sqrt{m}}\right) - \left(2 \sqrt{\log(3 / \alpha) / (2n)} + 2 \epsilon \right)\right) \\
        & \overset{(c)}\geq \frac{\Phi(-2)}{8\sqrt{2}}\sqrt{\frac{m}{p}} \left| \wh{p}_g - p \right|, 
    \end{split}
\end{equation} where (a) is by Berry-Esseen theorem (see Lemma \ref{Lem: Berry esseen}) and $\Phi(\cdot)$ denotes the CDF of standard Gaussian; (b) uses the facts that $p \geq C/m$ and $1 - p \geq 1/2$; (c) holds since $\epsilon$ and $\frac{\log(2 / \alpha)}{n}$ are sufficiently small, and $C$ and $m$ are large enough. Thus, by \eqref{Ineq: hat p m} and \eqref{Ineq: Ineq: hat p m 2}, we have with probability at least $1 - 2\alpha/3$,
\begin{equation*}
    |\wh{p}_g - p| \overset{(a)} \lesssim \sqrt{\frac{p}{m}}\left(\frac{1}{\sqrt{n}} + \epsilon \right) \overset{(b)} \asymp \sqrt{\frac{p(1 - p)}{m}} \left( \frac{1}{\sqrt{n}} + \epsilon \right) + \frac{1}{m} \left( \frac{1}{n} + \epsilon \right),
\end{equation*}
where in (a), $\lesssim$ holds up to a constant depending on $\alpha$; (b) follows from the fact that $p(1-p) \asymp p$ since $p \leq 1/2$, and we are considering the case $p \gtrsim 1/m$, under which $\sqrt{\frac{p(1 - p)}{m}}\Big(\frac{1}{\sqrt{n}} + \epsilon\Big) \gtrsim \frac{1}{m}\Big(\frac{1}{n} + \epsilon \Big)$. This finishes the proof of Case 2.

\end{itemize}

\section{Proofs for the Poisson Model} \label{app:proof-posi-add} 

This section presents an algorithm that computes the confidence interval (\ref{eq:ci-dis-Poisson}). Then we state the proofs of Theorem \ref{thm:lower-pois}, and Theorem \ref{thm:upper-pois}. Throughout the section, we write $P_{\lambda}=\text{Poisson}(\lambda)$ so that $P_{\epsilon,\lambda,Q}=(1-\epsilon)P_{\lambda}+\epsilon Q$.

\subsection{Pseudocode for Computing (\ref{eq:ci-dis-Poisson})} \label{app:posi-alg}

\begin{algorithm}[h]
\DontPrintSemicolon
\SetKwInOut{Input}{Input}\SetKwInOut{Output}{Output}
\Input{$\{X_i\}_{i=1}^n$}
\Output{$\wh{\lambda}_{\rm{left}}$, $\wh{\lambda}_{\rm{right}}$} 
\nl Set $\lambda \leftarrow \wh{\lambda}_{\max}$ as in (\ref{eq:lambda-max}) and $\mathcal{E}\leftarrow \left\{ \frac{ 2^k \log (24 / \alpha)}{n}: k = 0,1, \ldots, \left\lfloor \log_2 \left( \frac{n \epsilon_{\max}}{\log(24/\alpha)} \right) \right\rfloor \right\} \cup \{\epsilon_{\max}\}$.\;

\nl Set
\begin{eqnarray*}
\wh{\lambda}_{\rm left} &\leftarrow& \wh{\lambda}_{\max},\\
\wh{\lambda}_{\rm right} &\leftarrow& -\log\left(\left[1-\left(\frac{2}{n}\sum_{i=1}^n\indi\{X_i\geq 1\}+\frac{6\log(24/\alpha)}{n}\right)\wedge  (1- 1/e)\right]\right).
\end{eqnarray*}

\nl For each $j\in[\wh{\lambda}_{\max}]$, set $\lambda \leftarrow \lambda -1$,\;

\qquad For each $\epsilon\in\mathcal{E}$,\;

\qquad\qquad For each $\mu\in [0,\lambda + 1] \cap \mathbb{N}_0$, compute $\phi_{\mu,\epsilon}^+$ in (\ref{eq:pos-test+}).

\qquad If $\max_{\epsilon\in\mathcal{E}}\min_{\mu\in [0,\lambda+1] \cap \mathbb{N}_0}\phi_{\mu,\epsilon}^+=0$, set $\wh{\lambda}_{\rm{left}} \leftarrow \lambda$.

\nl For each $j\in[\wh{\lambda}_{\max}]$, set $\lambda \leftarrow \lambda+1$,\;

\qquad For each $\epsilon\in\mathcal{E}$,\;

\qquad\qquad For each $\mu \in [\lambda-1,\wh{\lambda}_{\max}] \cap \bbN$, compute $\phi_{\mu,\epsilon}^-$ in (\ref{eq:pos-test-}).

\qquad If $\max_{\epsilon\in\mathcal{E}}\min_{\mu\in [\lambda-1,\wh{\lambda}_{\max}] \cap \mathbb{N}}\phi_{\mu,\epsilon}^-=0$, set $\wh{\lambda}_{\rm{right}} \leftarrow \lambda$.
\caption{Computing Endpoints of Robust CI with Poisson Data}
\label{alg:CI-endpoints-pois}
\end{algorithm}
The following result shows that $\widehat{\CI}$ defined by (\ref{eq:ci-dis-Poisson}) is computed by Algorithm \ref{alg:CI-endpoints-pois}.
\begin{Proposition}\label{prop:pois-end-points}
    The set $\widehat{\CI}$ defined by (\ref{eq:ci-dis-Poisson}) is an interval whose endpoints are given by
    \begin{eqnarray*}
\label{eq:left-pois}\wh{\lambda}_{\rm{left}} &=& \inf\left\{\lambda\in [0, \wh{\lambda}_{\max} - 1] \cap \bbN_0: \max_{\epsilon\in\mathcal{E}}\left(\min_{\mu\in [0,\lambda+1] \cap \bbN_0  }\phi_{\mu,\epsilon}^+\right)=0\right\},\\
\label{eq:right-pois}\wh{\lambda}_{\rm{right}} &=& \sup\left\{\lambda \in [\wh{\lambda}_{\max}]\cup\left[0,1\right]: \max_{\epsilon\in\mathcal{E}}\left(\phi_{\lambda,\epsilon}^-\wedge\min_{\mu \in [\lambda-1, \wh{\lambda}_{\max}] \cap  \bbN}\phi_{\mu,\epsilon}^-\right)=0\right\},
\end{eqnarray*}
where $\mathcal{E}$ is the discretization of $[0,\epsilon_{\max}]$ given in Algorithm \ref{alg:CI-endpoints-pois},
 and the binary variables $\phi_{\mu,\epsilon}^+$ and $\phi_{\mu,\epsilon}^-$ are given by (\ref{eq:pos-test+}) and (\ref{eq:pos-test-}). In addition, they can be computed by Algorithm \ref{alg:CI-endpoints-pois}.
\end{Proposition}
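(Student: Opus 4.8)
The plan is to mirror the proof of Proposition~\ref{prop:binom-end-pionts}, with the grid $S_m$ replaced by the integer lattice $\{0,1,\dots,\wh{\lambda}_{\max}\}$ and with $\wh{\lambda}_{\max}$ playing the role that the right boundary $1$ plays in the binomial model. The first step is to record the monotonicity facts that make $\widehat{\CI}$ in~(\ref{eq:ci-dis-Poisson}) an interval: for every fixed $\epsilon\in\mathcal{E}$, the map $\lambda\mapsto\wh{\psi}_{\lambda,\epsilon}^+$ is non-increasing and $\lambda\mapsto\wh{\psi}_{\lambda,\epsilon}^-$ is non-decreasing on $[0,\wh{\lambda}_{\max}]$. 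For $\wh{\psi}^+$ this holds because $\lceil\lambda\rceil$ is non-decreasing in $\lambda$, so the index set $[0,\lceil\lambda\rceil]\cap\mathbb{N}_0$ over which $\phi_{\mu,\epsilon}^+$ is minimized can only grow, together with the fact that the value is set to $0$ at the right endpoint $\lambda=\wh{\lambda}_{\max}$; for $\wh{\psi}^-$ one argues symmetrically via $\lfloor\lambda\rfloor$ on $[1,\wh{\lambda}_{\max}]$, and on $[0,1)$ one uses that $\phi_{\lambda,\epsilon}^-$ is itself non-decreasing in $\lambda$ there (its threshold $\tfrac12(1-e^{-\lambda})-\tfrac{3\log(24/\alpha)}{n}$ increases in $\lambda$) while the other factor $\min_{\mu\in[1,\wh{\lambda}_{\max}]\cap\mathbb{N}}\phi_{\mu,\epsilon}^-$ is independent of $\lambda$. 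Hence $\{\lambda:\wh{\psi}_{\lambda,\epsilon}^+=0\ \forall\epsilon\in\mathcal{E}\}$ is an interval of the form $[\,\cdot\,,\wh{\lambda}_{\max}]$ and $\{\lambda:\wh{\psi}_{\lambda,\epsilon}^-=0\ \forall\epsilon\in\mathcal{E}\}$ is an interval of the form $[0,\,\cdot\,]$, so their intersection $\widehat{\CI}$ is an interval with endpoints $\wh{\lambda}_{\rm{left}}=\inf\{\lambda:\wh{\psi}_{\lambda,\epsilon}^+=0\ \forall\epsilon\}$ and $\wh{\lambda}_{\rm{right}}=\sup\{\lambda:\wh{\psi}_{\lambda,\epsilon}^-=0\ \forall\epsilon\}$.

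Next I would establish the exact analogue of Lemma~\ref{lm:p-hat-charac}, a case-split characterization of these endpoints. For the left endpoint, note that $\wh{\psi}_{\lambda,\epsilon}^+$ equals $\min_{\mu\in[0,k]\cap\mathbb{N}_0}\phi_{\mu,\epsilon}^+$ at an integer $\lambda=k<\wh{\lambda}_{\max}$ and $\min_{\mu\in[0,k+1]\cap\mathbb{N}_0}\phi_{\mu,\epsilon}^+$ for $\lambda\in(k,k+1)$, so if $\max_{\epsilon\in\mathcal{E}}\min_{\mu\in[0,\wh{\lambda}_{\max}-1]\cap\mathbb{N}_0}\phi_{\mu,\epsilon}^+=0$ then $\wh{\lambda}_{\rm{left}}$ is the smallest $\lambda\in[0,\wh{\lambda}_{\max}-1]\cap\mathbb{N}_0$ with $\max_{\epsilon}\min_{\mu\in[0,\lambda+1]\cap\mathbb{N}_0}\phi_{\mu,\epsilon}^+=0$, whereas if that quantity is $1$ then no $\lambda<\wh{\lambda}_{\max}$ belongs to $\widehat{\CI}$ and $\wh{\lambda}_{\rm{left}}=\wh{\lambda}_{\max}$; the two cases fuse into the single $\inf$ formula in the statement under the convention $\inf\emptyset=\wh{\lambda}_{\max}$. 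For the right endpoint the two regimes $\lambda\in[1,\wh{\lambda}_{\max}]$ and $\lambda\in[0,1)$ play the roles of the binomial cases ``$p\in[\tfrac1m,1]$'' and ``$p\in[0,\tfrac1m)$'': when $\max_{\epsilon}\min_{\mu\in[1,\wh{\lambda}_{\max}]\cap\mathbb{N}}\phi_{\mu,\epsilon}^-=0$, $\wh{\lambda}_{\rm{right}}$ is the largest $\lambda\in[1,\wh{\lambda}_{\max}]$ with $\max_{\epsilon}\min_{\mu\in[\lfloor\lambda\rfloor,\wh{\lambda}_{\max}]\cap\mathbb{N}}\phi_{\mu,\epsilon}^-=0$; otherwise $\wh{\lambda}_{\rm{right}}\in[0,1)$ and is obtained by directly inverting $\phi_{\lambda,\epsilon}^-=0$ (which for $\lambda<1$ is $\epsilon$-independent), yielding exactly the closed form $-\log\!\big(1-\big[\tfrac{2}{n}\sum_{i=1}^n\indi\{X_i\ge1\}+\tfrac{6\log(24/\alpha)}{n}\big]\wedge(1-1/e)\big)$. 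These two cases fuse into the displayed $\sup$ formula once one checks that $\phi_{\lambda,\epsilon}^-\wedge\min_{\mu\in[\lambda-1,\wh{\lambda}_{\max}]\cap\mathbb{N}}\phi_{\mu,\epsilon}^-$ agrees with $\wh{\psi}_{\lambda,\epsilon}^-$ in both regimes.

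Finally, I would verify that Algorithm~\ref{alg:CI-endpoints-pois} outputs these quantities, exactly as in Part~II of the proof of Proposition~\ref{prop:binom-end-pionts}: Step~2 initializes $\wh{\lambda}_{\rm{left}}\leftarrow\wh{\lambda}_{\max}$ and $\wh{\lambda}_{\rm{right}}$ to the closed-form boundary value, handling the ``$\max_\epsilon\min_{\mu\le\wh{\lambda}_{\max}-1}\phi_{\mu,\epsilon}^+=1$'' and ``$\max_\epsilon\min_{\mu\ge1}\phi_{\mu,\epsilon}^-=1$'' branches respectively; the downward scan in Step~3 then resets $\wh{\lambda}_{\rm{left}}$ to the smallest integer $\lambda\le\wh{\lambda}_{\max}-1$ with $\max_{\epsilon}\min_{\mu\in[0,\lambda+1]\cap\mathbb{N}_0}\phi_{\mu,\epsilon}^+=0$, and the upward scan in Step~4 does the symmetric job for $\wh{\lambda}_{\rm{right}}$, so the algorithm reproduces the case-split formulas and hence the displayed ones. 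There is no hard estimate here; the only genuine obstacle is the bookkeeping around the data-dependent cap $\wh{\lambda}_{\max}$ and the jumps of $\wh{\psi}^+$ (resp.\ $\wh{\psi}^-$) across integers induced by the ceiling (resp.\ floor), where one must confirm that these jumps move $\inf\widehat{\CI}$ and $\sup\widehat{\CI}$ by precisely the integer offsets built into the formulas ($\lambda+1$ on the left, $\lambda-1$ on the right) and that the empty-set and $\lambda=\wh{\lambda}_{\max}$ corner cases are handled consistently by the algorithm's initialization.
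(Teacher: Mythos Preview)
Your proposal is correct and follows essentially the same approach as the paper: the paper's proof explicitly mirrors that of Proposition~\ref{prop:binom-end-pionts}, stating the Poisson analogue of Lemma~\ref{lm:p-hat-charac} (Lemma~\ref{lm:lambda-hat-charac-poi}, with the same case split for $\wh{\lambda}_{\rm{right}}$ you describe) and noting that the remaining bookkeeping carries over, which is precisely the structure you outline.
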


\begin{proof}[Proof of Proposition \ref{prop:pois-end-points}]
    The proof of Proposition \ref{prop:pois-end-points} is similar to that of Proposition \ref{prop:binom-end-pionts}. We will omit most of the details, but present the following lemma that corresponds to Lemma~\ref{lm:p-hat-charac} in the proof of Proposition \ref{prop:binom-end-pionts}. Again, the proof of Lemma \ref{lm:lambda-hat-charac-poi} is similar to that of Lemma~\ref{lm:p-hat-charac}, and we omit it here.
\begin{Lemma}\label{lm:lambda-hat-charac-poi}
	The set $\widehat{\CI}$ defined by (\ref{eq:ci-dis-Poisson}) is an interval whose endpoints are given by
	\begin{equation*} \label{eq:lambda-left-third}
	\wh{\lambda}_{\rm{left}} = \inf\left\{\lambda\in [0, \wh{\lambda}_{\max} - 1] \cap \bbN_0: \max_{\epsilon\in\mathcal{E}}\left(\min_{\mu\in [0,\lambda+1] \cap \bbN_0  }\phi_{\mu,\epsilon}^+\right)=0\right\} ,
	\end{equation*} and 
	\begin{equation*}\label{eq:lambda-right-third}
		\begin{split}
					\wh{\lambda}_{\rm{right}} = \left\{ \begin{array}{ll}
			 \sup\{\lambda \in [\wh{\lambda}_{\max}]  :  \max_{\epsilon \in \cE} \min_{\mu \in [\lambda - 1,\wh{\lambda}_{\max}] \cap \bbN } \phi_{\mu, \epsilon}^- = 0  \} & \text{ if }  \max_{\epsilon \in \cE} \min_{\mu \in [\wh{\lambda}_{\max}]} \phi_{\mu, \epsilon}^- = 0\\
			 \sup \{ \lambda \in [0, 1 ): \phi_{\lambda, \epsilon'}^- = 0  \} & \text{ if }  \max_{\epsilon \in \cE} \min_{\mu \in [\wh{\lambda}_{\max}]} \phi_{\mu, \epsilon}^- = 1,
		\end{array}  \right.
		\end{split}
	\end{equation*}
    where $\epsilon' \in [0,\epsilon_{\max}]$ can be chosen arbitrarily.
\end{Lemma}

\end{proof}

\subsection{Proof of Theorem \ref{thm:lower-pois}}

A key theorem is given below and its proof will be given in the subsequent subsections.

\begin{Theorem}\label{thm:test-low-Poisson}
For any $\alpha \in (0,1)$, $\epsilon_{\max} \in [0, 1/2]$, and $n \geq 3$ satisfying $\epsilon_{\max} \geq \frac{2\alpha}{n}$, there exists some constant $c > 0$ only depending on $\alpha$ and $\epsilon_{\max}$, such that for any $\epsilon \in [0, \epsilon_{\max}]$ and $\lambda \geq 0$, as long as $$r\leq c\left(\left( \sqrt{\lambda}\left(\frac{1}{\sqrt{\log n}}+\frac{1}{\sqrt{\log(1/\epsilon)}}\right) + 1 \right)\wedge\lambda +\frac{1}{n}+\epsilon\right),$$ we have
\begin{equation*}\label{eq:l-s-p Poisson}
\begin{split}
	&\textnormal{either }\inf_{Q_0,Q_1} \TV\left(P^{\otimes n}_{\epsilon_{\max},\lambda - r,Q_0}, P^{\otimes n}_{\epsilon, \lambda, Q_1}\right) \leq \alpha \quad \textnormal{or}\quad  \inf_{Q_0,Q_1} \TV\left(P^{\otimes n}_{\epsilon_{\max},\lambda + r,Q_0}, P^{\otimes n}_{\epsilon, \lambda, Q_1}\right) \leq \alpha.
\end{split}
\end{equation*}
\end{Theorem}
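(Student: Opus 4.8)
The proof will mirror, essentially line by line, the proof of Theorem~\ref{thm:test-low}, under the dictionary $\text{Binomial}(m,p)\leftrightarrow\text{Poisson}(\lambda)$ with $mp\leftrightarrow\lambda$, $mp(1-p)\leftrightarrow\lambda$ (the variance), and $p\wedge(1-p)\leftrightarrow\lambda$ (the boundary, now only on the left). The one genuinely new structural feature is that $\text{Poisson}(\lambda)$ has \emph{no} upper boundary and \emph{no} distributional symmetry analogous to $X\mapsto m-X$; consequently, rather than reducing to "half" the parameter range by symmetry, for each $\lambda$ I will simply pick the easy one of the two testing directions. Write $\ell_{\mathrm P}:=\big(\sqrt\lambda(\tfrac1{\sqrt{\log n}}+\tfrac1{\sqrt{\log(1/\epsilon)}})+1\big)\wedge\lambda+\tfrac1n+\epsilon$ for the bound on $r$ in the statement, and recall $P_\lambda=\text{Poisson}(\lambda)$. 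I would split on $\lambda\le\frac14(\frac\alpha n+\epsilon)$ (Part I, perturb up) versus $\lambda>\frac14(\frac\alpha n+\epsilon)$ (Part II, perturb down), and inside Part II introduce the critical value $\lambda^\star\asymp\log n\wedge\log(1/\epsilon)$ solving $1=\sqrt{\lambda^\star}\big(\tfrac1{\sqrt{\log n}}+\tfrac1{\sqrt{\log(1/\epsilon)}}\big)$, exactly the image of the binomial $p^\star$ under the dictionary.

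\textbf{Part I ($\lambda\le\frac14(\frac\alpha n+\epsilon)$).} Here $\ell_{\mathrm P}\asymp\lambda+\frac1n+\epsilon\lesssim\frac1n+\epsilon$, and I will exhibit $Q_0,Q_1$ making $\inf_{Q_0,Q_1}\TV(P^{\otimes n}_{\epsilon_{\max},\lambda+r,Q_0},P^{\otimes n}_{\epsilon,\lambda,Q_1})\le\alpha$. If $\alpha/n\ge\epsilon$, take $Q_0=P_{\lambda+r}$, $Q_1=P_\lambda$, so both mixtures collapse to $P_{\lambda+r}$ and $P_\lambda$; the superposition coupling $\text{Poisson}(\lambda+r)=\text{Poisson}(\lambda)+\text{Poisson}(r)$ gives $\TV(P_{\lambda+r},P_\lambda)\le 1-e^{-r}\le r\le\alpha/n$ for $c$ small, and tensorizing yields $\le\alpha$. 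If $\alpha/n\le\epsilon$, take $Q_0=P_{\lambda+r}$ and $Q_1=\epsilon^{-1}\big(P_{\lambda+r}-(1-\epsilon)P_\lambda\big)$; this $Q_1$ is a genuine distribution exactly when $e^{-r}(1+r/\lambda)^k\ge 1-\epsilon$ for every $k\ge0$, which (the left side is increasing in $k$, hence minimized at $k=0$) reduces to $r\le\log\frac1{1-\epsilon}$, automatic for $c$ small since $\lambda+\tfrac1n+\epsilon\lesssim\epsilon$ here; then $P_{\epsilon_{\max},\lambda+r,Q_0}=P_{\lambda+r}=P_{\epsilon,\lambda,Q_1}$, so the $\TV$ is zero. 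The case $\lambda=0$ is identical with $P_\lambda$ replaced by $\delta_0$.

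\textbf{Part II ($\lambda>\frac14(\frac\alpha n+\epsilon)$).} Now I perturb downward; note $\lambda-r\ge0$ because $\lambda>\frac14(\frac\alpha n+\epsilon)$ forces $\tfrac1n+\epsilon\lesssim\lambda$, so $r\le c\ell_{\mathrm P}\lesssim c\lambda$. For $\frac14(\frac\alpha n+\epsilon)<\lambda\le C^\star\lambda^\star$ one has $\ell_{\mathrm P}\asymp(\lambda\wedge1)+\tfrac1n+\epsilon\asymp\lambda\wedge1$, and the exact-matching construction of the binomial Part~II, Case~1 carries over verbatim: $Q_1=P_\lambda$ and $q_0(k)=\epsilon_{\max}^{-1}P_\lambda(\{k\})-\epsilon_{\max}^{-1}(1-\epsilon_{\max})P_{\lambda-r}(\{k\})$, which is nonnegative iff $e^{-r}(\lambda/(\lambda-r))^k\ge1-\epsilon_{\max}$ for all $k$ — again minimized at $k=0$, so $r\le\log\frac1{1-\epsilon_{\max}}$ suffices — and then $P_{\epsilon_{\max},\lambda-r,Q_0}=P_\lambda=P_{\epsilon,\lambda,Q_1}$, giving $\TV=0$. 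For $\lambda>C^\star\lambda^\star$ one has $\ell_{\mathrm P}\asymp\sqrt\lambda\big(\tfrac1{\sqrt{\log n}}+\tfrac1{\sqrt{\log(1/\epsilon)}}\big)$, and I would run the two sub-scenarios of the binomial Part~II, Case~2. When $n\le1/\epsilon$ ($\ell_{\mathrm P}\asymp\sqrt{\lambda/\log n}$): take $Q_1=P_\lambda$ and let $Q_0$ be $\epsilon_{\max}^{-1}P_\lambda$ minus a renormalized copy of $P_{\lambda-r}$ restricted to $\{k\ge t_n\}$ with $t_n=\lambda-8\sqrt{\lambda\log(n/\alpha)}$; then $\TV(P_{\epsilon_{\max},\lambda-r,Q_0},P_{\epsilon,\lambda,Q_1})=(1-\epsilon_{\max})\bbP(P_{\lambda-r}<t_n)$, which the Poisson lower-tail Chernoff bound controls by $\exp(-(\lambda-t_n)^2/(64\lambda))\le\alpha/n$ provided $\lambda\gtrsim\log(n/\alpha)$ (guaranteed by taking $C^\star$ large), after which tensorization gives $\le\alpha$; nonnegativity of $q_0$ on $\{k\ge t_n\}$ reduces (the Poisson likelihood ratio $(\lambda/(\lambda-r))^k$ being increasing in $k$) to a single exponent inequality that holds for $c$ small. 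When $n\ge1/\epsilon$ ($\ell_{\mathrm P}\asymp\sqrt{\lambda/\log(1/\epsilon)}$): use the corresponding mass-transfer construction with threshold $t_\epsilon=\lambda-8\sqrt{\lambda\log(1/\epsilon)}$ and the hypothesis $\epsilon_{\max}\ge2\alpha/n$, exactly paralleling \eqref{Def: q_1 with epsilon}--\eqref{Ineq: r and p - t}. Combining this theorem with Lemma~\ref{Lem: TV to lower bound} then yields Theorem~\ref{thm:lower-pois}.

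\textbf{Main obstacle.} As in the binomial case, the delicate step is the last regime, $\lambda>C^\star\lambda^\star$ with $n\ge1/\epsilon$: one must choose the two-piece contamination so that all mass constraints are satisfied, verify positivity of the resulting probability mass function via a Poisson lower-tail estimate, ensure $t_\epsilon\ge0$ (which is precisely why $C^\star$ must be large), and calibrate the deviation $\lambda-t_\epsilon$ so that $\bbP(P_{\lambda-r}<t_\epsilon)$ is of order $\epsilon$ rather than $\alpha/n$. I expect this to be the only place requiring real care; everything else is a transcription of the binomial argument. Notably, this step should be \emph{easier} than its binomial counterpart, since the Poisson likelihood ratio $(\lambda/(\lambda-r))^k$ is monotone in $k$ with no competing $(\,\cdot\,)^{m-k}$ factor, so each positivity check collapses to one inequality at a single endpoint.
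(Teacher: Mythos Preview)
Your proposal is correct and follows essentially the same route as the paper's own proof: the same three-regime split on $\lambda$ (small, intermediate up to $C^\star(\log n\wedge\log(1/\epsilon))$, and large), the same exact-matching constructions in Parts~I--II, and the same truncated-$P_{\lambda-r}$ construction with a Poisson Chernoff tail bound in the two sub-scenarios of the large-$\lambda$ regime. The only differences are cosmetic constants (you take $t_n=\lambda-8\sqrt{\lambda\log(n/\alpha)}$ and $t_\epsilon=\lambda-8\sqrt{\lambda\log(1/\epsilon)}$, the paper uses $3$ and $4$ respectively) and the form of the Chernoff bound (you quote $\exp(-(\lambda-t_n)^2/(64\lambda))$, the paper expands $\log(1+x)$ to third order to get $\exp(-(\lambda-r-t_n)^2/(4t_n))$); both reach $\alpha/n$ after choosing $C^\star$ large enough.
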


The result of Theorem \ref{thm:lower-pois} is followed directly by a combination of Lemma \ref{Lem: TV to lower bound} and Theorem \ref{thm:test-low-Poisson}.

\subsubsection{Proof of Theorem \ref{thm:test-low-Poisson}}
For simplicity, let us define
\begin{equation} \label{eq:l-poi}
	\ell(n,\epsilon,\lambda)=\left( \sqrt{\lambda}\left(\frac{1}{\sqrt{\log n}}+\frac{1}{\sqrt{\log(1/\epsilon)}}\right) + 1 \right)\wedge\lambda +\frac{1}{n}+\epsilon.
\end{equation}
We will divide the rest of the proof into three parts based on different ranges of $\lambda$: $\lambda \in [0, \frac{1}{4} \left( \frac{\alpha}{n} + \epsilon  \right) ]$, $\lambda \in (\frac{1}{4} \left( \frac{\alpha}{n} + \epsilon  \right), C^*(\log n \wedge \log(1 / \epsilon))]$, and $\lambda \in (C^*(\log n \wedge \log(1 / \epsilon)), \infty)$, where $C^*$ is some large constant only depending on $\alpha$ and $\epsilon_{\max}$ and the conditions it needs to satisfy will be specified later.
\vskip.2cm
{\noindent \bf (Part I: $\lambda \in [0, \frac{1}{4} \left( \frac{\alpha}{n} + \epsilon  \right) ] $)} In this part, we will divide the proof into two cases based on the magnitude of $n$ and $\epsilon$.
\begin{itemize}[leftmargin=*]
\item (Case 1: $\alpha/n \geq \epsilon$) In this case, $\lambda \in [0, \frac{\alpha}{2n}] $ and $\ell(n,\epsilon,\lambda) = \lambda + \frac{1}{n} + \epsilon \leq \frac{\alpha}{2n} + \frac{2}{n}$. We will show that when
	$r\leq c\ell(n,\epsilon,\lambda) \leq c (2 + \alpha/2) \frac{1}{n}$ for some small enough $c > 0$, then $\inf_{Q_0,Q_1} \TV\left(P^{\otimes n}_{\epsilon_{\max},\lambda + r,Q_0}, P^{\otimes n}_{\epsilon, \lambda, Q_1}\right) \leq \alpha$. We take $Q_0 = \mathrm{Poisson}(\lambda + r)$ and $Q_1 = \mathrm{Poisson}(\lambda)$. Then 
	\begin{equation} \label{ineq:lower-part1-case1-Poisson}
\TV\left(P_{\epsilon_{\max},\lambda + r,Q_0}, P_{\epsilon, \lambda, Q_1}\right) = \TV(P_{\lambda + r},P_{\lambda})  \overset{\textnormal{Lemma }\ref{lm:TV monotonicity}}\leq \TV(P_{\lambda + r},P_{0})  = 1 - \exp(-\lambda-r)   \leq \lambda + r \leq \frac{\alpha}{n},
	\end{equation} where the last inequality holds as long as we take $c \leq \frac{\alpha}{4 + \alpha}$. Then by the property of TV distance on the product measure, we get 
\begin{equation*}
  \inf_{Q_0,Q_1} \TV\left(P^{\otimes n}_{\epsilon_{\max},\lambda + r,Q_0}, P^{\otimes n}_{\epsilon, \lambda, Q_1}\right)  \leq n\TV(P_{\lambda + r},P_{\lambda}) \overset{\eqref{ineq:lower-part1-case1-Poisson}} \leq \alpha.
\end{equation*}
	\item (Case 2: $\alpha /n \leq \epsilon$) In this case, $\lambda \in [0, \frac{1}{2}\epsilon] $ and $\ell(n,\epsilon,\lambda) = \lambda + \frac{1}{n} + \epsilon \leq \frac{1}{2} \epsilon + (1+ 1/\alpha) \epsilon$. We will show that when
	$r\leq c\ell(n,\epsilon,\lambda) \leq c ((1 + 1/\alpha) + 1/2) \epsilon$ for some small enough $c > 0$, then $\inf_{Q_0,Q_1} \TV\left(P^{\otimes n}_{\epsilon_{\max},\lambda + r,Q_0}, P^{\otimes n}_{\epsilon, \lambda, Q_1}\right) \leq \alpha$. Now, we take $Q_0 = \mathrm{Poisson}(\lambda + r)$, and $Q_1$ to have the following probability mass function:
\begin{equation}\label{q_1 for small lambda}
    q_1(k)= \frac{1 }{\epsilon} \frac{\exp(-\lambda - r)(\lambda + r)^k}{k!} - \frac{1 - \epsilon} {\epsilon}\frac{\exp(- \lambda) \lambda^k}{k!}, \quad \forall k \in \bbN_0.
\end{equation} It is easy to see that $P_{ \epsilon_{\max}, \lambda+r, Q_0}$ and $P_{\epsilon,\lambda,Q_1}$ exactly match as long as $Q_1$ is a valid distribution. This will directly imply the result. So, we just need to verify $Q_1$ is a valid distribution. It is easy to check that $\sum_{k=0}^{\infty}q_1(k)=1$. To show that the formula \eqref{q_1 for small lambda} is a valid probability mass function, we only need to verify that $q_1(k)\geq 0$ for all $k \in \bbN_0$. This is true because for any $\lambda > 0$ and $k \in \bbN_0$, we have
\begin{equation*}
    \begin{split}
    \frac{\frac{\exp(-\lambda - r)(\lambda + r)^k}{k!}}{\frac{\exp(-\lambda)\lambda^k}{k!}} & = \exp(-r )\left(\frac{\lambda + r}{\lambda}\right)^k 
   \geq \exp(-r) \geq 1 - r \geq  1 - \epsilon c(1.5 + 1/\alpha)    \geq 1 - \epsilon,
    \end{split}
\end{equation*}
where the last inequality holds as long as $c\leq \frac{1}{1.5 + 1/\alpha}$. When $\lambda = 0$, we also have $q_1(k) \geq 0$ for all $k \in \bbN_0$ since $\exp(-r) \geq 1 - \epsilon$ as long as $c\leq \frac{1}{1.5 + 1/\alpha}$.
\end{itemize}

\vskip.2cm
{\noindent \bf (Part II: $\lambda \in (\frac{1}{4} \left( \frac{\alpha}{n} + \epsilon  \right), C^*(\log n \wedge \log(1 / \epsilon)) ] $)} In this part, we have
\begin{equation*}
        \ell(n,\epsilon,\lambda) = \left( \sqrt{\frac{\lambda}{\log n}}+\sqrt{\frac{\lambda}{\log(1/\epsilon)}} + 1 \right)\wedge\lambda +\frac{1}{n}+\epsilon  \leq (2\sqrt{C^*} + 1) \wedge \lambda + 1 \wedge \frac{4}{\alpha}\lambda \leq (2\sqrt{C^*} +2) \wedge \frac{5}{\alpha}\lambda,
\end{equation*}
where the first inequality is because of the regime of $\lambda$. Next, we will show that when $r \leq c\ell(n,\epsilon,\lambda) \leq  c((2\sqrt{C^*} + 2) \wedge \frac{5}{\alpha}\lambda)$ for some small enough $c > 0$, $\inf_{Q_0,Q_1} \TV\left(P^{\otimes n}_{\epsilon_{\max},\lambda - r,Q_0}, P^{\otimes n}_{\epsilon, \lambda, Q_1}\right) \leq \alpha.$ Note that as long as $c \leq \alpha/5$, we have $ \lambda - r \geq 0$.
	Now, we construct $Q_1 = \mathrm{Poisson}(\lambda)$. To construct $Q_0$, we define its probability mass function as
 \begin{equation}\label{q_0 for small lambda Poisson}
    q_0(k)=\frac{1}{\epsilon_{\max}} \frac{\exp(-\lambda)\lambda^k}{k!} -\frac{1-\epsilon_{\max}}{\epsilon_{\max}}\frac{\exp(-\lambda + r)(\lambda - r)^k}{k!}, \quad \forall k \in \bbN_0.
\end{equation} As long as the formula \eqref{q_0 for small lambda Poisson} is valid probability mass function, the distributions $P_{\epsilon_{\max}, \lambda - r, Q_0}$ and $P_{\epsilon, \lambda, Q_1}$ exactly match, i.e., $\TV(P^{\otimes n}_{\epsilon_{\max},\lambda - r,Q_0}, P^{\otimes n}_{\epsilon, \lambda, Q_1}) = 0$ and it implies our result. Next, we verify $Q_0$ is a valid distribution. It is easy to check $\sum_{k=0}^{\infty} q_0(k) = 1$, so we just need to show $q_0(k) \geq 0$ for all $k \in \bbN_0$ to confirm that $Q_0$ is a valid distribution.
\begin{equation}\label{Cond: lower bound first case Poisson}
    \begin{split}
        & q_0(k) \geq 0, \quad \forall k \in \bbN_0 
        \Longleftrightarrow  \exp(-\lambda)\lambda^k - (1 - \epsilon_{\max})\exp(-\lambda + r)(\lambda - r)^k \geq 0, \quad \forall k \in \bbN_0 \\ \\
    \Longleftrightarrow &  \exp(r)\left(\frac{\lambda - r}{\lambda}\right)^k \leq \frac{1}{1 - \epsilon_{\max}}, \quad \forall k \in \bbN_0 
    \overset{(a)} \Longleftrightarrow   \exp(r) \leq \frac{1}{1 - \epsilon_{\max}}
    \Longleftrightarrow r \leq \log\left(\frac{1}{1 - \epsilon_{\max}}\right),
    \end{split}
\end{equation}
where (a) is because $\left(\frac{\lambda - r}{\lambda}\right)^k$ is decreasing in $k$. Notice that the last condition in \eqref{Cond: lower bound first case Poisson} is satisfied as long as we choose $c \leq \frac{\log\left(\frac{1}{1-\epsilon_{\max}}\right)}{2\sqrt{C^*} + 2}$.

\vskip.2cm
{\noindent \bf (Part III: $\lambda \in (C^*(\log n \wedge \log(1 / \epsilon)), \infty) $)} In this part, we have
\begin{equation}\label{Ineq: lambda is greater}
\begin{split}
 &\sqrt{\lambda} \left(\frac{1}{\sqrt{\log n }} + \frac{1}{ \sqrt{\log(1 / \epsilon)}}\right) + 1   \leq 2\sqrt{\frac{\lambda}{\log n \wedge \log(1 / \epsilon)}} +  1 \\
 & \overset{(a)} \leq 2\sqrt{3\lambda (\log n \wedge \log(1 / \epsilon))} + 3(\log n \wedge \log(1 / \epsilon)) \overset{(b)} < 2\sqrt{\frac{3}{C^*}}\lambda + \frac{3}{C^*}\lambda  \overset{(c)} < \lambda,
\end{split}
\end{equation}
where (a) holds since $\log n \wedge \log(1 / \epsilon) \geq \log (2)$, and both $1/x \leq 3x$ and $1 \leq 3x$ hold for all $x \geq \log (2)$; (b) is because of the regime of $\lambda$ in this part; (c) is satisfied as long as $C^{*} \geq 18$. Thus, we have 
\begin{equation*}
    \begin{split}
        \ell(n, \epsilon, \lambda) & \overset{\eqref{eq:l-poi} ,\eqref{Ineq: lambda is greater}}= \sqrt{\lambda} \left(\frac{1}{\sqrt{\log n }} + \frac{1}{ \sqrt{\log(1 / \epsilon)}}\right) + 1 +\frac{1}{n}+\epsilon  \leq 2\sqrt{\frac{\lambda}{\log n \wedge \log(1 / \epsilon)}} + 2\\
        & \leq 2\sqrt{\frac{\lambda}{\log n \wedge \log(1 / \epsilon)}} + \frac{2}{\sqrt{C^*}}\sqrt{\frac{\lambda}{\log n \wedge \log(1 / \epsilon)}} \leq 3\sqrt{\frac{\lambda}{\log n \wedge \log(1 / \epsilon)}},
    \end{split}
\end{equation*}
where the last inequality holds as long as we choose $C^* \geq 4$.
Hence, it suffices to show that when
	$r\leq c\ell(n,\epsilon,\lambda) \leq 3c\sqrt{\frac{\lambda}{\log n \wedge \log(1 / \epsilon)}}$ for some small enough $c > 0$, then $\inf_{Q_0,Q_1} \TV\left(P^{\otimes n}_{\epsilon_{\max},\lambda + r,Q_0}, P^{\otimes n}_{\epsilon, \lambda, Q_1}\right) \leq \alpha$. We will divide the proof into two cases based on the magnitude of $n$ and $\epsilon$.

\begin{itemize}[leftmargin=*]
\item (Case 1: $n \leq  1 / \epsilon$) In this case, $\log n \leq \log(1 / \epsilon)$. Let $C_1 = \frac{\log (3)}{\log (3 / \alpha)}C^*$. Then, we have
\begin{equation*}
    \lambda > C^* \log n \geq C^*  \frac{\log (3)}{\log (3/ \alpha)}\log (n / \alpha) =  C_1 \log(n / \alpha),
\end{equation*}
where the second inequality is because $ \log n = \frac{\log n}{\log n + \log(1 / \alpha)} \log(n/ \alpha) \geq \frac{\log (3)}{\log (3) + \log(1 / \alpha)} \log(n/ \alpha)$. In addition, 
\begin{equation*}
     \ell(n, \epsilon, \lambda)  \lesssim \sqrt{\lambda} \left(\frac{1}{\sqrt{\log n }} + \frac{1}{ \sqrt{\log(1 / \epsilon)}}\right) \lesssim \sqrt{\frac{\lambda}{\log (n / \alpha)}},
\end{equation*}
where in this proof, we use $a \lesssim b$ to mean there exists a constant $C$ depending on $\alpha$ only so that $a \leq C b$. Next, we are going to show that when $\lambda > C_1 \log(n / \alpha)$ and $r \leq  c \sqrt{\frac{\lambda}{\log (n / \alpha)}}$ for some large enough $C_1 > 0$ and small enough $c>0$, then we can construct $Q_0$ and $Q_1$ such that $\TV\left(P^{\otimes n}_{\epsilon_{\max},\lambda - r,Q_0}, P^{\otimes n}_{\epsilon, \lambda, Q_1}\right)  \leq \alpha$.

In particular, we construct $Q_1 = \mathrm{Poisson}(\lambda)$ and define the probability mass function of $Q_0$ by
\begin{equation*}
    q_0(k)=\frac{1}{\epsilon_{\max}}\exp(-\lambda)\lambda^{k} - \frac{1-\epsilon_{\max}}{\epsilon_{\max}}\frac{\indi \{k \geq t_n\}}{\bbP(\mathrm{Poisson}(\lambda - r)\geq t_n)} \exp(-\lambda+r)(\lambda - r)^{k},
\end{equation*}
for all $k \in \bbN_0$, where 
\begin{equation*}
    t_n = \lambda - 3\sqrt{\lambda \log (n / \alpha)}.
\end{equation*}
Then, we have
\begin{equation}\label{Ineq: tn is greater than 0 Poisson}
    t_n = \lambda - 3\sqrt{\lambda \log (n / \alpha)} >\sqrt{C_1 \lambda \log (n / \alpha)} - 3\sqrt{\lambda \log (n / \alpha)} = \Big(\sqrt{C_1} - 3\Big) \sqrt{\lambda \log (n / \alpha)},
\end{equation}
where in the first inequality we use the fact $\lambda > C_1 \log (n / \alpha)$. Therefore, we have $t_n \geq 0$ as long as $C_1 \geq 9$. Also, when $n \geq 3$, we have
\begin{equation*}
    \lambda - t_n = 3\sqrt{\lambda \log (n / \alpha)} \geq 3\sqrt{\frac{\lambda}{\log(n / \alpha)}} \geq \frac{3r}{c},
\end{equation*}
where the first inequality is because $\log(n / \alpha) \geq \log(3) > 1$ and in the second inequality we use the fact that $r \leq c \sqrt{\frac{\lambda}{\log (n / \alpha)}}$. Thus, when $c < 1$ we have $\sqrt{\lambda\log(n / \alpha)} > r$ and $\lambda - t_n > 2r$. In particular, this implies that $\lambda > 2r \geq r$ when $c$ is sufficiently small.

Next, we derive the conditions for $Q_0$ to be a valid distribution. It is easy to check that $\sum_{k=0}^{\infty}q_0(k)=1$. Therefore, to ensure that $Q_0$ is a valid distribution, we need to show that $q_0(k) \geq 0$ for all $k \in \bbN_0$, which is guaranteed by
\begin{equation}\label{Cond: Lower bound with n Poisson}
    \begin{split}
       & \exp(-\lambda)\lambda^{k} \geq  \frac{(1-\epsilon_{\max})\exp(-\lambda+r)(\lambda - r)^{k}}{\bbP(\mathrm{Poisson}(\lambda - r)\geq t_n)}, \quad \forall k \geq t_n\\
        \Longleftrightarrow & \exp(-r) \left(\frac{\lambda}{\lambda - r}\right)^k \geq \frac{1-\epsilon_{\max}}{\bbP(\mathrm{Poisson}(\lambda - r)\geq t_n)}, \quad \forall k \geq t_n \\
        \overset{(a)}\Longleftarrow & \exp(-r) \left(\frac{\lambda}{\lambda - r}\right)^{t_n} \geq \frac{1-\epsilon_{\max}}{\bbP(\mathrm{Poisson}(\lambda - r)\geq t_n)} \\
        \Longleftrightarrow & - r + t_n\log\left(\frac{\lambda}{\lambda - r}\right) \geq \log((1 - \epsilon_{\max})/\bbP(\mathrm{Poisson}(\lambda - r)\geq t_n)) \\
        \overset{(b)}{\Longleftarrow} & \frac{(t_n - \lambda) r}{\lambda} \geq \log((1 - \epsilon_{\max})/\bbP(\mathrm{Poisson}(\lambda - r)\geq t_n)) \\
        \Longleftrightarrow & r \leq \frac{\lambda\log(\bbP(\mathrm{Poisson}(\lambda - r)\geq t_n)/(1 - \epsilon_{\max}))}{\lambda - t_n},
    \end{split}
\end{equation}
where (a) is because $\left(\frac{\lambda}{\lambda - r}\right)^k$ is increasing in $k$; (b) is because $\log (1+x)\geq x/(1+x)$ for all $x>-1$. Also, when $C_1 \geq 49$, we have
\begin{equation} \label{Ineq: lambda, r, tn}
    \frac{\lambda - r - t_n}{t_n} \leq \frac{\lambda - t_n}{t_n} = \frac{3\sqrt{\lambda\log(n / \alpha)}}{\lambda - 3\sqrt{\lambda\log(n / \alpha)}} \overset{(a)}\leq \frac{3\sqrt{\lambda\log(n / \alpha)}}{\sqrt{C_1\lambda\log(n / \alpha)} - 3\sqrt{\lambda\log(n / \alpha)}} = \frac{3}{\sqrt{C_1} - 3} \leq \frac{3}{4},
\end{equation} 
where in (a) we use the fact that $\lambda \geq C_1\log(n / \alpha)$. 

In addition, when $Q_0$ is a valid distribution, we have
\begin{equation} \label{Ineq: Bound for TV with n Poisson}
    \begin{split}
        &\TV(P_{\epsilon_{\max},\lambda - r, Q_0},P_{\epsilon, \lambda, Q_1}) = \frac{1}{2}\sum_{k=0}^{\infty}(1-\epsilon_{\max})\left|1-\frac{\indi \{k\geq t_n\}}{\bbP(\mathrm{Poisson}(\lambda - r)\geq t_n)}\right| \frac{\exp(-\lambda + r)(\lambda - r)^k}{k!} \\
        &= (1-\epsilon_{\max})\bbP(\mathrm{Poisson}(\lambda - r) < t_n) \leq \bbP(\mathrm{Poisson}(\lambda - r) < t_n)\\
        & \overset{(a)}{\leq} \exp \left( - \lambda + r + t_n + t_n\log \Big( \frac{\lambda - r }{t_n} \Big) \right) \overset{(b)}\leq \exp \left( - \frac{(\lambda - r - t_n)^2}{2t_n} + \frac{(\lambda - r - t_n)^3}{3t_n^2} \right) \\
        &\overset{\eqref{Ineq: lambda, r, tn}}\leq \exp \left( - \frac{(\lambda - r - t_n)^2}{2t_n} + \frac{3}{4} \frac{(\lambda - r - t_n)^2}{3t_n}\right)  = \exp \left(-\frac{(\lambda - r - t_n)^2}{4t_n} \right) \\
        &\overset{(c)}\leq \exp \left(-\frac{\left(\lambda - t_n - \sqrt{\lambda\log(n / \alpha)}\right)^2}{4\lambda} \right) \overset{(d)}= \frac{\alpha}{n},
    \end{split}
\end{equation}
where in (a) we use the Chernoff bound for the Poisson distribution (see Lemma~\ref{Lem: Chernoff bound for Poisson}); in (b) we use the fact that $\log( 1 + x ) \leq x - \frac{x^2}{2} + \frac{x^3}{3}$ for all $x \geq 0$ and $\lambda - t_n  - r > 2r - r \geq 0$ when $c$ is sufficiently small; (c) is because $r < \sqrt{\lambda\log(n / \alpha)}$ and $t_n < \lambda$; (d) is by the definition of $t_n$. Then, the following holds by the property of TV distance on the product measure:
\begin{equation*}
    \TV\left(P^{\otimes n}_{\epsilon_{\max},\lambda - r,Q_0},P^{\otimes n}_{\epsilon, \lambda, Q_1}\right) \leq  n\TV(P_{\epsilon_{\max},\lambda - r, Q_0},P_{\epsilon,\lambda,Q_1}) \leq \alpha.
\end{equation*}
On the other hand, from the derivation of \eqref{Ineq: Bound for TV with n Poisson}, we also obtain the inequality $\mathbb{P}(\mathrm{Poisson}(\lambda-r) \geq t_n) \geq 1 - \frac{\alpha}{n}$, so the last condition in \eqref{Cond: Lower bound with n Poisson} is implied by
\begin{equation*}
    \begin{split}
       & r \leq \frac{\lambda\log((1 - \frac{\alpha}{n})/(1 - \epsilon_{\max}))}{\lambda - t_n}
         \overset{(a)}{\Longleftarrow}  r \leq \frac{\lambda\log((1 - \epsilon_{\max}/2)/(1 - \epsilon_{\max}))}{\lambda - t_n} \\
        \overset{(b)} \Longleftrightarrow & r \leq  \frac{\log((1 - \epsilon_{\max} / 2)/(1-\epsilon_{\max}))}{3}\sqrt{\frac{\lambda}{\log(n/\alpha)}},
    \end{split}
\end{equation*}
where (a) holds when $\epsilon_{\max}\geq \frac{2\alpha}{n}$ and (b) is by the setting of $t_n$. The above conditions are satisfied whenever $r \leq c \sqrt{\frac{\lambda}{\log (n / \alpha)}}$ for some sufficiently small $c > 0$ only depending on $\epsilon_{\max}$.

\item (Case 2: $n \geq  1 / \epsilon$) In this case, we have $\log n \geq \log(1 / \epsilon)$. To keep the notation consistent with the previous case, we let $C_2 = C^*$. Then, we have $\lambda > C_2\log(1 / \epsilon)$. In addition,
\begin{equation*}
     \ell(n, \epsilon, \lambda)  \lesssim \sqrt{\lambda} \left(\frac{1}{\sqrt{\log n }} + \frac{1}{ \sqrt{\log(1 / \epsilon)}}\right) \lesssim \sqrt{\frac{\lambda}{\log (1 / \epsilon)}}.
\end{equation*} Next, we are going to show that when $\lambda \geq C_2\log(1 / \epsilon)$ and $r \leq  c \sqrt{\frac{\lambda}{\log (1/ \epsilon)}}$ for large $C_2>0$ and some small enough $c>0$, then $\inf_{Q_0, Q_1}\TV\left(P^{\otimes n}_{\epsilon_{\max},\lambda - r,Q_0}, P^{\otimes n}_{\epsilon, \lambda, Q_1}\right)  \leq \alpha$. 

 In this case, let us define
\begin{equation*}
    r_{\epsilon} = \frac{c(\epsilon_{\max})}{4}\sqrt{\frac{\lambda}{\log(1 / \epsilon)}},
\end{equation*}
where $c(\epsilon_{\max}) = \log((1 - \epsilon_{\max}/2)/ (1 - \epsilon_{\max}))$. It is sufficient to verify that for all $\epsilon \in [0, \epsilon_{\max}/2]$, if $r \leq r_{\epsilon}$, then $\inf_{Q_0,Q_1} \TV\left(P^{\otimes n}_{\epsilon_{\max},\lambda - r_{\epsilon},Q_0},P^{\otimes n}_{\epsilon, \lambda, Q_1}\right)\leq \alpha$. This is because when $\epsilon \in [ \epsilon_{\max}/2, \epsilon_{\max}]$, there exist constants $C_2'$ and $c'$ such that when $\lambda \geq C_2' \log (1 / \epsilon) \geq  C_2 \log (2 / \epsilon_{\max})$ and $r \leq c'\sqrt{\frac{\lambda}{\log(1 / \epsilon )}} \leq \frac{c(\epsilon_{\max})}{4}\sqrt{\frac{\lambda}{\log(2 / \epsilon_{\max})}} = r_{\epsilon_{\max}/2}$, then
\begin{equation*}
    \inf_{Q_0,Q_1} \TV\left(P^{\otimes n}_{\epsilon_{\max},\lambda - r,Q_0},P^{\otimes n}_{\epsilon, \lambda, Q_1}\right) \leq \inf_{Q_0,Q_1} \TV\left(P^{\otimes n}_{\epsilon_{\max},\lambda - r,Q_0},P^{\otimes n}_{\epsilon_{\max}/2, \lambda, Q_1}\right)\leq \alpha,
\end{equation*}
where the first inequality is because $\{P_{\epsilon_{\max}/2,\lambda, Q_1} : Q_1\} \subseteq \{P_{\epsilon, \lambda, Q_1}: Q_1\}$ when $\epsilon \geq \epsilon_{\max}/2$. From now on, we assume $\epsilon \in [0, \epsilon_{\max}/2]$. Then, we have
\begin{equation*}
    r_{\epsilon} = \frac{c(\epsilon_{\max})}{4}\sqrt{\frac{\lambda}{\log(1 / \epsilon)}} \leq \frac{c(\epsilon_{\max})\lambda}{4\sqrt{C_2}\log(1 / \epsilon)} \leq \frac{c(\epsilon_{\max})\lambda}{4\sqrt{C_2}\log(2 / \epsilon_{\max})},
\end{equation*}
where the first inequality is because $\lambda \geq C_2 \log(1 / \epsilon)$ and in the second inequality we use the fact that $\epsilon \leq \epsilon_{\max}/2$. Therefore, whenever $C_2 > \left(\frac{c(\epsilon_{\max})}{4\log(2 / \epsilon_{\max})}\right)^2$, we have $r_{\epsilon} < \lambda$. Accordingly, we choose $C_2$ sufficiently large, depending only on $\epsilon_{\max}$, and consider the case where $r \leq r_{\epsilon} < \lambda$ is satisfied. In this case, we define the probability mass function of $Q_1$ by
\begin{equation}\label{Def: q_1 with epsilon Poisson}
q_1(k)=
\begin{cases}
a_k & (1-\epsilon) \exp(- \lambda) \lambda^k  > (1-\epsilon_{\max}) \exp(- \lambda + r) (\lambda - r)^k \\
\frac{1-\epsilon_{\max}}{\epsilon} \frac{\exp(- \lambda + r) (\lambda - r)^k}{k!} & (1-\epsilon) \exp(- \lambda) \lambda^k  \leq (1-\epsilon_{\max}) \exp(- \lambda + r) (\lambda - r)^k,
\end{cases}
\end{equation}
for all $k \in \bbN_0$, where $a_k \geq 0$ are arbitrary nonnegative values chosen so that $\sum_{k=0}^{\infty} q_1(k) = 1$ if such a choice is possible. Suppose that the formula \eqref{Def: q_1 with epsilon Poisson} is a valid probability mass function. Then, we can define the valid probability mass function of $Q_0$ by
\begin{equation*}
    q_0(k) = \frac{(1-\epsilon)\frac{\exp(- \lambda) \lambda^k}{k!} + \epsilon q_1(k) - (1-\epsilon_{\max})\frac{\exp(- \lambda + r) (\lambda - r)^k}{k!}}{\epsilon_{\max}},
\end{equation*}
for all $k \in \bbN_0$. Then, the distributions $P_{\epsilon_{\max}, \lambda - r ,Q_0}$ and $P_{\epsilon, \lambda, Q_1}$ exactly match, which implies our result. Note that we can define $q_1(k)$ as in \eqref{Def: q_1 with epsilon Poisson} only if there exist nonnegative values $a_k \geq 0$ such that $\sum_{k=0}^{\infty} q_1(k) = 1$. This is guaranteed if 
\begin{equation}\label{Cond: Valid q_1 with epsilon Poisson}
    \sum_{k \in \mathcal{S}(\lambda, r, \epsilon)} \frac{1 - \epsilon_{\max}}{\epsilon} \frac{\exp(- \lambda + r) (\lambda - r)^k}{k!} \leq 1,
\end{equation}
where
\begin{equation*}
\mathcal{S}(\lambda, r, \epsilon) 
= \left\{k \in \bbN_0 : (1-\epsilon) \exp(- \lambda) \lambda^k  \leq (1-\epsilon_{\max}) \exp(- \lambda + r) (\lambda - r)^k \right\}.    
\end{equation*}
It is easy to check that
\begin{equation*}
    \mathcal{S}(\lambda, r, \epsilon) 
= \left\{k \in \bbN_0 : 
k \leq \frac{r - \log\left(\frac{1 - \epsilon}{1 - \epsilon_{\max}}\right)}{\log\left(\frac{\lambda}{\lambda - r}\right)}
\right\}.
\end{equation*}
For $\epsilon \in [0, \epsilon_{\max}/2]$, the threshold value appearing in the definition of $\mathcal{S}(\lambda, r, \epsilon)$ can be upper bounded as follows:
\begin{equation}\label{Ineq: Upper bound for threshold Poisson}
         \frac{r - \log\left(\frac{1 - \epsilon}{1 - \epsilon_{\max}}\right)}{\log\left(\frac{\lambda}{\lambda - r}\right)}  \overset{(a)}\leq  \frac{r - c(\epsilon_{\max})}{\log\left(\frac{\lambda}{\lambda - r}\right)}  \overset{(b)}\leq \lambda - \frac{c(\epsilon_{\max})\lambda}{r} \leq \lambda - \frac{c(\epsilon_{\max})\lambda}{r_{\epsilon}}  \overset{(c)}= \lambda - 4\sqrt{\lambda \log (1 / \epsilon)},
\end{equation}
where (a) is because $\epsilon \leq \epsilon_{\max}/2$; in (b) we use the fact that $\log (1+x) \geq x/(1+x)$ when $x>-1$; (c) is by the definition of $r_{\epsilon}$. Now, define
\begin{equation*}
    t_\epsilon =   \lambda - 4\sqrt{\lambda \log (1 / \epsilon)}.
\end{equation*}
Since $\lambda \geq C_2 \log (1 / \epsilon)$, choosing $C_2 \geq 16$ ensures that $t_\epsilon \geq 0$ following the same analysis as in \eqref{Ineq: tn is greater than 0 Poisson}. Then for any $\epsilon \in [0, \epsilon_{\max}/2]$, we have
\begin{equation}\label{Ineq: r and lambda - t Poisson}
    \begin{split}
        2r_{\epsilon} & = \frac{\log((1 - \epsilon_{\max}/2)/(1 - \epsilon_{\max}))}{2}\sqrt{\frac{\lambda}{\log(1 / \epsilon)}}  \leq \frac{\log (2/\epsilon_{\max})}{2}\sqrt{\frac{\lambda}{\log(1 / \epsilon)}} \leq \frac{\log (1/\epsilon)}{2}\sqrt{\frac{\lambda}{\log(1 / \epsilon)}} \\
        & \leq 4 \sqrt{\lambda\log (1/\epsilon)} = \lambda - t_{\epsilon},
    \end{split}
\end{equation}
where the first inequality holds for all $\epsilon_{\max} \leq 1/2$ and in the second inequality we use the fact that $\epsilon \leq \epsilon_{\max}/2$. Also, we have
\begin{equation*}
    \frac{\lambda - t_{\epsilon} + r_{\epsilon}}{t_{\epsilon}} = \frac{4\log(1 / \epsilon) + c(\epsilon_{\max})/4}{\sqrt{\lambda \log (1 / \epsilon)} - 4 \log(1 / \epsilon)} \leq \frac{(4 + c(\epsilon_{\max})/4)\log(1 / \epsilon) }{\sqrt{\lambda \log (1 / \epsilon)} - 4 \log(1 / \epsilon)} \leq \frac{4 + c(\epsilon_{\max}) / 4}{\sqrt{C_2} - 4},
\end{equation*}
where the first inequality follows from the fact that $\log(1 / \epsilon) \geq \log(2 / \epsilon_{\max}) \geq \log(4) \geq 1$, and the second inequality uses the fact that $\lambda \geq C_2 \log(1 / \epsilon)$. Therefore, by choosing a sufficiently large $C_2$ that depends only on $\epsilon_{\max}$, we can ensure that $\frac{\lambda - t_{\epsilon} + r_{\epsilon}}{t_{\epsilon}} \leq \frac{3}{4}$ holds, and we restrict our attention to this setting. Then, the condition \eqref{Cond: Valid q_1 with epsilon Poisson} is implied by
\begin{equation*}
    \begin{split}
        \eqref{Cond: Valid q_1 with epsilon Poisson} \Longleftrightarrow & \bbP\left(\mathrm{Poisson}(\lambda - r) \leq  \frac{r - \log\left(\frac{1 - \epsilon}{1 - \epsilon_{\max}}\right)}{\log\left(\frac{\lambda}{\lambda - r}\right)} \right)\leq \frac{\epsilon}{1-\epsilon_{\max}}\\
    \overset{\text{Lemma }\ref{lm:Poisson-prop},\eqref{Ineq: Upper bound for threshold Poisson} }{\Longleftarrow} & \bbP\left(\mathrm{Poisson}(\lambda - r_{\epsilon}) \leq t_{\epsilon}\right)\leq \frac{\epsilon}{1-\epsilon_{\max}}\\
    \overset{(a)}{\Longleftarrow} & - \lambda + r_{\epsilon} + t_{\epsilon} + t_{\epsilon} \log \left(\frac{\lambda - r_{\epsilon}}{t_{\epsilon}}\right) \leq \log \left( \frac{\epsilon}{1 - \epsilon_{\max}} \right) \\
    \overset{(b)} \Longleftarrow &  - \frac{(\lambda - r_{\epsilon} - t_{\epsilon})^2}{2t_{\epsilon}} + \frac{(\lambda - r_{\epsilon} - t_{\epsilon})^3}{3t_{\epsilon}^2} \leq \log \left( \frac{\epsilon}{1 - \epsilon_{\max}} \right) \\
    \overset{(c)} \Longleftarrow &  - \frac{(\lambda - r_{\epsilon} - t_{\epsilon})^2}{2t_{\epsilon}} + \frac{3}{4}\frac{(\lambda - r_{\epsilon} - t_{\epsilon})^2}{3t_{\epsilon}} \leq \log \left(\epsilon \right) 
    \Longleftrightarrow   \frac{(\lambda - r_{\epsilon} - t_{\epsilon})^2}{4t_{\epsilon}} \geq \log \left( 1 / \epsilon \right) \\
        \Longleftarrow &  \frac{(\lambda - r_{\epsilon} - t_{\epsilon})^2}{4\lambda} \geq \log \left( 1 / \epsilon \right) 
    \overset{\eqref{Ineq: r and lambda - t Poisson}} \Longleftarrow   \frac{(\lambda - t_{\epsilon})^2}{16\lambda} \geq \log \left( 1 / \epsilon \right),
    \end{split}  
\end{equation*}
where in (a) we use the Chernoff bound for the Poisson distribution (see Lemma~\ref{Lem: Chernoff bound for Poisson}); in (b) we use the fact that $\log( 1 + x ) \leq x - \frac{x^2}{2} + \frac{x^3}{3}$ for all $x \geq 0$ and $\lambda - t_{\epsilon} + r_{\epsilon} =4\sqrt{\lambda \log(1 / \epsilon)} + r_{\epsilon} \geq 0$; in (c) we use the fact that $\frac{\lambda - t_{\epsilon} + r_{\epsilon}}{t_{\epsilon}} \leq \frac{3}{4}$. Notice that the above conditions are satisfied by the setting of $t_{\epsilon}$. This finishes the proof for Case 2 of Part III and also finishes the proof of this theorem.

\end{itemize}

\subsection{Proof of Theorem \ref{thm:upper-pois}}

The proof of Theorem \ref{thm:upper-pois} is similar to that of Theorem \ref{thm:upper}. We first state a theorem that establishes the simultaneous Type-1 error and Type-2 error guarantees for the testing functions $\phi_{\lambda,\epsilon}^+$ defined by \eqref{eq:pos-test+} and $\phi_{\lambda,\epsilon}^-$ defined by \eqref{eq:pos-test-}. Its proof is provided in the subsequent subsections.

 \begin{Theorem}\label{thm:test-up-Poisson}
Suppose $ \frac{\log(2/\alpha)}{n} + \epsilon_{\max}$ is less than a sufficiently small universal constant. The testing functions $\phi_{\lambda,\epsilon}^+$ and $\phi_{\lambda,\epsilon}^-$  defined by (\ref{eq:pos-test+}) and (\ref{eq:pos-test-}) satisfy the following simultaneous Type-1 error bounds
\begin{eqnarray}
\label{ineq:type-1-phi+ Poisson} \underset{Q}{\sup}P_{\epsilon_{\max},\lambda,Q}\left(\underset{\epsilon\in[0,\epsilon_{\max}]}{\sup}\phi_{\lambda,\epsilon}^+ = 1 \right)\leq \alpha/12, \\
\label{ineq:type-1-phi- Poisson} \underset{Q}{\sup}P_{\epsilon_{\max},\lambda,Q}\left(\underset{\epsilon\in[0,\epsilon_{\max}]}{\sup}\phi_{\lambda,\epsilon}^- = 1 \right)\leq \alpha/12,
\end{eqnarray}
for all $\lambda \in [0, \infty)$. In addition, the testing function $\phi_{\lambda,\epsilon}^+$ satisfies the following Type-2 error bound,
\begin{equation*}
    \underset{Q}{\sup}P_{\epsilon,\lambda+r,Q}(\phi_{\lambda,\epsilon}^+ = 0)\leq \alpha/12   ,
\end{equation*} for all $\epsilon \in [0, \epsilon_{\max}]$, all $\lambda \in [0, \infty)$ and all
$r \geq \overline{r}(\lambda, \epsilon)$, where $\overline{r}(\lambda, \epsilon)$ is defined by (\ref{def:r-Poisson-upper}). Similarly, the testing function $\phi_{\lambda,\epsilon}^-$ satisfies the following Type-2 error bound,
\begin{equation*}
\underset{Q}{\sup}P_{\epsilon,\lambda-r,Q}(\phi_{\lambda,\epsilon}^- = 0)\leq \alpha/12,  
\end{equation*} for all $\epsilon \in [0, \epsilon_{\max}]$, all $\lambda \in \left[4\left( \frac{10 \log(24/\alpha)}{n} + 3 \epsilon \right), \infty \right)$ and all
$r\in[\underline{r}(\lambda, \epsilon),\lambda]$, where $\underline{r}(\lambda, \epsilon)$ is defined by (\ref{def:r-Poisson-under}).
\end{Theorem}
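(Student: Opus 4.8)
## Proof Proposal for Theorem \ref{thm:test-up-Poisson}

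The plan is to mirror the structure of the proof of Theorem \ref{thm:test-up} for the binomial case, replacing binomial tail bounds with the corresponding Poisson tail bounds (Chernoff bound for Poisson, Bennett/Bernstein-type inequalities for Poisson deviations). As in the binomial proof, I would split into two parts: simultaneous Type-1 error control, then Type-2 error control. For each, the boundary regime ($\lambda$ small, where the test $\phi_{\lambda,\epsilon}^-$ is defined via the atom at $0$) is handled separately from the bulk regime, exactly as $p \in (1-1/m,1]$ versus $p \in [0,1-1/m]$ was handled before. By the symmetry note in the excerpt, it suffices to analyze $\phi_{\lambda,\epsilon}^+$ and the small-$\lambda$ case of $\phi_{\lambda,\epsilon}^-$; the large-$\lambda$ case of $\phi_{\lambda,\epsilon}^-$ follows from the analysis of $\phi^+$ by the choice of $\underline t,\underline r,\underline\tau$.

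First I would establish a Poisson analogue of Lemma \ref{lm:r-t-property}: namely that $\overline t(\lambda,\epsilon)\in[\lambda/2,\lambda]$, that $\lambda\mapsto\lambda+\overline r(\lambda,\epsilon)$ is strictly increasing with range in $[0,\infty)$, and most importantly the key lower bound $P_{X\sim\mathrm{Poisson}(\lambda+\overline r(\lambda,\epsilon))}(X\le\overline t(\lambda,\epsilon)) > 10(\epsilon+\sqrt{\log(24/\alpha)/(2n)})$ for all $\lambda\ge 0$, plus the corresponding bound $P_{X\sim\mathrm{Poisson}(\lambda)}(X\le\overline t(\lambda,\epsilon))\ge 6\epsilon+20\log(24/\alpha)/n$ in the intermediate range $\lambda\in(1,\,1\text{-type threshold}]$. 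The proof of this lower bound splits by whether $\log(1/A)\ge$ (a constant times $\lambda$) or not, with $A=\epsilon+\sqrt{\log(24/\alpha)/(2n)}$; in the first case one keeps only the $X=0$ term of the Poisson pmf, and in the second (bulk) case one uses a change-of-measure argument identical to \eqref{Lower bound for CDF of Binomial distribution}, combined with the Berry--Esseen theorem to lower bound $P_\lambda(mt-2\sqrt{\cdots}<X\le\cdots)$ by a constant. This is the main obstacle: getting the exponent $-c\,(\lambda-\overline t)^2/\lambda$ with the right constant $c$ so that the choice $\overline t = \lambda - \frac{1}{8}\sqrt{\lambda\log(1/A)}$ makes the whole thing $>10A$. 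The Poisson case is actually slightly cleaner than the binomial one since there is no $1-p$ factor, but one must be careful about the regime $\lambda\in[1,\mathrm{const})$ where the Gaussian approximation is weak — there I would fall back on explicitly bounding a few low-order Poisson probabilities, as in Case 3 Scenario 2 of the binomial proof.

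With that lemma in hand, the Type-1 control follows by the DKW inequality: on the event that $\sup_x|F_n(x)-P_{\epsilon_{\max},\lambda,Q}(X\le x)|\le\sqrt{\log(24/\alpha)/(2n)}$, one reduces $\phi^+_{\lambda,\epsilon}=0$ for all $\epsilon$ to the deterministic inequality $P_\lambda(X\le\overline t(\lambda,\epsilon)) > \frac{12}{10(1-\epsilon_{\max})}P_{\lambda+\overline r(\lambda,\epsilon)}(X\le\overline t(\lambda,\epsilon))$, then uses the pmf-ratio monotonicity $\bigl(\tfrac{\lambda}{\lambda+\overline r}\bigr)^k$ decreasing in $k$ together with $\log(1+x)\ge x/(1+x)$ to reduce to $\overline r(\lambda,\epsilon)\ge \lambda/(4(\lambda-\overline t(\lambda,\epsilon)))$, which holds by the definition \eqref{def:r-Poisson-upper}. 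For small $\lambda$ one instead invokes the Poisson analogue of Lemma \ref{lm:bernoulli-estimation}. The Type-2 control is analogous to \eqref{ineq:type-1-regime1}: first use stochastic domination of $\mathrm{Poisson}(\lambda+r)$ over $\mathrm{Poisson}(\lambda+\overline r(\lambda,\epsilon))$ to reduce to $r=\overline r(\lambda,\epsilon)$, then apply Hoeffding/Bernstein for Poisson together with the key lower bound $\frac1{10}P_{\lambda+\overline r}(X\le\overline t)\ge\epsilon+\sqrt{\log(24/\alpha)/(2n)}$ from the new lemma; the boundary regime ($\lambda$ slightly above the threshold, where $\overline t = $ the integer cap) uses the $1-e^{-\lambda}\ge 6(1-e^{-(\lambda+\overline r)})$-type elementary inequality, proven by the same derivative argument as the binomial $f(p)$. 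I do not anticipate difficulty in the bookkeeping here; the real work is the sharp Poisson tail lower bound in the Lemma.
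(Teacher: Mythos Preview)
Your proposal is correct and matches the paper's approach almost exactly: the paper also reduces to a Poisson analogue of Lemma~\ref{lm:r-t-property} (stated as Lemma~\ref{lm:r-t-property-Poisson}), proves the key tail lower bound by the same two-case split ($\lambda\le \log(1/A)/16$ via the $X=0$ atom, $\lambda>\log(1/A)/16$ via change-of-measure plus a Poisson Berry--Esseen bound), and then runs the DKW/ratio argument for Type-1 and the stochastic-domination/Hoeffding argument for Type-2 verbatim from the binomial proof.  One small caveat: your appeal to ``symmetry'' for the large-$\lambda$ case of $\phi^-$ is not available here, since unlike the binomial $p\leftrightarrow 1-p$ map there is no transformation sending $\mathrm{Poisson}(\lambda)$ upper tails to lower tails; the paper accordingly records a separate part~(iii) of the lemma for the upper-tail bound $P_{\lambda-\underline r}(X\ge\underline t)>10A$ (with its own direct-pmf case for $\lambda\in[1,\log(1/A)/16]$), but the argument is, as you anticipated, entirely analogous.
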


To show the guarantees of $\wh{\CI}$, we first analyze the properties of two related confidence intervals. Let us define $\psi_{\lambda, \epsilon}^+$ and $\psi_{\lambda ,\epsilon}^-$ by
\begin{equation}\label{def: psi+ pois real}
    \psi_{\lambda ,\epsilon}^+ = \min_{\mu \in[0,\lambda]}\phi_{\mu,\epsilon}^+,
\end{equation}
and 
\begin{equation}\label{def: psi- pois real}
        \psi_{\lambda ,\epsilon}^- = \min_{\mu \in[\lambda, \infty)}\phi_{\mu,\epsilon}^-.
\end{equation}
With these functions, define
\begin{equation*}
\wt{\CI}=\left\{\lambda \in [0, \infty): {\psi}_{\lambda,\epsilon}^+= {\psi}_{\lambda,\epsilon}^-=0\text{ for all }\epsilon\in [0, \epsilon_{\max}]\right\}, \label{eq:ci-wt-pois}
\end{equation*}
and
\begin{equation*}
\widebar{\CI}=\left\{\lambda \in [0, \infty): {\psi}_{\lambda,\epsilon}^+= {\psi}_{\lambda,\epsilon}^-=0\text{ for all }\epsilon\in \cE\right\}. \label{eq:ci-semidis-pois}
\end{equation*}

The coverage and length guarantees for $\wt{\CI}$ and $\widebar{\CI}$ are given as follows:
\begin{equation} \label{eq:CI-tilde-poi-guarantee}
		\begin{split}
			&\inf_{ \epsilon \in [0, \epsilon_{\max}], \lambda, Q} P_{\epsilon, \lambda, Q}\left( \lambda \in\widetilde{\CI} \right) \geq 1-\alpha/6 \quad \text{and}\quad  \inf_{\epsilon \in [0, \epsilon_{\max}], \lambda, Q } P_{\epsilon, 
            \lambda, Q}\left( |\widetilde{\CI}| \leq C \ell(n,\epsilon,\lambda) \right) \geq 1-\alpha/2,
		\end{split}
	\end{equation}
	\begin{equation}\label{eq:CI-bar-poi-guarantee}
		\begin{split}
			&\inf_{ \epsilon \in [0, \epsilon_{\max}], \lambda, Q} P_{\epsilon, \lambda, Q}\left( \lambda \in\widebar{\CI} \right) \geq 1-\alpha/6 \quad \text{and}\quad \inf_{\epsilon \in [0, \epsilon_{\max}], \lambda, Q } P_{\epsilon, 
            \lambda, Q}\left( |\widebar{\CI}| \leq C \ell(n,\epsilon,\lambda) \right) \geq 1-\alpha/2.
		\end{split}
	\end{equation}
where $\ell(n,\epsilon,\lambda)$ is defined in \eqref{eq:l-poi} and $C > 0$ is some constant depending on $\alpha$ only. The proofs for \eqref{eq:CI-tilde-poi-guarantee} and \eqref{eq:CI-bar-poi-guarantee} are similar to Part I and Part II in the proof of Theorem \ref{thm:upper}, we defer them to Appendix \ref{app:poi-additional-proof}. For the rest of the proof, we show the coverage and length guarantees for $\wh{\CI}$. 

\vskip.2cm
{\noindent \bf (Coverage Guarantee of $\wh{\CI}$)} Before establishing the coverage guarantee of $\wh{\CI}$, we define a new confidence interval. Let
\begin{equation*}
\wh{\psi}_{\lambda,\epsilon, \lambda_{\max}}^- = 
\begin{cases}
\phi_{\lambda,\epsilon}^- \wedge \min_{\mu \in [1,\lambda_{\max}]\cap\mathbb{N}} \phi_{\mu,\epsilon}^- & \lambda \in [0,1)  \\
\min_{\mu \in [\lfloor \lambda \rfloor,\lambda_{\max}]\cap\mathbb{N}} \phi_{\mu,\epsilon}^- & \lambda \in [1, \lambda_{\max}], \\
\end{cases}
\end{equation*}
where $\lambda_{\max} \in \mathbb{N}$ is a constant. With this, we define
\begin{equation*}\label{eq:CI-lambda-max}
\widehat{\CI}_{\lambda_{\max}}
=\left\{\lambda \in [0,\lambda_{\max}]: 
\wh{\psi}_{\lambda,\epsilon}^+=\wh{\psi}_{\lambda,\epsilon,\lambda_{\max}}^-=0 
\text{ for all } \epsilon \in \mathcal{E}\right\},
\end{equation*}
where $\wh{\psi}_{\lambda,\epsilon}^+$ is defined by \eqref{eq:psi+Poisson}. Then, by the definitions of $\wh{\psi}_{\lambda,\epsilon}^-$ in \eqref{eq:psi-Poisson} and $\wh{\CI}$ in \eqref{eq:ci-dis-Poisson}, it is easy to check that $\wh{\CI}_{\lambda_{\max}} \subseteq \wh{\CI}$ as long as $\lambda_{\max} \leq \wh{\lambda}_{\max}$. Therefore, if we can show that $\wh{\CI}_{\lambda_{\max}}$ achieves the coverage guarantee for some $\lambda_{\max}$ satisfying $\lambda_{\max} \leq \wh{\lambda}_{\max}$ with high probability, then $\wh{\CI}$ also achieves the coverage guarantee. 

We now show that $\wh{\CI}_{\lambda_{\max}}$ achieves the coverage guarantee when $\lambda_{\max} = \lceil \lambda \rceil \vee 1$. For any $\epsilon \in [0, \epsilon_{\max}]$ and $\lambda \in [0, \infty)$, we have
\begin{equation} \label{ineq:hatCI-coverage-pois}
\sup_QP_{\epsilon,\lambda,Q}\left(\lambda \notin \wh{\mathrm{CI}}_{\lceil\lambda \rceil \vee 1} \right) \leq \sup_QP_{\epsilon,\lambda,Q}\left(\sup_{\epsilon'\in \cE} \wh{\psi}^{+}_{\lambda, \epsilon'}=1 \right)+\sup_QP_{\epsilon,\lambda,Q}\left(\sup_{\epsilon'\in \cE} \wh{\psi}^{-}_{\lambda, \epsilon', \lceil\lambda \rceil \vee 1}=1\right).
\end{equation}
Next, we will bound the two terms at the end of the above equation. For any $\epsilon \in [0, \epsilon_{\max}]$ and $\lambda \in [0, \infty)$, by the same analysis as that used for the coverage guarantee of $\wh{\CI}$ in the proof of Theorem \ref{thm:upper} Part III, we have 
\begin{equation*}
		\begin{split}
    \sup_{Q}P_{\epsilon,\lambda,Q}\left(\sup_{\epsilon'\in \cE} \wh{\psi}^{+}_{\lambda, \epsilon'}=1 \right) \leq \alpha/12\quad \textnormal{ and }\quad \sup_{Q}P_{\epsilon,\lambda,Q}\left(\sup_{\epsilon'\in \cE} \wh{\psi}^{-}_{\lambda, \epsilon', \lceil \lambda\rceil \vee 1}=1 \right) \leq \alpha/12,
		\end{split}
	\end{equation*} by leveraging the definition of $\wh{\psi}^{+}_{\lambda, \epsilon'}$ and $\wh{\psi}^{-}_{\lambda, \epsilon',\lceil\lambda \rceil \vee 1}$, the stochastic dominance property of the Poisson distribution and the simultaneous Type-1 error control of $\phi_{\mu,\epsilon'}^+$/$\phi^-_{\lambda, \epsilon}$ shown in Theorem \ref{thm:test-up-Poisson}. By plugging them into \eqref{ineq:hatCI-coverage-pois}, we have shown for any $\epsilon \in [0, \epsilon_{\max}]$ and $\lambda \in [0,\infty)$, 
    \begin{equation} \label{Ineq: coverage guarantee of CI lambda ceiling}
        \sup_Q P_{\epsilon,\lambda,Q}\left(\lambda \notin \wh{\mathrm{CI}}_{\lceil \lambda \rceil \vee 1}\right) \leq \alpha/6.
    \end{equation} 
We now present a lemma which states that $\lceil \lambda \rceil \vee 1 \leq \wh{\lambda}_{\max}$ with high probability. Its proof is provided in the subsequent subsections.

\begin{Lemma} \label{Lem: poisson median}
For any $\alpha \in (0,1)$, $\epsilon \in [0, 1]$, and $n \geq 1$ satisfying $\frac{\epsilon}{2} + \sqrt{\frac{\log(12 / \alpha)}{2n}} \leq \frac{1}{4}$, we have
\begin{equation*}
    \inf_{\lambda \in [0, \infty),Q}P_{\epsilon,\lambda,Q}(\lceil\lambda\rceil\vee 1 \leq \wh{\lambda}_{\max} ) \geq 1 - \alpha/6,
\end{equation*}
where $\wh \lambda_{\max}$ is defined by (\ref{eq:lambda-max}).
\end{Lemma}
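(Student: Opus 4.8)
The plan is to rewrite the event $\{\lceil\lambda\rceil\vee 1\le\wh\lambda_{\max}\}$ as a statement about how many of the $X_i$ fall below a threshold, and then control that count through the empirical CDF by the DKW inequality (Lemma~\ref{lm:DKW}). First I would dispose of the trivial range: since every $X_i\ge 0$ we always have $\wh\lambda_{\max}=X_{(\lceil 3n/4\rceil)}+1\ge 1$, so the claim is automatic whenever $\lambda\le 1$, in which case $\lceil\lambda\rceil\vee 1=1$. Assume therefore $\lambda>1$, so that $\lceil\lambda\rceil\vee 1=\lceil\lambda\rceil\ge 2$ and $m:=\lceil\lambda\rceil-1\ge 1$. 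Using the elementary order-statistic identity that $X_{(j)}\ge m$ holds if and only if fewer than $j$ of the observations are strictly smaller than $m$, we get
\[
\{\wh\lambda_{\max}\ge\lceil\lambda\rceil\}=\big\{X_{(\lceil 3n/4\rceil)}\ge m\big\}=\Big\{\frac1n\#\{i:X_i<m\}\le\frac{\lceil 3n/4\rceil-1}{n}\Big\}.
\]
Because $\lceil 3n/4\rceil/n\ge 3/4$, it is enough to prove that $\frac1n\#\{i:X_i<m\}<3/4$ with probability at least $1-\alpha/6$; note that $\frac1n\#\{i:X_i<m\}$ is exactly the left-hand limit $F_n(m^-)$ of the empirical CDF (working with the strict-inequality count is needed because $Q$ is arbitrary, so the data need not be integer valued).

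Next I would bound the population counterpart $P_{\epsilon,\lambda,Q}(X<m)$. Splitting off the contamination gives $P_{\epsilon,\lambda,Q}(X<m)\le(1-\epsilon)\,P(\mathrm{Poisson}(\lambda)<m)+\epsilon$, and since the Poisson law is integer valued, $P(\mathrm{Poisson}(\lambda)<m)=P(\mathrm{Poisson}(\lambda)\le\lceil\lambda\rceil-2)$. The key input is the sharp location of the Poisson median: for every $\lambda>1$,
\[
P\big(\mathrm{Poisson}(\lambda)\le\lceil\lambda\rceil-2\big)<\frac12 .
\]
A clean way to see this is Poisson--Gamma duality: $\lceil\lambda\rceil-2\le\lfloor\lambda\rfloor-1$, and $P(\mathrm{Poisson}(\lambda)\le\lfloor\lambda\rfloor-1)=P(\mathrm{Gamma}(\lfloor\lambda\rfloor,1)\ge\lambda)\le P(\mathrm{Gamma}(\lfloor\lambda\rfloor,1)\ge\lfloor\lambda\rfloor)<\frac12$, where the last strict inequality holds because a $\mathrm{Gamma}(\alpha,1)$ law with integer $\alpha\ge1$ has mean strictly above its median. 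Hence $P_{\epsilon,\lambda,Q}(X<m)<\frac12+\frac{\epsilon}{2}$.

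Finally I would invoke DKW with deviation level $\delta=\sqrt{\log(12/\alpha)/(2n)}$: on an event of probability at least $1-\alpha/6$ one has $\sup_t|F_n(t)-P_{\epsilon,\lambda,Q}(X\le t)|\le\delta$, hence also $F_n(m^-)\le P_{\epsilon,\lambda,Q}(X<m)+\delta$. On that event,
\[
\frac1n\#\{i:X_i<m\}=F_n(m^-)<\frac12+\frac{\epsilon}{2}+\sqrt{\frac{\log(12/\alpha)}{2n}}\le\frac34 ,
\]
the last step being the hypothesis $\frac{\epsilon}{2}+\sqrt{\log(12/\alpha)/(2n)}\le\frac14$, which is exactly what was needed. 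This is a short, essentially routine lemma whose only content is assembling DKW with a population bound; the one point that requires a little care is keeping the Poisson-median estimate strict (rather than merely $\le\frac12$), so as to rule out the boundary case $\#\{i:X_i<m\}=3n/4$ that can occur when $4\mid n$. The sharp location of the Poisson median is the only ingredient external to the paper, and it is classical.
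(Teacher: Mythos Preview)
Your argument is correct and follows essentially the same route as the paper: DKW on the empirical CDF plus the location of the Poisson median. The paper phrases this through generalized inverse CDFs and cites Choi (1994) for the inequality $\lambda\le F_\lambda^{-}(1/2)+1$, whereas you work directly with the count $\#\{i:X_i<m\}$ and supply the median bound yourself via Poisson--Gamma duality; your presentation is more direct and self-contained, and the careful use of the strict inequality in the median bound to handle the $4\mid n$ boundary case is a nice touch.

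One small slip: disposing of the case $\lambda\le 1$ by asserting ``every $X_i\ge 0$'' is not justified, since the contamination $Q$ is unrestricted and may put mass on negative reals. The fix is trivial---just set $m=0$ there and run your main argument unchanged, noting that $P(\mathrm{Poisson}(\lambda)<0)=0<\tfrac12$, so on the DKW event $F_n(0^-)<\tfrac12+\tfrac{\epsilon}{2}+\delta\le\tfrac34$ exactly as before.
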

We note that the condition in Lemma \ref{Lem: poisson median} is satisfied since $\frac{\log(2/\alpha)}{n} + \epsilon_{\max}$ is less than a sufficiently small constant. We now proceed to establish the coverage guarantee for $\wh{\CI}$. For any $\epsilon \in [0, \epsilon_{\max}]$ and $\lambda \in [0, \infty)$, we have
\begin{equation*}
    \begin{split}
       &\sup_QP_{\epsilon,\lambda,Q}\left(\lambda \notin \wh{\mathrm{CI}} \right)  \leq \sup_QP_{\epsilon,\lambda,Q}\left(\lambda \notin \wh{\mathrm{CI}}, \lceil\lambda \rceil \vee 1  \leq \wh{\lambda}_{\max}\right) + \sup_QP_{\epsilon,\lambda,Q}\left(\lceil\lambda \rceil \vee 1  > \wh{\lambda}_{\max}\right) \\
        & \overset{\textnormal{Lemma }\ref{Lem: poisson median}}\leq \sup_QP_{\epsilon,\lambda,Q}\left(\lambda \notin \wh{\mathrm{CI}}, \lceil\lambda \rceil \vee 1  \leq \wh{\lambda}_{\max}\right) + \alpha/6 \\
        & \overset{(a)}\leq \sup_QP_{\epsilon,\lambda,Q}\left(\lambda \notin \wh{\mathrm{CI}}_{\lceil\lambda \rceil \vee 1}, \lceil\lambda \rceil \vee 1  \leq \wh{\lambda}_{\max}\right) + \alpha/6  \leq \sup_QP_{\epsilon,\lambda,Q}\left(\lambda \notin \wh{\mathrm{CI}}_{\lceil\lambda \rceil \vee 1} \right) + \alpha/6  \overset{\eqref{Ineq: coverage guarantee of CI lambda ceiling}} \leq \alpha / 3,
    \end{split}
\end{equation*}
where in (a) we use the fact that $\wh{\mathrm{CI}}_{\lceil\lambda \rceil \vee 1} \subseteq \wh{\mathrm{CI}}$ as long as $\lceil\lambda \rceil \vee 1 \leq \wh{\lambda}_{\max}$. This finishes the proof for the coverage guarantee.

\vskip.2cm
{\noindent \bf (Length Guarantee of $\wh{\CI}$)} Before establishing the coverage guarantee of $\wh{\CI}$, we define a new confidence interval. Let
\begin{equation*}
\wh{\psi}_{\lambda,\epsilon,\infty}^- = 
\begin{cases}
\phi_{\lambda,\epsilon}^- \wedge \min_{\mu \in \mathbb{N}} \phi_{\mu,\epsilon}^- & \lambda \in [0,1) \\
\min_{\mu \in [\lfloor \lambda \rfloor,\infty)\cap\mathbb{N}} \phi_{\mu,\epsilon}^- &  \lambda \in [1, \infty), \\
\end{cases}
\end{equation*}
and
\begin{equation*}\label{eq:CI-lambda-infty}
\widehat{\CI}_{\infty}
= \left\{\lambda \in [0,\infty): 
\wh{\psi}_{\lambda,\epsilon}^+ = \wh{\psi}_{\lambda,\epsilon,\infty}^- = 0 
\text{ for all } \epsilon \in \mathcal{E} \right\}.
\end{equation*} Then, by the definitions of $\wh{\psi}_{\lambda,\epsilon}^-$ and $\wh{\CI}$, it is easy to check that $\wh{\CI} \subseteq \wh{\CI}_{\infty}$. Therefore, if we can show that $\wh{\CI}_{\infty}$ achieves the length guarantee, then $\wh{\CI}$ also achieves the length guarantee. 

We define 
\begin{equation*} 
		\begin{split}
		&\wh{\lambda}_{\infty,\rm{left}} = \inf\left\{\lambda \in [0,\infty): 
\wh{\psi}_{\lambda,\epsilon}^+ = 0 
\text{ for all } \epsilon \in \mathcal{E} \right\}, \\
& \wh{\lambda}_{\infty, \rm{right}}= \sup \left\{\lambda \in [0,\infty): 
\wh{\psi}_{\lambda,\epsilon,\infty}^- = 0 
\text{ for all } \epsilon \in \mathcal{E} \right\},
		\end{split}
	\end{equation*}
	\begin{equation*}
		\begin{split}
			\widebar{\lambda}_{\rm{left}} = \inf\{\lambda \in [0,\infty):  {\psi}_{\lambda,\epsilon}^+ = 0 \text{ for all }\epsilon\in\mathcal{E} \} \quad \text{and} \quad \widebar{\lambda}_{\rm{right}} = \sup\{\lambda \in [0,\infty):  {\psi}_{\lambda,\epsilon}^- = 0 \text{ for all }\epsilon\in\mathcal{E} \}.
		\end{split}
	\end{equation*}
	Then the closure of $\wh{\CI}_{\infty}$ and $\widebar{\CI}$ can be concisely written as $[\wh{\lambda}_{\infty, \rm{left}},\wh{\lambda}_{\infty, \rm{right}}]$ and $[\widebar{\lambda}_{\rm{left}}, \widebar{\lambda}_{\rm{right}}]$.
	The following lemma shows a few connections of $\wh{\lambda}_{\infty, \rm{left}}$/$\wh{\lambda}_{\infty,\rm{right}}$ with $\widebar{\lambda}_{\rm{left}}$/$\widebar{\lambda}_{\rm{right}}$ and its proof is similar to that of Lemma \ref{lm:connection-hatp-barp} and we omit the proof here. 

\begin{Lemma}\label{lm:connection-hatp-barp-pois}
\begin{itemize}
	\item (i) If $\widebar{\lambda}_{\rm{right}} < 1$, then $\wh{\lambda}_{\infty,\rm{right}} = \widebar{\lambda}_{\rm{right}}$.
	\item (ii) We always have $[\wh{\lambda}_{\infty,\rm{left}}, \wh{\lambda}_{\infty,\rm{right}}] \subseteq [\widebar{\lambda}_{\rm{left}}- 1, \widebar{\lambda}_{\rm{right}} + 1]$. 
\end{itemize}
\end{Lemma}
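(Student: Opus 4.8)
The plan is to reproduce, in the Poisson notation, the two-step argument used for Lemma~\ref{lm:connection-hatp-barp}. Two structural facts drive it. First, $\wh{\psi}_{\lambda,\epsilon}^+$ and $\psi_{\lambda,\epsilon}^+=\min_{\mu\in[0,\lambda]}\phi^+_{\mu,\epsilon}$ are non-increasing in $\lambda$, while $\wh{\psi}_{\lambda,\epsilon,\infty}^-$ and $\psi_{\lambda,\epsilon}^-=\min_{\mu\in[\lambda,\infty)}\phi^-_{\mu,\epsilon}$ are non-decreasing in $\lambda$, so that $\widehat{\CI}_\infty$ and $\widebar{\CI}$ really are intervals whose endpoints are $\wh{\lambda}_{\infty,\rm{left}}/\wh{\lambda}_{\infty,\rm{right}}$ and $\widebar{\lambda}_{\rm{left}}/\widebar{\lambda}_{\rm{right}}$; second, passing from the monotonized tests to their discretizations only replaces a minimum over a continuum of levels by a minimum over $\bbN_0$ (resp.\ $\bbN$). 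I would also record two elementary facts that play the role of the $\epsilon$-independence of $\phi^+_{p,\epsilon}$ on $(1-\tfrac1m,1]$ in the binomial proof: from \eqref{def:t-tau-r-Poisson-lower}, $\phi^-_{\lambda,\epsilon}$ does not depend on $\epsilon$ for $\lambda\in[0,1)$, and $\phi^-_{\lambda,\epsilon}$ is non-decreasing in $\lambda$ on $[0,1)$ (the Poisson analogue of Lemma~\ref{lm:monotone-test-boundary}). If convenient, one may first prove the Poisson counterpart of the characterizations behind Lemma~\ref{lm:lambda-hat-charac-poi}, expressing $\widebar{\lambda}_{\rm{right}}$ and $\wh{\lambda}_{\infty,\rm{right}}$ as two-case formulas according to whether $\max_{\epsilon\in\mathcal{E}}\min_{\mu\ge1}\phi^-_{\mu,\epsilon}$, resp.\ $\max_{\epsilon\in\mathcal{E}}\min_{\mu\in\bbN}\phi^-_{\mu,\epsilon}$, equals $0$ or $1$.

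For part~(ii) I would establish $\wh{\lambda}_{\infty,\rm{left}}\ge\widebar{\lambda}_{\rm{left}}-1$ and $\wh{\lambda}_{\infty,\rm{right}}\le\widebar{\lambda}_{\rm{right}}+1$ by direct grid estimates. For the left inequality, take any $\lambda<\wh{\lambda}_{\max}$ with $\wh{\psi}^+_{\lambda,\epsilon}=0$ for all $\epsilon\in\mathcal{E}$; then for each $\epsilon$ there is an integer $\mu_\epsilon\le\lceil\lambda\rceil$ with $\phi^+_{\mu_\epsilon,\epsilon}=0$, hence $\psi^+_{\lceil\lambda\rceil,\epsilon}=\min_{\mu\in[0,\lceil\lambda\rceil]}\phi^+_{\mu,\epsilon}\le\phi^+_{\mu_\epsilon,\epsilon}=0$ for every $\epsilon\in\mathcal{E}$, so $\lceil\lambda\rceil\ge\widebar{\lambda}_{\rm{left}}$ and thus $\lambda>\widebar{\lambda}_{\rm{left}}-1$; taking the infimum over such $\lambda$ gives the bound, and the degenerate case $\wh{\lambda}_{\infty,\rm{left}}=\wh{\lambda}_{\max}$ (which forces $\widehat{\CI}_\infty$ to be empty or $\{\wh{\lambda}_{\max}\}$) is reconciled with the claim exactly as the corresponding boundary case is handled in the proof of Lemma~\ref{lm:connection-hatp-barp}. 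The right inequality is symmetric: if $\wh{\psi}^-_{\lambda,\epsilon,\infty}=0$ for all $\epsilon\in\mathcal{E}$ with $\lambda\ge1$, then for each $\epsilon$ some integer $\mu_\epsilon\ge\lfloor\lambda\rfloor$ has $\phi^-_{\mu_\epsilon,\epsilon}=0$, so $\psi^-_{\lfloor\lambda\rfloor,\epsilon}=\min_{\mu\in[\lfloor\lambda\rfloor,\infty)}\phi^-_{\mu,\epsilon}=0$ for all $\epsilon\in\mathcal{E}$, giving $\lfloor\lambda\rfloor\le\widebar{\lambda}_{\rm{right}}$ and $\lambda<\widebar{\lambda}_{\rm{right}}+1$; for $\lambda\in[0,1)$ the wedge in $\wh{\psi}^-_{\lambda,\epsilon,\infty}$ produces in the same way a level $\nu_\epsilon\ge\lambda$ with $\phi^-_{\nu_\epsilon,\epsilon}=0$, so $\psi^-_{\lambda,\epsilon}=0$ and $\lambda\le\widebar{\lambda}_{\rm{right}}$. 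Taking the supremum yields $\wh{\lambda}_{\infty,\rm{right}}\le\widebar{\lambda}_{\rm{right}}+1$.

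For part~(i), suppose $\widebar{\lambda}_{\rm{right}}<1$. Since $\psi^-_{1,\epsilon}=\min_{\mu\in[1,\infty)}\phi^-_{\mu,\epsilon}$, the hypothesis forces $\max_{\epsilon\in\mathcal{E}}\min_{\mu\in[1,\infty)}\phi^-_{\mu,\epsilon}=1$; fix $\epsilon_0\in\mathcal{E}$ with $\phi^-_{\mu,\epsilon_0}=1$ for all $\mu\ge1$ (which also gives $\min_{\mu\in\bbN}\phi^-_{\mu,\epsilon_0}=1$). Because of this single obstructing level, both right endpoints collapse to the same lower-boundary quantity $\sup\{\lambda\in[0,1):\phi^-_{\lambda,\epsilon_0}=0\}$: for $\lambda\ge1$ one has $\wh{\psi}^-_{\lambda,\epsilon_0,\infty}=\min_{\mu\in\bbN,\,\mu\ge\lfloor\lambda\rfloor}\phi^-_{\mu,\epsilon_0}=1$ and $\psi^-_{\lambda,\epsilon_0}=\min_{\mu\in[\lambda,\infty)}\phi^-_{\mu,\epsilon_0}=1$, so neither $\widehat{\CI}_\infty$ nor $\widebar{\CI}$ meets $[1,\infty)$; and for $\lambda\in[0,1)$, using that $\phi^-_{\lambda,\epsilon}$ is $\epsilon$-free there, that $\min_{\mu\in\bbN}\phi^-_{\mu,\epsilon_0}=\min_{\mu\in[1,\infty)}\phi^-_{\mu,\epsilon_0}=1$, and the boundary monotonicity $\min_{\mu\in[\lambda,1)}\phi^-_{\mu,\epsilon}=\phi^-_{\lambda,\epsilon}$, one computes $\max_{\epsilon\in\mathcal{E}}\wh{\psi}^-_{\lambda,\epsilon,\infty}=\phi^-_{\lambda,\epsilon_0}=\max_{\epsilon\in\mathcal{E}}\psi^-_{\lambda,\epsilon}$. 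Hence $\wh{\lambda}_{\infty,\rm{right}}=\sup\{\lambda\in[0,1):\phi^-_{\lambda,\epsilon_0}=0\}=\widebar{\lambda}_{\rm{right}}$ (alternatively, $\widebar{\lambda}_{\rm{right}}<1$ places both two-case characterizations into their common ``lower-boundary'' branch). The step I expect to require the most care is purely combinatorial: keeping straight which minimum ranges over which set — continuum versus integer grid, and $[\lambda,\infty)$ versus $[\lfloor\lambda\rfloor,\infty)$ versus $[1,\infty)$ — near the boundary region $[0,1)$, and, for the left endpoint, reconciling the degenerate cases created by the data-dependent cutoff $\wh{\lambda}_{\max}$ with the stated inclusion, exactly as in the binomial proof.
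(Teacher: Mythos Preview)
Your proposal is correct and follows exactly the approach the paper intends: the paper itself simply writes that the proof ``is similar to that of Lemma~\ref{lm:connection-hatp-barp} and we omit the proof here,'' and what you have outlined is precisely that adaptation---using the $\epsilon$-independence and boundary monotonicity of $\phi^-_{\lambda,\epsilon}$ on $[0,1)$ in place of the corresponding facts for $\phi^+_{p,\epsilon}$ on $(1-1/m,1]$, and replacing the grid $S_m$ by $\bbN_0$/$\bbN$. Your treatment is in fact more detailed than anything the paper provides for this lemma.
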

Following the same analysis as in Part III of Theorem~\ref{thm:upper} for the length guarantee of $\wh{\CI}$, we can get
\begin{equation*} \label{ineq:hatCI-length-pois}
	\sup_Q P_{\epsilon,\lambda,Q}\left(|\widehat{\CI}_{\infty}|\geq C^* \ell(n, \epsilon, \lambda) \right) \leq \alpha
\end{equation*} for some $C^* >0$, we omit the details for simplicity.

\subsubsection{Proof of Theorem \ref{thm:test-up-Poisson}}
The proof of Theorem~\ref{thm:test-up-Poisson} is similar to that of Theorem~\ref{thm:test-up}. Therefore, we omit some of the details here. A key lemma we will leverage is the following and its proof is provided in the subsequent subsections.

\begin{Lemma} \label{lm:r-t-property-Poisson}
	Suppose $ \frac{\log(2/\alpha)}{n} + \epsilon_{\max}$ is less than a sufficiently small universal constant. Then for the choices of $\overline{t}(\lambda, \epsilon)$ and $\overline{r}(\lambda, \epsilon)$ defined in (\ref{def:t-Poisson-upper}) and (\ref{def:r-Poisson-upper}), and $\underline{t}(\lambda, \epsilon)$ and $\underline{r}(\lambda, \epsilon)$ defined in (\ref{def:t-Poisson-under}) and (\ref{def:r-Poisson-under}), we have
	\begin{itemize}
		\item (i) $\lambda + \overline{r}(\lambda,\epsilon) \geq 0$ and $\lambda - \underline{r}(\lambda,\epsilon) \geq 0$ for all $\lambda \geq 0 $ and all $\epsilon \in [0, \epsilon_{\max}]$, moreover, both functions $\lambda \mapsto \lambda + \overline{r}(\lambda,\epsilon)$ and $\lambda \mapsto \lambda - \underline{r}(\lambda,\epsilon)$ are strictly increasing in $\lambda$ for $\lambda \in [0, \infty)$ given any fixed $\epsilon \in [0, \epsilon_{\max}]$;
		\item (ii) For all $\epsilon \in [0, \epsilon_{\max}]$ and all $\lambda \in [0, \infty)$, \begin{equation*} \label{ineq:r-t-property-1-Poisson}
		P_{\lambda + \overline{r}(\lambda, \epsilon)}\left( X \leq \overline{t}(\lambda, \epsilon) \right) > 10 \left( \epsilon + \sqrt{\frac{\log(24/\alpha)}{2n}} \right);
	\end{equation*}
        \item (iii) For all $\epsilon \in [0, \epsilon_{\max}]$ and all $\lambda \in [1, \infty)$, \begin{equation*} \label{ineq:r-t-property-2-Poisson}
		P_{\lambda - \underline{r}(\lambda, \epsilon)}\left( X \geq   \underline{t}(\lambda, \epsilon) \right) > 10 \left( \epsilon + \sqrt{\frac{\log(24/\alpha)}{2n}} \right).
	\end{equation*} 
	In addition, for all $\epsilon \in [0, \epsilon_{\max}]$ and all $\lambda \in \left[4 \left( \frac{10 \log(24/\alpha) }{n} + 3 \epsilon  \right), 1 \right)$, 
	\begin{equation*} \label{ineq:r-t-property-3-Poisson}
		P_{\lambda}\left( X \geq   \underline{t}(\lambda, \epsilon) \right) \geq 6 \epsilon + \frac{20\log(24/\alpha)}{n}.
	\end{equation*}
	\end{itemize}
\end{Lemma}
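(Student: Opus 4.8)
The plan is to establish Lemma \ref{lm:r-t-property-Poisson} by carefully mimicking the corresponding binomial result Lemma \ref{lm:r-t-property}, exploiting the fact that $\text{Poisson}(\lambda)$ is the $m\to\infty$ limit of $\text{Binomial}(m,\lambda/m)$, so the relevant estimates become cleaner (no boundary terms $1-p$, no $m$-dependence in the tail). Throughout, write $A = \epsilon + \sqrt{\log(24/\alpha)/(2n)}$, which by hypothesis is bounded by a sufficiently small constant, and abbreviate $\overline t(\lambda,\epsilon),\overline r(\lambda,\epsilon)$ as $t,r$ where convenient. The overall structure will be: (i) prove monotonicity and nonnegativity of the two maps; (ii) prove the lower tail bound for the $+$ test; (iii) prove the upper tail bound for the $-$ test in the two regimes $\lambda\geq 1$ and $\lambda\in[4(10\log(24/\alpha)/n + 3\epsilon),1)$.

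For part (i), I would split into the two cases $\log(1/A)\geq 4$ and $\log(1/A)< 4$, mirroring the binomial split at $4m$. In the first case $t = \lambda/2$ (the $\min$ in \eqref{def:t-Poisson-upper} is attained by $\lambda/2$ once $\log(1/A)$ is bounded), so $\overline r(\lambda,\epsilon) = \lambda/(4(\lambda-\lambda/2)) = 1/2$ for $\lambda>0$ and $1/2$ at $\lambda = 0$, hence $\lambda + r = \lambda + 1/2$, clearly strictly increasing and nonnegative. In the second case, for $\lambda$ not too small the $\min$ is attained by $\tfrac18\sqrt{\lambda\log(1/A)}$, giving $\lambda - t = \tfrac18\sqrt{\lambda\log(1/A)}$ and $r = \tfrac{\lambda}{4\cdot\frac18\sqrt{\lambda\log(1/A)}} = 2\sqrt{\lambda/\log(1/A)}$, so $\lambda + r = \lambda + 2\sqrt{\lambda/\log(1/A)}$ and $\partial_\lambda(\lambda+r) = 1 + \sqrt{1/(\lambda\log(1/A))} > 0$; for $\lambda$ small the $\min$ is $\lambda/2$ and we are back to $\lambda + 1/2$. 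Continuity at the switch point (analogous to $p_1$ in Lemma \ref{lm:r-t-property}) makes the glued function increasing on $[0,\infty)$. The $\underline r$ statement follows symmetrically using \eqref{def:r-Poisson-under}: on $\lambda\in[0,1)$, $\lambda - \underline r(\lambda,\epsilon) = \lambda - (1-1/(6e))\lambda = \lambda/(6e)\geq 0$ and increasing; on $\lambda\geq 1$, $\lambda - \underline r = \lambda - \tfrac{\lambda}{4(\underline t - \lambda)}$ with $\underline t - \lambda = \min\{\lambda/2, \tfrac18\sqrt{\lambda\log(1/A)}\}$, handled the same way, and nonnegativity needs $\underline r(\lambda,\epsilon)\leq\lambda$, i.e. $\underline t - \lambda \geq 1/4$, which holds since $\underline t - \lambda = \lambda/2\geq 1/2$ in the relevant small-$A$ subcase and $\tfrac18\sqrt{\lambda\log(1/A)}\geq\tfrac18\sqrt{\log(1/A)}$ is large otherwise — the one genuinely delicate point is checking $\underline r(\lambda,\epsilon)\le\lambda$ near $\lambda = 1$, where I would just verify the inequality numerically using that $A$ is small.

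For part (ii) — the key tail estimate $P_{\lambda+r}(X\leq t) > 10A$ — I follow the Case-2 argument of the proof of Lemma \ref{lm:r-t-property} but in the Poisson world: write $P_{\lambda+r}(X\leq t) = \sum_{k\leq t}e^{-(\lambda+r)}(\lambda+r)^k/k!$, compare term-by-term against $e^{-t}t^k/k!$ over a window $t - 2\sqrt t < k\leq t$, factor out $\exp(-D(\text{Poisson}(t)\|\text{Poisson}(\lambda+r)))$-type quantities, use $D(\text{Poisson}(a)\|\text{Poisson}(b)) = b - a + a\log(a/b) \leq (a-b)^2/a$ for the exponent, and lower bound the window probability $P_t(t - 2\sqrt t < X\leq t)$ by a constant via a Poisson Berry–Esseen / normal-approximation estimate (using that $t\geq \lambda/2$ is bounded below in the nontrivial regime). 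This reduces the claim to $\exp(-c\,(\lambda - t)^2/\lambda) > 10A \cdot \text{const}$, i.e. $(\lambda-t)^2/\lambda \lesssim \log(1/A)$, which is exactly guaranteed by the choice $\lambda - t \leq \tfrac18\sqrt{\lambda\log(1/A)}$; the small-$\lambda$ case $\lambda\leq$ const and the $\lambda = 0$ case ($t = 0$, $r=1/2$, so $P_{1/2}(X=0) = e^{-1/2}>10A$) are handled directly. Part (iii) is the mirror image: for $\lambda\geq 1$ one runs the same window argument on the upper tail $P_{\lambda-\underline r}(X\geq\underline t)$; for $\lambda\in[4(10\log(24/\alpha)/n+3\epsilon),1)$ one has $\underline t = 1$, so $P_\lambda(X\geq 1) = 1 - e^{-\lambda} \geq \lambda/2$ (using $1-e^{-x}\geq x/2$ for $x\in[0,1]$), and I need $\lambda/2 \geq 6\epsilon + 20\log(24/\alpha)/n$, which follows from $\lambda\geq 4(10\log(24/\alpha)/n + 3\epsilon) = 40\log(24/\alpha)/n + 12\epsilon$.

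The main obstacle I anticipate is the window-probability lower bound $P_t(t - 2\sqrt t < X \leq t)\geq c$ for Poisson: unlike the binomial case where Lemma \ref{Lem: Berry esseen} is quoted with an explicit constant, here I need a Poisson version of Berry–Esseen (the paper's Lemma referenced as \ref{Lem: Berry esseen} presumably covers sums of i.i.d. Bernoullis; a $\text{Poisson}(t)$ variable is a sum of $\lceil t\rceil$ i.i.d. $\text{Poisson}(t/\lceil t\rceil)$'s, so the same lemma applies with third-moment bookkeeping), together with care that $t$ is bounded below by a large enough constant so that the $O(1/\sqrt t)$ error does not swamp the $\Phi(0) - \Phi(-2)$ gap — this forces the $\log(1/A) < 4$ / $\log(1/A)\geq 4$ split and a further split on whether $\lambda$ (equivalently $t$) exceeds a large universal constant, exactly as Case 3 of Lemma \ref{lm:r-t-property} splits at $m\geq 40$. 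Everything else is bookkeeping with $\log(1+x)\leq x$, $\log(1+x)\geq x/(1+x)$, and the Poisson Chernoff bound (Lemma \ref{Lem: Chernoff bound for Poisson}).
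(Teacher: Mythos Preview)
Your proposal follows essentially the paper's approach, and most of the pieces are right, but there is one genuine oversight and a couple of unnecessary complications.

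First, the unnecessary complications. For part (i), the paper does \emph{not} split on $\log(1/A)\lessgtr 4$: it simply observes that $\lambda+\overline r(\lambda,\epsilon)=\lambda+\max\{1/2,\,2\sqrt{\lambda/\log(1/A)}\}$, which is trivially increasing. For $\lambda-\underline r$ the paper argues piecewise in $\lambda$ (on $[0,1)$, $[1,\log(1/A)/16]$, $(\log(1/A)/16,\infty)$), noting a harmless jump-up at $\lambda=1$; nonnegativity then follows from monotonicity and the value at $\lambda=0$, so no ``numerical verification near $\lambda=1$'' is needed. Likewise in part (ii) there is no ``$m\geq 40$''-type split: the single threshold $\lambda\lessgtr\log(1/A)/16$ suffices, because above the threshold one has $t\geq\log(1/A)/32\geq 9$ automatically, which is enough for the Poisson Berry--Esseen bound (the paper records this as a separate lemma, obtained exactly as you suggest by writing $\mathrm{Poisson}(t)$ as a sum of i.i.d.\ Poissons and letting the number of summands tend to infinity).

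The genuine gap is in part (iii) for the intermediate regime $\lambda\in[1,\log(1/A)/16]$. Here $\underline t(\lambda,\epsilon)=3\lambda/2$ and $\underline r(\lambda,\epsilon)=1/2$, so $\underline t$ can be as small as $3/2$ and the Berry--Esseen error $O(1/\sqrt{\underline t})$ swamps the window probability---your ``mirror image'' window argument fails precisely where you expected no trouble. The paper handles this regime by a direct single-term bound:
\[
P_{\lambda-1/2}(X\geq 3\lambda/2)\ \geq\ P_{\lambda/2}(X=\lceil 3\lambda/2\rceil)\ =\ \frac{e^{-\lambda/2}(\lambda/2)^{\lceil 3\lambda/2\rceil}}{\lceil 3\lambda/2\rceil!}\ \geq\ \frac{e^{-\lambda/2}}{5^{\lceil 3\lambda/2\rceil}}\ \gtrsim\ e^{-c\lambda},
\]
and then uses $\lambda\leq\log(1/A)/16$ to conclude this exceeds $10A$. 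So the difficulty you anticipated (``$t$ must be large enough'') does arise, but on the \emph{lower}-tail side of part (iii) rather than in part (ii), and the remedy is a crude point-mass estimate rather than a further case split.
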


Next, we show the simultaneous Type-1 error control of $\phi_{\lambda, \epsilon}^+$ for $\lambda \in [0, \infty)$. We use a similar argument to that in the proof of Theorem~\ref{thm:test-up}, adapted to the Poisson setting via Lemma~\ref{lm:r-t-property-Poisson} (ii). Specifically, using a similar analysis to that in the argument of \eqref{ineq:type-1-regime1}, it suffices to show that
    \begin{equation} \label{ineq:type-1-Poisson-upper}
   P_{\lambda}(X \leq \overline{t}(\lambda, \epsilon )) > \frac{12}{10(1 - \epsilon_{\max} )} P_{\lambda + \overline{r}(\lambda, \epsilon)} \left( X \leq \overline{t}(\lambda,\epsilon) \right), \forall \epsilon\in[0,\epsilon_{\max}].
    \end{equation}
It is easy to check that \eqref{ineq:type-1-Poisson-upper} is satisfied when $\lambda = 0$. When $\lambda > 0$, \eqref{ineq:type-1-Poisson-upper} is implied by $\overline{r} (\lambda, \epsilon) \geq \frac{ \lambda}{4(\lambda - \overline{t} (\lambda, \epsilon))}$ following a similar analysis as in \eqref{ineq:type-1-regime2}. Notice that this condition is satisfied by the choice of $\overline{r}(\lambda,\epsilon)$ for all $\epsilon\in[0,\epsilon_{\max}]$ and all $\lambda > 0$.

The simultaneous Type-1 error control of $\phi_{\lambda, \epsilon}^-$ when $\lambda \in [1, \infty)$ can be derived using an argument entirely analogous to that for $\phi_{\lambda,\epsilon}^+$, now applying Lemma~\ref{lm:r-t-property-Poisson} (iii) instead of Lemma~\ref{lm:r-t-property-Poisson} (ii). When $\lambda \in [0,1)$, the analysis for the simultaneous Type-1 error control of $\phi_{\lambda, \epsilon}^-$ is similar to the simultaneous Type-1 error control of $\phi_{p, \epsilon}^+$ when $p \in (1-1/m,1]$ in Theorem ~\ref{thm:test-up}, we omit the details for simplicity. 

Next, we move to the Type-2 error control. Using a similar analysis to that in the proof of Theorem~\ref{thm:test-up}, specifically the argument of \eqref{ineq:type2 with rbar}, it suffices to show that $P_{\epsilon, \lambda + \overline{r}(\lambda, \epsilon), Q}(\phi_{\lambda, \epsilon}^{+} = 0) \leq \alpha / 12$ for all $\epsilon \in [0, \epsilon_{\max}]$ and all $\lambda \in [0, \infty)$, and $P_{\epsilon,\lambda - \underline{r}(\lambda, \epsilon),Q}(\phi_{\lambda, \epsilon}^-=0)\leq \alpha / 12$ for all $\epsilon \in [0, \epsilon_{\max}]$ and all $\lambda \in \left[ 4\left(\frac{10 \log(24 / \alpha)}{n} + 3\epsilon\right), \infty \right)$. The Type-2 error control of $\phi_{\lambda, \epsilon}^+$ for $\lambda \in [0, \infty)$ and of $\phi_{\lambda, \epsilon}^-$ for $\lambda \in [1,\infty)$ follows an argument similar to the Type-2 error control proof of $\phi^+_{p, \epsilon}$ for $p \in [0,1-1/m]$ in Theorem~\ref{thm:test-up}. Similarly, following a similar argument for the Type-2 error control proof of $\phi^+_{p, \epsilon}$ for $p \in (1-1/m,1]$ in Theorem~\ref{thm:test-up}, when $\lambda \in \left[4\left(\frac{10\log(24 / \alpha)}{n} + 3\epsilon\right),1\right)$, $ P_{\epsilon, \lambda - \underline{r}(\lambda, \epsilon), Q}(\phi_{\lambda, \epsilon}^{-} = 0)$ is bounded by $\alpha/ 12$ if the following conditions hold:
\begin{equation}\label{ineq:type-2-suff-cond-Poisson}
    \begin{split}
        (i): P_\lambda(X \geq \underline{t}(\lambda, \epsilon) ) \geq 6 \epsilon + \frac{20\log(24/\alpha)}{n}\quad \textnormal{ and }\quad (ii): 1 - e^{-\lambda} \geq 6 \left(1 - e^{-\lambda + \underline{r}(\lambda, \epsilon)}\right).
    \end{split}
\end{equation}
Notice that the first condition in \eqref{ineq:type-2-suff-cond-Poisson} is satisfied for all $\lambda \in \left[4\left(\frac{10\log(24 / \alpha)}{n} + 3\epsilon\right),1\right)$ as we have shown in Lemma \ref{lm:r-t-property-Poisson} (iii). Next, we are going to show that given any $\epsilon \in [0, \epsilon_{\max}]$,
$$ 1 - e^{-\lambda} \geq 6 \left(1 - e^{-\lambda + \underline{r}(\lambda, \epsilon)}\right) \,\text{ holds for all }\,  \lambda \in [0,1) \text{  with  } \underline{r}(\lambda, \epsilon) = (1 - 1/(6e) )\lambda.$$
Let $f(\lambda) = 1 - e^{-\lambda} - 6 \left(1 - e^{-\lambda/(6e)}\right)$. Then $f(0) = 0$. If we can show $f'(\lambda) \geq 0$ for all $\lambda \in [0, 1)$, then it implies that $f(\lambda) \geq 0$ for all $\lambda \in [0, 1)$.
\begin{equation*}
    \begin{split}
        f'(\lambda) = e^{-\lambda}\left(1 - \frac{1}{e} e^{(1 - 1/(6e))\lambda}\right) \geq e^{-\lambda}\left(1 - \frac{1}{e} e^{1 - 1/(6e)}\right) = e^{-\lambda} \left(1 - e^{-1/(6e)}\right) > 0.
\end{split}
\end{equation*}
This shows $f(\lambda) \geq 0$ for all $
\lambda \in [0, 1)$ and finishes the proof for the Type-2 error control. This also finishes the proof of this theorem.

\subsubsection{Proof of Lemma \ref{Lem: poisson median}} 

Fix $\lambda \in [0, \infty)$ and $Q$. For simplicity, let $F_{\epsilon, \lambda, Q}(\cdot)$, $F_{\lambda}(\cdot)$, and $F_{Q}(\cdot)$ denote the CDFs of $P_{\epsilon, \lambda, Q}$, $\mathrm{Poisson}(\lambda)$, and $Q$, respectively. Also, for any CDF $F(\cdot)$, let $F^{-}(\cdot)$ denote its generalized inverse, defined by
\begin{equation*}
F^{-}(u) = \inf\{x \in \mathbb{R} : F(x) \geq u\},    
\end{equation*}
for all $u \in (0,1)$. 
Similarly, for any empirical CDF $F_n(\cdot)$, we define $F_n^{-}(\cdot)$ in the same way. By the DKW inequality (see Lemma \ref{lm:DKW}), the following event occurs with probability at least $1 - \alpha/6$:
\begin{equation*}
    (A) = \left\{\sup_{x \in \bbR} |F_n(x) - F_{\epsilon, \lambda, Q}(x)| \leq \sqrt{\frac{\log(12 / \alpha)}{2n}} \right\}.
\end{equation*}
Given $(A)$ happens, for any $x < F_{\epsilon, \lambda, Q}^{-}\Big(\frac{1+\epsilon}{2}\Big)$, we have
\begin{equation*}
    F_n(x) \overset{(A)}\leq F_{\epsilon, \lambda, Q}(x) + \sqrt{\frac{\log(12 / \alpha)}{2n}} < \frac{1 + \epsilon}{2} + \sqrt{\frac{\log(12 / \alpha)}{2n}},
\end{equation*}
which directly implies that
\begin{equation*}
    x < F_{n}^{-}\left(\frac{1 + \epsilon}{2} + \sqrt{\frac{\log(12 / \alpha)}{2n}}\right).
\end{equation*}
Since the above inequality holds for any $x < F_{\epsilon, \lambda, Q}^{-}\Big(\frac{1+\epsilon}{2}\Big)$, it follows that
\begin{equation}\label{Ineq: F- and F_n- right end point}
    F_{\epsilon, \lambda, Q}^{-}\left(\frac{1+\epsilon}{2}\right) \leq F_{n}^{-}\left(\frac{1 + \epsilon}{2} + \sqrt{\frac{\log(12 / \alpha)}{2n}}\right) \leq F_n^{-}(3/4) \leq X_{(\lceil 3n/4 \rceil)},
\end{equation}
where the second inequality follows from the assumption $\frac{\epsilon}{2} + \sqrt{\frac{\log(12 / \alpha)}{2n}} \leq \frac{1}{4}$. Also, by the definition of $P_{\epsilon, \lambda, Q}$, we have
\begin{equation}\label{Ineq: upper bound for F}
    F_{\epsilon, \lambda, Q}(x) = ( 1 - \epsilon)F_{\lambda}(x) + \epsilon F_{Q}(x) \leq (1 - \epsilon)F_{\lambda}(x) + \epsilon,
\end{equation}
for all $x \in \bbR$. Thus, when $(A)$ happens, we have
\begin{equation}\label{Ineq: upper bound for median with empirical CDF}
    \begin{split}
       X_{(\lceil 3n/4 \rceil)} & \overset{\eqref{Ineq: F- and F_n- right end point}}\geq  F_{\epsilon, \lambda, Q}^-\left(\frac{1 + \epsilon}{2}\right) = \inf \left\{x \in \bbR : F_{\epsilon, \lambda, Q}(x) \geq \frac{1+\epsilon}{2}\right\} \\
        & \overset{\eqref{Ineq: upper bound for F}}\geq \inf \left\{x \in \bbR : (1 - \epsilon)F_{\lambda}(x) + \epsilon \geq \frac{1+\epsilon}{2}\right\} =F_{\lambda}^-\left(1/2\right).
    \end{split}
\end{equation}
Also, by Theorem 2 in~\cite{choi1994medians}, the median of the $\mathrm{Poisson}(\lambda)$ distribution satisfies
\begin{equation}\label{Ineq: median and mean for poisson}
    \lambda  \leq F_\lambda^{-}\left(1/2\right)  + 1.
\end{equation}
By combining \eqref{Ineq: upper bound for median with empirical CDF} and \eqref{Ineq: median and mean for poisson}, we have \begin{equation*} 
\begin{split}
    \lambda \leq X_{(\lceil 3n/4 \rceil)} + 1 = \wh{\lambda}_{\max},
\end{split}
\end{equation*}
with probability at least $1 - \alpha/6$. Since $\wh{\lambda}_{\max} \in \mathbb{N}$, $\lambda \leq \wh{\lambda}_{\max}$ implies $\lceil \lambda \rceil \vee 1 \leq \wh{\lambda}_{\max}$. This finishes the proof of this lemma.

\subsubsection{Proof of Lemma \ref{lm:r-t-property-Poisson}}
 
The proof of Lemma~\ref{lm:r-t-property-Poisson} is similar to that of Lemma~\ref{lm:r-t-property}. For convenience, let us denote $A = \epsilon + \sqrt{\frac{\log(24/\alpha)}{2n}}$. By assumption, $A$ is less than a sufficiently small constant.  

\vskip.2cm
{\noindent \bf (Part I: Proof of Claim (i))} We first show that the function $\lambda \mapsto \lambda + \overline{r}(\lambda, \epsilon)$ is strictly increasing in $\lambda$ for $\lambda \in [0, \infty)$ given any fixed $\epsilon \in [0, \epsilon_{\max}]$. Since $\lambda + \overline{r}(\lambda, \epsilon) = \lambda + \max\left\{\frac{1}{2}, 2\sqrt{\frac{\lambda}{\log(1 / A)}}\right\}$, the desired monotonicity follows trivially.

Next, we show that the function $\lambda \mapsto \lambda - \underline{r}(\lambda, \epsilon)$ is strictly increasing in $\lambda$ for $\lambda \in [0, \infty)$ given any fixed $\epsilon \in [0, \epsilon_{\max}]$. Notice that $1 \leq \log(1/ A)/16$ when $A$ is sufficiently small. When $\lambda \in [0, 1)$, we have $\lambda - \underline{r}(\lambda, \epsilon) = \lambda / (6e)$, which is clearly increasing with respect to $\lambda$. When $\lambda \in [1, \log(1 / A)/16]$, we have $\lambda - \underline{r}(\lambda , \epsilon) = \lambda - 1/2$, which is increasing in $\lambda$. When $\lambda \in (\log(1 / A)/16, \infty)$, we have $\underline{r}(\lambda , \epsilon) = 2\sqrt{\frac{\lambda}{\log(1 / A)}}$, and it follows that
\begin{equation*}
    \begin{split}
        \frac{\partial (\lambda - \underline{r}(\lambda, \epsilon))}{\partial \lambda} & = 1 - \frac{1}{\sqrt{\lambda \log(1 / A)}} \geq 1 - \frac{4}{ \log(1 / A)} > 0,
    \end{split}
\end{equation*}
where the last inequality is satisfied since $A$ is less than a sufficiently small constant. Hence, the function $\lambda \mapsto \lambda - \underline{r}(\lambda, \epsilon)$ is strictly increasing on $[1, \infty)$ as well. At $\lambda = 1$, the function $\lambda \mapsto \lambda - \underline{r}(\lambda, \epsilon)$ may not be continuous, but the right limit at $\lambda = 1$ is strictly larger than the left limit when $A$ is less than a sufficiently small constant. Therefore, the function $\lambda \mapsto \lambda - \underline{r}(\lambda, \epsilon)$ is strictly increasing in $\lambda$ for $\lambda \in [0, \infty)$.

Moreover, as both functions are increasing in $\lambda$ for $\lambda \in [0, \infty)$, it follows that for all $\lambda \in [0, \infty)$,
$$
\lambda + \overline{r}(\lambda, \epsilon) \geq 0 + \overline{r}(0, \epsilon) = \frac{1}{2}, 
\quad
\lambda - \underline{r}(\lambda, \epsilon) \geq 0-\underline{r}(0, \epsilon) = 0,
$$
which shows that both functions are non-negative.

\vskip.2cm
{\noindent \bf (Part II: Proof of Claim (ii))}
We will simply write $\overline{t}(\lambda, \epsilon)$ and $\overline{r}(\lambda, \epsilon)$ as $t$ and $r$ in this part. We divide the proof into two cases. 

\vskip.2cm
\begin{itemize}[leftmargin=*]
	\item (Case 1: $\lambda \in [0,\log(1/ A)/16]$) In this case, $t = \lambda / 2$ and $r = 1/2$. Then
\begin{equation*}
	\begin{split}
		P_{\lambda + r}(X \leq t) = P_{\lambda + \frac{1}{2} } \left( X \leq t \right) \geq P_{\lambda + \frac{1}{2} }\left( X = 0 \right) = \exp\left( - \lambda - \frac{1}{2} \right) 
         \geq \exp\left( - \frac{\log(1 / A)}{16} - \frac{1}{2} \right)
	\end{split}
\end{equation*} A sufficient condition for $\exp\left( - \frac{\log(1 / A)}{16} - \frac{1}{2} \right) > 10 A$ is derived as follows:
\begin{equation*}
	\begin{split}
		 \exp\left( - \frac{\log(1 / A)}{16} - \frac{1}{2} \right) > 10 A \Longleftrightarrow & - \frac{\log(1 / A)}{16} - \frac{1}{2} > \log(A) + \log(10) \\
		 \Longleftrightarrow & \log(1/A) > \frac{8}{15} + \frac{16\log(10)}{15}.
	\end{split}
\end{equation*} Notice that the last condition above is satisfied since $A$ is less than a sufficiently small constant. So we have shown $P_{\lambda + r}(X \leq t) > 10A$ when $\lambda \in [0, \log(1/ A)/16]$. 
\item (Case 2: $\lambda \in (\log(1/ A)/16, \infty)$) In this case, we have $t = \lambda - \frac{1}{8} \sqrt{\lambda \log\left(1/A\right)}$ and $ r = 2 \sqrt{ \frac{\lambda}{ \log(1/A)} }$. 
Moreover, since $A$ is less than a sufficiently small constant, we have 
\begin{equation}\label{ineq:t-r-Poisson}
	\begin{split}
		\lambda + r -t < 2 (\lambda - t) \quad \text{and} \quad \sqrt{t} < 2(\lambda - t).
	\end{split}
\end{equation}
When $\lambda \geq \log(1 / A)/16$, we have
$\frac{\partial \overline{t}(\lambda, \epsilon)}{\partial \lambda}  > 0$. Thus, for fixed $\epsilon \in [0, \epsilon_{\max}]$, $\overline{t}(\lambda, \epsilon)$ is increasing in $\lambda$ when $\lambda \geq \log(1 / A)/16$, and therefore
\begin{equation}\label{Ineq: Lowerbound for t for Poisson}
    \overline{t} (\lambda, \epsilon) \geq \overline{t} \left(\frac{\log(1 / A)}{16}, \epsilon \right) = \frac{\log(1/A)}{32} \geq 9, 
\end{equation}
where the last inequality holds since $A$ is less than a sufficiently small constant.

Next, we aim to provide a lower bound for $P_{\lambda+r}(X\leq t)$.
\begin{equation}\label{Lower bound for CDF of Poisson distribution}
    \begin{split}
        P_{\lambda + r} (X \leq t) & = \sum_{k \leq t} \frac{\exp(- \lambda - r) (\lambda + r )^k}{k!} \overset{(a)} \geq \sum_{t - \sqrt{t} < k \leq t} \frac{\exp(- t) t^k}{k!} \exp(- \lambda - r + t) \left(\frac{\lambda + r}{t}\right)^k \\
        & \geq \sum_{t - \sqrt{t} < k \leq t} \frac{\exp(- t) t^k}{k!} \min _{t - \sqrt{t} < k' \leq t}\exp(- \lambda - r + t) \left(\frac{\lambda + r}{t}\right)^{k'}  \\
        & \geq \sum_{t - \sqrt{t} < k \leq t} \frac{\exp(- t) t^k}{k!} \exp(- \lambda - r + t) \left(\frac{\lambda + r}{t} \right)^{t - \sqrt{t}}\\
        & = P_{t} \left(t - \sqrt{t} < X \leq t \right) \exp\left(- \lambda -r + t + (t - \sqrt{t})\log\left(\frac{\lambda + r}{t}\right)\right) \\
        & \overset{(b)}\geq P_{t} \left(t - \sqrt{t} < X \leq t \right) \exp\left(- \lambda -r + t + \frac{(t - \sqrt{t})(\lambda + r - t)}{\lambda + r}\right),
    \end{split}
\end{equation}
where in (a), we use the fact $t \geq 9$ by \eqref{Ineq: Lowerbound for t for Poisson}, which ensures $t - \sqrt{t} > 0$; in (b), we use the inequality $\log(1 + x) \geq x/(1 + x)$ for all $x > -1$. Next, we bound the two terms $ P_{t} \left(t - \sqrt{t} < X \leq t \right)$ and $$\exp\left(- \lambda -r + t + \frac{(t - \sqrt{t})(\lambda + r - t)}{\lambda + r}\right)$$ at the end of \eqref{Lower bound for CDF of Poisson distribution} separately. Let $\Phi(\cdot)$ denote the CDF of standard Gaussian. Then, by a Poisson-specific Berry–Esseen bound (see Lemma \ref{Lem: Berry esseen for Poisson}), we have
\begin{equation}\label{Application of Berry Esseen Theorem Poisson}
    P_{t} \left(t - \sqrt{t} < X \leq t \right) \geq (\Phi(0) - \Phi(- 1)) - \frac{7}{10\sqrt{t}} \overset{\eqref{Ineq: Lowerbound for t for Poisson}}\geq (\Phi(0) - \Phi(- 1)) - \frac{7}{30} > 0.1.
\end{equation}
At the same time,
\begin{equation}\label{ineq: exponent-bound Poisson}
        - \lambda -r + t + \frac{(t - \sqrt{t})(\lambda + r - t)}{\lambda + r} = -(\lambda + r - t)\left(\frac{\lambda + r - t + \sqrt{t}}{\lambda + r}\right) 
        \overset{\eqref{ineq:t-r-Poisson}}{>} - \frac{8(\lambda - t)^2}{\lambda + r}\geq -\frac{8(\lambda - t)^2}{\lambda }.
\end{equation}
By plugging \eqref{Application of Berry Esseen Theorem Poisson} and \eqref{ineq: exponent-bound Poisson} into \eqref{Lower bound for CDF of Poisson distribution}, we have
\begin{equation*}
     P_{\lambda + r} (X \leq t) > \frac{1}{10}\exp\left(-\frac{8(\lambda - t)^2}{\lambda }\right),
\end{equation*}
and a sufficient condition to guarantee $P_{\lambda + r}(X \leq t) > 10A$ is given as follows,
\begin{equation*}
    \begin{split}
        P_{\lambda + r}(X \leq t) > 10A & \Longleftarrow \exp\left(-\frac{8(\lambda - t)^2}{\lambda }\right) \geq 100A  \Longleftrightarrow \frac{8(\lambda - t)^2}{\lambda } \leq \log(1 / A) - \log(100) \\
        & \Longleftarrow \frac{8(\lambda - t)^2}{\lambda } \leq \frac{1}8\log(1 / A)  \Longleftrightarrow |\lambda - t| \leq \frac{1}{8}\sqrt{\lambda \log(1 / A).}
    \end{split}
\end{equation*}
Notice that the last condition is satisfied by the choice of $t$. Thus, we have shown $P_{\lambda+r}(X\leq t) > 10A$ when $\lambda \in (\log(1 / A)/16 , \infty)$.

\end{itemize}

\vskip.2cm
    {\noindent \bf (Part III: Proof of Claim (iii))}
We will simply write $\underline{t}(\lambda, \epsilon)$ and $\underline{r}(\lambda, \epsilon)$ as $t$ and $r$ in this part. We divide the proof into three cases. Notice that $1 \leq \log(1/ A)/16$ when $A$ is sufficiently small.

\vskip.2cm
\begin{itemize}[leftmargin=*]
	\item (Case 1: $\lambda \in  \left[4 \left( \frac{10 \log(24/\alpha) }{n} + 3 \epsilon  \right), 1 \right)$) Notice that in this regime, $t = 1$. Thus, $P_\lambda(X \geq t) = P_\lambda(X \neq 0) = 1 - e^{-\lambda}$. Then,
\begin{equation}\label{Cond: Poisson boundary}
    \begin{split}
      &  P_{\lambda}(X \geq t) \geq 6 \epsilon + \frac{20\log(24 / \alpha)}{n}  \Longleftrightarrow \lambda \geq - \log \left(1 - \left(6 \epsilon + \frac{20\log(24 / \alpha)}{n}\right)\right) \\
        & \Longleftarrow \lambda \geq \frac{6 \epsilon + \frac{20\log(24 / \alpha)}{n}}{1 - \left(6 \epsilon + \frac{20\log(24 / \alpha)}{n}\right)} \quad (\text{as } \log(1 + x) \geq x / (1 + x), \forall x > -1 ) \\
        & \overset{(a)}{\Longleftarrow} \lambda \geq 4\left(3 \epsilon + \frac{10\log(24 / \alpha)}{n}\right),
    \end{split}
\end{equation}
where (a) holds as long as $6 \epsilon + \frac{20\log(24 / \alpha)}{n} \leq 1/2$. The last condition in \eqref{Cond: Poisson boundary} is satisfied for $\lambda$ in this regime. 
\item (Case 2: $\lambda \in [1, \log(1/ A)/16]$) In this case, we have $t = \frac{3}{2}\lambda $ and $ r = 1/2$. We aim to provide a lower bound for $P_{\lambda-r}(X \geq t)$.
\begin{equation*}
    \begin{split}
       P_{\lambda-r}(X \geq t) & = P_{\lambda-\frac{1}{2}}\left(X \geq \frac{3}{2}\lambda\right) \overset{(a)}\geq P_{\frac{1}{2}\lambda}\left(X \geq \frac{3}{2}\lambda\right)  \geq P_{\frac{1}{2}\lambda}\left(X  = \left\lceil\frac{3}{2}\lambda \right \rceil\right) \\
       & = \frac{e^{-\lambda/2}\left(\frac{\lambda}{2}\right)^{\left\lceil\frac{3}{2}\lambda \right \rceil}}{\left(\left\lceil\frac{3}{2}\lambda \right \rceil\right)!} \geq \frac{e^{-\lambda/2}\left(\frac{\lambda}{2}\right)^{\left\lceil\frac{3}{2}\lambda \right \rceil}}{\left(\left\lceil\frac{3}{2}\lambda \right \rceil\right)^{\left\lceil\frac{3}{2}\lambda \right \rceil}} \overset{(b)}\geq \frac{e^{-\lambda/2}\left(\frac{\lambda}{2}\right)^{\left\lceil\frac{3}{2}\lambda \right \rceil}}{\left(\frac{5}{2}\lambda \right)^{\left\lceil\frac{3}{2}\lambda \right \rceil}} \\
       & = \frac{e^{-\lambda/2}}{5^{\left\lceil\frac{3}{2}\lambda \right \rceil}} \overset{(c)} \geq \frac{e^{-\lambda/2}}{5^{\frac{5}{2}\lambda}} \overset{(d)} > \exp\left(-\frac{11}{2}\lambda\right) \geq \exp\left(-\frac{11}{32}\log(1/A)\right),
    \end{split}
\end{equation*}
where (a) by Lemma \ref{lm:Poisson-prop}, together with the fact that $\lambda - 1/2 \geq \lambda/2$ in this regime; (b) and (c) hold since $\left \lceil \frac{3}{2} \lambda\right \rceil \leq \frac{3}{2}\lambda + 1 \leq \frac{5}{2}\lambda$ for all $\lambda \geq 1$; (d) is because $5^{\frac{5}{2}\lambda} < e^{5\lambda}$, which follows from $ 5 < e^2$. 

A sufficient condition for $\exp\left(-\frac{11}{32}\log(1/A)\right) > 10A$ is derived as follows:
\begin{equation*}
    \begin{split}
        \exp\left(-\frac{11}{32}\log(1/A)\right) > 10A & \Longleftrightarrow -\frac{11}{32}\log(1/A) > \log(A) + \log(10)  \\
        & \Longleftarrow \frac{21}{32}\log(1 / A) > \log(10).
    \end{split}
\end{equation*}
Notice that the last condition above is satisfied since $A$ is less than a sufficiently small constant. So we have shown $P_{\lambda - r}(X \geq t) > 10A$ when $\lambda \in [1, \log(1/ A)/16]$.
\item (Case 3: $\lambda \in  \left((\log(1/ A)/16, \infty \right)$) In this case, we have $t = \lambda + \frac{1}{8} \sqrt{\lambda \log\left(1/A\right)}$ and $r = 2 \sqrt{ \frac{\lambda}{ \log(1/A)} }$. Since $\lambda \geq 1$ and $A$ is less than a sufficiently small constant, it is easy to check that
\begin{equation}\label{Ineq: t > 9}
    t \geq 9 \quad \text{and} \quad
    \lambda \geq 2r.
\end{equation}
Also, using a similar argument to that in Part III, we have
\begin{equation}\label{ineq: t-lambda geq sqrt t-lower}
t - \lambda + r < 2(t - \lambda) \quad \textnormal{ and } \quad \sqrt{t} < 2( t - \lambda).
\end{equation}

Next, we aim to provide a lower bound for $P_{\lambda - r}(X \geq t)$.
\begin{equation}\label{Lower bound for CDF of Poisson distribution-lower}
    \begin{split}
        P_{\lambda - r} (X \geq t) & = \sum_{k \geq t} \frac{\exp(- \lambda + r) (\lambda - r )^k}{k!}  \geq \sum_{ t\leq k < t + \sqrt{t}} \frac{\exp(- t) t^k}{k!} \exp(- \lambda + r + t) \left(\frac{\lambda - r}{t}\right)^k \\
        & \geq \sum_{t\leq k < t + \sqrt{t}} \frac{\exp(- t) t^k}{k!} \min _{t\leq k < t + \sqrt{t}}\exp(- \lambda + r +  t) \left(\frac{\lambda - r}{t}\right)^{k'}  \\
        & \geq \sum_{t\leq k < t + \sqrt{t}} \frac{\exp(- t) t^k}{k!} \exp(- \lambda + r + t) \left(\frac{\lambda - r}{t} \right)^{t + \sqrt{t}}\\
        & = P_{t} \left(t \leq X < t + \sqrt{t} \right) \exp\left(- \lambda  + r + t + (t + \sqrt{t})\log\left(\frac{\lambda - r}{t}\right)\right) \\
        & \geq P_{t} \left(t \leq X < t + \sqrt{t} \right) \exp\left(- \lambda + r + t + \frac{(t + \sqrt{t})(\lambda - r - t)}{\lambda - r}\right),
    \end{split}
\end{equation}
where in the last inequality is because $\log(1 + x) \geq x/(1 + x)$ for all $x > -1$. Next, we bound the two terms $ P_{t} \left(t \leq X < t  + \sqrt{t} \right)$ and $$\exp\left(- \lambda + r + t + \frac{(t + \sqrt{t})(\lambda - r - t)}{\lambda - r}\right)$$ at the end of \eqref{Lower bound for CDF of Poisson distribution-lower} separately. First, by a Poisson-specific Berry–Esseen bound (see Lemma \ref{Lem: Berry esseen for Poisson}), we have
\begin{equation}\label{Application of Berry Esseen Theorem Poisson-lower}
    P_{t} \left(t - \sqrt{t} < X \leq t \right) \geq (\Phi(0) - \Phi(- 1)) - \frac{7}{10\sqrt{t}} \overset{\eqref{Ineq: t > 9}}\geq (\Phi(0) - \Phi(- 1)) - \frac{7}{30} > 0.1.
\end{equation}
At the same time,
\begin{equation}\label{ineq: exponent-bound Poisson-lower}
        - \lambda +r + t + \frac{(t + \sqrt{t})(\lambda - r - t)}{\lambda - r} = -(t  - \lambda + r )\left(\frac{t - \lambda + r + \sqrt{t}}{\lambda - r}\right) 
        \overset{\eqref{Ineq: t > 9}, \eqref{ineq: t-lambda geq sqrt t-lower}}{>} -\frac{16(t - \lambda)^2}{\lambda}.
\end{equation}
By plugging \eqref{Application of Berry Esseen Theorem Poisson-lower} and \eqref{ineq: exponent-bound Poisson-lower} into \eqref{Lower bound for CDF of Poisson distribution-lower}, we have
\begin{equation*}
     P_{\lambda - r} (X \geq t) > \frac{1}{10}\exp\left(-\frac{16(t - \lambda)^2}{\lambda }\right),
\end{equation*}
and a sufficient condition to guarantee $P_{\lambda - r}(X \geq t) > 10A$ is given as follows,
\begin{equation*}
    \begin{split}
        P_{\lambda - r}(X \geq t) > 10A & \Longleftarrow \exp\left(-\frac{16(t - \lambda )^2}{\lambda }\right) \geq 100A  \Longleftrightarrow \frac{16(t - \lambda )^2}{\lambda } \leq \log(1 / A) - \log(100) \\
        & \Longleftarrow \frac{16(t - \lambda )^2}{\lambda } \leq \frac{1}{4}\log(1 / A) \Longleftrightarrow |t - \lambda | \leq \frac{1}{8}\sqrt{\lambda \log(1 / A).}
    \end{split}
\end{equation*}
Notice that the last condition is satisfied by the choice of $t$. Thus, we have shown $P_{\lambda - r}(X\geq t) > 10A$ when $\lambda \in (\log(1 / A)/16, \infty)$.

\end{itemize}
This finishes the proof of this lemma.

\subsection{Proofs of Claim \eqref{eq:CI-tilde-poi-guarantee} and \eqref{eq:CI-bar-poi-guarantee}} \label{app:poi-additional-proof}
The coverage property of $\wt{\CI}$ in \eqref{eq:CI-tilde-poi-guarantee} follows the same analysis as the one in  Theorem \ref{thm:bino-upper-no-dis}, we omit it here for simplicity. Next, we consider the length guarantee of $\wt{\CI}$ in \eqref{eq:CI-tilde-poi-guarantee}. The following theorem and lemma will be useful for establishing the length guarantee and their proofs are given in the subsections.

\begin{Theorem}\label{thm:type2-Poisson}
 Suppose $ \frac{\log(2/\alpha)}{n} + \epsilon_{\max}$ is less than a sufficiently small universal constant. The testing function $\psi_{\lambda,\epsilon}^+$ defined by (\ref{def: psi+ pois real}) satisfies the simultaneous Type-1 error bound in the same sense as in (\ref{ineq:type-1-phi+ Poisson}) with $\phi_{\lambda,\epsilon}^+$ being replaced by $\psi_{\lambda,\epsilon}^+$. In addition, it also satisfies the Type-2 error bound,
$$
\underset{Q}{\sup}P_{\epsilon,\lambda + r,Q}\left(\psi_{\lambda,\epsilon}^+ = 0\right)\leq \alpha/6,
$$
for all $\epsilon \in [0, \epsilon_{\max}]$, all $\lambda \in [0, \infty)$ and all
$r \geq \overline{r}(\lambda, \epsilon)$, where $\overline{r}(\lambda, \epsilon)$ is given in (\ref{def:r-Poisson-upper}).
Similarly, the testing function $\psi_{\lambda,\epsilon}^-$ defined by (\ref{def: psi- pois real}) satisfies the simultaneous Type-1 error bound in the same sense as in (\ref{ineq:type-1-phi- Poisson}) with $\phi_{\lambda,\epsilon}^-$ being replaced by $\psi_{\lambda,\epsilon}^-$. In addition, it also satisfies the Type-2 error bound,
$$
\underset{Q}{\sup}P_{\epsilon,\lambda-r,Q}\left(\psi_{\lambda,\epsilon}^- = 0\right)\leq \alpha/6,
$$
for all $\epsilon \in [0, \epsilon_{\max}]$, all $\lambda \in \left[ 4\left( \frac{10 \log(24/\alpha)}{n} + 3 \epsilon \right), \infty \right)$, and all
$\lambda \in[\underline{r}(\lambda, \epsilon),\lambda]$, where $\underline{r}(\lambda, \epsilon)$ is given in (\ref{def:r-Poisson-under}).
\end{Theorem}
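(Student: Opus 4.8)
The statement has two parts for each of $\psi_{\lambda,\epsilon}^{+}$ and $\psi_{\lambda,\epsilon}^{-}$: a simultaneous Type-1 bound and a Type-2 bound. The Type-1 claims are free: since $\psi_{\lambda,\epsilon}^{+}=\min_{\mu\in[0,\lambda]}\phi_{\mu,\epsilon}^{+}\leq\phi_{\lambda,\epsilon}^{+}$ (take $\mu=\lambda$ in the minimum), and likewise $\psi_{\lambda,\epsilon}^{-}\leq\phi_{\lambda,\epsilon}^{-}$, the event $\{\sup_{\epsilon}\psi_{\lambda,\epsilon}^{\pm}=1\}$ is contained in $\{\sup_{\epsilon}\phi_{\lambda,\epsilon}^{\pm}=1\}$, so the bounds follow verbatim from Theorem~\ref{thm:test-up-Poisson}. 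The real work is the Type-2 bounds, and the plan is to transcribe the argument of Theorem~\ref{thm:type2}, replacing the binomial ingredients by their Poisson counterparts from Lemma~\ref{lm:r-t-property-Poisson} (monotonicity of $\lambda\mapsto\lambda+\overline r(\lambda,\epsilon)$ and $\lambda\mapsto\lambda-\underline r(\lambda,\epsilon)$, and the CDF lower bounds) together with the stochastic ordering of Poisson families (Lemma~\ref{lm:Poisson-prop}).

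\textbf{Type-2 for $\psi_{\lambda,\epsilon}^{+}$.} Fix $\epsilon$, $\lambda\geq0$ and $r\geq\overline r(\lambda,\epsilon)$. It suffices to show $\inf_Q P_{\epsilon,\lambda+r,Q}\bigl(\phi_{\mu,\epsilon}^{+}=1\text{ for all }\mu\in[0,\lambda]\bigr)\geq1-\alpha/6$. On the DKW event $(E)=\{\sup_x|F_n(x)-P_{\epsilon,\lambda+r,Q}(X\le x)|\le\sqrt{\log(24/\alpha)/(2n)}\}$, which has probability at least $1-\alpha/12$ (Lemma~\ref{lm:DKW}), one checks, exactly as in \eqref{ineq:monotone-type2-regime1+}, that $\phi_{\mu,\epsilon}^{+}=1$ for every $\mu\le\lambda$ provided (a) $\epsilon+\sqrt{\log(24/\alpha)/(2n)}<\tfrac1{10}P_{\mu+\overline r(\mu,\epsilon)}(X\le\overline t(\mu,\epsilon))$ for all $\mu\le\lambda$, and (b) $\lambda+r\geq\mu+\overline r(\mu,\epsilon)$ for all $\mu\le\lambda$. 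Condition (a) is precisely Lemma~\ref{lm:r-t-property-Poisson}(ii); condition (b) reduces, by the monotonicity of $\mu\mapsto\mu+\overline r(\mu,\epsilon)$ from Lemma~\ref{lm:r-t-property-Poisson}(i), to $\lambda+r\geq\lambda+\overline r(\lambda,\epsilon)$, i.e.\ $r\geq\overline r(\lambda,\epsilon)$, which holds by hypothesis; the passage from $P_{\epsilon,\lambda+r,Q}(X\le\overline t(\mu,\epsilon))$ to $\epsilon+P_{\mu+\overline r(\mu,\epsilon)}(X\le\overline t(\mu,\epsilon))$ uses $\epsilon$-contamination plus the stochastic dominance $\mathrm{Poisson}(\lambda+r)\succeq\mathrm{Poisson}(\mu+\overline r(\mu,\epsilon))$ (Lemma~\ref{lm:Poisson-prop}). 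Note that, unlike the binomial case, $\phi_{\mu,\epsilon}^{+}$ has a single definition on all of $[0,\infty)$, so there is no boundary subcase here.

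\textbf{Type-2 for $\psi_{\lambda,\epsilon}^{-}$.} Fix $\epsilon$, $\lambda\in\bigl[4(\tfrac{10\log(24/\alpha)}{n}+3\epsilon),\infty\bigr)$ and $r\in[\underline r(\lambda,\epsilon),\lambda]$; we must bound $\sup_QP_{\epsilon,\lambda-r,Q}(\psi_{\lambda,\epsilon}^{-}=0)$. If $\lambda\geq1$, then $\psi_{\lambda,\epsilon}^{-}=\min_{\mu\in[\lambda,\infty)}\phi_{\mu,\epsilon}^{-}$ involves only the tail tests ($\mu\geq1$), and the same DKW-plus-dominance scheme works: on $(E)$ one needs $\epsilon+\sqrt{\log(24/\alpha)/(2n)}<\tfrac1{10}P_{\mu-\underline r(\mu,\epsilon)}(X\ge\underline t(\mu,\epsilon))$ for all $\mu\geq\lambda$ (Lemma~\ref{lm:r-t-property-Poisson}(iii), first part) and $\lambda-r\leq\mu-\underline r(\mu,\epsilon)$ for all $\mu\geq\lambda$, which by monotonicity of $\mu\mapsto\mu-\underline r(\mu,\epsilon)$ (Lemma~\ref{lm:r-t-property-Poisson}(i)) reduces to $\lambda-r\leq\lambda-\underline r(\lambda,\epsilon)$, i.e.\ $r\geq\underline r(\lambda,\epsilon)$, and then $\mathrm{Poisson}(\lambda-r)\preceq\mathrm{Poisson}(\mu-\underline r(\mu,\epsilon))$ gives the needed comparison of right-tail probabilities. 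If instead $\lambda\in\bigl[4(\tfrac{10\log(24/\alpha)}{n}+3\epsilon),1\bigr)$, write $\psi_{\lambda,\epsilon}^{-}=\bigl(\min_{\mu\in[\lambda,1)}\phi_{\mu,\epsilon}^{-}\bigr)\wedge\bigl(\min_{\mu\in[1,\infty)}\phi_{\mu,\epsilon}^{-}\bigr)$. The boundary test $\phi_{\mu,\epsilon}^{-}$ is non-decreasing in $\mu$ on $[0,1)$ — immediate since there $\underline t(\mu,\epsilon)=1$ is constant while $\underline\tau(\mu,\epsilon)=\tfrac12(1-e^{-\mu})-\tfrac{3\log(24/\alpha)}{n}$ increases in $\mu$ — so $\min_{\mu\in[\lambda,1)}\phi_{\mu,\epsilon}^{-}=\phi_{\lambda,\epsilon}^{-}$, and a union bound gives
\[
\sup_QP_{\epsilon,\lambda-r,Q}(\psi_{\lambda,\epsilon}^{-}=0)\le\sup_QP_{\epsilon,\lambda-r,Q}(\phi_{\lambda,\epsilon}^{-}=0)+\sup_QP_{\epsilon,\lambda-r,Q}\Bigl(\min_{\mu\in[1,\infty)}\phi_{\mu,\epsilon}^{-}=0\Bigr).
\]
The first term is at most $\alpha/12$ by the Type-2 bound for $\phi_{\lambda,\epsilon}^{-}$ in Theorem~\ref{thm:test-up-Poisson} (valid on this range of $\lambda$ and $r$). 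For the second term we run the DKW argument over $\mu\in[1,\infty)$ as above; the domination condition now reduces to $\lambda-r\leq1-\underline r(1,\epsilon)=1/2$, which holds because $\lambda<1$ and $r\geq\underline r(\lambda,\epsilon)=(1-\tfrac1{6e})\lambda$ force $\lambda-r\leq\lambda/(6e)<1/2$. Summing, $\sup_QP_{\epsilon,\lambda-r,Q}(\psi_{\lambda,\epsilon}^{-}=0)\leq\alpha/6$.

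\textbf{Main obstacle.} The argument is essentially a dictionary translation of the proofs of Theorem~\ref{thm:type2} and Theorem~\ref{thm:test-up-Poisson}, so no new idea is required; the one genuinely delicate point is the bookkeeping around $\mu=1$ for $\psi_{\lambda,\epsilon}^{-}$ — correctly splitting $\min_{\mu\in[\lambda,\infty)}$ across the two regimes in the piecewise definition of $\phi_{\mu,\epsilon}^{-}$, invoking the boundary monotonicity to collapse the near-zero part to $\phi_{\lambda,\epsilon}^{-}$, and verifying that the various monotonicity and domination comparisons from Lemma~\ref{lm:r-t-property-Poisson}(i) still line up at the junction (in particular that $\mu\mapsto\mu-\underline r(\mu,\epsilon)$ is increasing through $\mu=1$ despite the jump, so that the minimizer over $\mu\geq1$ is at $\mu=1$). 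Everything else is routine.
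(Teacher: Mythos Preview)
Your proposal is correct and follows essentially the same approach as the paper's own proof, which is itself just a short sketch pointing back to Theorem~\ref{thm:type2}: Type-1 is free from $\psi^{\pm}\le\phi^{\pm}$; Type-2 for $\psi^{+}$ (all $\lambda$) and for $\psi^{-}$ ($\lambda\ge1$) runs the DKW-plus-monotonicity argument of \eqref{ineq:monotone-type2-regime1+} with Lemma~\ref{lm:r-t-property-Poisson} in place of Lemma~\ref{lm:r-t-property}; and for $\psi^{-}$ with $\lambda\in[4(\tfrac{10\log(24/\alpha)}{n}+3\epsilon),1)$ you split across $\mu=1$, collapse the boundary part via the monotonicity of $\phi^{-}_{\mu,\epsilon}$ on $[0,1)$, and union-bound. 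Your explicit verification that $\lambda-r\le\lambda/(6e)<1/2=1-\underline r(1,\epsilon)$ is a fine substitute for invoking the global monotonicity of $\mu\mapsto\mu-\underline r(\mu,\epsilon)$ through the jump at $\mu=1$ (Lemma~\ref{lm:r-t-property-Poisson}(i)), which the paper would use instead.
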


\begin{Lemma}\label{lm:r-property-length-pois}
	Suppose $ \frac{\log(2/\alpha)}{n} + \epsilon_{\max}$ is less than a sufficiently small universal constant. Given any $c \in (0,1)$, there exists a large constant $C_0 > 0$ only depending on $c$ and $\alpha$ such that for any $C \geq C_0$ and $\epsilon \in [0, \epsilon_{\max}]$,
    	\begin{equation*}\label{ineq:r-length-prop2-pois}
		\begin{split}
			&\bar{r}( \lambda - C\bar{r}(\lambda, \epsilon), \epsilon ) \leq C\bar{r}(\lambda, \epsilon), \quad \forall \lambda \in [0, \infty) \bigcap \{\lambda: \lambda - C\bar{r}(\lambda, \epsilon) \geq 0 \}, \\
			&\underline{r}( \lambda + C\underline{r}(\lambda, \epsilon), \epsilon ) \leq C\underline{r}(\lambda, \epsilon), \quad \forall \lambda \in [c, \infty).
		\end{split}
	\end{equation*}
\end{Lemma}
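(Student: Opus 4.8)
The plan is to reduce the lemma to the closed forms of $\bar r$ and $\underline r$ that were extracted in the proof of Lemma~\ref{lm:r-t-property-Poisson}(i). Writing $A=\epsilon+\sqrt{\log(24/\alpha)/(2n)}$, one has
$\bar r(\lambda,\epsilon)=\tfrac12\vee 2\sqrt{\lambda/\log(1/A)}$ for every $\lambda\ge 0$, while $\underline r(\lambda,\epsilon)=(1-\tfrac1{6e})\lambda$ for $\lambda\in[0,1)$ and $\underline r(\lambda,\epsilon)=\tfrac12\vee 2\sqrt{\lambda/\log(1/A)}$ for $\lambda\in[1,\infty)$. The hypothesis that $\tfrac{\log(2/\alpha)}{n}+\epsilon_{\max}$ is small forces $A$ to be small, hence $\log(1/A)\ge L_0$ for a large universal constant $L_0$; this is the only quantitative fact used beyond the formulas themselves, and it makes the structure closely parallel to the binomial Lemma~\ref{lm:r-property-length-bino}, but simpler because there is no $(1-p)$ truncation to worry about.

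The first inequality is essentially free: $\lambda\mapsto\bar r(\lambda,\epsilon)$ is continuous and non-decreasing (a maximum of a constant and an increasing function of $\lambda$), so for any $C\ge 1$ and any $\lambda$ with $\lambda-C\bar r(\lambda,\epsilon)\ge 0$ we get $\bar r(\lambda-C\bar r(\lambda,\epsilon),\epsilon)\le\bar r(\lambda,\epsilon)\le C\bar r(\lambda,\epsilon)$; no largeness of $C$ is needed. For the second inequality I would split on the location of $\lambda$. If $\lambda\ge 1$, then $\underline r$ agrees with $\bar r$ on $[1,\infty)$ and is again non-decreasing, so $\lambda':=\lambda+C\underline r(\lambda,\epsilon)\ge\lambda\ge 1$ and $\underline r(\lambda',\epsilon)=\tfrac12\vee 2\sqrt{\lambda'/\log(1/A)}$. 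In the sub-case $\underline r(\lambda,\epsilon)=\tfrac12$ (equivalently $16\lambda\le\log(1/A)$) one bounds $2\sqrt{\lambda'/\log(1/A)}\le C/2$ using $16\lambda\le\log(1/A)$ and $\log(1/A)\ge L_0$; in the sub-case $\underline r(\lambda,\epsilon)=2\sqrt{\lambda/\log(1/A)}$ the target becomes $\lambda'\le C^2\lambda$, which holds since $\lambda'-\lambda=2C\sqrt{\lambda/\log(1/A)}\le 2C\sqrt{\lambda}/\sqrt{L_0}$ while $\sqrt\lambda\gtrsim\sqrt{L_0}$ in this regime, so $\lambda'-\lambda\lesssim C\lambda$. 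If instead $\lambda\in[c,1)$, then $\underline r(\lambda,\epsilon)=(1-\tfrac1{6e})\lambda$ and $\lambda'=\lambda\bigl(1+C(1-\tfrac1{6e})\bigr)$: when $\lambda'<1$ the bound reads $(1-\tfrac1{6e})\lambda'\le C(1-\tfrac1{6e})\lambda$, i.e. $1+C(1-\tfrac1{6e})\le C$, which is exactly $C\ge 6e$; when $\lambda'\ge 1$ one uses $\lambda'\le 1+C(1-\tfrac1{6e})$ together with $\lambda\ge c$ and $\log(1/A)\ge L_0$ to dominate $\tfrac12\vee 2\sqrt{\lambda'/\log(1/A)}$, which is of order $\sqrt C$, by $C\underline r(\lambda,\epsilon)\ge C(1-\tfrac1{6e})c$ once $C$ is large relative to $c$. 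Taking $C_0$ to be the maximum of the finitely many thresholds produced (each depending only on $c$ and universal constants) proves the claim, and each inequality is visibly preserved when $C$ is increased.

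The main obstacle is the mild bookkeeping around the discontinuity of $\underline r$ at $\lambda=1$: the shift $\lambda\mapsto\lambda+C\underline r(\lambda,\epsilon)$ can carry $\lambda$ out of the linear regime $[0,1)$ into either the constant regime $[1,\log(1/A)/16]$ or the square-root regime $(\log(1/A)/16,\infty)$, so the bound must be checked in each of the resulting sub-cases (the jump is downward, which is the favorable direction, so no sub-case is delicate — they simply all have to be written out). Everything else follows directly from the closed forms and the smallness of $A$; I do not anticipate needing any Poisson tail estimates or coupling arguments here, in contrast to the earlier lemmas.
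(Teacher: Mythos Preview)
Your proposal is correct. For the first inequality you and the paper do exactly the same thing: observe that $\bar r(\cdot,\epsilon)$ is non-decreasing, hence $\bar r(\lambda-C\bar r(\lambda,\epsilon),\epsilon)\le\bar r(\lambda,\epsilon)\le C\bar r(\lambda,\epsilon)$ for any $C\ge 1$.

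For the second inequality your route differs from the paper's. You work directly with the piecewise closed form of $\underline r$ and split into cases according to whether $\lambda\in[c,1)$ or $\lambda\ge 1$, and then into sub-cases tracking which branch of the max is active and where the shifted point $\lambda'=\lambda+C\underline r(\lambda,\epsilon)$ lands relative to the breakpoints $1$ and $\log(1/A)/16$. The paper instead invokes Lemma~\ref{lm:test-rate-CI-length-connection-pois} to get the uniform two-sided estimate $\underline r(\lambda,\epsilon)\asymp\sqrt{\lambda}\bigl(\tfrac{1}{\sqrt{\log n}}+\tfrac{1}{\sqrt{\log(1/\epsilon)}}\bigr)+1$ on all of $[c,\infty)$, combines it with $\underline r(\lambda,\epsilon)\lesssim\lambda$, and concludes $\underline r(\lambda+C_0\underline r(\lambda,\epsilon),\epsilon)\lesssim\sqrt{C_0}\,\underline r(\lambda,\epsilon)$ in one line, with no case analysis at all. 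Your approach is self-contained and avoids citing the auxiliary rate lemma, at the cost of the bookkeeping you flagged around the discontinuity at $\lambda=1$; the paper's approach is shorter because the $\asymp$ relation absorbs all the regime transitions into the hidden constants.
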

Also, we will use the following lemma, which establishes the asymptotic order of $\overline{r}(\lambda,\epsilon)$, $\underline{r}(\lambda,\epsilon)$, and $\ell(n,\epsilon,\lambda)$ in the Poisson setting. Its proof is similar to that of Lemma \ref{lm:test-rate-CI-length-connection}, so we omit the proof here.
\begin{Lemma} \label{lm:test-rate-CI-length-connection-pois}
	    Suppose $\epsilon \in [0, 1/2]$ and $n \geq 2$. For $\overline{r}(\lambda,\epsilon)$ defined by (\ref{def:r-Poisson-upper}), $\underline{r}(\lambda, \epsilon)$ defined by (\ref{def:r-Poisson-under}), and $\ell(n,\epsilon,\lambda)$ defined by \eqref{eq:l-poi}, we have
\begin{eqnarray*}
\overline{r}(\lambda, \epsilon) &\asymp& \sqrt{\lambda}\left(\frac{1}{\sqrt{\log n}}+\frac{1}{\sqrt{\log(1/\epsilon)}}\right) + 1, \\
\underline{r}(\lambda, \epsilon) &\asymp& \left(\sqrt{\lambda}\left(\frac{1}{\sqrt{\log n}}+\frac{1}{\sqrt{\log(1/\epsilon)}}\right) + 1 \right) \wedge \lambda, \\
\ell(n,\epsilon,\lambda) &\asymp&  \overline{r}(\lambda, \epsilon)\wedge \underline{r}(\lambda, \epsilon) + \frac{1}{n}+\epsilon, 
\end{eqnarray*}
     where $\asymp$ suppresses dependence on $\epsilon_{\max}$ and $\alpha$. 
\end{Lemma}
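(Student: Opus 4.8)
The plan is to mirror the proof of Lemma~\ref{lm:test-rate-CI-length-connection}, making the substitutions $p(1-p)/m \to \lambda$ and $1/m \to 1$ that turn the binomial quantities into their Poisson counterparts; no probabilistic input is needed, only elementary algebra with the explicit formulas \eqref{def:t-Poisson-upper}, \eqref{def:r-Poisson-upper}, \eqref{def:t-Poisson-under}, \eqref{def:r-Poisson-under} and the definition \eqref{eq:l-poi}. First I would set $A = \epsilon + \sqrt{\log(24/\alpha)/(2n)}$, the quantity appearing inside $\overline{t}$ and $\underline{t}$, and record the two-sided bound $\max\{\epsilon,\sqrt{\log(24/\alpha)/(2n)}\} \le A \le 2\max\{\epsilon,\sqrt{\log(24/\alpha)/(2n)}\}$. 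This yields $\log(1/A) \asymp \log(1/\epsilon)\wedge\log n$ and hence $\tfrac{1}{\sqrt{\log(1/A)}} \asymp \tfrac{1}{\sqrt{\log n}} + \tfrac{1}{\sqrt{\log(1/\epsilon)}}$, with $\asymp$ constants depending only on $\alpha$ and $\epsilon_{\max}$; I will use this identification throughout, working in the regime where $A$ is bounded away from $1$ so that $\overline{t},\underline{t}$ are well defined and $\log(1/A)\gtrsim 1$.

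Next I would obtain a closed form for $\overline{r}(\lambda,\epsilon)$. From \eqref{def:t-Poisson-upper}, $\lambda - \overline{t}(\lambda,\epsilon) = \min\{\lambda/2,\ \tfrac18\sqrt{\lambda\log(1/A)}\}$, so substituting into \eqref{def:r-Poisson-upper} gives $\overline{r}(\lambda,\epsilon) = \tfrac12$ whenever $\lambda \le \log(1/A)/16$ (in particular at $\lambda=0$) and $\overline{r}(\lambda,\epsilon) = 2\sqrt{\lambda/\log(1/A)}$ whenever $\lambda > \log(1/A)/16$; in either case $\overline{r}(\lambda,\epsilon) = \tfrac12 \vee 2\sqrt{\lambda/\log(1/A)}$. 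Since $a\vee b \asymp a+b$ for $a,b\ge0$, this is $\asymp 1 + \sqrt{\lambda}\bigl(\tfrac1{\sqrt{\log n}}+\tfrac1{\sqrt{\log(1/\epsilon)}}\bigr)$, which is the first claimed equivalence (and, unlike the binomial case, there is no $\wedge(1-p)$ term because the Poisson support is unbounded). For $\underline{r}(\lambda,\epsilon)$ I would split at $\lambda=1$: for $\lambda\in[0,1)$, \eqref{def:r-Poisson-under} gives $\underline{r}(\lambda,\epsilon)=(1-\tfrac1{6e})\lambda\asymp\lambda$, and since $\sqrt{\lambda}(\cdots)+1\ge1>\lambda$ the target $\bigl(\sqrt{\lambda}(\cdots)+1\bigr)\wedge\lambda$ also equals $\lambda$; for $\lambda\ge1$, \eqref{def:t-Poisson-under} gives $\underline{t}(\lambda,\epsilon)-\lambda = \min\{\lambda/2,\tfrac18\sqrt{\lambda\log(1/A)}\}$, the same expression as $\lambda-\overline{t}(\lambda,\epsilon)$, so $\underline{r}(\lambda,\epsilon)=\overline{r}(\lambda,\epsilon)$, and using $\lambda\ge1$ (so $\sqrt{\lambda}\le\lambda$ and $1\le\lambda$) together with $\log(1/A)\gtrsim1$ one checks $1+\sqrt{\lambda/\log(1/A)}\lesssim\lambda$, hence $\overline{r}(\lambda,\epsilon)\asymp\bigl(\sqrt{\lambda}(\cdots)+1\bigr)\wedge\lambda$ on $[1,\infty)$ too. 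This gives the second equivalence.

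Finally, the order of $\ell(n,\epsilon,\lambda)$ is read off by combining the two. A short case check shows $\overline{r}(\lambda,\epsilon)\wedge\underline{r}(\lambda,\epsilon)\asymp\bigl(\sqrt{\lambda}(\tfrac1{\sqrt{\log n}}+\tfrac1{\sqrt{\log(1/\epsilon)}})+1\bigr)\wedge\lambda$: for $\lambda\ge1$ the two quantities coincide and already equal the right-hand side; for $\lambda<1$ one uses $\overline{r}(\lambda,\epsilon)\ge\tfrac12$ and $\underline{r}(\lambda,\epsilon)\asymp\lambda$ to conclude $\overline{r}\wedge\underline{r}\asymp\lambda$, which matches $\bigl(\sqrt{\lambda}(\cdots)+1\bigr)\wedge\lambda=\lambda$. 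Since by \eqref{eq:l-poi} we have $\ell(n,\epsilon,\lambda) = \bigl(\sqrt{\lambda}(\tfrac1{\sqrt{\log n}}+\tfrac1{\sqrt{\log(1/\epsilon)}})+1\bigr)\wedge\lambda + \tfrac1n+\epsilon$, adding $\tfrac1n+\epsilon$ to both sides of the previous display yields the third equivalence. The only genuinely delicate points are bookkeeping the two case splits (at $\lambda=\log(1/A)/16$ and at $\lambda=1$) and verifying that $\log(1/A)$ is bounded below by a positive constant in the relevant regime, so that the term $1$ does not dominate $\sqrt{\lambda/\log(1/A)}$ in the wrong way when $\lambda\ge1$; if desired, the closed form $\overline{r}(\lambda,\epsilon)=\tfrac12\vee2\sqrt{\lambda/\log(1/A)}$ (together with its $\underline{r}$ analogue) can be isolated beforehand as the Poisson counterpart of Lemma~\ref{lm:additional-r-property}, proved by the same arithmetic.
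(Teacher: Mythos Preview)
Your proposal is correct and takes essentially the same approach as the paper: the paper explicitly states that the proof ``is similar to that of Lemma~\ref{lm:test-rate-CI-length-connection}, so we omit the proof here,'' and what you have written is precisely that transplanted argument, including the Poisson analogue of Lemma~\ref{lm:additional-r-property} (i) (your closed form $\overline{r}(\lambda,\epsilon)=\tfrac12\vee 2\sqrt{\lambda/\log(1/A)}$) and the case splits at $\lambda=1$ and $\lambda=\log(1/A)/16$. Your caveat that one needs $\log(1/A)$ bounded below matches the paper's implicit reliance on the standing hypothesis that $A$ is small (inherited from Lemma~\ref{lm:additional-r-property}).
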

With these results, we are ready to show the length guarantee of $\wt{\mathrm{CI}}$. By the same analysis as that used for the length guarantee in the proof of Theorem \ref{thm:bino-upper-no-dis}, i.e., the analysis of \eqref{Ineq: Length guarantee}, we have, for any $\lambda \in [0,\infty)$ and $\epsilon \in [0, \epsilon_{\max}]$, 
\begin{equation}\label{Ineq: Length guarantee-pois}
    \begin{split}
        \sup_{ Q } P_{\epsilon, \lambda, Q}\left( |\widetilde{\CI}| \geq C' \ell(n,\epsilon,\lambda) \right) & \leq         \sup_{ Q } P_{\epsilon, \lambda, Q}\left( |\widetilde{\CI}| \geq C' \ell(n,\epsilon,\lambda), \lambda \in \widetilde{\CI}\right) + \alpha / 6.
    \end{split}
\end{equation}
Next, we show \begin{equation} \label{ineq:Poisson-length-upp}
	\sup_{ Q } P_{\epsilon, \lambda, Q}\left( |\widetilde{\CI}| \geq C' \ell(n,\epsilon,\lambda), \lambda \in \widetilde{\CI}\right) \leq \alpha/3.
\end{equation} Let $C_0' = 40 \log(24 / \alpha) \vee 12 + 1$ and $c = \frac{1}{2C'_0}$. We divide the proof into two cases: $\lambda \in [0, c)$ and $\lambda \in [c, \infty)$.

\begin{itemize}[leftmargin=*]
	\item (Case 1: $\lambda \in [0, c)$) In this case, it suffices to show that $\sup_{ Q } P_{\epsilon, \lambda, Q}\left( |\widetilde{\CI}| \geq C'_0 \ell(n,\epsilon,\lambda), \lambda \in \widetilde{\CI}\right) \leq \alpha/3
   $ where $C_0' = 40 \log(24 / \alpha) \vee 12 + 1$. Then, for any $C' \geq C_0'$, \eqref{ineq:Poisson-length-upp} holds. It is easy to check that $C_0'(\lambda + \frac{1}{n}+\epsilon) < 1$ as long as $\frac{1}{n}+\epsilon$ is sufficiently small, and that $\lambda + (C_0'-1)(\lambda + \frac{1}{n} + \epsilon) \geq 4(\frac{10\log(24 / \alpha)}{n} + 3\epsilon)$. By following the same approach as in Case 1 of the length guarantee proof of Theorem~\ref{thm:bino-upper-no-dis}, now using Theorem~\ref{thm:type2-Poisson} as well, it is easy to check that \eqref{ineq:Poisson-length-upp} holds
for some sufficiently large constant $C'$ depending on $\alpha$ only.

\item (Case 2: $\lambda \in [c, \infty)$)  By Lemma \ref{lm:test-rate-CI-length-connection-pois}, it is easy to check $\ell(n, \epsilon, \lambda)  \asymp \sqrt{\lambda}\left(\frac{1}{\sqrt{\log n}}+\frac{1}{\sqrt{\log(1/\epsilon)}}\right) + 1 \asymp \bar{r}(\lambda, \epsilon) \wedge \underline{r}(\lambda, \epsilon) \asymp \bar{r}(\lambda, \epsilon) \vee \underline{r}(\lambda, \epsilon)$. By the same analysis as in Case 2 in the length guarantee proof of Theorem~\ref{thm:bino-upper-no-dis}, we have
	\begin{equation*}
		\begin{split}
			&\sup_Q P_{\epsilon,\lambda,Q}\left(|\wt{\mathrm{CI}}|\geq C' \ell(n, \epsilon, \lambda)  ,\lambda\in\wt{\mathrm{CI}}\right) \\
&  \leq \sup_Q P_{\epsilon,\lambda,Q}\Bigg(\lambda - \frac{C''}{2} \bar{r}(\lambda, \epsilon) \in \wt{\CI} \Bigg)  + \sup_Q P_{\epsilon,\lambda,Q}\Bigg(\lambda + \frac{C''}{2}  \underline{r}(\lambda, \epsilon)   \in \wt{\CI}\Bigg)\\
& \leq \sup_Q P_{\epsilon,\lambda,Q}\Bigg(\lambda - \frac{C''}{2} \bar{r}(\lambda, \epsilon) \in \wt{\CI} \Bigg)  +  \sup_Q P_{\epsilon,\lambda,Q}\Bigg(\psi_{\lambda + \frac{C''}{2}  \underline{r}(\lambda, \epsilon), \epsilon}^-   = 0\Bigg).
		\end{split}
	\end{equation*}	
Notice that $
\sup_Q P_{\epsilon,\lambda,Q}\big(\lambda - \tfrac{C''}{2}\bar{r}(\lambda,\epsilon) \in \wt{\CI}\big)$
equals $0$ if $\lambda - \tfrac{C''}{2}\bar{r}(\lambda,\epsilon) < 0$, and is bounded above by $\sup_Q P_{\epsilon,\lambda,Q}\left(\psi_{\lambda - \tfrac{C''}{2}\bar{r}(\lambda,\epsilon),\epsilon}^+ = 0\right)$ if $\lambda - \tfrac{C''}{2}\bar{r}(\lambda,\epsilon) \geq 0$. When $\lambda - \tfrac{C''}{2}\bar{r}(\lambda,\epsilon) \geq 0$, Theorem \ref{thm:type2-Poisson} implies that we need the following condition to control the Type-2 error of $\psi^+_{\lambda - \frac{C''}{2} \bar{r}(\lambda, \epsilon), \epsilon }$:
	\begin{equation*}
		\begin{split}
				\frac{C''}{2} \bar{r}(\lambda, \epsilon) \geq \bar{r}\left(\lambda - \frac{C''}{2} \bar{r}(\lambda, \epsilon), \epsilon\right).
		\end{split}
	\end{equation*} The condition above holds as long as $C''$ is large by Lemma \ref{lm:r-property-length-pois}. Similarly, by Theorem \ref{thm:type2-Poisson}, to control the Type-2 error of $\psi_{\lambda + \frac{C''}{2} \underline{r}(\lambda, \epsilon), \epsilon}^-$ above, we need 
	\begin{equation*}
		\begin{split}
				(i): \lambda + \frac{C''}{2} \underline{r}(\lambda, \epsilon) \geq 4\left( \frac{10 \log(24/\alpha)}{n} + 3 \epsilon \right) \quad \textnormal{ and }
				\quad (ii):
				\frac{C''}{2} \underline{r}(\lambda, \epsilon) \geq \underline{r}\left(\lambda + \frac{C''}{2} \underline{r}(\lambda, \epsilon), \epsilon\right).
		\end{split}
	\end{equation*} The first condition above holds because $\lambda \geq c$ and $\frac{\log(2 / \alpha)}{n} + \epsilon_{\max}$ is sufficiently small; the second condition above holds as long as $C''$ is large by Lemma \ref{lm:r-property-length-pois}. In summary, as long as $C'$ is large enough to allow $C''$ to be taken sufficiently large, by Theorem \ref{thm:type2-Poisson}, \eqref{ineq:Poisson-length-upp} holds. 
\end{itemize} By plugging \eqref{ineq:Poisson-length-upp} into \eqref{Ineq: Length guarantee-pois}, we have
	\begin{equation*}
		 \sup_QP_{\epsilon,\lambda,Q}\left(|\widetilde{\mathrm{CI}}|\geq C' \ell(n, \epsilon, \lambda)  \right)\leq  \sup_QP_{\epsilon,\lambda,Q}\left(|\wt{\mathrm{CI}}|\geq C' \ell(n, \epsilon, \lambda)  ,\lambda\in\wt{\mathrm{CI}}\right) + \alpha/6 \leq \alpha/2.
	\end{equation*}
This finishes the length guarantee of $\widetilde{\CI}$.

The proof of \eqref{eq:CI-bar-poi-guarantee} is the same as in Part II of the proof of Theorem \ref{thm:upper}. We omit the proof here for simplicity.

\subsubsection{Proof of Theorem \ref{thm:type2-Poisson}}

The proof of Theorem~\ref{thm:type2-Poisson} is similar to that of Theorem~\ref{thm:type2}, we omit most of the details. The simultaneous Type-1 error control of $\psi_{\lambda,\epsilon}^+$ and $\psi_{\lambda,\epsilon}^-$ are directly implied by the simultaneous Type-1 error control of $\phi_{\lambda,\epsilon}^+$ and $\phi_{\lambda,\epsilon}^-$ as $\psi_{\lambda,\epsilon}^+\leq \phi_{\lambda,\epsilon}^+$ and $\psi_{\lambda,\epsilon}^-\leq \phi_{\lambda,\epsilon}^-$. The proof of Type-2 error control of $\psi_{\lambda,\epsilon}^+$ when $\lambda \in [0, \infty)$ and $\psi_{\lambda,\epsilon}^-$ when $\lambda \in [1, \infty)$ is similar to the Type-2 error control proof of $\psi_{p,\epsilon}^+$ when $p \in [0,1-1/m]$ in Theorem~\ref{thm:type2}. 

The proof of Type-2 error control of $\psi_{\lambda,\epsilon}^-$ when $\lambda \in [0, 1)$ is similar to the Type-2 error control proof of $\psi_{p,\epsilon}^+$ when $p \in (1-1/m,1]$ in Theorem~\ref{thm:type2} with the observation that when $\lambda \in [0,1)$, $\phi_{\lambda, \epsilon}^{-}$ is non-decreasing as $\lambda$ increases.

\subsubsection{Proof of Lemma \ref{lm:r-property-length-pois}}
For any $\lambda \in [0, \infty)$ and $C' > 0$ such that $\lambda - C'\overline{r}(\lambda,\epsilon) \geq 0$, it is easy to check that $\overline{r}(\lambda - C' \overline{r}(\lambda ,\epsilon), \epsilon) \leq \overline{r}(\lambda,\epsilon)$ since $\overline{r}(\lambda ,\epsilon)$ is increasing in $\lambda$ for any fixed $\epsilon \in [0, \epsilon_{\max}]$. Thus, setting $C_0 = 1$ yields $\overline{r}(\lambda - C_0 \overline{r}(\lambda ,\epsilon), \epsilon) \leq C_0\overline{r}(\lambda,\epsilon)$ provided that $\lambda - C_0\overline{r}(\lambda,\epsilon) \geq 0$. Note that the same conclusion holds if we replace $C_0$ by any $ C \geq C_0$.

Next, we show that there exists some constant $C_0 > 0 $ such that for any $ C \geq C_0$ and $\epsilon \in [0, \epsilon_{\max}]$,
$$\underline{r}( \lambda + C\underline{r}(\lambda, \epsilon), \epsilon ) \leq C\underline{r}(\lambda, \epsilon), \quad \forall \lambda \in [c, \infty).$$ 

By Lemma \ref{lm:test-rate-CI-length-connection-pois}, it is easy to check that for any $\lambda \in [c, \infty)$,
\begin{equation*}
    \begin{split}
        \underline{r}(\lambda, \epsilon) & \asymp \left(\sqrt{\lambda }\left(\frac{1}{\sqrt{\log n}}+\frac{1}{\sqrt{\log(1/\epsilon)}}\right) + 1\right) \wedge \lambda  \asymp \sqrt{\lambda }\left(\frac{1}{\sqrt{\log n}}+\frac{1}{\sqrt{\log(1/\epsilon)}}\right) + 1.
    \end{split}
\end{equation*}
Thus, for any $C_0 > 1$ and $\lambda \in [c, \infty)$,
\begin{equation*}
\begin{split}
      \underline{r}(\lambda + C_0 \underline{r}(\lambda ,\epsilon)) &\lesssim \sqrt{\lambda + C_0 \underline{r}(\lambda ,\epsilon)}\left(\frac{1}{\sqrt{\log n}}+\frac{1}{\sqrt{\log(1/\epsilon)}}\right) + 1 \\
      & \overset{(a)}\lesssim \sqrt{(C_0 + 1)\lambda }\left(\frac{1}{\sqrt{\log n}}+\frac{1}{\sqrt{\log(1/\epsilon)}}\right) + 1 \lesssim \sqrt{C_0 }\underline{r}(\lambda ,\epsilon),
\end{split}
\end{equation*} where (a) is because $\underline{r}(\lambda ,\epsilon) \lesssim \lambda$. Therefore, as long as $C_0$ is large enough, we have $\underline{r}(\lambda + C_0 \underline{r}(\lambda, \epsilon), \epsilon) \leq C_0  \underline{r}(\lambda, \epsilon)$ and the same conclusion holds if we replace $C_0$ by any $C \geq C_0$. This finishes the proof of this lemma.

\section{Proofs for the Erd\Horig{o}s--R\'{e}nyi Model with Node Contamination} 

This section collects the proofs of Theorem \ref{thm:er-up}, Proposition \ref{prop:cd-lower} and Theorem \ref{thm:er-low}.

\subsection{Proof of Theorem \ref{thm:er-up}} \label{thm:er-up proof}
We begin by stating the following lemma, which describes key properties of $\|\cdot\|_{\mathcal{U}}$ defined by \eqref{def: norm}. Its proof is provided in the subsections. 
\begin{Lemma} \label{lm:norm property}
   For any $n \geq 1$, $\|\cdot \|_{\mathcal{U}}$ defined by (\ref{def: norm}) satisfies
   \begin{itemize}
		\item (i) For any $B, B' \in \bbR^{n \times n}$, $\|B + B'\|_{\mathcal{U}} \leq \|B\|_{\mathcal{U}}  + \|B'\|_{\mathcal{U}} $;
            \item (ii) For any $B \in \bbR^{n \times n}$ and $c \in \bbR$, $\|cB \|_{\mathcal{U}} = |c|\|B\|_{\mathcal{U}}$;
            \item (iii) For any $B \in \bbR^{n \times n}$, $\|B\|_{\mathcal{U}} \geq 0$;
		\item (iv) For any $B \in \bbR^{n \times n}$ and $S' \subseteq S \subseteq [n]$,  $\|B_{S'\times S'}\|_{\mathcal{U}} \leq \|B_{S\times S}\|_{\mathcal{U}}$;
        \item (v) For any $c \in \bbR$ and $S \subseteq[n]$, $\|(cJ)_{S \times S}\|_{\mathcal{U}} = |c||S|(|S|-1)$.
    \end{itemize}
    By (i), (ii), and (iii), $\|\cdot \|_{\mathcal{U}}$ defines a seminorm.
\end{Lemma}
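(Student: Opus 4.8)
The plan is to verify the five properties directly from the definition $\|B\|_{\mathcal{U}}=\sup_{U\in\mathcal{U}}|\langle B,U\rangle|$, using that the supremum ranges over a fixed family of matrices and that each map $B\mapsto\langle B,U\rangle$ is linear.

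For (i), (ii), and (iii) I would argue pointwise in $U$. Fix any $U\in\mathcal{U}$. Linearity and the ordinary triangle inequality give $|\langle B+B',U\rangle|\le|\langle B,U\rangle|+|\langle B',U\rangle|\le\|B\|_{\mathcal{U}}+\|B'\|_{\mathcal{U}}$ and $|\langle cB,U\rangle|=|c|\,|\langle B,U\rangle|$; taking the supremum over $U\in\mathcal{U}$ yields (i) and (ii). Property (iii) is immediate, since $\|B\|_{\mathcal{U}}$ is a supremum of nonnegative quantities over the nonempty set $\mathcal{U}$ (which contains the zero matrix, e.g. via $S=\emptyset$). Combining (i)--(iii) shows $\|\cdot\|_{\mathcal{U}}$ is a seminorm.

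The only slightly computational parts are (iv) and (v), both of which rest on a single bookkeeping identity: for any $B\in\bbR^{n\times n}$ and any $S',T\subseteq[n]$,
\[
\langle B_{S'\times S'},(J-I)_{T\times T}\rangle=\sum_{\substack{i\neq j\\ i,j\in S'\cap T}}B_{ij}=\langle B,(J-I)_{(S'\cap T)\times(S'\cap T)}\rangle,
\]
which follows by writing out the entrywise product and noting that $J-I$ annihilates the diagonal. For (iv), with $S'\subseteq S$ the right-hand side equals $\langle B_{S\times S},(J-I)_{(S'\cap T)\times(S'\cap T)}\rangle$ because $S'\cap T\subseteq S$, so that $B_{ij}=(B_{S\times S})_{ij}$ for $i,j\in S'\cap T$; since $(J-I)_{(S'\cap T)\times(S'\cap T)}\in\mathcal{U}$, its absolute value is at most $\|B_{S\times S}\|_{\mathcal{U}}$, and taking the supremum over $T$ (equivalently over $U\in\mathcal{U}$) gives $\|B_{S'\times S'}\|_{\mathcal{U}}\le\|B_{S\times S}\|_{\mathcal{U}}$. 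For (v), specializing $B=cJ$ in the identity gives $\langle(cJ)_{S\times S},(J-I)_{T\times T}\rangle=c\,|S\cap T|(|S\cap T|-1)$, whose absolute value is a nondecreasing function of $|S\cap T|$ and is therefore maximized over $T$ by any $T\supseteq S$ (in particular $T=S$), yielding exactly $|c|\,|S|(|S|-1)$; the value is attained, so equality holds.

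I do not anticipate a genuine obstacle: the statement is a collection of elementary verifications. The one point worth writing out carefully is the restriction identity above, since it is precisely what makes the ``zeroing out'' operation $B\mapsto B_{S\times S}$ interact cleanly with the test matrices defining $\|\cdot\|_{\mathcal{U}}$, and it is the common engine behind both (iv) and (v).
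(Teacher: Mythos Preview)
Your proposal is correct and follows essentially the same route as the paper: the paper also omits (i)--(iii) as routine, proves (iv) via the identity $\langle B_{S'\times S'},U'\rangle=\langle B_{S\times S},U'_{S'\times S'}\rangle$ (your restriction identity in slightly different notation), and handles (v) by pulling out $|c|$ via (ii) and observing the supremum is attained at $T=S$. Your write-up is a bit more explicit about the underlying bookkeeping identity, but the arguments are otherwise identical.
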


As \eqref{Ineq: er-est} holds trivially for $n$ of constant order, we only need to consider the case of sufficiently large $n$. Fix $p \in [0,1]$, and $P\in \mathcal{G}(n, p, \epsilon_{\max})$. Suppose $A \sim P$ and let $G \subseteq [n]$ be the set of uncontaminated nodes under $A$. By Bernstein's inequality (see Lemma \ref{lm:Binomial-prop} (ii)), we have with probability at least $1 - \alpha/3$, the following event holds:
\begin{equation*}
    (A) = \left\{ |G| \geq \frac{3n}{4} \right\}.
\end{equation*}
For simplicity, define $\wh{p}_S$ for any $S \subseteq [n]$ as 
\begin{equation*}
    \wh{p}_{S} = \frac{1}{\#S(\#S - 1)}\sum_{i\in S}\sum_{j\in S}A_{ij}.
\end{equation*}
Then, given $(A)$ happens, we have
\begin{equation}\label{ineq:SBM upper bound basic ineq}
    \begin{split}
   &\frac{n}{2}\Big(\frac{n}2{} - 1\Big)  |\wh{p} - p|    \overset{(a)}\leq    |\wh{S} \cap G|(|\wh{S} \cap G| - 1)  |\wh{p} - p|\overset{\textnormal{Lemma }\ref{lm:norm property}\,(v)} = \|((\wh{p} - p)J)_{\wh{S} \cap G \times \wh{S} \cap G} \|_{\mathcal{U}} \\
        & \overset{\textnormal{Lemma }\ref{lm:norm property}\,(i)}\leq \|(A - \wh{p}J)_{\wh{S} \cap G \times \wh{S} \cap G} \|_{\mathcal{U}} + \|(A - pJ)_{\wh{S} \cap G \times \wh{S} \cap G} \|_{\mathcal{U}} \\
        & \overset{\textnormal{Lemma }\ref{lm:norm property}\,(iv)}\leq \|(A - \wh{p}J)_{\wh{S} \times \wh{S} } \|_{\mathcal{U}} + \|(A - pJ)_{G \times  G} \|_{\mathcal{U}} \\
        & \overset{(b)} =\|(A - \wh{p}_{\wh{S}} J)_{\wh{S} \times \wh{S} } \|_{\mathcal{U}} + \|(A - pJ)_{G \times G} \|_{\mathcal{U}}  \overset{(c)} \leq \|(A - \wh{p}_{G} J)_{G \times G} \|_{\mathcal{U}} + \|(A - pJ)_{G \times  G} \|_{\mathcal{U}} \\
        & \overset{\textnormal{Lemma }\ref{lm:norm property}\,(i)} \leq 2\|(A - p J)_{G \times G} \|_{\mathcal{U}} + \|((p - \wh{p}_{G})J)_{G \times  G} \|_{\mathcal{U}} \\
        & \overset{\textnormal{Lemma }\ref{lm:norm property}\,(v)} = 2\|(A - p J)_{G \times G} \|_{\mathcal{U}} + |G|(|G|-1)|p - \wh{p}_{G}|
    \end{split}
\end{equation}
where in (a) we use the fact that $|\wh{S} \cap G| \geq \frac{n}{2}$, since $|\wh{S}|, |G| \geq \frac{3n}{4}$ under event $(A)$; (b) is by the definition of $\wh{p}$; (c) is by the definition of $\wh{S}$ in \eqref{eq:find-whS} together with the fact that $|G| \geq \frac{3n}{4}$ when $(A)$ occurs. 

We next derive upper bounds on the two terms on the right-hand side of \eqref{ineq:SBM upper bound basic ineq} separately, each of which holds with probability at least $1 - \alpha/3$.

First, let $t = \frac{16\log(6\cdot2^n/\alpha)}{3} + 8\sqrt{p(1-p) \left(\frac{n(n-1)}{2}\right)\log(6\cdot2^n/\alpha)}$. Note that for all $(i, j) \in G \times G$ with $i < j$, the entries $A_{ij}$ are independent and identically distributed according to $\mathrm{Bernoulli}(p)$. Then 
\begin{equation*}
    \begin{split}
        &\bbP(2\|(A - p J)_{G \times G} \|_{\mathcal{U}} \geq t)  = \bbP\left(4\sup_{S \subseteq [n]}\left|\sum_{(i, j) \in S\cap G \times S\cap G:i<j }(A_{ij} - p) \right|  \geq t\right) \\
        & \leq \sum_{S \subseteq [n]} \bbP\left(\left|\sum_{(i, j) \in S\cap G \times S\cap G:i<j }(A_{ij} - p) \right| \geq \frac{t}{4}\right) \leq \sum_{S \subseteq [n]} \frac{\alpha}{3\cdot 2^{n}} =\alpha /3,
    \end{split}
\end{equation*}
where in the first inequality we use the union bound and in the last inequality we use Bernstein's inequality (see Lemma \ref{lm:Binomial-prop} (ii)). Hence, for some large constant $C_1>0$ only depending on $\alpha$, we have with probability at least $1 - \alpha/3$, the following event holds:
$$
(B_1) = \left\{2\|(A - p J)_{G \times G} \|_{\mathcal{U}} \leq C_1\left(n + \sqrt{p(1-p)n^3}\right)\right\}.
$$ In addition, by Bernstein's inequality again, we obtain that for some large constant $C_2>0$ only depending on $\alpha$, the following event holds with probability at least $1 - \alpha/3$:
$$
(B_2) = \left\{|G|(|G| - 1)|p - \wh{p}_{G}| \leq C_2 \left(1 + \sqrt{p(1-p)n^2}\right) \right\}.
$$
Thus, by \eqref{ineq:SBM upper bound basic ineq}, when the events $(A)$, $(B_1)$, and $(B_2)$ occur simultaneously, we have
\begin{equation*}
 |\wh{p} - p| \lesssim \sqrt{\frac{p(1-p)}{n}} + \frac{1}{n}.
\end{equation*}
 Notice that $(A)$, $(B_1)$, and $(B_2)$ happen simultaneously with probability at least $1 - \alpha$ by the union bound. Also, the above arguments hold for any $p \in [0,1]$ and any $P \in \mathcal{G}(n, p, \epsilon_{\max})$. Therefore, we have shown that \eqref{Ineq: er-est} holds.

Next, by a similar analysis to that in the proof of Proposition~\ref{prop:bench}, where we defined a confidence interval using the estimator as in \eqref{Def: CI using estimator} and proved its coverage and length guarantees, it is easy to check that the interval~\eqref{eq:er-conser} also satisfies the coverage and length guarantees. This finishes the proof of this theorem.

\subsubsection{Proof of Lemma \ref{lm:norm property}}
The proofs of (i), (ii), and (iii) are straightforward so we omit them. Now, fix $B \in \mathbb{R}^{n \times n}$ and $S' \subseteq S \subseteq [n]$. Then, there exists $U' \in \mathcal{U}$ such that $\|B_{S' \times S'}\|_{\mathcal{U}} = 
|\iprod{B_{S' \times S'}}{U'}|$. It is easy to check that $$\iprod{B_{S' \times S'}}{U'} = \iprod{B_{S' \times S'}}{U'_{S' \times S'}} = \iprod{B_{S \times S}}{U'_{S' \times S'}}.$$
Thus, we have
$$
\|B_{S' \times S'}\|_{\mathcal{U}}= |\iprod{B_{S \times S}}{U'_{S' \times S'}}|\leq \sup_{U \in \mathcal{U}} |\iprod{B_{S \times S}}{U}| = \|B_{S \times S}\|_{\mathcal{U}}.
$$ 
Finally, for any $c \in \bbR$ and $S \subseteq [n]$, we have
\begin{equation*}
    \|(cJ)_{S \times S}\|_{\mathcal{U}} \overset{\text{Claim }(ii)}= |c|\|J_{S \times S}\|_{\mathcal{U}} = |c||S|(|S|-1),
\end{equation*}
where the second equality is trivial by the definition of $\|\cdot\|_{\mathcal{U}}$. This finishes the proof of this lemma.

\subsection{Proof of Proposition \ref{prop:cd-lower}}
 The proof of Proposition \ref{prop:cd-lower} is a simplification of the proof of Theorem 3.3 in \cite{jin2021optimal}, adapted to our setting. It suffices to consider the case $p \in [1/n, 1/2]$. Indeed, once the claim is established for $p \in [1/n, 1/2]$, the case $p \in [1/2, 1 - 1/n]$ follows by symmetry. 
Now, let
$P = \mathcal{G}(n, p, 0)$ and $Q = \mathrm{SBM}\left(n, p + r, p + \left( \frac{1 - \epsilon_{\max}}{\epsilon_{\max}} \right)^2 r, p - \frac{1 - \epsilon_{\max}}{\epsilon_{\max}} r,  \epsilon_{\max} \right)$ for simplicity. For any $p \in [1/n, 1/2]$ and $ r \leq c \left(\sqrt{\frac{p(1-p)}{n}} + \frac{1}{n}\right)$, it is easy to check that $Q$ is a valid distribution as long as $c$ is sufficiently small. Now, choose $c > 0$ sufficiently small so that $c \leq \frac{\epsilon_{\max}\alpha}{2 + \sqrt{2}}$. Then, 
\begin{equation} \label{Ineq: SBM r upper bound}
    \begin{split}
        r & \leq c \left(\sqrt{\frac{p(1-p)}{n}} + \frac{1}{n}\right) \overset{(a)}\leq  c\left(\sqrt{\frac{p(1-p)}{n}} + \sqrt{\frac{p}{n}}\right) \\
    & \overset{(b)}\leq c\left(\sqrt{\frac{p(1-p)}{n}} + \sqrt{\frac{2p(1 - p)}{n}}\right) =  (1 +\sqrt{2})c\sqrt{\frac{p(1-p)}{n}}  \overset{(c)} \leq \frac{ \epsilon_{\max}\alpha}{\sqrt{2}}\sqrt{\frac{p (1 - p)}{n}},
    \end{split}
\end{equation}
where (a) holds for all $p \geq 1/n$; (b) holds for all $p \leq 1/2$; (c) holds since $c \leq \frac{\epsilon_{\max}\alpha}{2 + \sqrt{2}}$.

Next, we will show that when $p \in [1/n, 1/2]$ and $r \leq c\left(\sqrt{\frac{p(1-p)}{n}} + \frac{1}{n}\right)$, then $\TV(P, Q) \leq \alpha$. We just need to show
    $$\chi^2 \left(Q \middle\|P  \right) = \int \left[\frac{dQ}{dP}\right]^2dP - 1\leq 2\alpha^2,$$
since $\TV(P, Q) \leq \sqrt{\frac{1}{2}\chi^2(Q\| P)}$ for any distributions $P$ and $Q$. Let $Z = (Z_1, \ldots, Z_n)$ be a random vector whose components $Z_i$ are i.i.d. random variables satisfying $P(Z_i=\epsilon_{\max})=1-\epsilon_{\max}$ and $P(Z_i=-(1-\epsilon_{\max}))=\epsilon_{\max}$ for all $i \in [n]$. Let 
$$q_{ij}(Z) = p + \frac{ Z_i Z_j}{ (\epsilon_{\max})^2}r,$$
for all $1 \leq i < j \leq n$. Conditioned on $Z$, consider the random adjacency matrix $A \in \{0,1\}^{n \times n}$ whose upper-triangular entries $\{A_{ij} :1 \leq  i < j \leq n \}$ are conditionally independent and distributed as $A_{ij} \mid Z \sim \mathrm{Bernoulli}(q_{ij}(Z))$. Then, it is easy to check that the marginal distribution of $A$ is $Q$. Let $\tilde{Z}=(\tilde{Z_1},\dots, \tilde{Z_n})$ be the independent copy of $Z$. Then, 
  \begin{equation*}
        \begin{split}
            &\int \left[\frac{dQ}{dP}\right]^2dP  = \mathbb{E}\left[\prod_{ 1 \leq i < j \leq n} 
            \sum_{a_{ij} \in \{0, 1\}} \frac{\left(q_{ij}(Z)^{a_{ij}}(1-q_{ij}(Z))^{1-a_{ij}}\right) \left( q_{ij}(\tilde{Z})^{a_{ij}}(1-q_{ij}(\tilde{Z}))^{1-a_{ij}} \right)}{p^{a_{ij}}(1-p)^{1-a_{ij}}} \right]\\
            & = \mathbb{E}\left[\prod_{ 1 \leq i < j \leq n} \left\{\frac{q_{ij}(Z)q_{ij}(\tilde{Z})}{p}+\frac{(1-q_{ij}(Z))(1-q_{ij}(\tilde{Z}))}{1-p} \right\}  \right] \\
            & =  \mathbb{E}\left[\prod_{ 1 \leq i < j \leq n} \left\{1+\frac{(q_{ij}(Z)-p)(q_{ij}(\tilde{Z})-p)}{p(1-p)}\right\}  \right] = \mathbb{E}\left[\prod_{ 1 \leq i < j \leq n} \left\{1+\frac{Z_iZ_j\tilde{Z_i}\tilde{Z_j}}{p(1-p)(\epsilon_{\max})^2}r^2\right\}  \right].
        \end{split}
    \end{equation*}
Therefore, if we let $S=\frac{1}{p(1-p)(\epsilon_{\max})^2}r^2$, we have
        \begin{equation}\label{chi-squared divergence}
        \begin{split}
            &\int \left[\frac{dQ}{dP}\right]^2dP = 
             \mathbb{E}\left[\prod_{ 1 \leq i < j \leq n} \left\{1+S Z_iZ_j\tilde{Z_i}\tilde{Z_j}\right\}  \right] \\
             & \leq \mathbb{E}\left[\exp\left(S\sum_{ 1 \leq i < j \leq n}Z_iZ_j\tilde{Z_i}\tilde{Z_j} \right) \right] = \mathbb{E}\left[\exp\left(\frac{S}{2}\left(\Big[\sum_{i =1 }^n Z_i\tilde{Z_i}\Big]^2-\sum_{i = 1}^n Z_i^2\tilde{Z_i}^2\right) \right) \right]\\
             &\leq \mathbb{E}\left[\exp\left(\frac{S}{2}\Big[\sum_{i=1}^n Z_i\tilde{Z_i}\Big]^2 \right) \right]= 1+\int_{0}^{\infty}e^t \mathbb{P}\bigg(S\Big[\sum_{i = 1}^n Z_i\tilde{Z_i}\Big]^2>2t\bigg)dt,
        \end{split}
    \end{equation}
    where in the first inequality we use the fact that $1+x \leq e^x$ for all $x \in \mathbb{R}$. Notice that $\sum_{i=1}^n Z_i\tilde{Z_i}$ is sum of independent, mean-zero random variables with $|Z_i\tilde{Z_i}|\leq1$ for all $i \in [n]$. Therefore, by Hoeffding's inequality, we have 
    \begin{equation}\label{Hoeffding's inequality}
        \mathbb{P}\left(\Big|\sum_{i=1}^n Z_i\tilde{Z_i}\Big|>\sqrt{\frac{2t}{S}}\right) \leq 2\exp\left(-\frac{t}{nS}\right),
    \end{equation}
for any $t > 0$. By \eqref{Ineq: SBM r upper bound}, $nS =\frac{nr^2}{p(1-p)(\epsilon_{\max})^2} \leq \alpha^2 /2 \leq 1/2$. Combining \eqref{chi-squared divergence} and \eqref{Hoeffding's inequality}, it follows that
    \begin{equation*}
        \begin{split}
            \int \left[\frac{dQ}{dP}\right]^2dP &\leq 1+2\int_{0}^{\infty}e^{\big(1-\frac{1}{nS}\big)t} dt  = 1+\frac{2nS}{1-nS} \quad (\text{as } nS < 1)\\
            & \leq 1 + 4nS \quad (\text{as } nS \leq 1/2)\\
            & \leq 1 + 2\alpha^2 \quad (\text{as } nS \leq \alpha^2 /2).
        \end{split}
    \end{equation*}
Therefore, we have
    $
            \chi^2(Q \| P) = \int \left[\frac{dQ}{dP}\right]^2dP - 1 \leq 2\alpha^2.
$
This finishes the proof of this proposition.

\subsection{Proof of Theorem \ref{thm:er-low}}

The proof of Theorem~\ref{thm:er-low} is similar to that of Theorem~\ref{thm:lower}, which follows directly from Lemma~\ref{Lem: TV to lower bound} and Theorem~\ref{thm:test-low}. In our setting, Theorem~\ref{thm:test-low} is replaced by Proposition~\ref{prop:cd-lower}. Also the following lemma serves as the analogue of Lemma~\ref{Lem: TV to lower bound}, and its proof is similar and thus we omit the proof here. 

\begin{Lemma}\label{lm: TV to lower bound SBM}
    For any $\alpha \in (0, 1/4)$, $\epsilon, \epsilon_{\max} \in [0, 1]$ with $\epsilon \leq \epsilon_{\max}$, and $p,q \in [0,1]$ satisfying $
        \inf_{P \in \mathcal{G}(n, p, \epsilon),Q \in \mathcal{G}(n, q, \epsilon_{\max})} \TV\left( P, Q\right)   \leq \alpha$,
    we have $r_{\alpha}^{\rm ER}(\epsilon,p,\epsilon_{\max}) > |p - q|$.
\end{Lemma}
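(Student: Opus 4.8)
The plan is to follow the same route as the proof of Lemma~\ref{Lem: TV to lower bound}: argue by contraposition, and turn a confidence interval that is short under $\mathcal{G}(n,p,\epsilon)$ into a test that separates $\mathcal{G}(n,p,\epsilon)$ from $\mathcal{G}(n,q,\epsilon_{\max})$ with combined error strictly below $1-\alpha$, which is impossible once $\inf_{P\in\mathcal{G}(n,p,\epsilon),Q\in\mathcal{G}(n,q,\epsilon_{\max})}\TV(P,Q)\le\alpha$ and $\alpha<1/4$.

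First I would record the model inclusion $\mathcal{G}(n,p,\epsilon)\subseteq\mathcal{G}(n,p,\epsilon_{\max})$ for $\epsilon\le\epsilon_{\max}$, the node-contamination analogue of~\eqref{eq:model-inclusion}: couple the node indicators $z_1,\dots,z_n$ of the two sampling mechanisms so that every node contaminated under the $\epsilon$-mechanism is also contaminated under the $\epsilon_{\max}$-mechanism, and assign the additional $\epsilon_{\max}$-contaminated nodes clean $\mathrm{Bernoulli}(p)$ edges; the resulting law is precisely the prescribed $P\in\mathcal{G}(n,p,\epsilon)$. In particular, every $\widehat{\CI}\in\mathcal{I}_{\alpha}^{\rm ER}(\epsilon_{\max})$ covers $p$ with probability at least $1-\alpha$ under every $P\in\mathcal{G}(n,p,\epsilon)$, and covers $q$ with probability at least $1-\alpha$ under every $Q\in\mathcal{G}(n,q,\epsilon_{\max})$.

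Now fix any $\widehat{\CI}\in\mathcal{I}_{\alpha}^{\rm ER}(\epsilon_{\max})$ and, given $\eta>0$, choose $P^*\in\mathcal{G}(n,p,\epsilon)$ and $Q^*\in\mathcal{G}(n,q,\epsilon_{\max})$ with $\TV(P^*,Q^*)\le\alpha+\eta$. Applying the elementary testing bound $\mathbb{E}_{P^*}[\psi]+\mathbb{E}_{Q^*}[1-\psi]\ge 1-\TV(P^*,Q^*)$ to the test $\psi=\indi\{q\in\widehat{\CI}\}$ and using $Q^*(q\notin\widehat{\CI})\le\alpha$ yields $P^*(q\in\widehat{\CI})\ge 1-2\alpha-\eta$; combining this with $P^*(p\in\widehat{\CI})\ge 1-\alpha$ and a union bound, the event $\{p\in\widehat{\CI}\}\cap\{q\in\widehat{\CI}\}$ has $P^*$-probability at least $1-3\alpha-\eta$, and on this event $\widehat{\CI}$ contains both $p$ and $q$, so $|\widehat{\CI}|\ge|p-q|$. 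Since $\widehat{\CI}$ and $\eta$ were arbitrary, this gives
\begin{equation*}
\inf_{\widehat{\CI}\in\mathcal{I}_{\alpha}^{\rm ER}(\epsilon_{\max})}\ \sup_{P\in\mathcal{G}(n,p,\epsilon)}P\bigl(|\widehat{\CI}|\ge|p-q|\bigr)\ \ge\ 1-3\alpha\ >\ \alpha ,
\end{equation*}
where the final strict inequality is exactly where the hypothesis $\alpha<1/4$ enters. Hence $|p-q|$ violates the defining condition of $r_{\alpha}^{\rm ER}(\epsilon,p,\epsilon_{\max})$, and since $r\mapsto\sup_{P}P(|\widehat{\CI}|\ge r)$ is nonincreasing the same conclusion propagates to a right-neighborhood of $|p-q|$, yielding $r_{\alpha}^{\rm ER}(\epsilon,p,\epsilon_{\max})>|p-q|$.

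The only model-specific ingredient is the inclusion $\mathcal{G}(n,p,\epsilon)\subseteq\mathcal{G}(n,p,\epsilon_{\max})$, which is routine. The step I expect to require the most care is the last one: converting ``$|p-q|$ is excluded from the defining set'' into the strict inequality for the infimum $r_{\alpha}^{\rm ER}$. This is handled exactly as in the proof of Lemma~\ref{Lem: TV to lower bound}: a confidence interval almost certifying $r_{\alpha}^{\rm ER}(\epsilon,p,\epsilon_{\max})\le|p-q|$ would have to make $\sup_{P\in\mathcal{G}(n,p,\epsilon)}P(|\widehat{\CI}|\ge r)$ essentially $\alpha$ for $r$ arbitrarily close to $|p-q|$, whereas the good-event analysis above forces that supremum to be at least $1-3\alpha>\alpha$, so the test $\psi=\indi\{q\in\widehat{\CI}\}$ produces the contradiction $1-\alpha\le 3\alpha$; tracking the arbitrarily small slacks $\eta$ and taking suprema over the nonparametric contamination families throughout is the only bookkeeping involved.
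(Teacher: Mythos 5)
Your overall route --- transfer the coverage of $q$ from $Q^*\in\mathcal{G}(n,q,\epsilon_{\max})$ to $P^*\in\mathcal{G}(n,p,\epsilon)$ via the total variation bound, intersect with the coverage of $p$ under $P^*$, and conclude $|\widehat{\CI}|\ge|p-q|$ with $P^*$-probability at least $1-3\alpha>\alpha$ --- is the same test-inversion scheme the paper has in mind (it omits the proof, pointing to Lemma \ref{Lem: TV to lower bound}), and the quantitative bookkeeping ($1-2\alpha-\eta$, union bound, $\alpha<1/4$) is correct. The genuine gap is the step you call routine: the inclusion $\mathcal{G}(n,p,\epsilon)\subseteq\mathcal{G}(n,p,\epsilon_{\max})$, which you need to assert $P^*(p\in\widehat{\CI})\ge 1-\alpha$ for the specific $P^*$ nearly attaining the TV infimum. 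Your coupling does not establish it. Setting $z_i'=z_i\vee w_i$ so that $z'\sim\mathrm{Bernoulli}(\epsilon_{\max})^{\otimes n}$, the original indicator $z_i$ of a node with $z_i'=1$ remains random given $z'$; an edge touching an originally contaminated node therefore has conditional law given $z'$ equal to a mixture over the latent $z$ of Bernoullis, and two edges sharing such a node are correlated given $z'$. This violates both requirements in the definition of $\mathcal{G}(n,p,\epsilon_{\max})$ (conditional independence of the edges given the indicators, and a fixed Bernoulli parameter per edge given the indicators). The inclusion is genuinely delicate for node contamination, unlike the Huber model where $\{P_{\epsilon,p,Q}:Q\}\subseteq\{P_{\epsilon_{\max},p,Q}:Q\}$ is immediate: at $\epsilon_{\max}=1$ the class $\mathcal{G}(n,p,1)$ contains only product Bernoulli laws, while $\mathcal{G}(n,p,\epsilon)$ with $\epsilon\in(0,1)$ contains laws with strictly positive ${\textrm Corr}(A_{12},A_{13})$, so monotonicity in the contamination level can fail. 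Note the paper only ever invokes this lemma with $\epsilon=0$, where $P^*$ is the plain Erd\Horig{o}s--R\'enyi law and the needed inclusion is trivial (declare every contaminated edge to be $\mathrm{Bernoulli}(p)$); to prove the lemma in its stated generality you must either actually prove the inclusion for $\epsilon\le\epsilon_{\max}\le 1/2$ or restrict the $P$ over which the TV infimum is taken.

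A second, minor point: monotonicity of $r\mapsto\sup_P P(|\widehat{\CI}|\ge r)$ propagates your exclusion to all $r\le|p-q|$, not to a right-neighborhood of $|p-q|$; for $r>|p-q|$ the event that both $p$ and $q$ lie in $\widehat{\CI}$ no longer forces $|\widehat{\CI}|\ge r$. As written you therefore obtain $r_\alpha^{\rm ER}(\epsilon,p,\epsilon_{\max})\ge|p-q|$ rather than the strict inequality. This matches the level of rigor of the paper's own sketch of Lemma \ref{Lem: TV to lower bound}, and the non-strict version suffices for Theorem \ref{thm:er-low}, but you should either weaken the conclusion or add the missing continuity argument.
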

This finishes the proof of this theorem.

\section{Additional Technical Lemmas}

The next lemma collects a few properties regarding $\bar{r}(p, \epsilon)$ defined in \eqref{def:r}.
\begin{Lemma} \label{lm:additional-r-property}
    Denote $A := \epsilon + \sqrt{\frac{\log(24/\alpha)}{2n}}$. Suppose $\epsilon_{\max} +  \sqrt{\frac{\log(24/\alpha)}{2n}}$ is less than a sufficiently small constant. Then for any $\epsilon \in [0, \epsilon_{\max}]$,
        \begin{itemize} 
        \item (i) \begin{equation*}
		\begin{split}
			\bar{r}(p, \epsilon) = \left\{ \begin{array}{ll}
				\frac{1}{2m} \vee 2 \sqrt{ \frac{p(1-p)}{m \log(1/A)} } &  p \in [0,1-1/m]\\
				(1- 1/(6e) )(1-p) &  p \in (1-1/m, 1].
			\end{array}  \right.
		\end{split}
	\end{equation*}
        \item (ii) When $\log(1/A) \geq 4m$, 
        	\begin{equation*}
		\begin{split}
			\bar{r}(p, \epsilon) = \left\{ \begin{array}{ll}
				\frac{1}{2m} &  p \in [0,1-1/m]\\
				(1- 1/(6e) )(1-p) &  p \in (1-1/m, 1].
			\end{array}  \right.
		\end{split}
	\end{equation*}
    \item (iii) When $\log(1/A) < 4m$, let $0\leq p_1 < 1/2 < p_2 \leq 1$ be the solution of the equation $\log(1/A) = 16 m p(1-p)$, then
    	\begin{equation*}
		\begin{split}
			\bar{r}(p, \epsilon) = \left\{ \begin{array}{ll}
				 \frac{1}{2m} &  p \in [0,p_1]\\
				2 \sqrt{ \frac{p(1-p)}{m \log(1/A)} } &  p \in (p_1, p_2) \\
				\frac{1}{2m} &  p \in [p_2,1-1/m] \\
				(1- 1/(6e) )(1-p) &  p \in (1-1/m, 1].
			\end{array}  \right.
		\end{split}
	\end{equation*}
    \end{itemize}
\end{Lemma}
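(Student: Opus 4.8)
The statement is, in the end, a direct unpacking of the definitions \eqref{def:t} and \eqref{def:r}, so I expect no genuine obstacle; the only real work is organizational book-keeping of the two interior breakpoints in (iii) and of the degenerate case $m=1$. I will first rewrite $p-\overline t$, then derive the single closed form (i), and finally specialize it to (ii) and (iii) via the parabola $g(p):=16mp(1-p)$.

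\textbf{Step 1 — rewrite $p-\overline{t}(p,\epsilon)$.} The argument of the logarithm in \eqref{def:t} is exactly $1/A$, so for $p\in[0,1-1/m]$ one has $p-\overline t(p,\epsilon)=\min\bigl\{\tfrac{p(1-p)}{2},\ \tfrac18\sqrt{p(1-p)\log(1/A)/m}\bigr\}$. The smallness hypothesis forces $A<1$, hence $\log(1/A)>0$, so all square roots below are well defined.

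\textbf{Step 2 — prove (i).} For $p\in(0,1-1/m]$ I substitute this into the middle branch of \eqref{def:r} and use $1/\min\{a,b\}=\max\{1/a,1/b\}$ for positive $a,b$:
\[
\overline r(p,\epsilon)=\frac{p(1-p)}{4m\,(p-\overline t(p,\epsilon))}
=\frac{p(1-p)}{4m}\max\Bigl\{\tfrac{2}{p(1-p)},\ \tfrac{8}{\sqrt{p(1-p)\log(1/A)/m}}\Bigr\}
=\frac{1}{2m}\vee 2\sqrt{\frac{p(1-p)}{m\log(1/A)}}.
\]
For $p=0$ the right side equals $\tfrac1{2m}\vee 0=\tfrac1{2m}$, matching \eqref{def:r}; for $p\in(1-1/m,1]$ the value $(1-1/(6e))(1-p)$ is read off directly from \eqref{def:r}. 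This gives part (i).

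\textbf{Step 3 — prove (ii) and (iii).} From the identity in Step~2, for $p\in[0,1-1/m]$ we have $2\sqrt{p(1-p)/(m\log(1/A))}\ge \tfrac1{2m}$ iff $g(p)\ge\log(1/A)$. For (ii): when $\log(1/A)\ge 4m$, then since $p(1-p)\le 1/4$ we get $g(p)\le 4m\le\log(1/A)$ for every $p$, so the maximum collapses to $\tfrac1{2m}$ on $[0,1-1/m]$, while the boundary branch is unchanged. For (iii): $g$ is a downward parabola with $g(0)=g(1)=0$ and peak $g(1/2)=4m$, so when $0<\log(1/A)<4m$ there are exactly two roots $p_1\in(0,1/2)$ and $p_2\in(1/2,1)$ of $g(p)=\log(1/A)$, and $g(p)\gtreqless\log(1/A)$ according to whether $p\in[p_1,p_2]$ or not. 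Combining with Step~2 yields $\overline r=\tfrac1{2m}$ on $[0,p_1]$, $\overline r=2\sqrt{p(1-p)/(m\log(1/A))}$ on $(p_1,p_2)$, and $\overline r=\tfrac1{2m}$ on $[p_2,1-1/m]$, which is the claimed piecewise formula. The one place where the smallness hypothesis is used nontrivially is to check that these intervals actually tile $[0,1-1/m]$ when $m\ge 2$: from $g(1/m)=g(1-1/m)=16(1-1/m)<16\le\log(1/A)$ (the last bound by taking the constant in the hypothesis small enough) and monotonicity of $g$ on $[0,1/2]$ and $[1/2,1]$, one gets $1/m<p_1<1/2<p_2<1-1/m$; for $m=1$ the set $[0,1-1/m]$ is $\{0\}\subseteq[0,p_1]$, where the formula gives $\tfrac1{2m}=\tfrac12$, again matching \eqref{def:r}, and the other two pieces do not meet $\{0\}$. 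The remaining obstacle is purely computational care in Step~2 and in verifying each of these boundary cases.
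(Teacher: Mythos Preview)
Your proof is correct and follows essentially the same approach as the paper: both plug the definition of $\overline{t}(p,\epsilon)$ into $\overline{r}(p,\epsilon)$ to obtain the $\max$ formula in (i), then specialize via the parabola $16mp(1-p)$ versus $\log(1/A)$ for (ii) and (iii), using the smallness of $A$ to ensure $\log(1/A)\ge 16$ and hence $1/m\le p_1<p_2\le 1-1/m$. Your write-up is somewhat more detailed than the paper's (which dispatches (i) in a single sentence), in particular by handling the degenerate cases $p=0$ and $m=1$ explicitly, but the underlying argument is identical.
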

\begin{proof} (i) is a direct consequence of plugging the formula of $\bar{t}(p, \epsilon)$ into $\bar{r}(p, \epsilon)$.
    When $\log(1/A) \geq 4m$, then $\frac{1}{2m} \geq  2 \sqrt{ \frac{p(1-p)}{m \log(1/A)} }$ for any $p \in [0,1]$, it is easy to verify the formula for $\bar{r}(p, \epsilon)$ from (i). When $\log(1/A) < 4m$, notice that when $p = 1/m$ or $1-1/m$, $16 m p(1-p) \leq 16 \leq \log(1/A)$ as $A$ is less than a sufficiently small constant. As a result, we have $1/m \leq p_1 \leq 1/2 \leq p_2 \leq 1-1/m$. It is also easy to verify the formula for $\bar{r}(p, \epsilon)$ from (i).
\end{proof}

The following is the Dvoretzky-Kiefer-Wolfowitz-Massart inequality (DKW inequality) from \cite{massart1990tight}.
\begin{Lemma}\label{lm:DKW}
	Suppose $n$ is a positive integer. Let $X_1, \ldots, X_n$ be i.i.d. real-valued random variables drawn from a distribution with CDF $F(\cdot)$. Then for any $\alpha \in (0,1)$, we have
	\begin{equation*}
		\bbP\left( \sup_{t \in \bbR} |F_n(t) - F(t)| > \sqrt{ \frac{\log(2/\alpha)}{2n} } \right) \leq \alpha.
	\end{equation*}
\end{Lemma}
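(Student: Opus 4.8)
<br>

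The final statement in the excerpt is the DKW inequality (Lemma~\ref{lm:DKW}), quoted from Massart~\cite{massart1990tight}.

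\medskip

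The plan is simply to cite the reference, since this is a well-known classical result that the authors are importing verbatim rather than proving. The DKW--Massart inequality states that for i.i.d.\ real-valued $X_1,\dots,X_n$ with CDF $F$, one has $\bbP(\sup_t |F_n(t)-F(t)|>\lambda)\leq 2e^{-2n\lambda^2}$ for every $\lambda>0$; setting $2e^{-2n\lambda^2}=\alpha$, i.e.\ $\lambda=\sqrt{\log(2/\alpha)/(2n)}$, gives exactly the stated bound. So the ``proof'' I would write is one sentence: this is Theorem~1 (the tight constant version) of~\cite{massart1990tight}, building on the earlier work of Dvoretzky, Kiefer, and Wolfowitz who established the inequality with an unspecified constant.

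\medskip

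If instead a self-contained argument were wanted, the route would be: (i) reduce to the uniform case by the quantile transform, since $\sup_t|F_n(t)-F(t)|$ is distribution-free for continuous $F$ and the discrete case follows by an approximation/monotonicity argument; (ii) for the uniform empirical process, use the reflection-principle bound on the one-sided maximum $\sup_t (F_n(t)-t)$ together with a symmetry argument to handle the two-sided supremum; (iii) optimize the constant, which is the delicate part that distinguishes Massart's sharp constant $2$ from the easier sub-optimal constants obtainable via chaining or Talagrand's inequality. The main obstacle in a from-scratch proof is precisely step (iii): getting the constant $2$ in front of the exponential (rather than something larger) requires Massart's careful combinatorial analysis of the uniform empirical distribution function. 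Since the paper only uses the inequality as a black box and attributes it correctly, there is nothing further to prove here.
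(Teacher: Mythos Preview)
Your proposal is correct and matches the paper's approach exactly: the paper does not prove this lemma but simply states it as ``the Dvoretzky--Kiefer--Wolfowitz--Massart inequality (DKW inequality) from \cite{massart1990tight}.'' Your one-line derivation of the stated form from Massart's bound $2e^{-2n\lambda^2}$ is exactly the intended reading.
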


The following two lemmas provide Chernoff bounds for the concentration of the binomial and Poisson distributions. They can be derived by a standard Chernoff bound argument, e.g., see \cite{hoeffding1963probability} and \cite{wainwright2019high}[Chapter 2].
\begin{Lemma} \label{Lem: Chernoff bound}
Suppose $p \in (0,1)$, $m \in \mathbb{N}$, and $k \in [0, mp]$. Then, we have
\begin{equation*}
    \bbP(\mathrm{Binomial}(m,p) \leq k)\leq \exp\left(-mD\left(\mathrm{Bernoulli}\left(\frac{k}{m}\right) \parallel \mathrm{Bernoulli}(p) \right)\right),
\end{equation*}
for all $t \in (0,1)$ and $p \in (0,1)$.
\end{Lemma}

\begin{Lemma} \label{Lem: Chernoff bound for Poisson}
Suppose $\lambda >0$. Then, we have
\begin{align*}
   & \bbP(\mathrm{Poisson}(\lambda) \leq x) \leq \frac{\exp(x - \lambda) \lambda^x}{x^x}, \quad \forall x <\lambda, \\
   & \bbP(\mathrm{Poisson}(\lambda) \geq x) \leq \frac{\exp(x - \lambda) \lambda^x}{x^x}, \quad \forall x > \lambda. 
\end{align*}
\end{Lemma}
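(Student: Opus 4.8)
The plan is to derive both inequalities from the textbook Chernoff bounding argument, using the moment generating function of the Poisson law. Recall that if $X \sim \mathrm{Poisson}(\lambda)$ then $\bbE[e^{tX}] = \exp(\lambda(e^t - 1))$ for every $t \in \bbR$. Since Markov's inequality applies to arbitrary real thresholds, no separate treatment of non-integer $x$ will be needed.

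For the upper-tail bound, fix $x > \lambda$. For any $t > 0$, Markov's inequality gives $\bbP(X \geq x) = \bbP(e^{tX} \geq e^{tx}) \leq e^{-tx}\,\bbE[e^{tX}] = \exp\!\big(-tx + \lambda(e^t - 1)\big)$. I would then minimize the exponent $g(t) = -tx + \lambda(e^t-1)$ over $t > 0$: the stationary point satisfies $\lambda e^{t} = x$, i.e.\ $t^{\star} = \log(x/\lambda)$, which is strictly positive exactly because $x > \lambda$, and it is a minimizer since $g$ is convex. Substituting $t^{\star}$ yields $\bbP(X \geq x) \leq \exp\!\big(-x\log(x/\lambda) + x - \lambda\big) = \dfrac{\exp(x-\lambda)\,\lambda^{x}}{x^{x}}$.

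For the lower-tail bound, fix $0 < x < \lambda$ (the case $x = 0$ is immediate from $\bbP(X = 0) = e^{-\lambda}$ and the convention $0^0 = 1$). For any $t > 0$, applying Markov's inequality to $e^{-tX}$ gives $\bbP(X \leq x) = \bbP(e^{-tX} \geq e^{-tx}) \leq e^{tx}\,\bbE[e^{-tX}] = \exp\!\big(tx + \lambda(e^{-t} - 1)\big)$. Minimizing $h(t) = tx + \lambda(e^{-t}-1)$ over $t > 0$ gives $\lambda e^{-t} = x$, i.e.\ $t^{\star} = \log(\lambda/x) > 0$ since $x < \lambda$; substituting produces $\bbP(X \leq x) \leq \exp\!\big(x\log(\lambda/x) + x - \lambda\big) = \dfrac{\exp(x-\lambda)\,\lambda^{x}}{x^{x}}$, the same expression as before.

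There is no substantive obstacle in this argument; the only points requiring attention are verifying that the optimal exponent parameter $t^{\star}$ lies in the admissible range $t > 0$ in each regime — which is precisely where the hypotheses $x > \lambda$ and $x < \lambda$ enter — and recalling that the Chernoff step is valid for all real $x$. Alternatively, one may simply cite a standard reference such as \cite{wainwright2019high}, as indicated in the remark preceding the lemma.
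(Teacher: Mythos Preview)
Your proof is correct and is precisely the standard Chernoff argument the paper invokes; the paper itself does not spell out a proof but merely cites \cite{hoeffding1963probability} and \cite{wainwright2019high} for this derivation. There is nothing to add.
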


The following lemma provides a standard Berry-Esseen bound.
\begin{Lemma}[\cite{shevtsova2011absolute}]\label{Lem: Berry esseen}
Let $X,\cdots, X_n$ be i.i.d. real-valued random variables with $\mathbb{E}(X)=0$, $\mathbb{E}(X^2)=\sigma^2>0$, and $\mathbb{E}(|X|^3)=\rho <\infty$. Let $F^{(n)}(\cdot)$ be the CDF of \begin{equation*}
    Y_n=\frac{\sum\limits_{i=1}^n X_i}{\sqrt{n}\sigma},
\end{equation*}
    and $\Phi(\cdot)$ denote the CDF of standard Gaussian. Then, for all $n\in \mathbb{N}$, we have
    \begin{equation*}
        \underset{x\in \mathbb{R}}{\sup}|F^{(n)}(x)-\Phi(x)|\leq \frac{7\rho}{20\sigma^3\sqrt{n}} +\frac{1}{6\sqrt{n}}.
    \end{equation*}
\end{Lemma}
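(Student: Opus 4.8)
The plan is to obtain Lemma~\ref{Lem: Berry esseen} directly as a special case of the quantitative central limit bound established in \cite{shevtsova2011absolute}; no new argument is needed, and the two-term form $\frac{7\rho}{20\sigma^3\sqrt n}+\frac{1}{6\sqrt n}$ with the stated absolute constants is read off from the relevant theorem there after a scale reduction. Concretely, I would first normalize by replacing each $X_i$ with $X_i/\sigma$, so that the summands have unit variance and third absolute moment $\beta:=\rho/\sigma^3$; since $\beta$ is scale invariant and $Y_n$ is unchanged by this rescaling, the claimed inequality is exactly a bound on $\sup_{x}|F^{(n)}(x)-\Phi(x)|$ in terms of $\beta$ and $n$ only, which is precisely the standardized i.i.d.\ form in which the result of \cite{shevtsova2011absolute} is stated. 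Then I would simply invoke that inequality.

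For completeness I would also recall the classical architecture underlying such a bound, in case a self-contained derivation is preferred over a citation. The three ingredients are: (i) Esseen's smoothing inequality, which for any $T>0$ controls $\sup_x|F^{(n)}(x)-\Phi(x)|$ by $\frac{1}{\pi}\int_{-T}^{T}\frac{|\widehat F^{(n)}(t)-e^{-t^2/2}|}{|t|}\,dt+\frac{c}{T}\sup_x|\Phi'(x)|$ for an absolute constant $c$; (ii) a Taylor expansion of the characteristic function $\varphi(t)=\mathbb{E}[e^{itX_1/(\sigma\sqrt n)}]$, using $\mathbb{E}X_1=0$, $\mathbb{E}X_1^2/\sigma^2=1$ and $\mathbb{E}|X_1|^3/\sigma^3=\beta$, to write $\widehat F^{(n)}(t)=\varphi(t/\sqrt n)^n$ and estimate $\log\varphi(t/\sqrt n)=-\frac{t^2}{2n}+O(\beta|t|^3/n^{3/2})$ uniformly over the range $|t|\le T$; and (iii) the elementary bound $|e^{a}-e^{b}|\le|a-b|\,e^{\max(\Re a,\Re b)}$ to turn the logarithm estimate into a bound on $|\widehat F^{(n)}(t)-e^{-t^2/2}|$. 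Choosing $T$ of order $\sqrt n/\beta$ and evaluating the integral yields a bound of the form $C\beta/\sqrt n$, and keeping the Taylor-remainder contribution and the smoothing error term separate produces the two-term shape appearing in the statement.

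The only genuinely delicate point—and the reason the paper cites \cite{shevtsova2011absolute} instead of reproving the lemma—is the bookkeeping of absolute constants: getting $7/20$ in front of $\beta/\sqrt n$ together with the residual $\frac{1}{6\sqrt n}$ requires the sharpened forms of Esseen's smoothing lemma and of the characteristic-function estimates and a careful optimization over $T$, all of which are carried out in that reference. Accordingly, in the write-up I would state that Lemma~\ref{Lem: Berry esseen} follows immediately from \cite{shevtsova2011absolute} after the scale reduction described above, and direct the reader there for the constant-tracking.
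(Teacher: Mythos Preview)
Your proposal is correct and matches the paper's treatment: the paper does not prove this lemma at all but simply states it as a citation to \cite{shevtsova2011absolute}, so your plan to invoke that reference (after the trivial scale reduction) is exactly what is done. Your additional sketch of the classical Esseen smoothing architecture is more than the paper provides and is not needed, but it is accurate.
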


The following is an application of the Berry–Esseen bound to the Poisson distribution.

\begin{Lemma}\label{Lem: Berry esseen for Poisson}
 Suppose $\lambda  > 0$. Let $\Phi(\cdot)$ denote the CDF of the standard Gaussian. Then, we have
    \begin{equation}\label{Ineq: Berry esseen for Poisson 1}
        \underset{x\in \mathbb{R}}{\sup} | \bbP (\mathrm{Poisson}(\lambda) \leq \lambda + \sqrt{\lambda}x) - \Phi(x) | \leq \frac{7}{20\sqrt{\lambda}}
    \end{equation}
    and 
    \begin{equation}\label{Ineq: Berry esseen for Poisson 2}
        \underset{x\in \mathbb{R}}{\sup} | \bbP (\mathrm{Poisson}(\lambda) \geq \lambda + \sqrt{\lambda}x) - \Phi(-x) | \leq \frac{7}{20\sqrt{\lambda}}.
    \end{equation}
\end{Lemma}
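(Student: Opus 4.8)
The plan is to reduce \eqref{Ineq: Berry esseen for Poisson 2} to \eqref{Ineq: Berry esseen for Poisson 1}, and then to establish \eqref{Ineq: Berry esseen for Poisson 1} by combining the infinite divisibility of the Poisson law with the general Berry--Esseen bound of Lemma \ref{Lem: Berry esseen}. For the reduction, note that for any real $a \notin \bbN_0$ one has $\bbP(\mathrm{Poisson}(\lambda) \geq a) = 1 - \bbP(\mathrm{Poisson}(\lambda) \leq a)$ because the point mass at $a$ vanishes; hence, using $\Phi(-x) = 1 - \Phi(x)$, for all but countably many $x$ the quantity $|\bbP(\mathrm{Poisson}(\lambda) \geq \lambda + \sqrt\lambda x) - \Phi(-x)|$ equals $|\bbP(\mathrm{Poisson}(\lambda) \leq \lambda + \sqrt\lambda x) - \Phi(x)|$, which is controlled by \eqref{Ineq: Berry esseen for Poisson 1}. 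Since $x \mapsto \bbP(\mathrm{Poisson}(\lambda) \geq \lambda + \sqrt\lambda x)$ is left-continuous, taking limits through non-exceptional points extends the bound to the remaining countably many $x$, so \eqref{Ineq: Berry esseen for Poisson 2} follows once \eqref{Ineq: Berry esseen for Poisson 1} is known.

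For \eqref{Ineq: Berry esseen for Poisson 1}, fix $\lambda > 0$ and an arbitrary positive integer $N$, and write $\mathrm{Poisson}(\lambda) \stackrel{d}{=} \sum_{i=1}^N Y_i$ with $Y_1, \dots, Y_N$ i.i.d. $\mathrm{Poisson}(\lambda/N)$. Set $X_i = Y_i - \lambda/N$, so $\bbE X_i = 0$, $\sigma^2 := \var(X_i) = \lambda/N$, and $\rho_N := \bbE|X_i|^3 < \infty$. Then $\sqrt N\,\sigma = \sqrt\lambda$, and the CDF of $(\sum_i X_i)/\sqrt\lambda = (\mathrm{Poisson}(\lambda) - \lambda)/\sqrt\lambda$ evaluated at $x$ is exactly $\bbP(\mathrm{Poisson}(\lambda) \leq \lambda + \sqrt\lambda x)$, so Lemma \ref{Lem: Berry esseen} yields, for every $N$,
\begin{equation*}
\sup_{x \in \bbR}\left| \bbP(\mathrm{Poisson}(\lambda) \leq \lambda + \sqrt\lambda x) - \Phi(x) \right| \leq \frac{7\rho_N}{20\sigma^3\sqrt N} + \frac{1}{6\sqrt N} = \frac{7 N\rho_N}{20\lambda^{3/2}} + \frac{1}{6\sqrt N}.
\end{equation*}
The left-hand side does not depend on $N$, so it is bounded by the limit of the right-hand side as $N \to \infty$, and the proof reduces to showing $N\rho_N \to \lambda$. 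Writing $\mu = \lambda/N$ and $Y \sim \mathrm{Poisson}(\mu)$, we have $N\rho_N = \lambda\,\bbE|Y - \mu|^3/\mu$, and a direct moment estimate gives $(1-\mu)^3 e^{-\mu} \leq \bbE|Y-\mu|^3/\mu \leq 1 + 3\mu + 2\mu^2$ for $\mu \in (0,1)$ — the lower bound by keeping only the $k=1$ term, the upper bound from $|k-\mu|^3 \leq k^3$ for $k \geq 1$ together with $\bbE Y^3 = \mu + 3\mu^2 + \mu^3$. Both bounds tend to $1$ as $\mu \to 0^+$, so $N\rho_N \to \lambda$, $\tfrac{1}{6\sqrt N} \to 0$, and the right-hand side above tends to $\tfrac{7}{20\sqrt\lambda}$, proving \eqref{Ineq: Berry esseen for Poisson 1}.

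Everything in this argument, apart from the small asymptotic computation of the third absolute central moment of a low-intensity Poisson variable and the countable-exception bookkeeping in the reduction step, is a routine application of Lemma \ref{Lem: Berry esseen}. The main (and rather pleasant) obstacle is conceptual: one must recognize that $\mathrm{Poisson}(\lambda)$ should be split into \emph{arbitrarily many} i.i.d. summands before applying Berry--Esseen and then pass to the limit, so that the additive $\tfrac{1}{6\sqrt N}$ defect of the general inequality disappears and the leading coefficient converges to exactly $\tfrac{7}{20}$, matching the constant claimed in the lemma.
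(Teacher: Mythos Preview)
Your proposal is correct and follows essentially the same approach as the paper: split $\mathrm{Poisson}(\lambda)$ into $N$ i.i.d.\ $\mathrm{Poisson}(\lambda/N)$ summands, apply the Berry--Esseen bound of Lemma~\ref{Lem: Berry esseen}, and let $N\to\infty$ so that the $\tfrac{1}{6\sqrt N}$ term vanishes and the third-moment ratio converges. The only cosmetic differences are that the paper bounds the third absolute moment more crudely via $\bbE|Z-\mu|^3\le\bbE(Z+\mu)^3$ rather than your sandwich argument, and it dismisses \eqref{Ineq: Berry esseen for Poisson 2} as ``analogous'' rather than reducing it to \eqref{Ineq: Berry esseen for Poisson 1} through your complementation-and-left-continuity argument.
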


\begin{proof}
   We will only prove inequality \eqref{Ineq: Berry esseen for Poisson 1}. The proof for inequality \eqref{Ineq: Berry esseen for Poisson 2} is analogous, as it essentially involves applying the Berry-Esseen theorem with the direction of the inequality reversed. Consider an arbitrary $n \in \bbN$ and suppose $Z_1, \cdots, Z_n \overset{i.i.d.}\sim \mathrm{Poisson}(\lambda / n)$. Notice that $\bbE(Z_1 - \lambda / n) = 0$ and $\bbE((Z_1 - \lambda / n)^2) = \lambda / n$. Also, we have
    \begin{equation*}
        \begin{split}
            \bbE (|Z_1 - \lambda / n|^3) & \leq \bbE ((Z_1 + \lambda / n)^3) = \frac{\lambda}{n} + \frac{6 \lambda^2}{n^2} + \frac{8 \lambda^3}{n^3}.
        \end{split}
    \end{equation*}
    By the property of the Poisson distribution, we have $\sum_{i=1}^n Z_i \sim \mathrm{Poisson}(\lambda)$. Therefore, we have
    \begin{equation}\label{Eq: Berry esseen in Poisson}
        \begin{split}
            \underset{x\in \mathbb{R}}{\sup} | \bbP (\mathrm{Poisson}(\lambda) \leq \lambda + \sqrt{\lambda}x) - \Phi(x) | & =  \underset{x\in \mathbb{R}}{\sup} \left| \bbP \left(\sum_{i=1}^n (Z_i - \lambda/n)  \leq \sqrt{\lambda} x \right) - \Phi(x) \right| \\
             & \overset{\textnormal{Lemma } \ref{Lem: Berry esseen} } {\leq}  \frac{7}{20\sqrt{\lambda}} \left(1 + \frac{6 \lambda}{n} + \frac{8 \lambda^2}{n^2} \right) + \frac{1}{6\sqrt{n}}.
        \end{split}
    \end{equation}
    Notice that \eqref{Eq: Berry esseen in Poisson} holds for all $n \in \bbN$. Therefore, taking the limit as $n \to \infty$ completes the proof.
    
\end{proof}

The following lemma states a few standard properties of the binomial distribution, and we omit the proof for simplicity. 
\begin{Lemma} \label{lm:Binomial-prop} Given any positive integer $n$, $\alpha \in (0,1)$ and $0\leq p \leq 1$,
\begin{itemize}
	\item (i) (Hoeffding's inequality) 
	\begin{equation*}
		\bbP\left(|\textnormal{Binomial}(n,p) - np| \geq \sqrt{ \frac{n\log(2/\alpha)}{2} } \right) \leq \alpha.
	\end{equation*} 
	\item (ii) (Bernstein's inequality)
	\begin{equation*}
		\bbP\left(|\textnormal{Binomial}(n,p) - np| \geq \frac{4\log(2/\alpha) }{3}  \vee 2\sqrt{ p(1-p)n\log(2/\alpha) } \right) \leq \alpha.
	\end{equation*} 
    Since $ \frac{np(1-p)}{C} + C \log(2/\alpha) \geq \frac{4\log(2/\alpha) }{3}  \vee 2\sqrt{ p(1-p)n\log(2/\alpha) }  $ holds for any $C \geq 4/3$, the inequality above holds if we replace $\frac{4\log(2/\alpha) }{3}  \vee 2\sqrt{ p(1-p)n\log(2/\alpha) } $ by $ \frac{np(1-p)}{C} + C \log(2/\alpha) $ for any $C \geq 4/3$.

	\item (iii) (see \cite{roch2024modern} Example 4.2.4) Given any other $1\geq p' \geq p$, we have $P( \textnormal{Binomial}(n,p) \leq t ) \geq P( \textnormal{Binomial}(n,p') \leq t )$ for any $t \in [0,n]$.

\end{itemize}
\end{Lemma}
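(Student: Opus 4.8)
\textbf{Proof plan for Lemma \ref{lm:Binomial-prop}.}

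The plan is to verify the three claims by invoking standard results, since $\textnormal{Binomial}(n,p)$ is a sum of $n$ i.i.d.\ $\textnormal{Bernoulli}(p)$ random variables, say $Y = \sum_{i=1}^n X_i$ with $X_i \sim \textnormal{Bernoulli}(p)$ independent. For part (i), I would apply Hoeffding's inequality to the bounded random variables $X_i \in [0,1]$: the two-sided bound gives $\bbP(|Y - np| \geq t) \leq 2\exp(-2t^2/n)$, and substituting $t = \sqrt{n\log(2/\alpha)/2}$ makes the right-hand side equal to $\alpha$. For part (ii), I would apply Bernstein's inequality to the centered variables $X_i - p$, which satisfy $|X_i - p| \leq 1$ and $\var(X_i - p) = p(1-p)$, so that $\sum_i \var(X_i - p) = np(1-p)$; the standard Bernstein bound $\bbP(|Y - np| \geq t) \leq 2\exp\!\left(-\tfrac{t^2/2}{np(1-p) + t/3}\right)$ yields, after the usual splitting into the sub-Gaussian and sub-exponential regimes, that $\bbP(|Y - np| \geq \tfrac{4\log(2/\alpha)}{3} \vee 2\sqrt{p(1-p)n\log(2/\alpha)}) \leq \alpha$. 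The remark that $\tfrac{np(1-p)}{C} + C\log(2/\alpha)$ dominates $\tfrac{4\log(2/\alpha)}{3} \vee 2\sqrt{p(1-p)n\log(2/\alpha)}$ for any $C \geq 4/3$ follows from the AM--GM inequality $2\sqrt{ab} \leq a/C + Cb$ applied with $a = np(1-p)$, $b = \log(2/\alpha)$, together with $C\log(2/\alpha) \geq \tfrac{4}{3}\log(2/\alpha)$.

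For part (iii), I would use the stochastic dominance of $\textnormal{Binomial}(n,p')$ over $\textnormal{Binomial}(n,p)$ when $p' \geq p$. The cleanest argument is a coupling: write $\textnormal{Binomial}(n,p)$ as $\sum_{i=1}^n \indi\{U_i \leq p\}$ and $\textnormal{Binomial}(n,p')$ as $\sum_{i=1}^n \indi\{U_i \leq p'\}$ for i.i.d.\ uniform $U_i$ on $[0,1]$; then each summand of the former is pointwise at most the corresponding summand of the latter, so the former is pointwise at most the latter, whence $\{\textnormal{Binomial}(n,p') \leq t\} \subseteq \{\textnormal{Binomial}(n,p) \leq t\}$ under the coupling and the CDF inequality follows. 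Alternatively I would simply cite \cite{roch2024modern} Example 4.2.4 as the statement already indicates.

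There is no real obstacle here: all three parts are textbook facts, and the only mild bookkeeping is the constant-chasing in part (ii) to match the particular form $\tfrac{4\log(2/\alpha)}{3} \vee 2\sqrt{p(1-p)n\log(2/\alpha)}$ and its $C$-dependent relaxation. I would therefore keep the proof to a few lines, stating that the claims follow from Hoeffding's inequality, Bernstein's inequality, and a standard coupling (or the cited reference), with the AM--GM step spelled out for the remark in (ii).
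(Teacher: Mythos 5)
Your proof is correct and follows the standard textbook route (Hoeffding for (i), Bernstein with the usual two-regime splitting plus AM--GM for (ii), and a uniform-variable coupling for (iii)); the paper itself omits the proof of this lemma as standard, so your argument supplies exactly the expected justification. The constant-checking in part (ii) works out: in the regime $2\sqrt{np(1-p)\log(2/\alpha)} \geq \tfrac{4\log(2/\alpha)}{3}$ one has $np(1-p) \geq \tfrac{4}{9}\log(2/\alpha)$, which is precisely what makes the Bernstein exponent at least $\log(2/\alpha)$, and the complementary regime is handled symmetrically.
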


The following lemma states a standard monotonicity property of the Poisson distribution (see \cite{roch2024modern} Example 4.2.5), and we omit the proof for simplicity.
\begin{Lemma} \label{lm:Poisson-prop}
Given any $\lambda' \geq \lambda \geq 0$ and $t \geq 0$, we have
\[
P( \textnormal{Poisson}(\lambda) \leq t ) \geq P( \textnormal{Poisson}(\lambda') \leq t )  \quad \textnormal{and} \quad
P( \textnormal{Poisson}(\lambda) \geq t ) \leq P( \textnormal{Poisson}(\lambda') \geq t ).
\]
\end{Lemma}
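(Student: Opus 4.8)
The plan is to deduce both displayed inequalities from the stochastic dominance of $\mathrm{Poisson}(\lambda')$ over $\mathrm{Poisson}(\lambda)$ when $0\leq\lambda\leq\lambda'$. Once that dominance is available, the first inequality is immediate because $\{X\leq t\}$ is a downward-closed event, and the second follows either by taking complements or directly because $\{X\geq t\}$ is upward-closed; no further argument is needed beyond the definition of stochastic order.

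First I would record the additivity of the Poisson family: if $Y\sim\mathrm{Poisson}(\lambda)$ and $Z\sim\mathrm{Poisson}(\lambda'-\lambda)$ are independent, then $Y+Z\sim\mathrm{Poisson}(\lambda')$. This is the standard Poisson convolution identity and can be verified in one line, e.g.\ by multiplying probability generating functions $e^{\lambda(s-1)}e^{(\lambda'-\lambda)(s-1)}=e^{\lambda'(s-1)}$, or by a direct Vandermonde-type sum over the probability mass functions. This realizes $\mathrm{Poisson}(\lambda)$ and $\mathrm{Poisson}(\lambda')$ on a common probability space with $Y\leq Y+Z$ almost surely, since $Z\geq 0$.

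With this coupling, for any fixed $t\geq 0$ we have $\{Y+Z\leq t\}\subseteq\{Y\leq t\}$ and $\{Y\geq t\}\subseteq\{Y+Z\geq t\}$, hence $P(\mathrm{Poisson}(\lambda')\leq t)=P(Y+Z\leq t)\leq P(Y\leq t)=P(\mathrm{Poisson}(\lambda)\leq t)$ and $P(\mathrm{Poisson}(\lambda)\geq t)=P(Y\geq t)\leq P(Y+Z\geq t)=P(\mathrm{Poisson}(\lambda')\geq t)$, which is exactly the claim. A purely analytic alternative, avoiding couplings altogether, is to fix $t\geq 0$, put $n=\lfloor t\rfloor$, and differentiate $g(\lambda):=P(\mathrm{Poisson}(\lambda)\leq t)=\sum_{k=0}^{n}e^{-\lambda}\lambda^{k}/k!$ in $\lambda$; the resulting sum telescopes to $g'(\lambda)=-e^{-\lambda}\lambda^{n}/n!\leq 0$, so $g$ is nonincreasing, giving the first inequality, and writing $P(\mathrm{Poisson}(\lambda)\geq t)=1-P(\mathrm{Poisson}(\lambda)\leq\lceil t\rceil-1)$ and reusing monotonicity gives the second.

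I do not expect a substantive obstacle. The only points requiring a little care are the bookkeeping for non-integer $t$ (replacing $t$ by $\lfloor t\rfloor$, resp.\ $\lceil t\rceil-1$, at the right moments) and, in the coupling proof, checking that the degenerate cases $\lambda=\lambda'$ (where $Z\equiv 0$) and $\lambda=0$ are covered, which they are trivially. Both routes are short, so I would present the coupling argument as the main proof and perhaps mention the telescoping computation as a remark.
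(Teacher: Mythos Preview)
Your proposal is correct. The paper does not actually give a proof of this lemma: it states the result as a standard monotonicity property, cites a textbook reference, and explicitly omits the proof. Your coupling argument via Poisson additivity is exactly the standard way to establish this stochastic dominance, and your alternative telescoping-derivative computation is also valid; either would be more than sufficient here.
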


The following lemma captures a standard monotonicity of total variation distance with respect to the binomial or Poisson parameter.

\begin{Lemma} \label{lm:TV monotonicity}
For any $0\leq p' \leq p \leq 1$, we have
\begin{equation*}
    \TV(\mathrm{Binomial}(m, p), \mathrm{Binomial}(m, p')) \leq     \TV(\mathrm{Binomial}(m, p), \mathrm{Binomial}(m, 0)).
\end{equation*}
Similarly, for any $0 \leq \lambda' \leq \lambda$, we have
\begin{equation*}
    \TV(\mathrm{Poisson}(\lambda), \mathrm{Poisson}(\lambda')) \leq     \TV(\mathrm{Poisson}(\lambda), \mathrm{Poisson}(0)).
\end{equation*}
\end{Lemma}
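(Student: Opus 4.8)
\textbf{Proof proposal for Lemma \ref{lm:TV monotonicity}.}

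The plan is to reduce everything to the elementary overlap formula for total variation distance between distributions supported on $\bbN_0$, namely $\TV(\mu,\nu)=1-\sum_{k\in\bbN_0}\min\{\mu(k),\nu(k)\}$, combined with the trivial observation that a single nonnegative summand is a lower bound for a sum of nonnegative terms. The point is that $\mathrm{Binomial}(m,0)=\delta_0$ and $\mathrm{Poisson}(0)=\delta_0$ are point masses at $0$, so $\TV(\mathrm{Binomial}(m,p),\delta_0)=1-P_{X\sim\mathrm{Binomial}(m,p)}(X=0)=1-(1-p)^m$ and likewise $\TV(\mathrm{Poisson}(\lambda),\delta_0)=1-e^{-\lambda}$; the right-hand sides of the two displayed inequalities are therefore explicit, and the job is to show the left-hand overlaps are at least $(1-p)^m$ and $e^{-\lambda}$ respectively.

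First I would record the overlap identity $\TV(\mu,\nu)=1-\sum_{k}\min\{\mu(k),\nu(k)\}$ for $\mu,\nu$ on $\bbN_0$ (a standard consequence of $\TV(\mu,\nu)=\tfrac12\sum_k|\mu(k)-\nu(k)|$), and then note $\sum_{k}\min\{\mu(k),\nu(k)\}\ge\min\{\mu(0),\nu(0)\}$ since every term is nonnegative. Next, for the binomial case with $0\le p'\le p\le1$, set $\mu=\mathrm{Binomial}(m,p)$ and $\nu=\mathrm{Binomial}(m,p')$; then $\mu(0)=(1-p)^m\le(1-p')^m=\nu(0)$ because $1-p\le1-p'$, so $\min\{\mu(0),\nu(0)\}=(1-p)^m$. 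Hence
\begin{equation*}
\TV(\mathrm{Binomial}(m,p),\mathrm{Binomial}(m,p'))=1-\sum_k\min\{\mu(k),\nu(k)\}\le 1-(1-p)^m=\TV(\mathrm{Binomial}(m,p),\mathrm{Binomial}(m,0)).
\end{equation*}

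Finally, the Poisson case is verbatim the same: with $\mu=\mathrm{Poisson}(\lambda)$, $\nu=\mathrm{Poisson}(\lambda')$ and $0\le\lambda'\le\lambda$, we have $\mu(0)=e^{-\lambda}\le e^{-\lambda'}=\nu(0)$, so $\min\{\mu(0),\nu(0)\}=e^{-\lambda}$ and $\TV(\mathrm{Poisson}(\lambda),\mathrm{Poisson}(\lambda'))\le 1-e^{-\lambda}=\TV(\mathrm{Poisson}(\lambda),\mathrm{Poisson}(0))$. I do not anticipate a genuine obstacle here; the only points requiring minor care are citing the overlap formula correctly and evaluating the total variation distance to a point mass, both of which are routine.
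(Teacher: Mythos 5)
Your proof is correct, but it takes a genuinely different route from the paper's. The paper's argument first identifies, via the monotone likelihood ratio structure of the binomial (resp.\ Poisson) family, that the set where the $p'$-pmf dominates the $p$-pmf is an initial segment $\{0,1,\dots,k\}$, writes $\TV(P_p,P_{p'})=P_{p'}(X\le k)-P_p(X\le k)$, and then invokes stochastic dominance (Lemma \ref{lm:Binomial-prop} (iii), resp.\ Lemma \ref{lm:Poisson-prop}) to replace $P_{p'}(X\le k)$ by the larger quantity $P_0(X\le k)\le \TV(P_p,P_0)+P_p(X\le k)$. You instead use the overlap identity $\TV(\mu,\nu)=1-\sum_k\min\{\mu(k),\nu(k)\}$, observe that the right-hand side of the lemma equals $1-\mu(0)$ because the reference distribution is a point mass at $0$, and lower-bound the full overlap by its single term at $k=0$, which equals $\mu(0)$ since $\mu(0)=(1-p)^m\le(1-p')^m=\nu(0)$ (and likewise $e^{-\lambda}\le e^{-\lambda'}$). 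Your argument is shorter and avoids both the crossing-point analysis and the stochastic dominance lemmas, at the cost of exploiting the degeneracy of $\mathrm{Binomial}(m,0)$ and $\mathrm{Poisson}(0)$: the paper's template would also yield the stronger monotonicity $\TV(P_p,P_{p'})\le\TV(P_p,P_{p''})$ for any $p''\le p'\le p$, whereas yours applies only with $p''=0$ --- which is all the lemma asserts, so the proof is complete as written.
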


\begin{proof}
    We begin with the binomial case. Let $\mathcal{I} = \{i \in [m] \cup \{0 \}:  (p^i (1-p)^{m-i} \leq (p')^i (1-p')^{m-i} \}$, then it is easy to check that $\mathcal{I} = \{0, 1, \dots,k\}$ for some $k \in [m] \cup \{0\}$. Then 
	\begin{equation*}
		\begin{split}
			\TV(P_{p},P_{p'}) & = \frac{1}{2} \sum_{i=0}^m {m \choose i} \left| p^i (1-p)^{m-i} - (p')^i (1-p')^{m-i} \right| \\ 
            & = P_{p'}(X \leq k) -P_{p}(X \leq k)  \overset{\text{Lemma } \ref{lm:Binomial-prop} \,(iii)}\leq P_{0}(X \leq k) - P_{p}(X \leq k)  \leq \TV(P_{p},P_{0}).
		\end{split}
	\end{equation*} 
    
We now consider the Poisson case. Let $\mathcal{I} = \{i \in \bbN_0:  \exp(-\lambda )\lambda^{i} \leq \exp(-\lambda') (\lambda')^{i} \}$, then it is easy to check that $\mathcal{I} = \{0, 1, \dots,k\}$ for some $k \in \bbN_0$. Then 
	\begin{equation*} 
		\begin{split}
		 \TV(P_{\lambda},P_{\lambda'}) & = \frac{1}{2}\sum_{i=0}^{\infty}\frac{1}{i!}\left|\exp(-\lambda )\lambda ^{i} - \exp(-\lambda') (\lambda')^{i}\right| \\
& = P_{\lambda'}(X \leq k) - P_{\lambda}(X \leq k)  \overset{\textnormal{Lemma }\ref{lm:Poisson-prop}} \leq P_{0}(X \leq k) - P_{\lambda }(X \leq k)  \leq 		 \TV(P_{\lambda},P_{0})	.	\end{split}
	\end{equation*} 

    This finishes the proof of this lemma.
\end{proof}

\begin{Lemma}\label{lem: TV}
 For any distributions $P_1$ and $P_2$, and any $\epsilon \in [0,1)$ satisfying $
\TV(P_1, P_2) \leq \frac{\epsilon}{1 - \epsilon}$, there exist distributions $Q_1$ and $Q_2$ such that $(1 - \epsilon) P_1 + \epsilon Q_1 = (1 - \epsilon) P_2 + \epsilon Q_2$.
\end{Lemma}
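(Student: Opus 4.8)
The plan is to construct a single ``common'' distribution $P$ with the property that $P = (1-\epsilon)P_1 + \epsilon Q_1 = (1-\epsilon)P_2 + \epsilon Q_2$ for suitable $Q_1,Q_2$, and then simply read off $Q_1$ and $Q_2$ from $P$. First I would fix a common dominating measure $\mu$ (for instance $\mu = P_1 + P_2$) and let $p_1, p_2$ be the corresponding densities. Writing $\delta := \TV(P_1,P_2) = \frac12\int|p_1-p_2|\,d\mu$, I would record the elementary identities $\int\min(p_1,p_2)\,d\mu = 1-\delta$ and hence $\int\max(p_1,p_2)\,d\mu = 1+\delta$, which follow from $\max(p_1,p_2) = p_1+p_2-\min(p_1,p_2)$.

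The key observation is that the hypothesis $\delta \le \frac{\epsilon}{1-\epsilon}$ is equivalent, after multiplying by $1-\epsilon>0$ and rearranging, to $(1-\epsilon)(1+\delta)\le 1$. Therefore $c := 1 - (1-\epsilon)(1+\delta) \ge 0$, and I can define the density
\[
p(x) := (1-\epsilon)\max\bigl(p_1(x),p_2(x)\bigr) + c\,p_1(x),
\]
whose total $\mu$-mass is $(1-\epsilon)(1+\delta) + c = 1$; let $P$ be the probability distribution with density $p$ with respect to $\mu$. By construction $p(x) \ge (1-\epsilon)p_i(x)$ pointwise for both $i=1,2$, so
\[
q_i(x) := \frac{1}{\epsilon}\bigl(p(x) - (1-\epsilon)p_i(x)\bigr)
\]
is a nonnegative function with $\int q_i\,d\mu = \frac{1}{\epsilon}\bigl(1-(1-\epsilon)\bigr) = 1$. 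Letting $Q_i$ be the distribution with density $q_i$, we get $(1-\epsilon)P_i + \epsilon Q_i = P$ for $i=1,2$, which is exactly the asserted identity.

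I do not expect a genuine obstacle here: the only point needing a line of care is the passage to a common dominating measure so that the argument is valid for arbitrary (not necessarily discrete) $P_1,P_2$, and in the discrete settings where this lemma is actually applied in the paper (binomial and Poisson) one may take the densities to be probability mass functions directly. The real content of the lemma is the remark that $\TV(P_1,P_2)\le \epsilon/(1-\epsilon)$ is precisely the threshold at which $(1-\epsilon)\max(p_1,p_2)$ has mass at most $1$, leaving exactly enough room to pad it up to a probability density that dominates both $(1-\epsilon)P_1$ and $(1-\epsilon)P_2$.
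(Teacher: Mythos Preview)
Your construction is correct. The paper does not actually prove this lemma; it simply cites Theorem~5.1 of Chen, Gao, and Ren (2018). Your argument gives a self-contained proof: the reformulation of the hypothesis as $(1-\epsilon)\int\max(p_1,p_2)\,d\mu\le 1$ is exactly the right way to see why a common dominating mixture exists, and the padding by $c\,p_1$ is a clean way to fill the remaining mass. One small omission is the degenerate case $\epsilon=0$, where the hypothesis forces $P_1=P_2$ and the conclusion is trivial; you divide by $\epsilon$ so you should note this separately, but it is immediate.
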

\begin{proof}
See Theorem 5.1 of \cite{chen2018robust}.
\end{proof}

\end{document}